\providecommand{\tabularnewline}{\\}
\numberwithin{equation}{section}
\numberwithin{figure}{section}
\theoremstyle{plain}
\newtheorem{thm}{\protect\theoremname}[section]
\theoremstyle{remark}
\newtheorem{rem}[thm]{\protect\remarkname}
\theoremstyle{plain}
\newtheorem*{prop*}{\protect\propositionname}
\theoremstyle{plain}
\newtheorem{prop}[thm]{\protect\propositionname}
\theoremstyle{plain}
\newtheorem{cor}[thm]{\protect\corollaryname}
\theoremstyle{plain}
\newtheorem{lem}[thm]{\protect\lemmaname}
\theoremstyle{definition}
\newtheorem{defn}[thm]{\protect\definitionname}
\providecommand{\corollaryname}{Corollary}
\providecommand{\definitionname}{Definition}
\providecommand{\lemmaname}{Lemma}
\providecommand{\propositionname}{Proposition}
\providecommand{\remarkname}{Remark}
\providecommand{\theoremname}{Theorem}
\begin{document}
\global\long\def\R{\mathbf{\mathbb{R}}}%
\global\long\def\C{\mathbf{\mathbb{C}}}%
\global\long\def\Z{\mathbf{\mathbb{Z}}}%
\global\long\def\N{\mathbf{\mathbb{N}}}%
\global\long\def\T{\mathbb{T}}%
\global\long\def\Im{\mathrm{Im}}%
\global\long\def\Re{\mathrm{Re}}%
\global\long\def\H{\mathcal{H}}%
\global\long\def\M{\mathbb{M}}%
\global\long\def\P{\mathbb{P}}%
\global\long\def\L{\mathcal{L}}%
\global\long\def\F{\mathcal{\mathcal{F}}}%
\global\long\def\s{\sigma}%
\global\long\def\Rc{\mathcal{R}}%
\global\long\def\W{\tilde{W}}%

\global\long\def\G{\mathcal{G}}%
\global\long\def\d{\partial}%
 
\global\long\def\jp#1{\langle#1\rangle}%
\global\long\def\norm#1{\|#1\|}%
\global\long\def\mc#1{\mathcal{\mathcal{#1}}}%

\global\long\def\Right{\Rightarrow}%
\global\long\def\Left{\Leftarrow}%
\global\long\def\les{\lesssim}%
\global\long\def\hook{\hookrightarrow}%

\global\long\def\D{\mathbf{D}}%
\global\long\def\rad{\mathrm{rad}}%

\global\long\def\env{\mathrm{Env}}%
\global\long\def\re{\mathrm{re}}%
\global\long\def\im{\mathrm{im}}%
\global\long\def\err{\mathrm{Err}}%

\global\long\def\d{\partial}%
 
\global\long\def\jp#1{\langle#1\rangle}%
\global\long\def\norm#1{\|#1\|}%
\global\long\def\ol#1{\overline{#1}}%
\global\long\def\wt#1{\widehat{#1}}%
\global\long\def\tilde#1{\widetilde{#1}}%

\global\long\def\br#1{(#1)}%
\global\long\def\Bb#1{\Big(#1\Big)}%
\global\long\def\bb#1{\big(#1\big)}%
\global\long\def\lr#1{\left(#1\right)}%

\global\long\def\ve{\varepsilon}%
\global\long\def\la{\lambda}%
\global\long\def\al{\alpha}%
\global\long\def\be{\beta}%
\global\long\def\ga{\gamma}%
\global\long\def\La{\Lambda}%
\global\long\def\De{\Delta}%
\global\long\def\na{\nabla}%

\global\long\def\ep{\epsilon}%
\global\long\def\fl{\flat}%
\global\long\def\sh{\sharp}%
\global\long\def\calN{\mathcal{N}}%
\global\long\def\avg{\mathrm{avg}}%

\title[Critical LWP of NLS on $\T^{d}$]{Critical local well-posedness of the nonlinear Schr{\"o}dinger equation
on the torus}
\author{Beomjong Kwak}
\email{beomjong@kaist.ac.kr}
\address{Department of Mathematical Sciences, Korea Advanced Institute of Science
and Technology, 291 Daehak-ro, Yuseong-gu, Daejeon 34141, Korea}
\author{Soonsik Kwon}
\email{soonsikk@kaist.edu}
\address{Department of Mathematical Sciences, Korea Advanced Institute of Science
and Technology, 291 Daehak-ro, Yuseong-gu, Daejeon 34141, Korea}
\begin{abstract}
In this paper, we study the local well-posedness of nonlinear Schr{\"o}dinger
equations on tori $\T^{d}$ at the critical regularity. We focus on
cases where the nonlinearity $|u|^{a}u$ is non-algebraic with small
$a>0$. We prove the local well-posedness for a wide range covering
the mass-supercritical regime. Moreover, we supplementarily investigate
the regularity of the solution map.

In pursuit of lowering $a$, we prove a bilinear estimate for the
Schr{\"o}dinger operator on tori $\T^{d}$, which enhances previously
known multilinear estimates.
We design a function space adapted to the new bilinear estimate and
a package of Strichartz estimates, which is not based
on conventional atomic spaces.
\end{abstract}

\maketitle

\section{Introduction}

\subsection{Statement of the problem and main results}

The subject of this paper is the critical local well-posedness and
ill-posedness of the Cauchy problem for the nonlinear Schr{\"o}dinger equation
(NLS) on periodic spaces $\T^{d}$
\begin{equation}
\begin{cases}
iu_{t}+\De u=\pm|u|^{a}u=:\mathcal{N}(u)\\
u(0)=u_{0}\in H^{s}(\T^{d})
\end{cases},\tag{{NLS}}\label{eq:NLS}
\end{equation}
where $u:\R\times\T^{d}\rightarrow\C$.

The nonlinearity $|u|^{a}u$ is of a single power type for $a>0$.
When $a$ is an even integer, the nonlinearity $\left|u\right|^{a}u$
is algebraic. Otherwise, $|u|^{a}u$ is said to be non-algebraic.
In this work, we are particularly interested in the case of non-algebraic
nonlinearity, especially when $a$ is small. At a glance, for small
non-algebraic $a$, one can observe that the regularity of solutions
has certain restrictions and anticipate that the solution map is less
regular. However, it turns out that there is a genuine difficulty:\emph{
the nonlinear term $|u|^{a}u$ is not sufficiently decomposable.}
For instance, if one tries to take a paraproduct decomposition, there
is not enough summability from existing technology. In this work,
we investigate these issues and overcome them by introducing new function
spaces and bilinear estimates. For the negative direction, we also
study limitations on the regularity of the solution map.

In view of scaling considerations, the critical Sobolev regularity
is
\begin{equation}
s:=s_{c}=\frac{d}{2}-\frac{2}{a}.\label{eq:s_c}
\end{equation}
Since we consider only the critical local problem in $H^{s_{c}}(\T^{d})$,
we simply denote $s=s_{c}$. We say (\ref{eq:NLS}) is mass-critical
if $s=0$ and energy-critical if $s=1$.

Firstly, we state our main theorem, the critical local well-posedness
of a wide range of NLS on $\T^{d}$.
\begin{thm}
\label{thm:LWP s<a} Let $a>\frac{4}{d}$ (or equivalently, $s>0$).
Assume $s<1+a$. Then, (\ref{eq:NLS}) is locally well-posed in the
critical Sobolev space $H^{s}(\T^{d})$.
\end{thm}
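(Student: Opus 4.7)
\emph{Setup.} The plan is to run a Picard-type contraction argument for the Duhamel formulation
\[
u(t) = e^{it\De}u_{0} \mp i\int_{0}^{t}e^{i(t-\tau)\De}\mathcal{N}(u(\tau))\,d\tau
\]
on a short time interval $[0,T]$, inside a small ball of a solution space $X^{s}([0,T])$ paired with a nonlinearity space $Y^{s}([0,T])$. These spaces are the non-atomic spaces flagged in the abstract, constructed so as to simultaneously accommodate the new bilinear estimate and the Strichartz package. The whole scheme reduces to three analytic bounds: (i) a homogeneous linear bound $\norm{e^{it\De}u_{0}}_{X^{s}} \les \norm{u_{0}}_{H^{s}}$; (ii) a Duhamel bound $\norm{\int_{0}^{t}e^{i(t-\tau)\De}F\,d\tau}_{X^{s}} \les \norm{F}_{Y^{s}}$; and (iii) the multilinear estimate $\norm{\mathcal{N}(u)}_{Y^{s}} \les \norm{u}_{X^{s}}^{a+1}$, together with a corresponding Lipschitz version.

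\emph{Multilinear estimate.} For (iii), I would Littlewood--Paley decompose $\mathcal{N}(u) = \sum_{N}P_{N}\mathcal{N}(u)$ and, inside each dyadic block, split off one holomorphic copy of $u$ from the modulus factor by writing $|u|^{a}u = u\cdot(u\bar u)^{a/2}$. The two highest-frequency factors of $u$ are then paired via the new bilinear estimate, which provides dyadic summability that no conventional Strichartz inequality delivers for small $a$. The remaining factor $(u\bar u)^{(a-1)/2}$ has only H\"older regularity $a$; treat it by a fractional Moser / chain-rule estimate, and the hypothesis $s < 1+a$ is exactly what ensures that the map $z \mapsto |z|^{a}z$ can carry $s$ fractional derivatives. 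The hypothesis $a > 4/d$ (equivalently $s > 0$) supplies the time-integrability margin needed to close the estimate at the critical regularity with a small factor $T^{\theta}$ absorbed into the small-ball radius, or, equivalently, by taking the high-frequency part of the initial data small by criticality.

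\emph{Difference estimate, contraction, and the main obstacle.} The Lipschitz version of (iii) is delicate because $z \mapsto |z|^{a}z$ is only $C^{1+a}$. I expect to obtain
\[
\norm{\mathcal{N}(u)-\mathcal{N}(v)}_{Y^{s}} \les \bb{\norm{u}_{X^{s}}^{a}+\norm{v}_{X^{s}}^{a}}\norm{u-v}_{X^{s}}
\]
by interpolating the multilinear estimate with a weaker Strichartz-controlled component of $X^{s}$ in which the nonlinearity is genuinely Lipschitz, as is standard for non-algebraic problems. Combined with (i) and (ii), the Duhamel map is then a strict contraction for $T$ small, delivering existence, uniqueness, and continuous dependence. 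The main obstacle is unquestionably the dyadic summability in the multilinear estimate when $a$ is small: the H\"older exponent of $|u|^{a-1}$ deteriorates as $a \to 0$, and the standard paraproduct or $U^{p}$--$V^{p}$ decompositions lose the geometric summability needed between the two highest-frequency factors. Engineering $X^{s}$ and $Y^{s}$ so that the new bilinear estimate, the Strichartz package, and the Duhamel inclusion can coexist inside a single coherent functional framework, and so that the summation across dyadic scales closes at the exact critical regularity, is where the core novelty of the paper must lie, and is the central technical hurdle to be verified.
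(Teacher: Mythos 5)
There is a genuine gap in the overall scheme: you propose a Picard contraction in a critical space $X^{s}$ based on a Lipschitz difference estimate $\norm{\mathcal{N}(u)-\mathcal{N}(v)}_{Y^{s}}\les(\norm u_{X^{s}}^{a}+\norm v_{X^{s}}^{a})\norm{u-v}_{X^{s}}$, but no such estimate (and no contraction argument closing in a space embedding into $C^{0}H^{s}$) can exist in the full range of the theorem. For $d\ge5$ the hypothesis $a>\tfrac4d$ allows $a<1$, and in exactly that regime Theorem \ref{thm:a<1} of the same paper shows the solution map fails to be even locally $\al$-H{\"o}lder continuous for any $\al>a$; a contraction scheme would force local Lipschitz dependence on the data, a contradiction. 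Your suggested repair — interpolating with a ``weaker Strichartz-controlled component of $X^{s}$ in which the nonlinearity is genuinely Lipschitz'' — cannot produce a contraction at regularity $s$ for the same reason: the obstruction is not a technical loss but a true failure of Lipschitz dependence when $a<1$ (and the paper only obtains Lipschitz well-posedness under the stronger hypotheses $a>\max\{4/d,1\}$, $s<a$ of Theorem \ref{thm:Lipschitz}). A secondary issue: you plan to close the small-ball argument by absorbing a factor $T^{\theta}$, but at critical regularity atomic/Strichartz-type norms of a free evolution need not shrink as $T\to0$; the paper has to build the $Z^{s}$ space precisely so that $\norm{u\chi_{[0,T]}}_{Z^{s}}\to0$, which is what makes the large-data short-time argument run.

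The paper's actual proof is structured to avoid contraction at regularity $s$: (i) existence is obtained from a priori $Z^{s}$ bootstrap bounds for frequency-truncated approximate equations and a compactness/weak-limit argument, with the nonlinearity handled by Bony linearization $\mathcal{N}(u_{\le N})-\mathcal{N}(u_{\le N/2})=u_{N}\times A^{N}$ and the new bilinear estimate (Proposition \ref{prop:Holder strip decomposition}) pairing the high-frequency factor $u_{N}$ against the low-frequency piece $P_{R}A^{N}$, plus fractional chain rules where the hypothesis $s<1+a$ enters (as you correctly anticipated); (ii) uniqueness follows from a difference estimate performed at regularity zero, i.e.\ in $Z^{0}$, where the derivative $\d_{z}\mathcal{N}$ is only required in an $a$-power bound and no $s$ derivatives fall on the difference; (iii) continuous dependence is proved separately, via an enhanced frequency-weighted nonlinear estimate (Lemma \ref{lem:Nu bound =0003C6} with weights $\ga_{N}$) giving smallness of the high-frequency tail of the solution, combined with the low-frequency Lipschitz bound in $Y^{0}$. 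You correctly identified the bilinear estimate and the need for a bespoke function space as the core novelty, but the contraction framework you build around them only covers the restricted range of Theorem \ref{thm:Lipschitz}, not Theorem \ref{thm:LWP s<a}.
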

For a technical statement of Theorem \ref{thm:LWP s<a}, see Proposition
\ref{prop:Y^s LWP}. Theorem \ref{thm:LWP s<a} extends many existing
results on the critical local well-posedness and covers a new regime
of small $a$. This covers a wide range of the mass-supercritical
regime $s>0$. In particular, this includes all energy-critical cases, for which the result is new  for dimensions $d\ge5$. The
restriction $s<1+a$ arises from the fact that $|u|^{a}u$ does not
have regularities higher than $1+a$ for smooth functions $u$ in
general.

Next, we consider the regularity of the solution map. In Theorem \ref{thm:LWP s<a},
we know only that the flow map is continuous from $H^{s}(\T^{d})$
to $C^{0}([0,T];H^{s}(\T^{d}))$. Yet, for a narrower range of $a$,
we show the Lipschitz regularity of the flow map.
\begin{thm}
\label{thm:Lipschitz}Assume that
\begin{equation}
a>\max\left\{ \frac{4}{d},1\right\} \text{ and }s<a.\label{eq:Lipschitz a assumption}
\end{equation}
Then, (\ref{eq:NLS}) is locally Lipschitz well-posed in $H^{s}(\T^{d})$.
\end{thm}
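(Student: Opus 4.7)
The plan is to upgrade the continuous flow of Theorem~\ref{thm:LWP s<a} to a Lipschitz one by running the difference argument inside the very same solution space $Y^{s}$ constructed for Theorem~\ref{thm:LWP s<a} (see Proposition~\ref{prop:Y^s LWP}). Two inputs are crucial. First, since $a>1$, the nonlinearity $\mc N(z)=\pm|z|^{a}z$ is of class $C^{1}(\C)$ with derivatives $\d_{z}\mc N(z)\sim|z|^{a}$ and $\d_{\ol z}\mc N(z)\sim|z|^{a-2}z^{2}$, both of modulus $\sim|z|^{a}$. Hence the fundamental theorem of calculus yields the pointwise identity
\[
\mc N(u)-\mc N(v)=\int_{0}^{1}\bb{\d_{z}\mc N(u_{\theta})(u-v)+\d_{\ol z}\mc N(u_{\theta})\ol{(u-v)}}\,d\theta,\qquad u_{\theta}:=\theta u+(1-\theta)v.
\]
Second, since $s<a$, one may afford to place $s$ fractional derivatives on the factor $|u_{\theta}|^{a}$.

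Given initial data $u_{0},v_{0}\in H^{s}(\T^{d})$, Theorem~\ref{thm:LWP s<a} furnishes solutions $u,v\in Y^{s}$. The difference $w:=u-v$ solves the inhomogeneous Schr\"odinger equation with forcing $\mc N(u)-\mc N(v)$ and datum $u_{0}-v_{0}$. The linear/Duhamel apparatus already in place reduces the Lipschitz claim to a \emph{multilinear difference estimate} of the form
\[
\norm{\mc N(u)-\mc N(v)}_{N^{s}}\les\bb{\norm u_{Y^{s}}^{a}+\norm v_{Y^{s}}^{a}}\norm w_{Y^{s}},
\]
where $N^{s}$ is the forcing-side space paired with $Y^{s}$. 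After inserting the FTC expansion, this in turn boils down to a product estimate
\[
\norm{|u_{\theta}|^{a}w}_{N^{s}}\les\norm{u_{\theta}}_{Y^{s}}^{a}\norm w_{Y^{s}},
\]
uniformly in $\theta\in[0,1]$.

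To attack this product, I would perform a dyadic frequency decomposition and a fractional Leibniz split. When the $s$ derivatives fall on $w$, the resulting object is essentially the same multilinear expression already controlled in the proof of Theorem~\ref{thm:LWP s<a}, with one input replaced by $w$, so the previously established bilinear and Strichartz estimates close that piece directly. When the $s$ derivatives fall on $|u_{\theta}|^{a}$, invoke a Christ--Weinstein-type fractional chain rule for the non-smooth map $z\mapsto|z|^{a}$; the hypothesis $s<a$ is precisely what makes this chain rule available, and it essentially costs a factor $\norm{u_{\theta}}^{a-s}$ in an $L^{\infty}$-type Strichartz norm times $\norm{|D|^{s}u_{\theta}}$ in another, both controlled by $\norm{u_{\theta}}_{Y^{s}}$ through the Strichartz package built into $Y^{s}$.

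The principal obstacle is exactly this last step: classical fractional chain-rule lemmas are formulated for plain $L^{p}_{t,x}$ spaces, whereas $Y^{s}$ and $N^{s}$ are the bespoke \emph{non-atomic} function spaces designed around the new bilinear Schr\"odinger estimate of the paper. One must therefore transplant the chain-rule estimate into that framework, working dyadically in frequency and re-summing via the same orthogonality/bilinear input that powers the main theorem; the non-atomic nature of $Y^{s}$ makes the usual ``one-scale-at-a-time'' reductions delicate, so care is needed when trading regularity for Strichartz integrability. Once this transplantation is carried out, the difference estimate closes whenever $a>\max\{4/d,1\}$ and $s<a$, a standard contraction argument in $Y^{s}$ produces the fixed point, and Lipschitz dependence on bounded subsets of $H^{s}(\T^{d})$ follows.
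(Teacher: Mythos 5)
Your high-level skeleton --- linearize the difference by the fundamental theorem of calculus, reduce Lipschitz dependence to a product estimate of the shape $\norm{|u_{\theta}|^{a}w}\les\norm{u_{\theta}}^{a}\norm w$ in the critical spaces, and close by a difference bound on the two solutions --- is the same as the paper's, which reduces Theorem \ref{thm:Lipschitz} to Lemma \ref{lem:Lipschitz lemma}. The trouble is that your proposal essentially stops at the statement of that lemma: the ``transplantation'' of the fractional chain rule into the solution space, which you flag as the principal obstacle and then assume, is the entire content of Section \ref{sec:Lipschitz}. Moreover, the route you sketch would not close. The dichotomy ``derivatives fall on $w$ versus on $|u_{\theta}|^{a}$'' misses the decisive regime in which the output frequency is much larger than the frequency of $w$, i.e.\ $w$ is the low-frequency factor. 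There you can neither put the $s$ derivatives on $w$ nor ``replace one input'' in the nonlinear estimates of Theorem \ref{thm:LWP s<a}, because in those estimates the linear factor $u_{N}$ and the weight $A^{N}$ come from the same function through the Bony linearization; the closest ready-made statement, Proposition \ref{prop:Holder strip decomposition}, measures the extra factor only in $Z^{0}$, i.e.\ at regularity $0$. The paper must instead Bony-linearize $\d_{z}\calN(u)$ itself, producing second-derivative weights of size $|u|^{a-1}$, and prove a new high-low estimate (the term $III$ in the proof of Lemma \ref{lem:Lipschitz lemma}, obtained by interpolating (\ref{eq:H-1}) and (\ref{eq:L-1})) whose off-diagonal decay $(N/L)^{\mu}$ is what makes the dyadic sum converge. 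A plain Christ--Weinstein chain rule in $L_{t,x}^{p}$ at critical scaling carries no such decay, and for $a<2$ the low-frequency sum then diverges logarithmically --- exactly the obstruction described in the introduction, and the reason the bilinear estimate (\ref{eq:strip1 claim1-1}) and the $Z^{s}$ machinery remain indispensable here rather than just ``the previously established bilinear and Strichartz estimates.''

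You also misplace the hypotheses. The FTC identity holds for every $a>0$ (both Wirtinger derivatives of $|z|^{a}z$ are continuous), and the paper already uses this difference form, at regularity $0$, in the uniqueness and continuous-dependence steps of Proposition \ref{prop:Y^s LWP} for all $a>\frac4d$; so $a>1$ is not what licenses the linearization. It is needed inside the functional estimates: in the term $II$ one must rerun the proof of Proposition \ref{prop:Holder strip decomposition} with one exponent in (\ref{eq:a1+...+ak}) equal to $\frac12$, which forces $a/2>\frac12$, and the weights $\sim|u|^{a-1}$ must carry a positive H\"older exponent; Theorem \ref{thm:a<1} shows this restriction is essentially sharp, so no soft argument can remove it. The hypothesis $s<a$ you do place correctly (it gives the room $\nu=a-s-\s_{1}>0$ for derivatives on $\d_{z}\calN(u)$). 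Finally, running the contraction purely in $Y^{s}$ has the large-data defect the paper points out: atomic-type norms of a free evolution do not shrink on short time intervals, so the smallness needed to absorb $\norm{u_{\theta}}^{a}$ must come from the $Z^{s}$ norm via (\ref{eq:shrink Z^s}) and (\ref{eq:smallness}), not from $Y^{s}$ itself.
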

On the other hand, if $a$ is even lower, one expects the solution
map to be less regular. As a negative result, we show that when $a$
is smaller than $1$, the solution map fails to be locally Lipschitz
continuous. More precisely, we have the failure of $\alpha$-H{\"o}lder
continuity.
\begin{thm}
\label{thm:a<1}Assume
\begin{equation}
0<a<1\text{ and }0<s<1+\frac{1}{a}.\label{eq:assuption thm a<1}
\end{equation}
Then, the solution map fails to be locally $\al$-H{\"o}lder continuous
in $H^{s}(\T^{d})$ for each $\al>a$.

More explicitly, there is no radius $\epsilon>0$ and time $T>0$
such that for every $u_{0},v_{0}\in H^{s}(\T^{d})$ with $\norm{u_{0}}_{H^{s}},\norm{v_{0}}_{H^{s}}<\epsilon$,
the corresponding solutions $u$ and $v$ to (\ref{eq:NLS}) satisfy
$\norm{u-v}_{C^{0}H^{s}\left([0,T]\times\T^{d}\right)}\les\norm{u_{0}-v_{0}}_{H^{s}}^{\al}$.
\end{thm}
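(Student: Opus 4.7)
The plan is to argue by contradiction: suppose there exist $\alpha>a$, $\epsilon>0$, $T>0$, and $C>0$ realizing the claimed H\"older estimate on $H^{s}$-balls of radius $\epsilon$. The underlying mechanism is that $|u|^{a}u$ is only $C^{1,a}$ (not $C^{2}$) at $u=0$ when $a<1$, so the nonlinear amplitude-to-phase map $\lambda\mapsto e^{-i\lambda^{a}t}$ is only H\"older-$a$ at $\lambda=0$. The subcase $\alpha>1$ is immediate: take $v_{0}=0$ and $u_{0}=\lambda\psi$ for a fixed $\psi\in H^{s}$ with $\norm{\psi}_{H^{s}}=1$, and send $\lambda\to 0^{+}$; then $v\equiv0$, $\norm{u(T)}_{H^{s}}\gtrsim\lambda$ by the dominance of the linear flow, and the H\"older bound forces $\lambda\lesssim\lambda^{\alpha}$, which is impossible.

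For the subtle range $\alpha\in(a,1]$, I would combine two data whose nonlinear self-phase modulations disagree at leading order:
\begin{equation*}
u_{0}=\lambda+\mu e^{iNx_{1}},\qquad v_{0}=\mu e^{iNx_{1}},
\end{equation*}
with small $\lambda>0$ (eventually sent to $0$), $\mu=cN^{-s}$ for fixed $c<\epsilon/2$, and $N$ chosen large so that $\mu\ll\lambda$. Then $\norm{u_{0}-v_{0}}_{H^{s}}=\lambda$, $\norm{v_{0}}_{H^{s}}\approx c$, and $\norm{u_{0}}_{H^{s}}^{2}\approx\lambda^{2}+c^{2}<\epsilon^{2}$. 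The data $v_{0}$ evolves exactly to the plane wave $v(T,x)=\mu e^{iNx_{1}-i(N^{2}+\mu^{a})T}$. For $u_{0}$, I would linearize around the constant background $\lambda e^{-i\lambda^{a}t}$ via a first Picard iteration: the expansion $|\lambda+\mu e^{i\theta}|^{a}\approx\lambda^{a}+a\lambda^{a-1}\mu\cos\theta$ shows that the $e^{iNx_{1}}$-mode sitting atop the constant background acquires the nonlinear self-phase modulation $(1+a/2)\lambda^{a}T$, which is substantially larger than the $\mu^{a}T$ phase carried by the bare plane wave. To leading order,
\begin{equation*}
u(T,x)\approx\lambda e^{-i\lambda^{a}T}+\mu e^{iNx_{1}-iN^{2}T-i(1+a/2)\lambda^{a}T},
\end{equation*}
so the $e^{iNx_{1}}$-component of $u(T)-v(T)$ has amplitude $\sim\mu\lambda^{a}T$, contributing $H^{s}$-norm $\sim cT\lambda^{a}$. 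The H\"older hypothesis then requires $cT\lambda^{a}\lesssim\lambda^{\alpha}$, i.e.\ $\lambda^{a-\alpha}\lesssim(cT)^{-1}$, which fails as $\lambda\to0^{+}$ for every $\alpha>a$, producing the desired contradiction.

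The main technical hurdle will be promoting the heuristic first Picard iteration to a rigorous mode-by-mode approximation of the true solutions. Although the naive $C^{0}_{t}H^{s}$-bound on the Picard remainder is $O(\norm{u_{0}}_{H^{s}}^{2a+1})$, a direct mode-by-mode analysis shows that higher Picard iterates contribute to the signal mode $e^{iNx_{1}}$ only at size $O(\lambda^{2a}T^{2}c)$, smaller than the signal $cT\lambda^{a}$ by a factor $\lambda^{a}T\ll1$ (in particular, quadratic perturbation errors are supported on modes $0,\pm 2N$ and are orthogonal to the signal mode). The critical local well-posedness established in Theorem~\ref{thm:LWP s<a} provides the uniform $C^{0}_{t}H^{s}$-framework needed to control the Picard series and render these estimates rigorous.
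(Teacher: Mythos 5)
Your mechanism is the same one the paper exploits: two data differing only in their constant (zero-mode) part, a small high-frequency carrier riding on top, and the observation that the cross-modulation of the carrier by the background is of size $\sim\la^{a}T$ per unit carrier amplitude, so the difference of the two evolutions at frequency $N$ has $H^{s}$-size $\sim c\,\la^{a}T\gg\la^{\al}$ as $\la\rightarrow0$ (the paper compares backgrounds $\la$ and $2\la$ and uses the wave packet $e^{-iT\De}\delta_{N}$ as carrier, testing $\P_{N}\d_{x}w(T,0)$ against the kernel and invoking the oscillation estimate (\ref{eq:L^inftyL^2}); your plane-wave carrier would make that particular lemma unnecessary). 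The subcase $\al>1$ and the parameter bookkeeping (choice of $\mu=cN^{-s}$, $\mu\ll\la$, $\la^{a}T\ll1$) are fine.

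The genuine gap is the error control, which is exactly where the paper's work lies and which your sketch does not supply. First, the appeal to Theorem \ref{thm:LWP s<a} is misplaced: that theorem requires $a>\frac{4}{d}$ and $s<1+a$, so it simply does not cover much of the range of Theorem \ref{thm:a<1} (e.g.\ any $d\le4$, or $s\in[1+a,1+\frac1a)$), and even where it applies it only yields continuity of the flow, not a quantitative bound on the deviation from your ansatz at the scale $o(c\la^{a}T)$. Second, for non-algebraic $|u|^{a}u$ with $a<1$ there is no ``Picard series'' and no clean mode structure: $\calN(\la+\mu e^{iNx_{1}})$ has nonzero Fourier content at every multiple of $N$, and the statement that higher iterates touch the signal mode only at size $O(\la^{2a}T^{2}c)$ is an unproven formal claim. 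The real issue is the Taylor remainder: the bound $|\calN(\la+h)-\calN(\la)-d\calN(\la)h|\les\la^{a-1}|h|^{2}$ is valid only where $|h|\les\la$ pointwise, and elsewhere one only has the H{\"o}lder bound $|h|^{1+a}$; since the deviation of the true solution from your ansatz is controlled only in integral norms (and the natural $H^{1}\hookrightarrow L^{\infty}$ route fails, as $\norm{u_{0}}_{H^{1}}\sim cN^{1-s}$ is large for $s<1$), one must show that the region where the expansion degenerates is small and contributes negligibly to the $N$-mode. This is precisely what the paper does via the decomposition $[0,T]\times\T=E\cup E^{c}$, the $L^{4}$ Strichartz bounds (\ref{eq:u-e L^2}), (\ref{eq:u-e H^1}) showing the solutions are $\la^{a}$-close to free evolutions, and the measure bound (\ref{eq:m}); without an argument of this type (or a substitute), your remainder estimate does not close, so the proposal as written has a substantive hole at its central step.
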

\begin{rem}
Our proof does not rely on number-theoretic arguments on frequencies.
Thus, the proof works for irrational tori $\widetilde{\T}^{d}=\R^{d}/(\theta_{1}\Z\times\cdots\times\theta_{d}\Z)$
with any $\theta_{j}>0$. For simplicity, in this paper, we assume
our domain is the square torus $\T^{d}=\R^{d}/(2\pi\Z)^{d}$.
\end{rem}
\begin{rem}
In Theorem \ref{thm:LWP s<a}, one can derive the exponent restriction
by using (\ref{eq:s_c}). When $d\le7$, $s<1+a$ is void, so it holds
true for any $a>\frac{4}{d}$. When $d\ge8$, there is an uncovered
band:
\[
\begin{cases}
\frac{4}{d}<a<\frac{d-2-\sqrt{d^{2}-4d-28}}{4} & \qquad\text{or}\\
a>\frac{d-2+\sqrt{d^{2}-4d-28}}{4}.
\end{cases}
\]
\end{rem}
\begin{rem}
In Theorem \ref{thm:Lipschitz}, the restriction on exponents for
the Lipschitz continuity for each dimension is as follows: 
\begin{align*}
(d\le4)\qquad & a>\frac{4}{d} & \qquad & s>0,\\
(d=5)\qquad & a>1 & \qquad & s>\frac{1}{2},\\
(d\ge6)\qquad & a>\frac{d+\sqrt{d^{2}-32}}{4} & \qquad & s>\frac{d+\sqrt{d^{2}-32}}{4}.
\end{align*}
In particular, we note that the energy-critical case ($s=1$) is Lipschitz
well-posed when $d\le5$. For $d=3,4$, the LWP was previously proved
via a contraction mapping \cite{herr2011global,killip2016scale}.
\end{rem}
\begin{rem}
In Theorem \ref{thm:a<1}, the range of exponents of (\ref{eq:assuption thm a<1})
for each dimension is as follows:
\begin{align*}
(d\le8)\qquad & a<1 & \qquad & s<\frac{d}{2}-2,\\
(d\ge9)\qquad & a<\frac{6}{d-2} & \qquad & s<\frac{d}{6}+\frac{2}{3}.
\end{align*}
In particular, we note that the energy-critical case ($s=1$) fails
to be Lipschitz well-posed when $d\ge7$. However, when $d=6\,(a=1)$,
Lipschitz continuity of the solution map is inconclusive.
\end{rem}
\subsection{Previous works}

In \cite{bourgain1993fourier}, Bourgain obtained a range of scale-invariant
Strichartz estimates on square tori with a certain amount of loss
of regularity, with which $X^{s,b}$ spaces were also first introduced.
He used these to obtain several local and small data global well-posedness
results for subcritical NLS on tori. As a tool for constructing function
spaces adapted to critical dispersive equations, atomic spaces $U^{p}\text{ and }V^{p}$
have been successfully used. $U^{p}\text{ and }V^{p}$ spaces were
developed for the Schr{\"o}dinger operator in \cite{tataru2008large},
\cite{koch2007priori}, and many others. Based on atomic structures,
the critical function spaces $X^{s}$ and $Y^{s}$ for NLS on (partially)
periodic domains were introduced in \cite{herr2014strichartz} and
\cite{herr2011global}.

Based on the development of function spaces, several local and global
well-posedness results of NLS on periodic domains were shown for algebraic
cases. In \cite{herr2014strichartz}, using the $X^{s}$ spaces and
multilinear Strichartz estimates, Herr, Tataru, and Tzvetkov obtained
the local well-posedness and small data global well-posedness of the
energy-critical NLS in $H^{1}(\R^{2}\times\T^{2})$ and $H^{1}(\R^{3}\times\T)$
with arbitrary torus parts $\T^{m}$ (including irrational tori).
In \cite{herr2011global}, the same authors showed the local well-posedness
and small data global well-posedness of the energy-critical NLS
in $H^{1}(\T^{3})$ for rational tori. In \cite{wang2003periodic},
the author developed scaling-critical multilinear Strichartz estimates
and proved results for a larger range of exponent $a$. In \cite{guo2014strichartz},
new scaling-critical Strichartz estimates on irrational tori were
proved. As an application, they proved the critical local well-posedness
of NLS in several regimes of algebraic nonlinearities. This result
was further enhanced in \cite{strunk2014strichartz}.

Afterward, Bourgain and Demeter \cite{bourgain2015proof} established
a Strichartz estimate with an arbitrarily small loss of scale and
regularity on general irrational tori  as an application of their
celebrated $\ell^{2}$-decoupling result. For rational tori, this
result can be strengthened to a scale-invariant version by the argument
in \cite{bourgain1993fourier}. For irrational tori, the corresponding
scale-invariant Strichartz estimate was shown in \cite{killip2016scale}.
As an application of this, they obtained the local well-posedness
and small data global well-posedness result for energy-critical NLS
on $\T^{3}$ and $\T^{4}$.

The large data global well-posedness of the energy-critical defocusing
NLS in $H^{1}(\T^{3})$ was shown in \cite{ionescu2012energy}. In
\cite{ionescu2012global}, the same result was shown in $H^{1}(\R\times\T^{3})$.
For focusing equations, the large data global well-posedness of the
energy-critical focusing NLS in $H^{1}(\T^{4})$ was shown in \cite{YUE2021754}.
The aforementioned well-posedness works for algebraic nonlinearities
use multilinear estimates and are based on contraction mapping arguments.

When $|u|^{a}u$ is non-algebraic, Lee \cite{lee2019local} proved
the well-posedness of $H^{s}$-critical NLS in $H^{s}(\T^{3})$ for
$a\ge2$ (or equivalently, $s\ge1/2$). One main new ingredient of
\cite{lee2019local} was the \emph{Bony linearization }\cite{bony1981calcul}
for non-algebraic nonlinearities. When a nonlinearity $f(u)$ has
sufficient regularity, one takes a paraproduct decomposition of $f(u)$
in terms of $u_{N}$ and $\partial f(u_{\le N})$. For given $f:\C\rightarrow\C$
and $u:\T^{d}\rightarrow\C$, we write
\begin{align*}
f(u) & =\sum_{N\in2^{\N}}f(P_{\le N}u)-f(P_{\le N/2}u)\\
 & =\sum_{N\in2^{\N}}\int_{0}^{1}P_{N}u\d_{z}f(P_{\le N/2}u+\theta P_{N}u)d\theta+\int_{0}^{1}\overline{P_{N}u}\d_{\overline{z}}f(P_{\le N/2}u+\theta P_{N}u)d\theta.
\end{align*}
Lee \cite{lee2019local} used a contraction mapping argument based
on previously known multilinear estimates \cite{herr2011global,killip2016scale}
and the Bony linearization. The condition $a\ge2$ was required for
triple iterations of Bony linearizations. 

\subsection{A new estimate, a function space $Z^{s}$, and proofs of the main results}

The main difficulty of our well-posedness results, Theorem \ref{thm:LWP s<a}
and Theorem \ref{thm:Lipschitz}, lies in the previously unresolved regime
$a<2$. Here, we investigate the limitations of existing techniques
for lower $a$, which stems from the linear level, and introduce our
new main ingredients to overcome them: a bilinear Strichartz estimate
and a function space $Z^{s}$.

To date, critical Strichartz estimates have been established on pure
tori $\T^{d}$ only with a loss of regularity. To compensate for the
regularity loss, multilinear estimates have been used. On $\R^{d}$,
for dyadic frequencies $N\gg R$, we have the bilinear estimate
\begin{equation}
\norm{P_{N}e^{it\De}\phi P_{R}e^{it\De}\psi}_{L_{t,x}^{2}(\R\times\R^{d})}\les N^{\frac{d-1}{2}}R^{-\frac{1}{2}}\norm{\phi}_{L^{2}}\norm{\psi}_{L^{2}}.\label{eq:R^d bilinear}
\end{equation}
We do not expect an estimate like (\ref{eq:R^d bilinear}) on $\T^{d}$,
even with a finite time cutoff. Indeed, a trivial choice $\psi\equiv1$
gives a simple counterexample for (\ref{eq:R^d bilinear}) on $\T^{d}$.
Still, the following version of the bilinear estimate was previously
known:
\begin{prop*} \cite{herr2011global,killip2016scale}
 For $d\ge3$, when $N_{1}\ge N_{2}$, there exists $\delta>0$ such
that
\begin{equation}
\norm{P_{N_{1}}uP_{N_{2}}v}_{L_{t,x}^{2}(I\times\T^{d})}\les_{I}N_{2}^{\frac{d-2}{2}}\left(\frac{N_{2}}{N_{1}}+\frac{1}{N_{2}}\right)^{\delta}\norm u_{Y^{0}}\norm v_{Y^{0}}.\label{eq:HerrTataruTzvetkov-type}
\end{equation}
\end{prop*}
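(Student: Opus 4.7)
The plan is to reduce the bilinear estimate to one for free Schr\"odinger evolutions via the atomic structure of $Y^{0}$, then prove the linear case by combining the scale-invariant $L^{4}_{t,x}$ Strichartz estimate on $\T^{d}$ with an almost-orthogonal frequency-cube decomposition of the higher-frequency factor, and extract the gain $\delta$ from the subcritical margin in the underlying decoupling Strichartz.

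I would first invoke the transference principle for $Y^{0}$. Since $Y^{0}$ is built out of $U^{2}_{\Delta}$-atoms, each of which is a step function whose pieces are free Schr\"odinger evolutions of $L^{2}$-data, an $L^{2}_{t,x}$ bilinear bound for pairs $(e^{it\Delta}\phi,e^{it\Delta}\psi)$ with $\norm{\phi}_{L^{2}}\norm{\psi}_{L^{2}}$ on the right lifts, by bilinearity and Cauchy--Schwarz in the atomic decomposition, to the stated bound with $\norm{u}_{Y^{0}}\norm{v}_{Y^{0}}$. Thus it suffices to prove, for $u=e^{it\Delta}\phi$ and $v=e^{it\Delta}\psi$,
\[
\norm{P_{N_{1}}u\cdot P_{N_{2}}v}_{L^{2}_{t,x}(I\times\T^{d})}\les_{I} N_{2}^{(d-2)/2}\bigl(\tfrac{N_{2}}{N_{1}}+\tfrac{1}{N_{2}}\bigr)^{\delta}\norm{\phi}_{L^{2}}\norm{\psi}_{L^{2}}.
\]

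Next, I would obtain the basic bound (without the gain factor) by a frequency-cube decomposition. Write $P_{N_{1}}\phi=\sum_{j}\phi_{j}$ where each $\phi_{j}$ has Fourier support in a cube $C_{j}\subset\{|\xi|\sim N_{1}\}$ of side $N_{2}$. A Galilean shift centered at each $C_{j}$, which is exact on $\T^{d}$ since frequencies are integer-valued, reduces the bilinear $L^{2}_{t,x}$ norm of $e^{it\Delta}\phi_{j}\cdot P_{N_{2}}e^{it\Delta}\psi$ to one in which both factors are free evolutions at frequency $\lesssim N_{2}$. The scale-invariant Strichartz estimate of Bourgain--Demeter and Killip--Visan, $\norm{P_{N}e^{it\Delta}\varphi}_{L^{4}_{t,x}(I\times\T^{d})}\les_{I}N^{(d-2)/4}\norm{\varphi}_{L^{2}}$ for $d\ge3$, together with H\"older, then gives
\[
\norm{e^{it\Delta}\phi_{j}\cdot P_{N_{2}}e^{it\Delta}\psi}_{L^{2}_{t,x}}\les_{I}N_{2}^{(d-2)/2}\norm{\phi_{j}}_{L^{2}}\norm{\psi}_{L^{2}}.
\]
The products have Fourier supports in $C_{j}+B(0,N_{2})$ with bounded overlap in $j$, so Plancherel and $\sum_{j}\norm{\phi_{j}}_{L^{2}}^{2}=\norm{P_{N_{1}}\phi}_{L^{2}}^{2}$ deliver the full $L^{2}_{t,x}$ bound $N_{2}^{(d-2)/2}\norm{\phi}_{L^{2}}\norm{\psi}_{L^{2}}$.

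To produce the gain $(N_{2}/N_{1}+1/N_{2})^{\delta}$, I would exploit two sources of slack. First, $\ell^{2}$-decoupling delivers scale-invariant Strichartz at $L^{p_{d}+\varepsilon}_{t,x}$ for small $\varepsilon>0$, whose scaling exponent is strictly smaller than $s_{c}(p_{d})$; interpolating this subcritical Strichartz against the critical one produces a factor $N_{2}^{-\delta}$ responsible for the $1/N_{2}$ term. Second, when $N_{1}\gg N_{2}$, the Galilean frames of the cubes $C_{j}$ have group velocities transverse on scale $\sim N_{1}$, so on the time scale $1/N_{1}$ the two evolutions behave like $\R^{d}$ solutions and inherit a fraction of the $\R^{d}$ bilinear improvement (\ref{eq:R^d bilinear}); this yields the $(N_{2}/N_{1})^{\delta}$ term. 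The main obstacle is performing the almost-orthogonal summation uniformly with the $\delta$-gain, since at the critical Strichartz exponent the naive summation is barely critical. I would resolve this by first proving the gain in a slightly off-critical $L^{q}_{t,x}$ norm with $q>2$, where $\ell^{q}$-summability is automatic and the subcritical Strichartz provides a genuine improvement, and then interpolating with the basic critical $L^{2}_{t,x}$-bound to return to $L^{2}_{t,x}$ with a positive (possibly smaller) $\delta$.
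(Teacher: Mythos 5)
Note first that the paper does not prove this proposition: it is quoted from \cite{herr2011global,killip2016scale}, and the paper only remarks that the known proof of the $\delta>0$ version ``used spacetime almost orthogonalities, requiring both $u$ and $v$ to be free evolutions,'' while the cube-decomposition argument you describe yields only the weaker $\delta=0$ version. Measured against those proofs, your first step contains a genuine gap. $Y^{0}$ is not ``built out of $U^{2}_{\Delta}$-atoms'': it is defined through $V^{2}$ norms of the Fourier coefficients, and by (\ref{eq:U^p V^p embed}) it sits strictly between $U^{2}_{\Delta}L^{2}$ and $U^{q}_{\Delta}L^{2}$, $q>2$. A bilinear $L^{2}_{t,x}$ bound for free evolutions lifts via the atomic decomposition only to $U^{2}_{\Delta}$ inputs; it does not lift to $V^{2}_{\Delta}$ or $Y^{0}$, since $V^{2}\not\subset U^{2}$, and for a $U^{q}$-atom with $q>2$ the $\ell^{2}$-sum of the pieces' $L^{2}$ norms is not controlled by the atom normalization. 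This is exactly why Herr--Tataru--Tzvetkov and Killip--Visan must combine two inputs --- the free-evolution estimate \emph{with} the gain ($U^{2}$ level) and a $\delta=0$ bound valid at some $U^{p}$, $p>2$ (essentially your cube-decomposition bound) --- through the $U^{2}$--$V^{2}$ interpolation lemma, which consumes part of the $\delta$; the positivity of $\delta$ is what makes that transfer close. Your claimed direct ``Cauchy--Schwarz in the atomic decomposition'' lifting is false, and you never invoke the interpolation mechanism that replaces it.

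The proposed source of the gain is also not viable as described. After the Galilean shift both factors live at frequency $\lesssim N_{2}$, and for any H\"older splitting $1/p_{1}+1/p_{2}=1/2$ the scale-invariant Strichartz exponents sum to exactly $(d-2)/2$; no choice of exponents, and no interpolation of subcritical against critical Strichartz, can produce a negative power of $N_{2}$, because scaling pins the constant at $N_{2}^{(d-2)/2}$ (which is sharp when $N_{1}\sim N_{2}$). Likewise, asserting that on time scale $1/N_{1}$ the evolutions ``behave like $\R^{d}$ solutions and inherit a fraction of (\ref{eq:R^d bilinear})'' is a heuristic with no torus-compatible argument behind it: re-summing the $\sim N_{1}$ time windows loses precisely the gain you are trying to capture. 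In the cited proofs the decay factor comes from genuinely different ingredients --- almost orthogonality in the modulation variable $\tau+|\xi|^{2}$ together with lattice-point counting for products of free evolutions in \cite{herr2011global}, and a refined decomposition combined with the scale-invariant Strichartz estimates in \cite{killip2016scale}. Your outline reproduces the easy $\delta=0$ part correctly, but the two steps that constitute the actual content of the proposition --- the quantitative gain for free solutions and the transfer from free solutions to $Y^{0}$ --- are respectively unproved and incorrectly argued.
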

A key strength of (\ref{eq:HerrTataruTzvetkov-type}) is the decay
factor $\delta>0$. The first proof of (\ref{eq:HerrTataruTzvetkov-type})
used spacetime almost orthogonalities, requiring both $u$ and $v$
to be free evolutions, and was applicable to algebraic nonlinearities.
A weaker version with $\delta=0$ allows a simpler proof by partitioning
the frequency domain $\Z^{d}$ into congruent cubes. When $a\ge2$,
such a weaker estimate is sufficient for the local well-posedness by paraproduct
decompositions (see \cite{killip2016scale,lee2019local}).

The decay factor $\delta>0$, however, becomes crucial when $a<2$.
Decomposing the nonlinear term $\left|u\right|^{a}u$ into a product
of a linear part and the rest, say, of the form
\[
\left|u\right|^{a}u=u\times A=\sum_{N,R\in2^{\N}}P_{N}u\times P_{R}A,
\]
all frequency sizes of $A$ contribute critically if one uses
a bilinear estimate without decay on a high-low product. Since $H^{s}(\T^{d})$
is $\ell^{2}$-based, we expect $A$ to lie in any $\ell^{1}$-based
critical Besov space only if $a\ge2$, otherwise causing a logarithmic
loss in the summation.

We extend (\ref{eq:HerrTataruTzvetkov-type}) to general Sobolev regularities
through a new approach. For nice functions $u$ and $A$, and dyadic numbers
$N\gtrsim R$, we show
\begin{align}
\norm{\chi_{[-1,1]}\cdot P_{N}u\cdot P_{R}A}_{(Z^{0})'} & \les\norm u_{Z^{0}}\left(\text{\ensuremath{\left(N/R\right)^{-\s_{1}}}}+R^{-2\s_{1}}\right)R^{\theta}\norm A_{B_{r_{0},r_{0}}^{\frac{1}{r_{0}}-\frac{1}{q_{0}}}L^{r_{0}}},\label{eq:key prop}
\end{align}
where $Z^{0}$ and $(Z^{0})'$ are the new function space of this
paper and its spacetime dual norm, respectively, and $\s_{1},q_{0},r_{0}$,
and $\theta$ denote the exponents defined in (\ref{eq:ss1s2<<})
and Lemma \ref{lem:strip1 claim1-1}. (See (\ref{eq:strip1 claim1-1})
for the precise form of (\ref{eq:key prop}).) While (\ref{eq:HerrTataruTzvetkov-type})
in \cite{herr2011global} was shown by estimating almost orthogonalities
on the Fourier side, we detect the decay factor $\s_{1}>0$ for (\ref{eq:key prop})
based on the Galilean structure and spacetime Besov regularities of
the Schr{\"o}dinger operator.

Relying crucially on (\ref{eq:key prop}), the proof of Theorem \ref{thm:LWP s<a}
proceeds as follows: In view of Theorem \ref{thm:a<1}, the solution
map is not Lipschitz continuous for small $a$, so we do not use a
contraction mapping argument. Instead, we separately show the existence
of solutions, a decay of high-frequency pieces, and a contraction-type
estimate in a space of lower regularity. Using Bony linearizations
and (\ref{eq:key prop}), we construct an a priori bound on a solution
for a short time and show the local existence by taking a weak limit.
For the continuity of the solution map, we further obtain extra a
priori decay on the high-frequency part of a solution. Then, the problem
reduces to showing a bootstrapping estimate of the difference between
two solutions, which follows immediately from (\ref{eq:key prop}).

For initial data with large $H^{s}$ norms, we face an obstacle in
the choice of function spaces. Earlier works on the critical well-posedness
of NLS on $\T^{d}$ (\cite{herr2011global,wang2003periodic,guo2014strichartz,strunk2014strichartz,killip2016scale})
used atomic-based norms $X^{s}$ and $Y^{s}$. The estimates used
in the proof of Theorem \ref{thm:LWP s<a} could also be shown in
terms of $Y^{s}$. However, if one uses an atomic-based norm such
as $Y^{s}$, the norm of a free evolution does not shrink sufficiently
on any short time interval, making the bootstrapping inequalities not
obvious for large initial data.

For the regime $a>2$, earlier authors resolved the issue by estimating
the high-frequency portion of $u$ separately. More precisely, they
showed bootstrap bounds on $\norm{P_{\ge N}u}_{Y^{1}}$ for $N\gg1$
by paraproduct decompositions on the nonlinear term $\calN(u)$. Such
decompositions require a certain regularity of $\mathcal{N}(u)$ (or
equivalently, a high power $a$).

We construct a new function space $Z^{s}$ on $\R\times\T^{d}$ adapted
to conventional linear estimates, the bilinear Strichartz estimate
(\ref{eq:key prop}), and the desired norm-shrinking property. More
precisely, the $Z^{s}$ space has the following favorable properties:
\begin{enumerate}
\item Boundedness of the retarded dual Schr{\"o}dinger propagator from $(Z^{0})'$
to $Z^{0}$, (\ref{eq:Z^s-Y^s})
\item Strichartz embeddings into Sobolev spaces, (\ref{eq:Sobolev embed Z^s}) and
(\ref{eq:time Besov embed Z^s})
\item Shrinking of the norm to zero as we give shorter time cutoffs; for
$u\in Z^{s}$, $\norm{\chi_{[0,T]}u}_{Z^{s}}\rightarrow0$ as $T\rightarrow0$,
(\ref{eq:shrink Z^s})
\end{enumerate}
Based on this new $Z^{s}$ space, the proof of Theorem \ref{thm:LWP s<a}
works consistently for arbitrarily large initial data.

The proof of Theorem \ref{thm:Lipschitz} is similar to that of Theorem \ref{thm:LWP s<a}
at the level of functional estimates. For the Lipschitz regularity
of the solution map, we use a contraction mapping argument in Theorem
\ref{thm:Lipschitz}. Although a conventional contraction argument
is used, the main difficulty of Theorem \ref{thm:LWP s<a} that requires
(\ref{eq:key prop}) and $Z^{s}$ spaces is still present for the regime
$1<a<2$, and the machinery built for Theorem \ref{thm:LWP s<a} is
thoroughly used.

The negative counterpart of Theorem \ref{thm:Lipschitz} is addressed
in Theorem \ref{thm:a<1} by constructing an explicit counterexample.
In particular, Theorem \ref{thm:a<1} shows that the main assumption
$a>1$ of Theorem \ref{thm:Lipschitz}, which is crucially required
for a difference form for a contraction inequality, is indeed almost
sharp. A key observation for the construction is an oscillating behavior
of the frequency-localized Schr{\"o}dinger kernel $e^{it\De}\delta_{N}$
on $\T$, (\ref{eq:L^inftyL^2}). We show that the $L^{2}$ and $L^{\infty}$
norms of $e^{it\De}\delta_{N}$ are comparable on a large set of times
$t$, which implies that $e^{it\De}\delta_{N}$ mostly tends to oscillate
rather than concentrate.

The rest of the paper is organized as follows: In Section \ref{sec:Preliminaries},
we provide preliminary materials, such as notations, Strichartz estimates,
and atomic spaces. In Section \ref{sec:-Zspaces}, we define the function
space $Z^{s}$ and show related bilinear estimates. In Section \ref{sec:Proof of LWP},
we provide the proof of Theorem \ref{thm:LWP s<a}. In Section \ref{sec:Lipschitz},
we show Theorem \ref{thm:Lipschitz}. In Section \ref{sec:Proof-of-Theorem a<1},
we prove Theorem \ref{thm:a<1}.

\subsection*{Acknowledgements}

The authors are partially supported by National Research Foundation
of Korea, NRF-2019R1A5A1028324 and NRF-2022R1A2C1091499.

\section{\label{sec:Preliminaries}Preliminaries}

\subsection{Notations}

We denote $A\les B$ if $A\le CB$ for some constant $C$.

Given a set $E\subset\R^{d}$ or $\T^{d}$, we denote by $\chi_{E}$
the sharp cutoff function of $E$.

\subsubsection*{Fourier truncations}

We handle functions of spacetime variables $f(t,x)$ and $f(x)$ for
$x\in\T^{d}$ and $t\in\R$. We denote the Fourier transform (or the Fourier
series) of $f$ with respect to the associated variables $x$, $t$,
and $(t,x)$ by $\F_{x}f,\F_{t}f$, and $\F_{t,x}f$, respectively.
For simplicity, we also denote the spatial Fourier transform by $\widehat{f}$
and the spacetime Fourier transform by $\widetilde{f}$.

We use frequency truncation operators. For spatial frequencies, we
use sharp cutoffs. For time frequencies, we use smooth cutoffs. We
denote by $P_{C}$ the spatial frequency cutoff projection for a given
set $C\subset\Z^{d}$; that is, $P_{C}$ is the Fourier multiplier
operator associated with the characteristic function $\chi_{C}$.
For most cases, we use the Littlewood-Paley projection. We denote
the set of natural numbers by $\N=\left\{ 0\right\} \cup\Z_{+}$ and
dyadic numbers by $2^{\N}$. For a dyadic number $N\in2^{\N}$, we
denote the Littlewood-Paley operators by
\[
P_{\le N}:=P_{[-N,N]^{d}}\text{ and }P_{N}:=P_{\le N}-P_{\le N/2},
\]
where we set $P_{\le1/2}:=0$. In particular, the cutoff $P_{1}=P_{\le1}$
contains the zero frequency mode. For simplicity, we denote $u_{N}=P_{N}u$
and $u_{\le N}=P_{\le N}u$ for $u:\T^{d}\rightarrow\C$.

For time Fourier projections, we use the superscript $t$; $P_{\le N}^{t}$
is a smooth time Littlewood-Paley operator. Let $\varphi:\R\rightarrow[0,\infty)$
be a smooth even bump function such that $\varphi|_{[-1,1]}\equiv1$
and $\text{supp}(\varphi)\subset[-\frac{11}{10},\frac{11}{10}]$.
For a dyadic number $N\in2^{\N}$, we denote by $\varphi_{N}:\R\rightarrow[0,\infty)$ the function $\varphi_{N}(t)=\varphi(t/N)$. We denote by $P_{\le N}^{t}$ the
Fourier multiplier operator induced by $\varphi_{N}$.

In Section \ref{sec:Proof-of-Theorem a<1}, we will use a smooth cutoff
for the spatial frequency truncation operator on $\T$. For a dyadic
number $N\in2^{\N}$, we denote by $\P_{\le N}$ the smooth Littlewood-Paley
operator on $\T$, i.e., $\text{\ensuremath{\P}}_{\le N}$ denotes
the Fourier multiplier operator induced by $\varphi_{N}$. We also
denote by $\delta_{N}=\P_{N}\delta$ the function on $\T$ defined
as $\F_{x}^{-1}\varphi_{N}$.

\subsubsection*{Paraproducts}

We use paraproduct decompositions on spatial frequencies. Given functions
$u$ and $v$ defined on either $\T^{d}$ or $\R\times\T^{d}$, we
denote their paraproducts by 
\[
\pi_{>}(u,v):=\sum_{M\ge32N}u_{M}v_{N}\text{ and }\pi_{<}(u,v):=\sum_{N\ge32M}u_{M}v_{N},
\]
where the summations are made over dyadic numbers. Similarly, we also
use the notations $\pi_{\ge}(u,v):=\sum_{M\ge\frac{1}{16}N}u_{M}v_{N}$
and $\pi_{\le}(u,v):=\sum_{N\ge\frac{1}{16}M}u_{M}v_{N}$.

\subsubsection*{Interpolations}

We use various function spaces for functions defined on $\T^{d}$
or $\R\times\T^{d}$, such as $L^{p}$-based spaces, atomic spaces,
and the spaces $X^{s}$ and $Y^{s}$ generated from the atomic spaces.
Each space is a Banach space and we denote the norm of a Banach space
$B$ by $\norm f_{B}$.

$B'$ denotes the dual of $B$ with respect to the inner product $\left\langle u,v\right\rangle :=\int\overline{u}v$
over the domain $\T^{d}$ or $\R\times\T^{d}$.

A finite collection of Banach spaces $(B_{1},\ldots,B_{n})$ is said
to be an interpolation tuple if $B_{1},\ldots,B_{n}$ can be embedded
simultaneously in a Hausdorff topological vector space. For an interpolation
tuple $(B_{1},\ldots,B_{n})$, we define the intersection and the
sum of Banach spaces $\bigcap_{j=1}^{n}B_{j}$ and $\sum_{j=1}^{n}B_{j}$
by the norms
\[
\|u\|_{\bigcap_{j=1}^{n}B_{j}}:=\max_{j}\|u\|_{B_{j}}
\]
and
\[
\norm u_{\sum_{j=1}^{n}B_{j}}:=\inf_{\substack{u_{1}+\ldots+u_{n}=u\\
u_{j}\in B_{j}
}
}\sum_{j}\norm{u_{j}}_{B_{j}},
\]
respectively.

We use conventional notations for interpolation spaces. Let $\theta\in(0,1)$
and let $(B_{0},B_{1})$ be an interpolation couple. The complex interpolation
space between $B_{0}$ and $B_{1}$ of exponent $\theta$ is denoted
by $[B_{0},B_{1}]_{\theta}$.

Given $q\in[1,\infty]$, the real interpolation space between $B_{0}$
and $B_{1}$ of exponent $\theta$ and parameter $q$ is
denoted by $(B_{0},B_{1})_{\theta,q}$. (For more details, see, for
example, \cite{bergh2012interpolation}.)

\subsection{Function spaces}

Here, we collect well-known facts regarding function spaces.

Given $q\in[1,\infty]$ and a Banach space $E$ defined on $\T^{d}$,
the mixed norm $L^{q}E=L_{t}^{q}E$ is defined as 
\[
\norm u_{L^{q}E}:=\left(\int_{\R}\norm{u(t)}_{E}^{q}dt\right)^{1/q}.
\]
We omit the subscript $t$ for simplicity of notation.

More generally, for $m\in\N$, we denote by $W^{m,q}E$ the norm
\[
\norm u_{W^{m,q}E}:=\sum_{j=0}^{m}\norm{\d_{t}^{j}u}_{L^{q}E}.
\]

Given $p\in(1,\infty)$ and $s\in\R$, we denote by $H^{s,p}(\T^{d})$
the (fractional regularity) Sobolev space given by the norm $\norm f_{H^{s,p}}=\norm{\F_{x}^{-1}\left(\widehat{f}(\xi)\cdot\jp{\xi}^{s}\right)}_{L^{p}(\T^{d})}$,
where $\jp{\xi}$ denotes $\sqrt{1+\left|\xi\right|^{2}}$.

Given $s\in\R$, $p,q\in[1,\infty]$, and a Banach space $E$ defined
on $\T^{d}$, we define the (vector-valued) Besov space $B_{p,q}^{s}E=(B_{p,q}^{s})_{t}E_{x}$
as the dyadic summation of time frequency cutoffs
\[
\|u\|_{B_{p,q}^{s}E}:=\left(\sum_{N\in2^{\N}}N^{qs}\|P_{N}^{t}u\|_{L^{p}E}^{q}\right)^{1/q}+\|P_{\le1}^{t}u\|_{L^{p}E}.
\]
More generally, for a spacetime Banach space $F$ of functions defined
on $\R\times\T^{d}$, we denote by $\ell_{s;\tau}^{q}F$ the norm
\[
\|u\|_{\ell_{s;\tau}^{q}F}:=\left(\sum_{N\in2^{\N}}N^{qs}\|P_{N}^{t}u\|_{F}^{q}\right)^{1/q}+\norm{P_{\le1}^{t}u}_{F}.
\]
In particular, the time Besov space $B_{p,q}^{s}E$ is norm-equivalent
to $\ell_{s;\tau}^{q}L^{p}E$.

For spatial frequencies, we use the notation $\ell_{s}^{q}$ for Banach
spaces on both $\T^{d}$ and $\R\times\T^{d}$. For Banach spaces
$E$ and $F$ defined on $\T^{d}$ and $\R\times\T^{d}$, respectively,
we define
\[
\|u\|_{\ell_{s}^{q}E}:=\left(\sum_{N\in2^{\N}}N^{qs}\|u_{N}\|_{E}^{q}\right)^{1/q}+\|u_{\le1}\|_{E}
\]
and
\[
\|u\|_{\ell_{s}^{q}F}:=\left(\sum_{N\in2^{\N}}N^{qs}\|u_{N}\|_{F}^{q}\right)^{1/q}+\norm{u_{\le1}}_{F}.
\]
Unlike the Besov space notation, both $\ell_{s}^{q}$ and $\ell_{s;\tau}^{q}$
can be applied to a spacetime function space, so we use a subscript
$\tau$ to distinguish them. When $s=0$, we omit the subscripts from
$\ell_{0}^{q}$ and $\ell_{0;\tau}^{q}$, denoting them by $\ell^{q}$
and $\ell_{\tau}^{q}$, respectively.
\begin{prop}
\cite{amann1997operator}We have the following embedding relations:
\begin{enumerate}
\item For $s\in(0,1)$ and $p\in(1,\infty)$, we have
\begin{equation}
\norm u_{B_{p,p}^{s}(\T^{d})}^{p}\sim\norm u_{L^{p}}^{p}+\int_{\T^{d}\times\T^{d}}\left(\frac{\left|u(x)-u(y)\right|}{|x-y|^{s}}\right)^{p}\frac{d(x,y)}{|x-y|^{d}}.\label{eq:slobo x}
\end{equation}
Similarly, for a Banach space $E$ on $\T^{d}$, we have
\begin{equation}
\norm u_{B_{p,p}^{s}E}^{p}\sim\norm u_{L^{p}E}^{p}+\int_{\R\times\R}\left(\frac{\norm{u(t_{1})-u(t_{2})}_{E}}{|t_{1}-t_{2}|^{s}}\right)^{p}\frac{d(t_{1},t_{2})}{|t_{1}-t_{2}|}.\label{eq:slobo tx}
\end{equation}
\item For $s\in\R$ and $p\in[2,\infty)$, we have
\begin{equation}
\norm u_{B_{p,p}^{s}(\T^{d})}\les\norm u_{H^{s,p}(\T^{d})}\les\norm u_{B_{p,2}^{s}(\T^{d})}.\label{eq:BcWcB}
\end{equation}
When $p\in(1,2]$, the opposite embedding relation holds.
\end{enumerate}
\end{prop}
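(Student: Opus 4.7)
Both statements are classical Littlewood-Paley facts; I sketch the plan below.

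For (1), the first step is to rewrite the double integral via the finite-difference operator $\Delta_{h}u(x):=u(x+h)-u(x)$. By the substitution $h=y-x$ we get
\[
\int_{\T^{d}\times\T^{d}}\left(\frac{|u(x)-u(y)|}{|x-y|^{s}}\right)^{p}\frac{d(x,y)}{|x-y|^{d}}\sim\int_{\T^{d}}\frac{\norm{\Delta_{h}u}_{L^{p}}^{p}}{|h|^{sp+d}}dh,
\]
and the claim reduces to the equivalence
\[
\int_{\T^{d}}|h|^{-sp-d}\norm{\Delta_{h}u}_{L^{p}}^{p}dh\sim\sum_{N\in2^{\N}}N^{sp}\norm{P_{N}u}_{L^{p}}^{p}.
\]
For the $\les$ direction, I would split the $h$-integral dyadically at $|h|\sim 2^{-k}$, decompose $u=\sum_{N}P_{N}u$, and apply the Bernstein-type bound $\norm{\Delta_{h}P_{N}u}_{L^{p}}\les\min(N|h|,1)\norm{P_{N}u}_{L^{p}}$, then sum via Minkowski's inequality using the convergence of $\sum_{k}2^{-ks}$ and $\sum_{k}2^{-k(1-s)}$ (which forces $0<s<1$). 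For the $\gtrsim$ direction, I would represent each $P_{N}u$ as a weighted average of $\Delta_{h}u$ over $|h|\les N^{-1}$, exploiting that any smooth symbol $m_{N}$ supported on $|\xi|\sim N$ admits a representation of the form $m_{N}(\xi)=\int K_{N}(h)(1-e^{ih\cdot\xi})dh$ for some integrable kernel $K_{N}$. The $E$-valued analog is identical: the time Littlewood-Paley projection is a scalar convolution and commutes with $E$-valuedness, so the same estimates apply verbatim once $|\cdot|$ is replaced by $\norm{\cdot}_{E}$.

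For (2), the starting point is the Littlewood-Paley square-function characterization
\[
\norm u_{H^{s,p}}\sim\Bigl\Vert\Bigl(\sum_{N\in2^{\N}}N^{2s}|P_{N}u|^{2}\Bigr)^{1/2}\Bigr\Vert_{L^{p}}\qquad(1<p<\infty),
\]
which follows from the Mihlin-H\"ormander multiplier theorem combined with a Khintchine randomization. Given this, the two embeddings are consequences of the inclusion $\ell^{2}\hookrightarrow\ell^{p}$ (valid for $p\ge 2$) applied pointwise in $x$. Concretely, $\norm u_{B_{p,p}^{s}}\les\norm u_{H^{s,p}}$ follows from the pointwise bound $(\sum_{N}N^{sp}|P_{N}u|^{p})^{1/p}\le(\sum_{N}N^{2s}|P_{N}u|^{2})^{1/2}$ followed by integration in $x$, while $\norm u_{H^{s,p}}\les\norm u_{B_{p,2}^{s}}$ is Minkowski's inequality with exponent $p/2\ge 1$ exchanging the $L^{p}$ norm and the $\ell^{2}$ sum. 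Both inequalities reverse when $p\in(1,2]$ by the dual inclusion $\ell^{p}\hookrightarrow\ell^{2}$ and the reverse Minkowski.

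The main obstacle is the $\gtrsim$ direction of (1): producing a concrete difference representation of $P_{N}u$ requires some Fourier-analytic care, and this is precisely where the restriction $s<1$ enters, since larger $s$ would necessitate iterated differences and a more involved reconstruction. Everything else reduces to routine dyadic summation and Minkowski's inequality.
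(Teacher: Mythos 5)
Your sketch is correct, but it takes a different route from the paper only in the sense that the paper does not argue these facts directly at all: its ``proof'' consists of citing \cite[(5.8)]{amann1997operator} for the Slobodeckij characterizations and \cite[Theorem 6.4.4]{bergh2012interpolation} for the embedding chain, together with the remark that the $\R^d$ statements transfer to $\T^d$ via periodic Littlewood--Paley theory. What you propose is essentially the standard textbook proof of those cited results: for (1), the two-sided comparison between the dyadic sum and the difference integral via $\norm{\Delta_h P_N u}_{L^p}\les\min(N|h|,1)\norm{P_N u}_{L^p}$ in one direction and the mean-zero kernel representation $P_N u(x)=\int \check\varphi_N(h)\,[u(x-h)-u(x)]\,dh$ in the other, with the dyadic summation forcing $0<s<1$; for (2), the square-function characterization of $H^{s,p}$ plus pointwise $\ell^2\hook\ell^p$ and Minkowski with exponent $p/2$, reversed for $p\le2$. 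Two small points are worth making explicit if you flesh this out. First, the paper's spatial projections are sharp cube cutoffs, so you should either prove the equivalence for smooth projections and transfer using the uniform $L^p(\T^d)$-boundedness ($1<p<\infty$) of cube multipliers, or note that the sharp-cutoff square function needed in (2) requires a vector-valued multiplier argument; working with smooth projections throughout and comparing at the end is the cleaner option, and it is also what your integrable-kernel representation implicitly assumes (the Dirichlet kernel of a sharp cutoff is not in $L^1$). Second, your observation that the $E$-valued case of (1) goes through verbatim is exactly right and is the reason the paper can state (\ref{eq:slobo tx}) for an arbitrary Banach space $E$: only scalar convolution-kernel bounds and Minkowski are used, whereas a multiplier- or square-function-based argument such as the one in (2) would require geometric assumptions on $E$, which is why (2) is stated only in the scalar-valued setting. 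Also remember the trivial low-frequency block $P_{\le1}u$, which is what produces the $\norm u_{L^p}^p$ term on the right-hand side.
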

\begin{proof}
(\ref{eq:slobo tx}) is introduced, for example, in \cite[(5.8)]{amann1997operator}.
(\ref{eq:slobo x}) and (\ref{eq:BcWcB}) are known properties for
$\R^{d}$; (\ref{eq:slobo x}) is a special case of \cite[(5.8)]{amann1997operator},
and for (\ref{eq:BcWcB}), see, for example, \cite[Theorem 6.4.4]{bergh2012interpolation}.
These results can be shown similarly on $\T^{d}$ via the Littlewood-Paley
theory on $\T^{d}$.
\end{proof}
The proposition below provides facts for function-valued Besov spaces
regarding embeddings, Bernstein inequalities, and interpolations.
\begin{prop}
\label{prop:Besov props} \cite{amann1997operator,amann2000compact,nakamura2016modified} In
this proposition, we denote by $s_{\theta}$ the number $s_{\theta}=(1-\theta)s_{0}+\theta s_{1}$,
where the numbers $s_{0}$ and $s_{1}$ are given in each corresponding statement
and $\theta\in(0,1)$ is an arbitrary number. Similarly, we denote by
$p_{\theta}$ and $q_{\theta}$ the numbers such that $\frac{1}{p_{\theta}}=\frac{1-\theta}{p_{0}}+\frac{\theta}{p_{1}}$
and $\frac{1}{q_{\theta}}=\frac{1-\theta}{q_{0}}+\frac{\theta}{q_{1}}$,
respectively.

Let $E$ and $E_{j},j=0,1$ be Banach spaces on $\T^{d}$. For a spacetime
function $f:\R\times\T^{d}\rightarrow\C$, we have the following embedding
relations:
\begin{itemize}
\item For $1\le p<\infty$ and $m\in\N$, we have
\begin{equation}
\norm f_{B_{p,\infty}^{m}E}\les\norm f_{W^{m,p}E}\les\norm f_{B_{p,1}^{m}E}.\label{eq:B c W c B}
\end{equation}
\item For $1\le\tilde p<p<\infty$ and $1\le q\le\infty$, we have
\begin{equation}
\norm f_{L^{p,q}E}\les\norm f_{B_{\tilde p,q}^{1/\tilde p-1/p}E}.\label{eq:Besov-Lorentz embed}
\end{equation}
\item For $1\le\tilde p<p<\infty$ and $M\in2^{\N}$, we have
\begin{equation}
\|P_{M}^{t}f\|_{L^{p}E}\les M^{1/\tilde p-1/p}\|P_{M}^{t}f\|_{L^{\tilde p}E}.\label{eq:Besov Bernstein}
\end{equation}
\item For $1\le\tilde p<p<\infty$, $1\le q\le\infty$, and $s\in\R$, we
have
\begin{equation}
\norm f_{B_{p,q}^{s}E}\les\norm f_{B_{\tilde p,q}^{s+1/\tilde p-1/p}E}.\label{eq:Besov embed}
\end{equation}
\item For $p,q\in(1,\infty)$ and $s\in\R$, assuming further that $E'$
is separable, we have
\begin{equation}
\norm f_{(B_{p,q}^{s}E)'}\sim\norm f_{B_{p',q'}^{-s}E'}.\label{eq:Besov dual}
\end{equation}
\item For $p\in[1,\infty)$, $q_{0},q_{1},\eta\in[1,\infty]$, and $s_{0},s_{1}\in\R$
such that $s_{0}\ne s_{1}$, we have
\begin{equation}
\norm f_{(B_{p,q_{0}}^{s_{0}}E,B_{p,q_{1}}^{s_{1}}E)_{\theta,\eta}}\sim\norm f_{B_{p,\eta}^{s_{\theta}}E}.\label{eq:Besov real interp E s0s1}
\end{equation}
\item For $p_{0},p_{1}\in[1,\infty)$, $q_{0},q_{1}\in[1,\infty]$, $s_{0},s_{1}\in\R$,
and an interpolation couple $(E_{0},E_{1})$, we have
\begin{equation}
\norm f_{[B_{p_{0},q_{0}}^{s_{0}}E_{0},B_{p_{1},q_{1}}^{s_{1}}E_{1}]_{\theta}}\sim\norm f_{B_{p_{\theta},q_{\theta}}^{s_{\theta}}[E_{0},E_{1}]_{\theta}}.\label{eq:Besov complex interp}
\end{equation}
\end{itemize}
\end{prop}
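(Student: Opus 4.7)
The plan is to reduce each of the seven claims to standard facts about scalar Besov spaces and $\ell^{q}$-valued Bochner spaces by exploiting the retract structure of $B^{s}_{p,q}E$. Writing $\Pi f = (P^{t}_{N} f)_{N\in 2^{\N}}$ gives a bounded embedding $B^{s}_{p,q}E \hook \ell^{q}_{s}(L^{p} E)$, while reconstruction against a slightly fattened cutoff (a standard widened Littlewood--Paley multiplier that reproduces $P^{t}_{N}$) furnishes a bounded left inverse. Interpolation, duality, and sum-type embeddings for Besov spaces then follow by transferring the analogous statements across this retract--coretract pair.

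The analytic engine is the time-Bernstein inequality (\ref{eq:Besov Bernstein}). Since $P^{t}_{M}$ is convolution against the scalar kernel $M\check{\varphi}(M\,\cdot)$ (which commutes with taking the $E$ norm), Young's inequality in the time variable yields $\|P^{t}_{M} f\|_{L^{p} E} \les \|M\check{\varphi}(M\,\cdot)\|_{L^{r}}\|P^{t}_{M} f\|_{L^{\tilde p}E}$ with $1+1/p = 1/r + 1/\tilde p$, and the kernel norm scales as $M^{1/\tilde p-1/p}$. Claim (\ref{eq:Besov embed}) then follows by dyadic summation after distributing the regularity weight. For (\ref{eq:B c W c B}) I would use that $\|\partial_{t}^{j} P^{t}_{M} f\|_{L^{p}E} \sim M^{j} \|P^{t}_{M} f\|_{L^{p}E}$, where one direction comes from the symbol and the other from a multiplier that inverts $\partial_{t}^{j}$ on the localized block; $\ell^{\infty}$- and $\ell^{1}$-summation then relate $W^{m,p}E$ to $B^{m}_{p,\infty}E$ and $B^{m}_{p,1}E$ as required. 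For the Lorentz embedding (\ref{eq:Besov-Lorentz embed}), I would write $f = \sum_{N} P^{t}_{N} f$, apply (\ref{eq:Besov Bernstein}) with a free intermediate exponent, and recognize the Lorentz quasinorm as a real-interpolation functional on adjacent dyadic blocks; this is the standard Jawerth--Franke argument and transfers verbatim to vector values.

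The duality (\ref{eq:Besov dual}) and the interpolation identities (\ref{eq:Besov real interp E s0s1}) and (\ref{eq:Besov complex interp}) follow from the retract structure together with classical results for $\ell^{q}_{s}(L^{p} E)$. Specifically, $(B^{s}_{p,q}E)'$ is complemented in $\ell^{q'}_{-s}((L^{p} E)')$; under the separability hypothesis on $E'$ one has $(L^{p} E)' \simeq L^{p'}E'$, and the retract identifies this dual with $B^{-s}_{p',q'}E'$. Real and complex interpolation for $\ell^{q}_{s}$-valued Bochner spaces are classical and transfer through the retract, yielding (\ref{eq:Besov real interp E s0s1}) and (\ref{eq:Besov complex interp}). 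The main obstacle I anticipate is not conceptual but rather careful vector-valued bookkeeping: the duality $(L^{p} E)' \simeq L^{p'}E'$ genuinely requires separability of $E'$ (hence the hypothesis in (\ref{eq:Besov dual})), and the identity $[L^{p_{0}}E_{0}, L^{p_{1}}E_{1}]_{\theta} = L^{p_{\theta}}[E_{0},E_{1}]_{\theta}$ for a general interpolation couple needs a Calder\'{o}n-product argument. Both are handled in Amann's treatment, so the cleanest write-up invokes those references directly rather than reproving them.
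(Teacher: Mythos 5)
Your proposal is correct, and for the substantive items -- the duality (\ref{eq:Besov dual}) and the two interpolation identities (\ref{eq:Besov real interp E s0s1}), (\ref{eq:Besov complex interp}) -- it follows the same route as the paper: both rest on the fact that $B_{p,q}^{s}E$ is a retract of $\ell_{q}^{s}(L^{p}E)$ with a common (co)retraction, after which one transfers the classical duality and real/complex interpolation results for weighted $\ell^{q}$-valued Bochner spaces; the paper simply cites \cite[Lemma 5.1, (5.22)]{amann1997operator} for exactly this, and your remarks about needing the Radon--Nikodym-type hypothesis (separability of $E'$) for $(L^{p}E)'\simeq L^{p'}E'$ and a Calder\'on-product argument for $[L^{p_{0}}E_{0},L^{p_{1}}E_{1}]_{\theta}=L^{p_{\theta}}[E_{0},E_{1}]_{\theta}$ match the hypotheses in the statement. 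Where you genuinely diverge is in the elementary embeddings and their logical order: the paper cites \cite[(3.6)]{amann2000compact} for (\ref{eq:B c W c B}), cites \cite[Lemma 2.4(1)]{nakamura2016modified} for the Besov--Lorentz embedding (\ref{eq:Besov-Lorentz embed}), and then obtains the Bernstein inequality (\ref{eq:Besov Bernstein}) and the embedding (\ref{eq:Besov embed}) as consequences of (\ref{eq:Besov-Lorentz embed}); you instead prove (\ref{eq:Besov Bernstein}) directly by Young's inequality against the (fattened) Littlewood--Paley kernel, sum dyadically to get (\ref{eq:Besov embed}), prove (\ref{eq:B c W c B}) by the multiplier equivalence $\|\d_{t}^{j}P_{M}^{t}f\|_{L^{p}E}\sim M^{j}\|P_{M}^{t}f\|_{L^{p}E}$, and recover (\ref{eq:Besov-Lorentz embed}) by real interpolation of two Bernstein-type embeddings (legitimately using (\ref{eq:Besov real interp E s0s1}), which is established independently via the retract, so there is no circularity). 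Your version is more self-contained for these elementary facts, at the cost of a little bookkeeping the paper avoids by citation (e.g.\ one should reconstruct with $P_{\le2M}^{t}$ rather than $P_{\le M}^{t}$, since $\varphi_{M}$ is not identically $1$ on the support of the symbol of $P_{M}^{t}$, and the zero-frequency block $P_{\le1}^{t}$ must be treated separately throughout); either write-up is acceptable.
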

\begin{proof}
(\ref{eq:B c W c B}) is given in \cite[(3.6)]{amann2000compact}.
(\ref{eq:Besov-Lorentz embed}) is given in \cite[Lemma 2.4(1)]{nakamura2016modified}.
(\ref{eq:Besov Bernstein}) and (\ref{eq:Besov embed}) are direct
consequences of (\ref{eq:Besov-Lorentz embed}). (\ref{eq:Besov dual})
is given in \cite[(5.22)]{amann1997operator}. In \cite[Lemma 5.1]{amann1997operator},
it is shown that $B_{p,q}^{s}E$ are retracts of $\ell_{q}^{s}(L^{p}E)$
with a common retraction, which implies (\ref{eq:Besov complex interp})
and (\ref{eq:Besov real interp E s0s1}) (see, for example, \cite[Section 6.4, 5.6]{bergh2012interpolation}). 
\end{proof}
\begin{prop}
Let $E_j,j=1,2,3$ be Banach spaces on $\T^{d}$ satisfying
the inequality
\begin{equation}
\left|\int_{\T^{d}}fghdx\right|\les\norm f_{E_{1}}\norm g_{E_{2}}\norm h_{E_{3}}.\label{eq:fgh}
\end{equation}
Let $s_{j}\in\R,p_{j}\in(1,\infty),q_{j}\in[1,\infty],j=1,2,3$ be
exponents satisfying the inequalities 
\[
s_{1}+s_{2}+s_{3}>0,\qquad s_{2}+s_{3}>0,\qquad\frac{1}{p_{1}}>s_{1},
\]
and the scaling conditions
\[
\frac{1}{p_{1}}+\frac{1}{p_{2}}+\frac{1}{p_{3}}=s_{1}+s_{2}+s_{3}+1\text{ and }\frac{1}{q_{1}}+\frac{1}{q_{2}}+\frac{1}{q_{3}}=1.
\]
We have the estimate
\begin{equation}
\sum_{L\les M\les N}\left|\int_{\R\times\T^{d}}P_{L}^{t}f\cdot P_{M}^{t}g\cdot P_{N}^{t}hdxdt\right|\les\norm f_{B_{p_{1,}q_{1}}^{s_{1}}E_{1}}\norm g_{B_{p_{2},q_{2}}^{s_{2}}E_{2}}\norm h_{B_{p_{3},q_{3}}^{s_{3}}E_{3}}.\label{eq:Besov trilinear}
\end{equation}
\end{prop}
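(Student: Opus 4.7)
The plan is to combine (i) a time-Fourier-support observation that collapses the triple sum to the near-diagonal regime $M\sim N$, (ii) pointwise-in-time spatial Hölder via (\ref{eq:fgh}) together with a time-Hölder/time-Bernstein shuffle that converts all Lebesgue exponents back to the prescribed $p_{j}$, and (iii) a Schur-type dyadic summation enabled by the scaling identity $\sum_{j}\alpha_{j}=\sum_{j}s_{j}$.

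First I would observe that the time-Fourier support of $P_{N}^{t}h$ for $N\ge2$ lies in an annulus $\left|\tau\right|\sim N$, whereas the product $P_{L}^{t}f\cdot P_{M}^{t}g$ has time-Fourier support inside $\left|\tau\right|\les L+M\les M$; hence the spacetime integral vanishes unless $N\les M$, and combined with $L\le M\le N$ this pins the summation to $L\le M\sim N$ (the finitely many low-frequency cases $N=O(1)$ and the contribution of $P_{\le1}^{t}$ are disposed of directly). Next, applying (\ref{eq:fgh}) pointwise in $t$ together with time-Hölder at auxiliary exponents $\tilde{p}_{j}\ge p_{j}$ satisfying $\frac{1}{\tilde{p}_{1}}+\frac{1}{\tilde{p}_{2}}+\frac{1}{\tilde{p}_{3}}=1$, and then the time-Bernstein inequality (\ref{eq:Besov Bernstein}) to pass from $L^{\tilde{p}_{j}}E_{j}$ back to $L^{p_{j}}E_{j}$, produces a factor $L^{\alpha_{1}}M^{\alpha_{2}}N^{\alpha_{3}}$ with $\alpha_{j}=\frac{1}{p_{j}}-\frac{1}{\tilde{p}_{j}}\in[0,\frac{1}{p_{j}}]$ and $\sum_{j}\alpha_{j}=\sum_{j}\frac{1}{p_{j}}-1=\sum_{j}s_{j}$.

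Writing $a_{L}=L^{s_{1}}\|P_{L}^{t}f\|_{L^{p_{1}}E_{1}}$, $b_{M}=M^{s_{2}}\|P_{M}^{t}g\|_{L^{p_{2}}E_{2}}$, and $c_{N}=N^{s_{3}}\|P_{N}^{t}h\|_{L^{p_{3}}E_{3}}$, and exploiting $M\sim N$ together with $\sum_{j}\alpha_{j}=\sum_{j}s_{j}$, the remaining estimate collapses to
$$
\sum_{L\le M\sim N}(L/M)^{\alpha_{1}-s_{1}}\,a_{L}b_{M}c_{N}\les\|a\|_{\ell^{q_{1}}}\|b\|_{\ell^{q_{2}}}\|c\|_{\ell^{q_{3}}}.
$$
The hypotheses $s_{1}<\frac{1}{p_{1}}$, $s_{2}+s_{3}>0$, and $s_{1}+s_{2}+s_{3}>0$ jointly allow me to select $\alpha_{1}\in(\max\{0,s_{1}\},\min\{\frac{1}{p_{1}},s_{1}+s_{2}+s_{3}\})$, which makes $\beta:=\alpha_{1}-s_{1}>0$ while keeping every $\alpha_{j}\in[0,\frac{1}{p_{j}}]$; I would then conclude via Schur's test on the dyadic kernel $(L/M)^{\beta}\chi_{L\le M}$ (whose row- and column-sums are both bounded because $\beta>0$) followed by Hölder with $\sum_{j}1/q_{j}=1$ in the remaining indices. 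The delicate part of the argument is precisely this exponent bookkeeping: securing $\beta>0$ simultaneously with the Bernstein admissibility $\alpha_{j}\ge0$ is exactly where each of the three strict hypotheses plays a distinct role, whereas the Fourier-support reduction and the summation itself are soft.
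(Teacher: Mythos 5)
Your proof is correct, and it follows the same underlying mechanism as the paper but with a more elementary and more explicit packaging. The paper first perturbs the regularities (using $s_{2}+s_{3}>0$ and the fact that the two high frequencies $M\sim N$ are comparable) to reduce to $0<s_{j}<1/p_{j}$ for all $j$, and then applies the Besov--Lorentz embedding (\ref{eq:Besov-Lorentz embed}), $B_{p_{j},q_{j}}^{s_{j}}E_{j}\hookrightarrow L^{\tilde p_{j},q_{j}}E_{j}$ with $1/\tilde p_{j}=1/p_{j}-s_{j}$, together with (\ref{eq:fgh}) and H\"older in Lorentz spaces, leaving the dyadic summation implicit; the slack created by the perturbation plays exactly the role of your $\beta>0$. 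You instead stay in Lebesgue spaces: blockwise time-H\"older at auxiliary exponents combined with the Bernstein inequality (\ref{eq:Besov Bernstein}) gives the factor $L^{\alpha_{1}}M^{\alpha_{2}}N^{\alpha_{3}}$, and choosing $\alpha_{1}\in(\max\{0,s_{1}\},\min\{1/p_{1},s_{1}+s_{2}+s_{3}\})$ --- which is nonempty precisely because of the three strict hypotheses, mirroring the paper's perturbation --- produces the off-diagonal decay $(L/M)^{\beta}$ that makes the Schur/Young summation and the final $\ell^{q_{j}}$-H\"older go through. Your bookkeeping checks out: the reduction to $M\sim N$ via time-Fourier supports is the same observation the paper invokes, $\alpha_{2}+\alpha_{3}=\sum_{j}s_{j}-\alpha_{1}\in(0,1/p_{2}+1/p_{3})$ so both can be taken in $[0,1/p_{j})$ (keeping $\tilde p_{j}<\infty$ so that (\ref{eq:Besov Bernstein}) applies verbatim), and the kernel $(L/M)^{\beta}\chi_{L\lesssim M}$ has bounded dyadic row and column sums. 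What each version buys: the paper's Lorentz formulation absorbs the $q_{j}$-indices into the second Lorentz exponent and is shorter, while your version avoids Lorentz spaces and real interpolation altogether and spells out the summation step that the paper compresses into its concluding ``which implies''.
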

\begin{proof}
Since $s_{1}$ is of the lowest frequency and $s_{1}+s_{2}+s_{3}>0$,
by increasing $s_{1}$ and decreasing $s_{2}+s_{3}$, we may assume
$s_{1}>0$ in advance.

Similarly, since the frequencies $M$ and $N$ are comparable and
$\frac{1}{p_{2}}+\frac{1}{p_{3}}=s_{2}+s_{3}+s_{1}+1-\frac{1}{p_{1}}>s_{2}+s_{3}>0$,
by perturbing $s_{2}$ and $s_{3}$ keeping $s_{2}+s_{3}$ fixed,
we may assume $\frac{1}{p_{2}}>s_{2}>0$ and $\frac{1}{p_{3}}>s_{3}>0$
in advance.

For each $j=1,2,3$, by $\frac{1}{p_{j}}>s_{j}>0$ and (\ref{eq:Besov-Lorentz embed}), we have the
embedding $B_{p_{j},q_{j}}^{s_{j}}E\hook L^{\tilde p_{j},q_{j}}E$,
where $\tilde p_{j}$ is the exponent $\frac{1}{\tilde p_{j}}:=\frac{1}{p_{j}}-s_{j}$.
By scaling conditions, we have $\frac{1}{\tilde p_{1}}+\frac{1}{\tilde p_{2}}+\frac{1}{\tilde p_{3}}=\frac{1}{q_{1}}+\frac{1}{q_{2}}+\frac{1}{q_{3}}=1$,
which implies (\ref{eq:Besov trilinear}).
\end{proof}
As a particular consequence, we have product rules for time Besov
spaces.
\begin{cor}
Let $E_j,j=1,2,3$ be Banach spaces satisfying (\ref{eq:fgh}).
Let $s_{j}\in\R,p_{j}\in(1,\infty),q_{j}\in[1,\infty],j=1,2,3$ be
parameters such that $s_{1}+s_{2}+s_{3}>0$, $\frac{1}{p_{j}}>s_{j}$,
$\frac{1}{p_{1}}+\frac{1}{p_{2}}+\frac{1}{p_{3}}=s_{1}+s_{2}+s_{3}+1$,
and $\frac{1}{q_{1}}+\frac{1}{q_{2}}+\frac{1}{q_{3}}=1$.
\begin{enumerate}
\item Assume $s_{1}+s_{2}>0$, $s_{1}+s_{3}>0$, and $s_{2}+s_{3}>0$. Then,
we have
\begin{equation}
\norm{uv}_{B_{p_{3}',q_{3}'}^{-s_{3}}E_{3}'}\les\norm u_{B_{p_{1},q_{1}}^{s_{1}}E_{1}}\norm v_{B_{p_{2},q_{2}}^{s_{2}}E_{2}}.\label{eq:Besov product rule}
\end{equation}
\item Assume $s_{1}+s_{2}>0$ and $s_{2}+s_{3}>0$. Then, we have
\begin{equation}
\norm{\pi_{\le}(u,v)}_{B_{p_{3}',q_{3}'}^{-s_{3}}E_{3}'}\les\norm u_{B_{p_{1},q_{1}}^{s_{1}}E_{1}}\norm v_{B_{p_{2},q_{2}}^{s_{2}}E_{2}}.\label{eq:Besov paraproduct rule,<=00003D}
\end{equation}
\end{enumerate}
\end{cor}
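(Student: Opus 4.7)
The plan is to reduce both assertions to the trilinear estimate (\ref{eq:Besov trilinear}) through Besov duality and a case analysis on which time frequency is smallest. First, by (\ref{eq:Besov dual}),
\[
\norm w_{B_{p_{3}',q_{3}'}^{-s_{3}}E_{3}'}\sim\sup\Bigl\{\Bigl|\int_{\R\times\T^{d}}wh\,dxdt\Bigr|:\norm h_{B_{p_{3},q_{3}}^{s_{3}}E_{3}}\le1\Bigr\},
\]
so part (1) reduces to proving $|\int uvh\,dxdt|\les\norm u_{B_{p_{1},q_{1}}^{s_{1}}E_{1}}\norm v_{B_{p_{2},q_{2}}^{s_{2}}E_{2}}\norm h_{B_{p_{3},q_{3}}^{s_{3}}E_{3}}$ for every admissible $h$, and part (2) reduces to the analogous bound for $\int\pi_{\le}(u,v)h\,dxdt$.

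Next, I would expand each factor into dyadic time frequencies
\[
\int uvh\,dxdt=\sum_{L,M,N\in2^{\N}}\int P_{L}^{t}u\cdot P_{M}^{t}v\cdot P_{N}^{t}h\,dxdt
\]
and use Plancherel in time to discard all summands except those in which the two largest of $L,M,N$ are comparable. This partitions the sum into three regions: (A) $L\les M\sim N$, (B) $M\les L\sim N$, and (C) $N\les L\sim M$. In each region I would apply (\ref{eq:Besov trilinear}) after permuting the three factors so that the one with the lowest time frequency occupies the ``$s_{1}$'' slot. The scaling identities $\sum_{j}1/p_{j}=1+\sum_{j}s_{j}$ and $\sum_{j}1/q_{j}=1$ are symmetric under permutation, and the uniform assumption $1/p_{j}>s_{j}$ supplies the ``$1/p_{1}>s_{1}$'' hypothesis of (\ref{eq:Besov trilinear}) no matter which factor is promoted. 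The remaining ``high--high'' condition $s_{2}+s_{3}>0$ of (\ref{eq:Besov trilinear}) becomes, in regions (A), (B), (C) respectively, $s_{2}+s_{3}>0$, $s_{1}+s_{3}>0$, and $s_{1}+s_{2}>0$.

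For part (1), these three pairwise positivity assumptions are precisely the hypothesis, so every region is controllable and (\ref{eq:Besov product rule}) follows. For part (2), the paraproduct $\pi_{\le}(u,v)$ restricts to configurations in which the (time) frequency of $v$ is at least comparable to that of $u$, which eliminates region (B); only (A) and (C) remain, requiring exactly $s_{2}+s_{3}>0$ and $s_{1}+s_{2}>0$, in agreement with the weakened hypothesis.

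The hard part has already been absorbed into the proof of (\ref{eq:Besov trilinear}); the corollary itself is a clean consequence. The only bookkeeping subtlety I anticipate is verifying that the scaling identities and the conditions $1/p_{j}>s_{j}$ remain compatible under permutation of the three factors, but both facts follow immediately from their symmetric form.
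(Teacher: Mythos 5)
Your argument is correct and is essentially the paper's own proof: the paper likewise reduces both estimates to (\ref{eq:Besov trilinear}) by duality and a trichotomy on which pair of the three factors carries the two comparable high time frequencies, noting that for $\pi_{\le}(u,v)$ the case with $v$ at the lowest frequency is excluded, so only $s_{1}+s_{2}>0$ and $s_{2}+s_{3}>0$ are needed. Your region-by-region bookkeeping (symmetry of the scaling conditions, $1/p_{j}>s_{j}$ supplying the low-frequency hypothesis after permutation) just makes explicit what the paper states in two sentences.
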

\begin{proof}
(\ref{eq:Besov product rule}) and (\ref{eq:Besov paraproduct rule,<=00003D})
are direct consequences of (\ref{eq:Besov trilinear}) and dualities.
For (\ref{eq:Besov product rule}), the high-frequency terms can be
either $(u,v)$, $(u,\pi_{\le}(u,v))$, or $(v,\pi_{\le}(u,v))$,
so we assume all of $s_{1}+s_{2}>0$, $s_{1}+s_{3}>0$, and $s_{2}+s_{3}>0$.
For (\ref{eq:Besov paraproduct rule,<=00003D}), the high-frequency
terms can be either $(u,v)$ or $(v,\pi_{\le}(u,v))$, so we require
only $s_{1}+s_{2}>0$ and $s_{2}+s_{3}>0$.
\end{proof}
Proposition \ref{prop:Besov props}, (\ref{eq:Besov product rule}),
and (\ref{eq:Besov paraproduct rule,<=00003D}) can be shown similarly
on (scalar-valued) Besov spaces on $\T^{d}$, unless they are $L^{1}$
or $L^{\infty}$-based. This can be done via estimates on Littlewood-Paley
convolution kernels on $\T^{d}$.

Next, we state the fractional chain rule for H{\"o}lder continuous functions.
\begin{lem}
Let $F\in C^{0,\al}(\C)$, $\alpha\in(0,1)$. Let $s\in(0,\al)$,
$\s>0$, and $p,p_{1},p_{2}\in(1,\infty)$ be exponents satisfying
$\frac{1}{p}=\frac{1}{p_{1}}+\frac{1}{p_{2}}$ and $(\al-\frac{s}{\sigma})p_{1}>1$.
We have
\begin{equation}
\|F(u)\|_{H^{s,p}}\les\|u\|_{L^{(\al-\frac{s}{\sigma})p_{1}}}^{\al-\frac{s}{\s}}\cdot\|u\|_{H^{\s,\frac{s}{\sigma}p_{2}}}^{\frac{s}{\sigma}}.\label{eq:low fractional Holder}
\end{equation}
\end{lem}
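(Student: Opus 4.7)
The plan is to apply a fractional chain rule for Hölder-continuous nonlinearities of Visan--Taylor type. The argument splits into a bound on the $L^p$ piece and a bound on the homogeneous $|\nabla|^s$ piece of the $H^{s,p}$ norm.

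WLOG $F(0)=0$ (subtracting $F(0)$ changes $F(u)$ by a constant and does not affect $|\nabla|^s F(u)$). The $L^p$-bound $\|F(u)\|_{L^p}\les\|u\|_{L^{\alpha p}}^{\alpha}$ follows from $|F(u)|\les|u|^{\alpha}$, and via Hölder with the exponent split $\alpha=(\alpha-s/\sigma)+s/\sigma$ and $\tfrac{1}{p}=\tfrac{1}{p_{1}}+\tfrac{1}{p_{2}}$ this is dominated by $\|u\|_{L^{(\alpha-s/\sigma)p_{1}}}^{\alpha-s/\sigma}\|u\|_{L^{(s/\sigma)p_{2}}}^{s/\sigma}$, which Sobolev embedding further dominates by the RHS of (\ref{eq:low fractional Holder}).

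For the homogeneous piece $\||\nabla|^{s}F(u)\|_{L^{p}}$, I would use the Stein (Strichartz) characterization, valid for $s\in(0,1)$ and $p\in(1,\infty)$,
\[
\||\nabla|^{s}f\|_{L^{p}}\sim\Bigl\|\Bigl(\int_{\T^{d}}\frac{|f(x)-f(y)|^{2}}{|x-y|^{d+2s}}\,dy\Bigr)^{1/2}\Bigr\|_{L_{x}^{p}},
\]
apply $|F(u(x))-F(u(y))|\le[F]_{\alpha}|u(x)-u(y)|^{\alpha}$, and split the exponent $2\alpha=2(\alpha-\beta)+2\beta$ with $\beta:=s/\sigma\in(0,\alpha)$. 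The pointwise bound $|u(x)-u(y)|^{2(\alpha-\beta)}\les|u(x)|^{2(\alpha-\beta)}+|u(y)|^{2(\alpha-\beta)}$ and symmetrization reduce the problem to bounding the $L_{x}^{p/2}$ norm of
\[
|u(x)|^{2(\alpha-\beta)}\int_{\T^{d}}\frac{|u(x)-u(y)|^{2\beta}}{|x-y|^{d+2\beta\sigma}}\,dy.
\]
A Hedberg-type argument (splitting the $y$-integral at a scale $R$, applying Hölder on the near region and a dyadic decomposition on the far region, then optimizing in $R$) controls the inner integral pointwise by a product of (a power of) the Stein square function of $|\nabla|^{\sigma}u$ and the Hardy--Littlewood maximal function of $u$. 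A final Hölder inequality in $x$ with the split $\tfrac{1}{p}=\tfrac{1}{p_{1}}+\tfrac{1}{p_{2}}$, combined with $L^{q}$-boundedness of the maximal function for $q>1$, then yields the stated estimate; the hypothesis $(\alpha-s/\sigma)p_{1}>1$ is exactly the integrability needed to apply the maximal function to the low-regularity factor $|u|^{\alpha-s/\sigma}$.

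The main obstacle I expect is the Hedberg step: a naive Hölder split of the inner integral produces the divergent factor $\int|x-y|^{-d}dy$, and only the scale-optimization trick routes this singularity through the maximal function. A secondary technical issue is that the first-difference Stein characterization is adapted to $s,\sigma\in(0,1)$; when $\sigma\ge1$ I would instead use the Littlewood--Paley square function characterization of $H^{\sigma,q}$, run the same program on the telescoping decomposition $F(u)=F(u_{\le1})+\sum_{N>1}[F(u_{\le N})-F(u_{\le N/2})]$ (each summand pointwise bounded by $[F]_{\alpha}|u_{N}|^{\alpha}$), and close via Bernstein-type inequalities of the form $|u_{N}(x)|\les N^{-\sigma}M(|\nabla|^{\sigma}u)(x)$ where $M$ denotes the Hardy--Littlewood maximal function.
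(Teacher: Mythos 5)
Your plan is essentially an attempt to reprove, from scratch, the result that the paper simply cites: for $\sigma<1$ the estimate is exactly the fractional chain rule for H\"older continuous $F$ from \cite{visan2007defocusing}, and the paper's own contribution in this lemma is only the reduction of the case $\sigma\ge1$ to $\tilde\sigma\in(\frac{s}{\alpha},1)$ via complex interpolation between $L^{(\alpha-\frac{s}{\sigma})p_1}$ and $H^{\sigma,\frac{s}{\sigma}p_2}$. Within your $\sigma<1$ argument there is a genuine gap at precisely the step you flag. After the split $2\alpha=2(\alpha-\beta)+2\beta$ with $\beta=\frac{s}{\sigma}$, the inner integral $\int|u(x)-u(y)|^{2\beta}|x-y|^{-d-2\beta\sigma}\,dy$ is scale invariant: on each dyadic annulus $|x-y|\sim r$, H\"older against the first-difference square function of $u$ at regularity $\sigma$ gives a bound independent of $r$, so the sum over scales below the Hedberg radius $R$ diverges logarithmically; if instead you keep the full power $2\alpha$ in the near region so as to gain $r^{2\sigma(\alpha-\beta)}$ per annulus, the optimization in $R$ puts the maximal function on $|u|^{2\alpha}$ (or a comparably high power) rather than on $|u|^{\alpha-\frac{s}{\sigma}}$, and the integrability you then need is strictly stronger than the stated hypothesis $(\alpha-\frac{s}{\sigma})p_1>1$. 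The actual proof in the literature does not proceed by a two-scale pointwise optimization: it decomposes $u$ itself into Littlewood--Paley pieces inside the difference and closes with (iterated) maximal-function bookkeeping, which is where the sharp condition comes from. A further caveat: bounding whatever square function of $u$ you produce by $\|\,|\nabla|^{\sigma}u\,\|_{L^{\frac{s}{\sigma}p_2}}$ via Stein's first-difference characterization requires $\frac{s}{\sigma}p_2>\frac{2d}{d+2\sigma}$, which is not among the hypotheses; only the converse direction of that characterization is free in the full range $1<p<\infty$.

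Your treatment of $\sigma\ge1$ also has a gap, and here your route genuinely differs from the paper's. The telescoping bound $|F(u_{\le N})-F(u_{\le N/2})|\lesssim|u_N|^{\alpha}$ controls only output frequencies $K\lesssim N$; for $K\gg N$ it gives no decay in $K/N$, so the Littlewood--Paley square-function sum does not close. To handle that range one must use the smoothness of $u_{\le N}$ (e.g.\ that $F(u_{\le N})$ has $C^{0,\alpha}$ modulus of continuity at scale $N^{-1}$, yielding a factor of order $(N/K)^{\alpha}$, summable precisely because $s<\alpha$), and one must still track how the gain is distributed between the $L^{(\alpha-\frac{s}{\sigma})p_1}$ and $H^{\sigma,\frac{s}{\sigma}p_2}$ factors; none of this appears in your sketch. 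The paper avoids the issue entirely: it applies the $\sigma<1$ case with $\tilde\sigma\in(\frac{s}{\alpha},1)$ and adjusted exponents $\tilde p_1,\tilde p_2$, and then interpolates $\|u\|_{H^{\tilde\sigma,\frac{s}{\tilde\sigma}\tilde p_2}}\lesssim\|u\|_{L^{(\alpha-\frac{s}{\sigma})p_1}}^{1-\tilde\sigma/\sigma}\|u\|_{H^{\sigma,\frac{s}{\sigma}p_2}}^{\tilde\sigma/\sigma}$. I recommend adopting that reduction, and for $\sigma<1$ either citing the reference or carrying out the Littlewood--Paley-in-$u$ argument in full rather than the Hedberg shortcut. (Minor point: your ``WLOG $F(0)=0$'' is indeed needed for the inhomogeneous $H^{s,p}$ norm on the left-hand side, and is harmless for the paper's application.)
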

\begin{proof}
In \cite{visan2007defocusing}, (\ref{eq:low fractional Holder})
is proved for $\s<1$. For $\s\ge1$, we choose $\tilde{\s}\in(\frac{s}{\al},1)$
and let $\tilde p_{1}:=\frac{\left(\al-\frac{s}{\s}\right)}{\left(\al-\frac{s}{\tilde{\s}}\right)}p_{1}$
and $\tilde p_{2}:=\frac{1}{p}-\frac{1}{\tilde p_{1}}$. Since $\tilde{\s}<1$,
by the complex interpolation, we have
\begin{align*}
\|F(u)\|_{H^{s,p}} & \les\|u\|_{L^{(\al-\frac{s}{\tilde{\sigma}})\tilde p_{1}}}^{\al-\frac{s}{\tilde{\s}}}\cdot\|u\|_{H^{\tilde{\s},\frac{s}{\tilde{\sigma}}\tilde p_{2}}}^{\frac{s}{\tilde{\sigma}}}\\
 & =\|u\|_{L^{(\al-\frac{s}{\sigma})p_{1}}}^{\al-\frac{s}{\tilde{\s}}}\cdot\|u\|_{H^{\tilde{\s},\frac{s}{\tilde{\sigma}}\tilde p_{2}}}^{\frac{s}{\tilde{\sigma}}}\\
 & \les\norm u_{L^{(\al-\frac{s}{\s})p_{1}}}^{\al-\frac{s}{\tilde{\s}}}\cdot\norm u_{L^{(\al-\frac{s}{\s})p_{1}}}^{\frac{s}{\tilde{\s}}-\frac{s}{\s}}\norm u_{H^{\s,\frac{s}{\s}p_{2}}}^{\frac{s}{\s}}\\
 & =\norm u_{L^{(\al-\frac{s}{\s})p_{1}}}^{\al-\frac{s}{\s}}\cdot\norm u_{H^{\s,\frac{s}{\s}p_{2}}}^{\frac{s}{\s}}.
\end{align*}
\end{proof}
A similar result on higher H{\"o}lder regularities can be deduced.
\begin{lem}
Let $\al\ge1$ and $m\in\Z$. Let $F:\C\rightarrow\C$ be the function
$F(z):=|z|^{\al-m}z^{m}$. Let $s\in[0,\al)$ and $p,p_{1},p_{2}\in(1,\infty)$
be exponents satisfying $\frac{1}{p}=\frac{\al-1}{p_{1}}+\frac{1}{p_{2}}$.
Then, for $u:\T^{d}\rightarrow\C$, we have
\begin{equation}
\|F(u)\|_{H^{s,p}}\les\|u\|_{L^{p_{1}}}^{\al-1}\|u\|_{H^{s,p_{2}}}.\label{eq:high fractional Holder}
\end{equation}
\end{lem}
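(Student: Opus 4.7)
The plan is to split according to whether $s\le1$ or $s>1$. In both regimes, the crucial point is that $\al\ge1$ guarantees $F$ is $C^{1}$ with $|\d_{z}F(z)|,\,|\d_{\bar z}F(z)|\les|z|^{\al-1}$, which is exactly the input for a chain-rule argument. The case $s=0$ is immediate: the pointwise bound $|F(u)|\le|u|^{\al-1}\cdot|u|$ together with H\"older's inequality for $\tfrac{1}{p}=\tfrac{\al-1}{p_{1}}+\tfrac{1}{p_{2}}$ gives the result.

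For $s\in(0,1]$, I would use the standard fractional chain rule. A convenient route is via the Bony linearization
\[
F(u)=\sum_{N\in 2^{\N}}\int_{0}^{1}\bb{(\d_{z}F)(u_{\le N/2}+\theta u_{N})\,u_{N}+(\d_{\bar z}F)(u_{\le N/2}+\theta u_{N})\,\overline{u_{N}}}d\theta,
\]
which yields the pointwise bound $|P_{N}F(u)|\les(\mc Mu)^{\al-1}\sum_{M\gtrsim N}|u_{M}|$, where $\mc M$ is the Hardy-Littlewood maximal operator. Combining this with the Littlewood-Paley square function characterization of $H^{s,p}$, H\"older's inequality, and the Fefferman-Stein vector-valued maximal inequality produces
\[
\norm{F(u)}_{H^{s,p}}\les\norm{(\mc Mu)^{\al-1}}_{L^{p_{1}/(\al-1)}}\cdot\norm u_{H^{s,p_{2}}}\les\norm u_{L^{p_{1}}}^{\al-1}\cdot\norm u_{H^{s,p_{2}}}.
\]

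For $s\in(1,\al)$, which is only non-empty when $\al>1$, I would induct on $\lfloor s\rfloor$. Start from $\norm{F(u)}_{H^{s,p}}\sim\norm{F(u)}_{L^{p}}+\norm{\na F(u)}_{H^{s-1,p}}$; the first term is handled by H\"older as in the base case. For the second, expand $\na F(u)=(\d_{z}F)(u)\na u+(\d_{\bar z}F)(u)\na\bar u$. The derivatives $\d_{z}F,\d_{\bar z}F$ are again of the form $|z|^{(\al-1)-m'}z^{m'}$ with integer $m'$, so the induction hypothesis is available with parameters $\al-1$ and $s-1<\al-1$. Apply the fractional Leibniz rule in $H^{s-1,p}$, feed the inductive bound into the factor $(\d_{z}F)(u)$, and interpolate the remaining Sobolev norms of $\na u$ between $L^{p_{1}}$ and $H^{s,p_{2}}$ via Gagliardo-Nirenberg.

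The main obstacle I foresee is the exponent bookkeeping in the inductive step: each application of the fractional Leibniz rule splits into two summands, and we must choose the H\"older partners so that (a) the induction fires on $(\d_{z}F)(u)$ with admissible parameters, and (b) Gagliardo-Nirenberg interpolation on the $\na u$ factor reproduces powers of $\norm u_{L^{p_{1}}}$ and $\norm u_{H^{s,p_{2}}}$ with the scaling matching $\tfrac{1}{p}=\tfrac{\al-1}{p_{1}}+\tfrac{1}{p_{2}}$. The strict slack $s<\al$ ensures all compatibility conditions can be satisfied, and the induction then closes.
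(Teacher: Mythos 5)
Your base cases are fine ($s=0$ by H\"older, $s\in(0,1]$ is the standard fractional chain rule, which the paper also simply invokes), but the inductive step for $s\in(1,\al)$ has a genuine gap. You induct on $\lfloor s\rfloor$ and claim that after writing $\na F(u)=(\d_{z}F)(u)\na u+(\d_{\bar z}F)(u)\overline{\na u}$ "the induction hypothesis is available with parameters $\al-1$ and $s-1<\al-1$." That is only true when $\al-1\ge1$. In the regime where $\al$ is non-integer and $\lfloor s\rfloor>\al-1$ (already for $1<\al<2$ and $1<s<\al$, which is exactly the range relevant to small non-algebraic nonlinearities), the derivative $\d_{z}F$ is homogeneous of degree $\al-1\in(0,1)$: it is only H\"older continuous, not $C^{1}$, so neither the lemma being proved (which requires exponent $\ge1$) nor your $s\le1$ chain-rule argument (which uses $|\d_{z}F(z)|\les|z|^{\al-1}$ for a $C^{1}$ function $F$) applies to estimate $\norm{(\d_{z}F)(u)}_{H^{s-1,\tilde r}}$. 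The same obstruction reappears for larger $\al$ whenever the induction bottoms out on a derivative of $F$ of homogeneity in $(0,1)$.

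The missing ingredient is precisely the fractional chain rule for merely H\"older-continuous nonlinearities, i.e. the paper's preceding lemma (\ref{eq:low fractional Holder}) (after Visan): for $G\in C^{0,\beta}$ with $\beta\in(0,1)$ one only gets $\norm{G(u)}_{H^{s-1,\tilde r}}\les\norm u_{L^{\cdots}}^{\beta-\frac{s-1}{\s}}\norm u_{H^{\s,\cdots}}^{\frac{s-1}{\s}}$, a product of \emph{fractional powers} of two norms rather than the clean form $\norm u_{L^{p_{1}}}^{\al-2}\norm u_{H^{s-1,q}}$ your induction would feed in. The paper's proof applies exactly this estimate (with $\s=s$) to the derivative factors, and uses interpolation on the $\na u$ factors so that the fractional powers recombine into $\norm u_{L^{p_{1}}}^{\al-1}\norm u_{H^{s,p_{2}}}$; its induction is organized on $\al$ (in integer steps) rather than on $\lfloor s\rfloor$, with a sum over intermediate regularities $k\le\lfloor s\rfloor-1$ in the Leibniz decomposition. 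Without invoking (\ref{eq:low fractional Holder}) (or proving an equivalent H\"older-regime chain rule), your induction cannot close in the non-integer range; with it, your bookkeeping concern is exactly the part the paper's proof carries out. A minor secondary point: the pointwise bound $|P_{N}F(u)|\les(\mc Mu)^{\al-1}\sum_{M\gtrsim N}|u_{M}|$ in your $s\in(0,1]$ sketch ignores the low-frequency pieces $M\ll N$ (since $F$ is not a polynomial, $F(u_{\le M})-F(u_{\le M/2})$ is not frequency-localized), but as this case is the classical chain rule it does not affect the substance.
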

\begin{proof}
We use an induction on $\al$. First, we focus on the case of $1\le\al\le2$.
When $0\le s\le1$, (\ref{eq:high fractional Holder}) is the well-known
fractional chain rule. Assume $1<s<\al$. Define numbers $\tilde q,r,\tilde r\in(1,\infty)$
as $\frac{s}{\tilde q}=\frac{s-1}{p_{1}}+\frac{1}{p_{2}}$, $\frac{1}{r}=\frac{1}{p}-\frac{1}{p_{2}}$,
and $\frac{1}{\tilde r}=\frac{1}{p}-\frac{1}{\tilde q}$, where we
used $s<\al$. Using $\frac{1}{\tilde r}=\frac{\al+\frac{1}{s}-2}{p_{1}}+\frac{1-\frac{1}{s}}{p_{2}}$,
we apply (\ref{eq:low fractional Holder}) to $\|(\na F)(u)\|_{H^{s-1,\tilde r}}$
with $\s=s$. Using Wirtinger derivatives, we have
\begin{align*}
\|\na(F(u))\|_{H^{s-1,p}} & =\|\na u\cdot\d_{z}F(u)+\overline{\na u}\cdot\d_{\overline{z}}F(u)\|_{H^{s-1,p}}\\
 & \les\|\na u\|_{H^{s-1,p_{2}}}\left(\|\d_{z}F(u)\|_{L^{r}}+\|\d_{\overline{z}}F(u)\|_{L^{r}}\right)\\
 & +\|\na u\|_{L^{\tilde q}}\left(\|\d_{z}F(u)\|_{H^{s-1,\tilde r}}+\|\d_{\overline{z}}F(u)\|_{H^{s-1,\tilde r}}\right)\\
 & \les\norm u_{H^{s,p_{2}}}\norm u_{L^{p_{1}}}^{\al-1}+\norm u_{L^{p_{1}}}^{\frac{s-1}{s}}\norm u_{H^{s,p_{2}}}^{\frac{1}{s}}\cdot\norm u_{L^{p_{1}}}^{\al-1-\frac{s-1}{s}}\norm u_{H^{s,p_{2}}}^{\frac{s-1}{s}}\\
 & \les\norm u_{L^{p_{1}}}^{\al-1}\norm u_{H^{s,p_{2}}},
\end{align*}
which implies
\[
\norm{F(u)}_{H^{s,p}}\les\norm{F(u)}_{L^{p}}+\|\na(F(u))\|_{H^{s-1,p}}\les\norm u_{L^{p_{1}}}^{\al-1}\norm u_{H^{s,p_{2}}}.
\]
Now, we fix an integer $N\ge2$ and assume that (\ref{eq:high fractional Holder})
holds when $\al\le N$. We claim that (\ref{eq:high fractional Holder})
holds for $\al\le N+1$ as well.

Define the numbers $q_{k},r_{k},\tilde q_{k},\tilde r_{k}$, $0\le k\le\left\lfloor s\right\rfloor -1$
as $\frac{s}{q_{k}}=\frac{k}{p_{1}}+\frac{s-k}{p_{2}}$, $\frac{1}{r_{k}}=\frac{1}{p}-\frac{1}{q_{k}}$
and $\frac{s}{\tilde q_{k}}=\frac{s-1-k}{p_{1}}+\frac{1+k}{p_{2}}$,
$\frac{1}{\tilde r_{k}}=\frac{1}{p}-\frac{1}{\tilde q_{k}}$. We have
\begin{align*}
\|\na(F(u))\|_{H^{s-1,p}} & =\|\na u\cdot\d_{z}F(u)+\overline{\na u}\cdot\d_{\overline{z}}F(u)\|_{H^{s-1,p}}\\
 & \les\sum_{k=0}^{\left\lfloor s\right\rfloor -1}\|\na u\|_{H^{s-1-k,q_{k}}}\left(\|\d_{z}F(u)\|_{H^{k,r_{k}}}+\|\d_{\overline{z}}F(u)\|_{H^{k,r_{k}}}\right)\\
 & +\sum_{k=0}^{\left\lfloor s\right\rfloor -1}\|\na u\|_{H^{k,\tilde q_{k}}}\left(\|\d_{z}F(u)\|_{H^{s-1-k,\tilde r_{k}}}+\|\d_{\overline{z}}F(u)\|_{H^{s-1-k,\tilde r_{k}}}\right)\\
 & \les\norm u_{L^{p_{1}}}^{\al-1}\norm u_{H^{s,p_{2}}},
\end{align*}
where we used complex interpolations to bound norms of $\na u$, and
used (\ref{eq:low fractional Holder}) and the induction hypothesis
to bound norms of $\d_{z}F(u)$ and $\d_{\overline{z}}F(u)$.

It follows that
\[
\norm{F(u)}_{H^{s,p}}\les\norm{F(u)}_{L^{p}}+\|\na(F(u))\|_{H^{s-1,p}}\les\norm u_{L^{p_{1}}}^{\al-1}\norm u_{H^{s,p_{2}}},
\]
which finishes the proof by induction on $N$.
\end{proof}
Next, we propose a Sobolev-Slobodeckij version of the fractional H{\"o}lder
inequality.
\begin{lem}
Fix $s\in(0,1)$, $p\in(1,\infty)$, $\al\in(0,1)$, and $F\in C^{0,\al}(\C)$.
For $u\in B_{p,p}^{s}(\T^{d})$, we have
\begin{equation}
\|F(u)\|_{B_{p/\al,p/\al}^{s\al}(\T^{d})}\les\|u\|_{B_{p,p}^{s}(\T^{d})}^{\al}.\label{eq:frac Holder x}
\end{equation}
\end{lem}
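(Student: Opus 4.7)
The plan is to reduce everything to the Slobodeckij characterization (\ref{eq:slobo x}) and use only the pointwise H\"older bound on $F$. Applied to both sides, (\ref{eq:slobo x}) gives
\[
\|F(u)\|_{B_{p/\al,p/\al}^{s\al}}^{p/\al}\sim\|F(u)\|_{L^{p/\al}}^{p/\al}+\iint_{\T^d\times\T^d}\left(\frac{|F(u(x))-F(u(y))|}{|x-y|^{s\al}}\right)^{p/\al}\frac{d(x,y)}{|x-y|^d},
\]
so it suffices to bound each piece by $\|u\|_{B_{p,p}^s}^p$.

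For the difference-quotient piece, the only input is the H\"older bound $|F(z_1)-F(z_2)|\le [F]_{C^{0,\al}}|z_1-z_2|^\al$. Raising to the $p/\al$-th power yields $|F(u(x))-F(u(y))|^{p/\al}\les|u(x)-u(y)|^p$, and the exponent identity $(s\al)(p/\al)=sp$ then collapses the double integral to
\[
\iint_{\T^d\times\T^d}\left(\frac{|u(x)-u(y)|}{|x-y|^s}\right)^p\frac{d(x,y)}{|x-y|^d},
\]
which is controlled by $\|u\|_{B_{p,p}^s}^p$ by (\ref{eq:slobo x}) again. The $L^{p/\al}$ piece is handled by the trivial estimate $|F(u)|\le|F(0)|+[F]_{C^{0,\al}}|u|^\al$, yielding $\|F(u)\|_{L^{p/\al}}^{p/\al}\les|F(0)|^{p/\al}+\|u\|_{L^p}^p$; the additive constant is absorbed by normalizing $F$ (subtracting $F(0)$ does not change the difference-quotient piece) or is harmless in all intended applications where $F(0)=0$.

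Combining the two pieces gives $\|F(u)\|_{B_{p/\al,p/\al}^{s\al}}^{p/\al}\les\|u\|_{B_{p,p}^s}^p$, and raising to the power $\al/p$ yields (\ref{eq:frac Holder x}). There is no real obstacle here; the whole proof rests on the exponent matching $(s\al)\cdot(p/\al)=sp$ and $d\cdot 1=d$ in the Slobodeckij integrand, which is exactly what makes the H\"older continuity of $F$ convert a $B_{p,p}^s$ bound on $u$ into a $B_{p/\al,p/\al}^{s\al}$ bound on $F(u)$ with the correct power $\al$ on the right-hand side.
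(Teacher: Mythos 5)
Your proof is correct and follows essentially the same route as the paper: both rest on the Slobodeckij characterization (\ref{eq:slobo x}) together with the pointwise H{\"o}lder bound on $F$ and the exponent identity $(s\al)\cdot(p/\al)=sp$. Your explicit remark about the $F(0)$ term is in fact slightly more careful than the paper, which tacitly uses $|F(u)|\les|u|^{\al}$ (harmless, since in its applications $F(0)=0$).
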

\begin{proof}
The proof naturally follows from the H{\"o}lder continuity of $F$:
\begin{align*}
\|F(u)\|_{B_{p/\al,p/\al}^{s\al}}^{p/\al} & \sim\norm{F(u)}_{L^{p/\al}}^{p/\al}+\int_{\T^{d}\times\T^{d}}\left(\frac{\left|F(u(x))-F(u(y))\right|}{|x-y|^{s\al}}\right)^{p/\al}\frac{d(x,y)}{|x-y|^{d}}\\
 & \les\norm u_{L^{p}}^{p}+\int_{\T^{d}\times\T^{d}}\left(\frac{\left|u(x)-u(y)\right|}{|x-y|^{s}}\right)^{p}\frac{d(x,y)}{|x-y|^{d}}\les\|u\|_{B_{p,p}^{s}}^{p}.
\end{align*}
\end{proof}
We also have a spacetime version of the fractional chain rule for
Besov spaces.
\begin{lem}
Let $s_{0},s_{1}>0$ be exponents satisfying $2s_{0}+s_{1}<1$. Fix
$p\in(1,\infty),\al\in(0,1)$, and a function $F\in C^{0,\al}(\C)$.
For $u:\R\times\T^{d}\rightarrow\C$, we have
\begin{equation}
\|F(u)\|_{B_{p/\al,p/\al}^{s_{0}\al}B_{p/\al,p/\al}^{s_{1}\al}}\les\|u\|_{L^{p}B_{p,p}^{2s_{0}+s_{1}}\cap B_{p,p}^{s_{0}+s_{1}/2}L^{p}}^{\al}.\label{eq:frac Holder tx}
\end{equation}
\end{lem}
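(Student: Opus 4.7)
The plan is to expand the norm via an iterated application of the Slobodeckij characterizations (\ref{eq:slobo x}) and (\ref{eq:slobo tx}), writing $\|F(u)\|_{B^{s_0\al}_{p/\al,p/\al}B^{s_1\al}_{p/\al,p/\al}}^{p/\al}$ as the sum of four contributions: an $L^{p/\al}_{t,x}$ term, a spatial Slobodeckij integral over $(x,y)$ at fixed $t$, a temporal Slobodeckij integral over $(t_1,t_2)$ at fixed $x$, and the cross-difference integral
\[
(D):=\int\frac{|F(u(t_1,x))-F(u(t_2,x))-F(u(t_1,y))+F(u(t_2,y))|^{p/\al}}{|t_1-t_2|^{s_0 p+1}\,|x-y|^{s_1 p+d}}\,d(t_1,t_2,x,y).
\]
The first three contributions reduce immediately, via $|F(a)-F(b)|\les|a-b|^\al$ together with the elementary embeddings $B^{2s_0+s_1}_{p,p}\hook B^{s_1}_{p,p}$ and $B^{s_0+s_1/2}_{p,p}\hook B^{s_0}_{p,p}$, to quantities controlled by the right-hand side of (\ref{eq:frac Holder tx}) raised to the $p$th power, so the whole task is the estimate for $(D)$.

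For $(D)$ the plan is to split the domain of integration into the two parabolic regions $\{|t_1-t_2|\ge|x-y|^2\}$ and $\{|t_1-t_2|<|x-y|^2\}$, a decomposition dictated by the Schr\"odinger-type scaling $\d_t\leftrightarrow\d_x^2$ implicit in the hypothesis. On the first region I will use the one-sided spatial Hölder bound
\[
|F(u(t_1,x))-F(u(t_2,x))-F(u(t_1,y))+F(u(t_2,y))|^{p/\al}\les|u(t_1,x)-u(t_1,y)|^p+|u(t_2,x)-u(t_2,y)|^p,
\]
and then integrate out the $t_2$-variable using $\int_{|t_1-t_2|\ge R^2}|t_1-t_2|^{-(s_0 p+1)}\,dt_2\les R^{-2s_0 p}$ with $R=|x-y|$; the weight collapses into $|x-y|^{-((2s_0+s_1)p+d)}$, which by (\ref{eq:slobo x}) — legitimate exactly because $2s_0+s_1<1$ — is the Slobodeckij kernel of $B^{2s_0+s_1}_{p,p}(\T^d)$, producing the bound $\|u\|_{L^p B^{2s_0+s_1}_{p,p}}^p$. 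The second region is treated symmetrically: bound the cross difference by the two time-differences and integrate $y$ out over $\{|x-y|>|t_1-t_2|^{1/2}\}$ using $\int|x-y|^{-(s_1 p+d)}\,dy\les|t_1-t_2|^{-s_1 p/2}$ to produce the temporal Slobodeckij weight $|t_1-t_2|^{-((s_0+s_1/2)p+1)}$, which by (\ref{eq:slobo tx}) gives $\|u\|_{B^{s_0+s_1/2}_{p,p}L^p}^p$; here $s_0+s_1/2<1/2$ again follows from $2s_0+s_1<1$.

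The main obstacle will lie in choosing the right decomposition. A symmetric approach via the geometric-mean interpolation $|G|\les(|\text{time differences}|^\al+\cdots)^{1-\eta}(|\text{space differences}|^\al+\cdots)^\eta$ followed by a single Hölder step on the full four-variable integral forces $\eta$ to satisfy simultaneously $\eta<s_1/(2s_0+s_1)$ (so $t_2$ is integrable against the singular weight) and $\eta>s_1/(2s_0+s_1)$ (so $y$ is integrable), an endpoint obstruction that cannot be dissolved by any redistribution of the weights. Splitting along the parabolic cutoff $|t_1-t_2|\sim|x-y|^2$ avoids the interpolation step entirely: on each half a one-sided Hölder estimate on the cross difference suffices, the orthogonal variable is integrated trivially against the singular weight, and the factor produced combines with what remains to yield precisely the Slobodeckij kernel of one of the two target norms.
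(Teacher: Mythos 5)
Your proposal is correct and follows essentially the same route as the paper: expand the norm via the Slobodeckij characterizations (\ref{eq:slobo x})--(\ref{eq:slobo tx}), dispose of the pure-space, pure-time, and $L^{p/\al}$ pieces by the H{\"o}lder continuity of $F$ (i.e.\ the argument of (\ref{eq:frac Holder x})), and control the cross-difference integral by a one-sided H{\"o}lder bound combined with the parabolic splitting $|t_1-t_2|\gtrless|x-y|^2$, integrating out the orthogonal variable to recover the kernels of $B_{p,p}^{2s_0+s_1}$ and $B_{p,p}^{s_0+s_1/2}$. The only cosmetic difference is that the paper phrases the pointwise cross-difference bound as a min--max of the space and time increments and invokes the $(x,y)$, $(t,s)$ symmetry, whereas you use the sum of the two one-sided increments; both are equivalent for this purpose.
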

\begin{proof}
From the assumptions $s_{0},s_{1}>0$, $0<2s_{0}+s_{1}<1$, and $\al\in(0,1)$,
we have $s_{0},s_{1},s_{0}\al,s_{1}\al,2s_{0}+s_{1},s_{0}+s_{1}/2\in(0,1)$.
Thus, (\ref{eq:slobo x}), (\ref{eq:slobo tx}), and (\ref{eq:frac Holder x})
are applicable to each Besov space in (\ref{eq:frac Holder tx}).

By (\ref{eq:slobo tx}), we have
\begin{align*}
\|F(u)\|_{B_{p/\al,p/\al}^{s_{0}\al}B_{p/\al,p/\al}^{s_{1}\al}} & \les\norm{F(u)}_{L^{p/\al}B_{p/\al,p/\al}^{s_{1}\al}}\\
 & +\left(\int_{\R\times\R}\frac{\norm{F(u)(t,\cdot)-F(u)(s,\cdot)}_{B_{p/\al,p/\al}^{s_{1}\al}}^{p/\al}}{|t-s|^{s_{0}p}}\cdot\frac{d(t,s)}{|t-s|}\right)^{\al/p}\\
 & =I+II.
\end{align*}
We estimate $I$ using (\ref{eq:frac Holder x}):
\[
I=\norm{F(u)}_{L^{p/\al}B_{p/\al,p/\al}^{s_{1}\al}}\les\norm u_{L^{p}B_{p,p}^{s_{1}}}^{\al}\les\norm u_{L^{p}B_{p,p}^{2s_{0}+s_{1}}}^{\al}.
\]
To estimate $II$, we further decompose $II$ using (\ref{eq:slobo x}):
\begin{align*}
II & \sim\norm{F(u)}_{B_{p/\al,p/\al}^{s_{0}\al}L^{p/\al}}\\
 & +\left(\int_{\R\times\R}\frac{1}{|t-s|^{s_{0}p}}\int_{\T^{d}\times\T^{d}}\frac{1}{|x-y|^{s_{1}p}}\right.\\
 & \left.\left|F(u)(t,x)-F(u)(s,x)-F(u)(t,y)+F(u)(s,y)\right|^{p/\al}\frac{d(x,y)}{|x-y|^{d}}\frac{d(t,s)}{|t-s|}\right)^{\al/p}\\
 & =II_{A}+II_{B}.
\end{align*}

We estimate $II_{A}$ using an argument similar to (\ref{eq:frac Holder x}):
\[
\norm{F(u)}_{B_{p/\al,p/\al}^{s_{0}\al}L^{p/\al}}\les\norm u_{B_{p,p}^{s_{0}}L^{p}}^{\al}\les\norm u_{B_{p,p}^{s_{0}+s_{1}/2}L^{p}}^{\al}.
\]

We estimate $II_{B}$. For variables $t_{0},t_{1}\in\R$ and $x_{0},x_{1}\in\T^{d}$,
let $c_{j}:=\left|u(t_{j},x_{0})-u(t_{j},x_{1})\right|$ and $d_{j}:=\left|u(t_{0},x_{j})-u(t_{1},x_{j})\right|$
then we have
\begin{align}
 & \left|F(u)(t_{0},x_{0})-F(u)(t_{0},x_{1})-F(u)(t_{1},x_{0})+F(u)(t_{1},x_{1})\right|^{1/\al}\label{eq:cd}\\
 & \les\min\left\{ \max\left\{ c_{0},c_{1}\right\} ,\max\left\{ d_{0},d_{1}\right\} \right\} \nonumber \\
 & =\max_{i,j\in\left\{ 0,1\right\} }\min\left\{ c_{i},d_{j}\right\} .\nonumber 
\end{align}
By (\ref{eq:cd}) and the symmetry of $(x,y)$ and $(t,s)$ in integrals,
we have
\begin{align*}
\left(II_{B}\right)^{p/\al}= & \int_{\R\times\R}\int_{\T^{d}\times\T^{d}}\left|F(u)(t,x)-F(u)(s,x)-F(u)(t,y)+F(u)(s,y)\right|^{p/\al}\\
 & \cdot\frac{d(x,y)}{|x-y|^{s_{1}p+d}}\cdot\frac{d(t,s)}{|t-s|^{s_{0}p+1}}\\
\text{\ensuremath{\les}} & \int_{\R\times\R}\int_{\T^{d}\times\T^{d}}\min\left\{ |u(t,x)-u(s,x)|,|u(t,x)-u(t,y)|\right\} ^{p}\\
 & \cdot\frac{d(x,y)}{|x-y|^{s_{1}p+d}}\cdot\frac{d(t,s)}{|t-s|^{s_{0}p+1}}\\
\les & \int_{|t-s|\ge|x-y|^{2}}\left|u(t,x)-u(t,y)\right|^{p}\cdot\frac{d(x,y)}{|x-y|^{s_{1}p+d}}\cdot\frac{d(t,s)}{|t-s|^{s_{0}p+1}}\\
+ & \int_{|t-s|\le|x-y|^{2}}\left|u(t,x)-u(s,x)\right|^{p}\cdot\frac{d(x,y)}{|x-y|^{s_{1}p+d}}\cdot\frac{d(t,s)}{|t-s|^{s_{0}p+1}}\\
\les & \int_{\R}\int_{\T^{d}\times\T^{d}}|u(t,x)-u(t,y)|^{p}\cdot\frac{d(x,y)}{|x-y|^{(2s_{0}+s_{1})p+d}}\cdot dt\\
+ & \int_{\T^{d}}\int_{\R\times\R}|u(t,x)-u(s,x)|^{p}\cdot\frac{d(t,s)}{|t-s|^{(s_{0}+s_{1}/2)p+1}}\cdot dx\\
= & \int_{\R}\int_{\T^{d}\times\T^{d}}\left(\frac{|u(t,x)-u(t,y)|}{|x-y|^{2s_{0}+s_{1}}}\right)^{p}\cdot\frac{d(x,y)}{|x-y|^{d}}\cdot dt\\
+ & \int_{\R\times\R}\int_{\T^{d}}\left(\frac{|u(t,x)-u(s,x)|}{|t-s|^{s_{0}+s_{1}/2}}\right)^{p}\cdot dx\cdot\frac{d(t,s)}{|t-s|}\\
\les & \|u\|_{L^{p}B_{p,p}^{2s_{0}+s_{1}}}^{p}+\norm u_{B_{p,p}^{s_{0}+s_{1}/2}L^{p}}^{p},
\end{align*}
which finishes the proof of (\ref{eq:frac Holder tx}).
\end{proof}
\begin{rem}
The parameter $2$ in (\ref{eq:frac Holder tx}) is replaceable. Indeed,
for every $\la>0$, we can show $\|F(u)\|_{B_{p/\al,p/\al}^{s_{0}\al}B_{p/\al,p/\al}^{s_{1}\al}}\les\|u\|_{L^{p}B_{p,p}^{\la s_{0}+s_{1}}\cap B_{p,p}^{s_{0}+s_{1}/\la}L^{p}}^{\al}$
by the same argument. The choice $\la=2$ is for the scaling of the
Schr{\"o}dinger operator.
\end{rem}
\subsection{Schr{\"o}dinger operators, Strichartz estimates, and atomic spaces}

We collect Strichartz estimates for linear Schr{\"o}dinger operators on
tori, properties of the atomic spaces, and Galilean invariance properties.

\subsubsection*{Schr{\"o}dinger operators}

For a function $\phi:\T^{d}\rightarrow\C$ and $t\in\R$, we
denote by $e^{it\De}\phi$ the function such that
\[
\widehat{e^{it\De}\phi}(\xi)=e^{-it\left|\xi\right|^{2}}\widehat{\phi}(\xi).
\]

For a function $f:\R\times\T^{d}\rightarrow\C$, we denote the retarded
Schr{\"o}dinger operator $K^{+}$ by 
\begin{equation}
K^{+}f(t):=\int_{-\infty}^{t}e^{i(t-s)\De}f(s)ds.\label{eq:K+}
\end{equation}

\subsubsection*{Strichartz estimates and atomic spaces}

The following are the kernel estimate and the $L_{t,x}^{4}$-Strichartz
estimate for the Schr{\"o}dinger operator on $\T$, first shown in \cite{bourgain1993fourier}:
\begin{prop}
\cite[Lemma 3.18]{bourgain1993fourier} On $\T$, for dyadic $N\in2^{\N}$
and coprime integers $l$ and $m$ such that $1\le l<m<N$ and $\left|t-\frac{l}{m}\right|\le\frac{1}{mN}$,
we have
\begin{equation}
\left|e^{it\De}\delta_{N}(t,x)\right|\les\frac{N}{\sqrt{m}\left(1+N\left|t-\frac{l}{m}\right|^{1/2}\right)}.\label{eq:Bourgain bound}
\end{equation}
\end{prop}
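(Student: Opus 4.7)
The plan is to follow Bourgain's original argument: separate the Schr{\"o}dinger kernel into residue classes modulo $m$ to exploit the near-rationality $t\approx l/m$, evaluate the arising quadratic Gauss sum, and estimate the remaining smooth sum by Poisson summation and stationary phase. This is a classical Weyl-type decomposition of an oscillatory sum with quadratic phase.

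More concretely, I begin with the explicit representation $e^{it\Delta}\delta_{N}(x)=\sum_{n\in\Z}\varphi_{N}(n)\,e^{-itn^{2}+inx}$ and set $\tau:=t-l/m$, so $|\tau|\le 1/(mN)$. Writing $n=n_{0}+mk$ with $n_{0}\in\{0,\ldots,m-1\}$ and $k\in\Z$, the rational part of the phase $e^{-i(l/m)n^{2}}$ depends (after the standard $2\pi$-normalization on $\T$) only on the residue class $n_{0}$, since the cross term $2ln_{0}k$ and the diagonal term $lmk^{2}$ contribute phases that lie in $2\pi\Z$. The sum then factorizes into the quadratic Gauss sum $\sum_{n_{0}=0}^{m-1}e^{-i(l/m)n_{0}^{2}}e^{in_{0}x}$ and a smooth $k$-sum involving $\varphi_{N}(n_{0}+mk)\,e^{-i\tau(n_{0}+mk)^{2}}e^{imkx}$.

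Next I apply Poisson summation to the $k$-sum, turning it into $m^{-1}\sum_{j\in\Z}$ of oscillatory integrals of the form $\int_{\R}\varphi_{N}(u)\,e^{-i\tau u^{2}+iu(x-2\pi j/m)}\,du$ (up to harmless residue-dependent shifts absorbed into the Gauss sum frequency). A standard van der Corput / stationary phase bound gives each such integral $\les \min\{N,|\tau|^{-1/2}\}\les N/(1+N|\tau|^{1/2})$, while smoothness of $\varphi_{N}$ restricts the $j$-sum to $O(1)$ effective terms via rapid decay of non-stationary contributions. Combining this with the classical Gauss sum bound
\[
\Bigl|\sum_{n_{0}=0}^{m-1}e^{-i(l/m)n_{0}^{2}+in_{0}\xi}\Bigr|\les\sqrt{m},
\]
valid uniformly in $\xi$ precisely because $(l,m)=1$, yields $\sqrt{m}\cdot m^{-1}\cdot N/(1+N|\tau|^{1/2})=N/[\sqrt{m}(1+N|t-l/m|^{1/2})]$, as desired.

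The main technical obstacle is the careful bookkeeping of phase shifts across the three factorizations: when one applies Poisson summation to the $k$-sum, the residue $n_{0}$ appears as a linear shift in both the Gauss sum frequency and the oscillatory integral, and one must verify that the Gauss sum bound remains uniform under the resulting shift by $2\pi j/m$, and that these shifts do not alter the stationary phase analysis beyond the range already absorbed by $|\tau|\le 1/(mN)$. The analysis is routine but requires attention to the interplay between the arithmetic structure of the rational phase and the smooth cutoff $\varphi_{N}$.
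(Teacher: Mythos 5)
Your sketch is the standard Weyl--Gauss-sum argument---decomposition into residue classes mod $m$, Poisson summation in $k$, the complete Gauss sum bound $\lesssim\sqrt m$ (using $\gcd(l,m)=1$), and van der Corput/stationary phase for the oscillatory integrals with the tail in $j$ killed by non-stationary decay---which is precisely the proof of the cited result: the paper does not reprove this lemma but quotes \cite[Lemma 3.18]{bourgain1993fourier}, so your route coincides with the source's. The outline is correct; the only point to tighten is that after Poisson summation the real shift $x$ belongs inside the oscillatory integral (the $e^{in_{0}x}$ factors cancel), so the Gauss sums that actually arise carry only the integer shifts $j$, and the classical complete Gauss sum bound suffices---you do not need, and should not invoke, uniformity in an arbitrary real frequency $\xi$.
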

\begin{prop}
\cite[(2.2)]{bourgain1993fourier} On the domain $[0,T]\times\T$,
where $T>0$, for any function $f\in L_{t,x}^{3/4}+L^{1}L^{2}$, we
have 
\begin{equation}
\norm{K^{+}f}_{L_{t,x}^{4}\cap L^{\infty}L^{2}}\les_{T}\norm f_{L_{t,x}^{3/4}+L^{1}L^{2}}.\label{eq:L^4 Strichartz}
\end{equation}
\end{prop}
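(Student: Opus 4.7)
The plan is to establish first the homogeneous $L^{4}$ Strichartz estimate
\[
\norm{e^{it\De}\phi}_{L^{4}_{t,x}([0,T]\times\T)}\les_{T}\norm{\phi}_{L^{2}(\T)},
\]
and then derive the stated inhomogeneous retarded bound (with the sum-space on the right) by a $TT^{*}$ argument combined with the Christ--Kiselev lemma; the $L^{\infty}L^{2}$ component of the left-hand side is essentially free from Minkowski's inequality.

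To prove the homogeneous estimate, I would reduce by Littlewood--Paley decomposition to $\phi=P_{N}\phi$ and expand
\[
\left|e^{it\De}\phi(x)\right|^{2}=\sum_{k_{1},k_{2}\in\Z}\widehat{\phi}(k_{1})\overline{\widehat{\phi}(k_{2})}\,e^{i(k_{1}-k_{2})x-i(k_{1}^{2}-k_{2}^{2})t}.
\]
Each phase is $2\pi$-periodic in $t$, so after partitioning $[0,T]$ into $O(T)$ subintervals of length $\le2\pi$ I may view $|e^{it\De}\phi|^{2}$ as a function on $\T_{t}\times\T_{x}$ and apply Plancherel. This yields
\[
\norm{e^{it\De}\phi}_{L^{4}_{t,x}}^{4}\les_{T}\sum_{m,n\in\Z}\Bb{\sum_{\substack{k_{1}-k_{2}=n\\k_{1}^{2}-k_{2}^{2}=m}}\widehat{\phi}(k_{1})\overline{\widehat{\phi}(k_{2})}}^{2}.
\]
The crucial point is that on $\T$ the system $k_{1}-k_{2}=n$, $k_{1}^{2}-k_{2}^{2}=m$ has at most one solution when $n\ne0$, since then $k_{1}+k_{2}=m/n$ is forced; the $n=0$ diagonal contributes $\norm{\phi}_{L^{2}}^{4}$ directly. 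Hence the right-hand side is $\les\norm{\phi}_{L^{2}}^{4}$, and summing over $N$ via standard almost-orthogonality of the products $|e^{it\De}P_{N}\phi|^{2}$ completes the proof of the homogeneous bound.

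For the inhomogeneous part, the non-retarded operator $f\mapsto\int_{\R}e^{i(t-s)\De}f(s)\,ds$ maps $L^{4/3}_{t,x}\to L^{4}_{t,x}$ by the $TT^{*}$ dual of the homogeneous estimate, and the Christ--Kiselev lemma (which applies since $4/3<4$) transfers this bound to $K^{+}$. The mapping $K^{+}\colon L^{1}L^{2}\to L^{\infty}L^{2}$ is immediate from Minkowski, and the two off-diagonal bounds $L^{1}L^{2}\to L^{4}_{t,x}$ and $L^{4/3}_{t,x}\to L^{\infty}L^{2}$ follow by duality plus Christ--Kiselev. Writing any $f=f_{1}+f_{2}$ with $f_{1}\in L^{4/3}_{t,x}$ and $f_{2}\in L^{1}L^{2}$ and applying each of the four mapping properties to the corresponding summand yields the stated estimate on the sum-space.

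The main obstacle is the combinatorial counting that produces the scale-invariant constant: bounding the number of representations of $(n,m)$ as $(k_{1}-k_{2},k_{1}^{2}-k_{2}^{2})$. In the $d=1$ setting this is essentially trivial because $k_{1}+k_{2}=m/n$ uniquely determines the pair $(k_{1},k_{2})$, which is precisely why no $N^{\ep}$ loss appears and why the analogous strategy in higher dimensions fails. The only remaining subtlety is bookkeeping for the sum--space norm on the right, handled by the four-case splitting above.
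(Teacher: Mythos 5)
Your proposal is essentially correct. Note first that the paper itself gives no proof of this proposition: it is quoted verbatim from Bourgain's 1993 paper (his estimate (2.2)), and the exponent $L^{3/4}_{t,x}$ in the statement is a typo for the dual exponent $L^{4/3}_{t,x}$, which is how you (correctly) read it. Your reconstruction is the classical argument: Zygmund's $L^4$ estimate on $\T$ by Plancherel in $(t,x)$ plus the divisor-type counting (for $n\neq 0$ the pair $(k_1,k_2)$ is determined by $k_1-k_2=n$ and $k_1+k_2=m/n$), followed by duality, the $TT^*$ identity $K f=e^{it\De}\int e^{-is\De}f\,ds$, and Christ--Kiselev (valid since $4/3<4$ and $4/3<\infty$) to pass to the retarded operator, with the sum-space handled by the four mapping properties. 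Two small remarks. First, the Littlewood--Paley reduction you invoke is unnecessary in dimension one: the counting argument applies directly to the full function $\phi$, since the off-diagonal terms are bounded by $\bigl(\sum_k|\widehat\phi(k)|^2\bigr)^2$ with no frequency localization; this also removes the only under-justified step in your write-up, namely the ``standard almost-orthogonality of the products $|e^{it\De}P_N\phi|^2$'' used to resum in $N$, which as stated would itself require an argument. Second, the bound $K^+\colon L^1L^2\to L^4_{t,x}$ does not need duality or Christ--Kiselev: Minkowski's inequality puts the $L^4_{t,x}$ norm inside the $ds$-integral, where the homogeneous estimate applies to each $e^{i(t-s)\De}f(s)$ directly, the sharp time truncation being harmless under the absolute value. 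With these simplifications the argument is complete and matches the source Bourgain cites.
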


The following is a scale-invariant Strichartz estimate for general
tori, which is a main ingredient of the proof of Theorem \ref{thm:LWP s<a}.
This was first shown for rational tori in \cite{bourgain2015proof}.
For general tori, a subcritical version was first shown in \cite{bourgain2015proof}
and was sharpened to the critical scale in \cite{killip2016scale}.
\begin{prop}
\cite{bourgain1993fourier,bourgain2015proof,killip2016scale} Fix $p\in(\frac{2(d+2)}{d},\infty)$.
Let $\s=\frac{d}{2}-\frac{d+2}{p}$. Fix a finite interval $I\subset\R$.
For $N\in2^{\N}$, we have
\begin{equation}
\|P_{N}e^{it\De}f\|_{L_{t,x}^{p}(I\times\T^{d})}\les_{p,I}\|f\|_{H^{\s}(\T^{d})}.\label{eq:Bourgain Strichartz}
\end{equation}
\end{prop}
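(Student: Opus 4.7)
The plan is to derive this estimate from the $\ell^{2}$-decoupling theorem of Bourgain--Demeter. First, applying $\ell^{2}$-decoupling to the truncated paraboloid $\{(\xi,|\xi|^{2}):|\xi|\sim N\}$ (after reducing from $\T^{d}$ to $\R^{d}$ by viewing $P_{N}e^{it\De}f$ as an extension operator on a dyadic piece of the paraboloid) yields, for any $\ve>0$ and $p\ge\frac{2(d+2)}{d}$, the \emph{subcritical} bound
\[
\|P_{N}e^{it\De}f\|_{L_{t,x}^{p}([0,1]\times\T^{d})}\les_{p,\ve}N^{\s+\ve}\|f\|_{L^{2}}.
\]
The technical input is a partition of the cap into $N^{-1}$-neighborhoods on which Bernstein's inequality controls the $L^{p}$ norm by the $L^{2}$ norm; $\ell^{2}$-decoupling then sums these contributions, incurring only the $N^{\ve}$ loss. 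This step works uniformly for rational and irrational tori.

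Second, I would remove the $N^{\ve}$ loss to reach the scale-invariant exponent $N^{\s}$. For rational tori (which covers the square torus considered in this paper), this was carried out by Bourgain \cite{bourgain1993fourier,bourgain2015proof} through an exact scaling/periodicity argument: exploiting $2\pi$-periodicity in time of $e^{it\De}P_{N}f$ together with a Galilean change of variables, one partitions $[0,1]$ into short pieces of length $\sim N^{-2}$ centered at rationals $l/m$, controls each piece by the essentially Euclidean kernel bound (\ref{eq:Bourgain bound}), and averages. For general (possibly irrational) tori, Killip and Vi\c{s}an \cite{killip2016scale} replaced exact periodicity with a carefully chosen tiling of the frequency lattice $\Z^{d}$ adapted to the torus periods, producing the same scale-invariant bound.

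The main obstacle is this second step: decoupling alone only gives the $N^{\s+\ve}$ estimate, and removing the loss is genuinely more delicate, requiring either number-theoretic input (the rational case, via (\ref{eq:Bourgain bound})) or a carefully chosen geometric covering of frequencies (the irrational case, via Killip--Vi\c{s}an). Once the unit-time estimate is established, the bound on a finite interval $I$ follows at once by partitioning $I$ into $O(|I|)$ unit intervals and invoking the time-translation invariance of $e^{it\De}$, which accounts for the $I$-dependence of the implicit constant.
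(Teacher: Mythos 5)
The paper gives no proof of this proposition; it is imported verbatim from the cited works, and your sketch follows precisely that route: Bourgain--Demeter decoupling yields the estimate with an $N^{\ve}$ loss, the loss is removed for strictly supercritical $p$ by Bourgain's argument (rational tori) or Killip--Vi\c{s}an (general rectangular tori), and the finite interval $I$ is handled by translating unit time intervals. The only quibble is that your one-line glosses of the $\ve$-removal are loose --- Bourgain's argument is a level-set estimate over the major arcs $|t-\frac{l}{m}|\le\frac{1}{mN}$ using the kernel bound (\ref{eq:Bourgain bound}) rather than an averaging over intervals of length $N^{-2}$, and Killip--Vi\c{s}an's argument is uniform in the torus periods (a rescaling-plus-decoupling argument in which the strict supercriticality of $p$ supplies the needed summation), not a period-adapted tiling of the frequency lattice --- but since the statement is cited rather than proved in the paper, this does not affect the comparison.
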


Next, we recall the definition of atomic spaces $U^{p}$ and $V^{p}$.
Although we will not directly use $U^{p},V^{p}$-structures to construct
the function spaces for the well-posedness, we will still use their
embedding properties. Here, we collect facts relevant to them. For
a general theory, we refer to \cite{koch2014dispersive}, \cite{herr2011global},
and \cite{hadac2009well}.
\begin{defn}[atomic spaces, \cite{herr2011global}]
Let $H$ be a separable Hilbert space. Let $\mathcal{Z}$ be the
collection of finite non-decreasing sequences $\left\{ t_{k}\right\} _{k=0}^{K}$
in $(-\infty,\infty]$. For $1\le p<\infty$, we call $a:\R\rightarrow H$
a $U^{p}$-atom if $a$ can be expressed as $a=\sum_{k=1}^{K}\chi_{[t_{k-1},t_{k})}\phi_{k}$,
$\sum_{k=1}^{K}\|\phi_{k}\|_{H}^{p}=1$. We define $U^{p}H$ as the
space of all functions $u:\R\rightarrow H$ that can be represented
as $u=\sum_{j=1}^{\infty}\la_{j}a_{j}$, where $a_{j}$ is a $U^{p}$-atom
for each $j\in\N$ and $\left\{ \la_{j}\right\} \in\ell^{1}$ is a
complex-valued sequence, equipped with the norm
\[
\|u\|_{U^{p}H}:=\inf\left\{ \sum_{j=1}^{\infty}|\la_{j}|:u=\sum_{j=1}^{\infty}\la_{j}a_{j},\la_{j}\in\C,a_{j}:U^{p}\text{-atom}\right\} .
\]
We define $V^{p}H$ as the space of all functions $u:\R\rightarrow H$
with $\norm u_{V^{p}H}<\infty$, where the norm is defined as
\[
\|u\|_{V^{p}H}^{p}:=\sup_{\left\{ t_{k}\right\} _{k=0}^{K}\in\mc Z}\sum_{k=1}^{K}\|u(t_{k})-u(t_{k-1})\|_{H}^{p},
\]
where the convention $u(\infty)=0$ is used. Then, we define $V_{rc}^{p}H$
as the subspace of $V^{p}H$ of right-continuous function $u:\R\rightarrow H$
satisfying $\lim_{t\rightarrow-\infty}u(t)=0.$ For simplicity of
notation, we omit $H$ in $U^{p}H,V^{p}H,V_{rc}^{p}H$ when $H\simeq\C$.
Based on this, we define the spaces $U_{\De}^{p}H,V_{\De}^{p}H,V_{\De,rc}^{p}H$
as the images by the map $u\mapsto e^{it\De}u$ of $U^{p}H,V^{p}H,V_{rc}^{p}H$,
respectively.

We define $Y^{s}$ as the space of $u:\R\times\T^{d}\rightarrow\C$
such that $\widehat{u}(n)$ lies in $V_{rc}^{2}$ for each $n\in\Z^{d}$
and
\[
\|u\|_{Y^{s}}^{2}:=\sum_{n\in\Z^{d}}\jp n^{2s}\|e^{it|n|^{2}}\widehat{u(t)}(n)\|_{V^{2}}^{2}<\infty.
\]
\end{defn}
While the space $Y^{s}$ is defined on the full domain $\R\times\T^{d}$,
since Strichartz estimates such as (\ref{eq:Bourgain Strichartz})
depend on the size of a time interval, we often need to restrict the
space to a finite interval. Given a time interval $I$, the $Y^{s}$
space corresponding to $I$, $Y^{s}(I)$, is the restriction of the
$Y^{s}$ space to the domain $I\times\T^{d}$.

In particular, in the proof of Theorem \ref{thm:LWP s<a}, we will
always consider $Y^{s}$ and all the other solution spaces localized
on a short time interval containing $0$ to avoid any issue with the
interval size.

The space $Y^{s}$ is used in \cite{herr2011global}, \cite{killip2016scale},
and \cite{lee2019local}. Some well-known properties of such atomic
spaces are the following propositions:
\begin{prop}
\cite{herr2011global}\label{prop:atomic space props} Fix $s\in\R$.
Fix a finite time interval $I$ and the corresponding $Y^{s}$ space.
We have the following:
\begin{enumerate}
\item Let $A$ and $B$ be disjoint subsets of $\Z^{d}$. We have
\begin{equation}
\|P_{A\cup B}u\|_{Y^{s}}^{2}=\|P_{A}u\|_{Y^{s}}^{2}+\|P_{B}u\|_{Y^{s}}^{2}.\qquad(\ell_{\xi}^{2}-structure)\label{eq:l^2_=00005Cxistructure}
\end{equation}
\item For $q>2$, we have
\begin{equation}
U_{\De}^{2}H^{s}\hook Y^{s}\hook V_{\De,rc}^{2}H^{s}\hook U_{\De}^{q}H^{s}\hook L^{\infty}H^{s}.\label{eq:U^p V^p embed}
\end{equation}
\end{enumerate}
\end{prop}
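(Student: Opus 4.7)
The plan is to address the two claims of the proposition in turn, appealing only to the $\ell^{2}_{\xi}$ structure built into the definition of the $Y^{s}$ norm and to standard embeddings among the atomic spaces $U^{p}$ and $V^{p}$.

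The identity \eqref{eq:l^2_=00005Cxistructure} is immediate from the definition of $Y^{s}$: since $P_{A}u$ and $P_{B}u$ have disjoint Fourier supports, the defining series
\[
\norm u_{Y^{s}}^{2}=\sum_{n\in\Z^{d}}\jp n^{2s}\norm{e^{it|n|^{2}}\wt{u(t)}(n)}_{V^{2}}^{2}
\]
restricted to $A\cup B$ cleanly splits into the contributions from $A$ and $B$; no further ingredient is needed.

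For the chain \eqref{eq:U^p V^p embed}, I verify the four links separately. For $U_{\De}^{2}H^{s}\hook Y^{s}$, I test on an arbitrary $U^{2}_{\De}H^{s}$-atom $u(t)=\sum_{k}\chi_{[t_{k-1},t_{k})}(t)e^{it\De}\phi_{k}$ with $\sum_{k}\norm{\phi_{k}}_{H^{s}}^{2}=1$; fiberwise, $e^{it|n|^{2}}\wt{u(t)}(n)=\sum_{k}\chi_{[t_{k-1},t_{k})}(t)\wt{\phi_{k}}(n)$ is a scalar step function whose $V^{2}$-norm is controlled by $(\sum_{k}|\wt{\phi_{k}}(n)|^{2})^{1/2}$, and summing against $\jp n^{2s}$ together with Plancherel yields $\norm u_{Y^{s}}\les 1$, which extends to general $u$ through atomic decomposition. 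For $Y^{s}\hook V_{\De,rc}^{2}H^{s}$, Plancherel in $x$ gives, for any finite partition $\{t_{k}\}$,
\[
\sum_{k}\norm{e^{-it_{k}\De}u(t_{k})-e^{-it_{k-1}\De}u(t_{k-1})}_{H^{s}}^{2}=\sum_{n}\jp n^{2s}\sum_{k}|e^{it_{k}|n|^{2}}\wt{u(t_{k})}(n)-e^{it_{k-1}|n|^{2}}\wt{u(t_{k-1})}(n)|^{2};
\]
swapping the sums, bounding the inner sum by the $V^{2}$-norm on the $n$-th fiber, and taking a supremum over partitions gives the embedding, while right-continuity and vanishing at $-\infty$ of $e^{-it\De}u$ in $H^{s}$ follow fiberwise and by dominated convergence using that the $Y^{s}$-sum is summable.

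The final two links $V_{\De,rc}^{2}H^{s}\hook U_{\De}^{q}H^{s}$ (for $q>2$) and $U_{\De}^{q}H^{s}\hook L^{\infty}H^{s}$ are standard consequences of the abstract $U^{p}/V^{p}$ theory \cite{koch2014dispersive,hadac2009well}: the former is the classical $V^{2}\hook U^{q}$ embedding in any Hilbert space applied to $e^{-it\De}u$, and the latter follows from the $L^{\infty}$-bound on any $U^{q}$-atom by its $\ell^{q}$ sequence of amplitudes. The main technical heart, and the only non-trivial bookkeeping, is the Plancherel swap in the middle embedding; the rest amounts to direct unpacking of definitions and quotation of known results.
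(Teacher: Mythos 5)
Your proof is correct. Note that the paper itself does not prove this proposition: it is imported verbatim from \cite{herr2011global} (with the general $U^{p}/V^{p}$ background in \cite{hadac2009well,koch2014dispersive}), so there is no in-paper argument to compare against; what you have written is essentially the standard proof from that literature. The orthogonality identity is indeed immediate from the fiberwise definition of the $Y^{s}$ norm. For the chain of embeddings, your reductions are the right ones: testing $U_{\De}^{2}H^{s}$ on atoms and bounding the scalar $V^{2}$ norm of a step function by the $\ell^{2}$ norm of its values (keep in mind the convention $u(\infty)=0$, so the terminal jump $|a_{K}|$ also appears, but it is still dominated by the same $\ell^{2}$ quantity, only affecting the constant), then extending by the atomic decomposition and the triangle inequality; the Plancherel swap plus supremum over partitions for $Y^{s}\hook V_{\De}^{2}H^{s}$, with right-continuity and vanishing at $-\infty$ recovered by dominated convergence against the summable weights $\jp n^{2s}\|e^{it|n|^{2}}\wt{u(t)}(n)\|_{V^{2}}^{2}$, exactly as you say; and the last two links are the classical $V_{rc}^{2}\hook U^{q}$ ($q>2$) and $U^{q}\hook L^{\infty}$ embeddings for a Hilbert target, conjugated by $e^{it\De}$. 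The only cosmetic point is that the proposition is stated for $Y^{s}$ on a finite interval $I$; since $Y^{s}(I)$ is defined by restriction, the embeddings transfer verbatim, and it is worth one sentence to say so.
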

\begin{prop}[Strichartz estimates]
Fix a finite time interval $I$ and the corresponding $Y^{s}$ space.
Fix $p\in(\frac{2(d+2)}{d},\infty)$. Let $\s=\frac{d}{2}-\frac{d+2}{p}$.
Denote the diameter of a set $S\subset\Z^{d}$ by $\textrm{diam}(S)$.
We have the following estimates:
\begin{itemize}
\item Let $C\subset\Z^{d}$ be a square cube. We have the estimate
\begin{equation}
\|\chi_{I}P_{C}u\|_{L_{t,x}^{p}}\les_{p,I}\jp{\textrm{diam}(C)}^{\s}\|P_{C}u\|_{Y^{0}}.\label{eq:cube Strichartz}
\end{equation}
\item For $s\in\R$ and a function $f:I\times\T^{d}\rightarrow\C$, we have
\begin{equation}
\norm{K^{+}f}_{Y^{s}}\les_{I}\norm f_{(Y^{-s})'}.\label{eq:U2V2}
\end{equation}
\end{itemize}
\end{prop}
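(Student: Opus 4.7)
The plan is to derive both estimates from (\ref{eq:Bourgain Strichartz}) combined with the embedding relations (\ref{eq:U^p V^p embed}) for atomic spaces.

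For (\ref{eq:cube Strichartz}), I would proceed in two stages. First, establish the estimate for free Schr\"odinger evolutions $u = e^{it\De}\phi$ and a frequency cube $C$ centered near some $\xi_{0} \in \Z^{d}$ of diameter $N$. The Galilean transform
\[
v(t, x) := e^{i\xi_{0}\cdot x - it|\xi_{0}|^{2}}\, u\bb{t, x - 2t\xi_{0}}
\]
is again a free Schr\"odinger evolution, has spatial Fourier support in $C - \xi_{0} \subset [-CN, CN]^{d}$, and preserves the $L^{p}_{t,x}(I \times \T^{d})$ norm (the time-dependent translation in $x$ is harmless since $\T^{d}$ is compact). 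Then (\ref{eq:Bourgain Strichartz}) applied to the dyadic piece at scale $\sim N$ containing $C - \xi_{0}$ gives the free-evolution bound $\jp{N}^{\s}\norm{P_{C}\phi}_{L^{2}}$.

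Second, I lift this bound to $Y^{0}$ by atomic decomposition. Fixing some $q \in (2, p)$ and using $Y^{0} \hook U^{q}_{\De}L^{2}$ from (\ref{eq:U^p V^p embed}), it suffices to estimate $\norm{P_{C}a}_{L^{p}_{t,x}}$ for a single $U^{q}$-atom $a = \sum_{k}\chi_{[t_{k-1}, t_{k})}e^{it\De}\phi_{k}$ with $\sum_{k}\norm{\phi_{k}}_{L^{2}}^{q} \le 1$. Disjointness of the time intervals turns the $L^{p}_{t,x}$-norm into an $\ell^{p}_{k}$-sum of norms on each piece; applying the free-evolution bound piece by piece and using $\ell^{q} \hook \ell^{p}$ (since $p > q$) gives $\norm{P_{C}a}_{L^{p}_{t,x}} \les \jp{N}^{\s}$. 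The $\ell^{1}$ triangle inequality over atomic coefficients then closes (\ref{eq:cube Strichartz}).

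For (\ref{eq:U2V2}), I would exploit the $\ell^{2}_{\xi}$-structure (\ref{eq:l^2_=00005Cxistructure}) to reduce to a mode-by-mode estimate. Writing $g_{n}(t) := e^{it|n|^{2}}\widehat{K^{+}f}(t, n) = \int_{-\infty}^{t}e^{is|n|^{2}}\widehat{f}(s, n)\,ds$, I would control $\norm{g_{n}}_{V^{2}}$ by duality, testing against $U^{2}$-atoms $h_{n}$. A Fubini interchange converts this pairing into a pairing of $f$ against an element constructed from the $h_{n}$ via an adjoint Duhamel formula; because of $U^{2}_{\De} \hook Y^{0}$ in (\ref{eq:U^p V^p embed}), this construction, assembled across Fourier modes with weight $\jp{n}^{-s}$ and an $\ell^{2}_{\xi}$ sum, yields an element of $Y^{-s}$, whose pairing with $f$ is bounded by $\norm{f}_{(Y^{-s})'}$. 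Cauchy--Schwarz in the $\ell^{2}_{\xi}$-summation then matches the $Y^{s}$-norm on the left-hand side. The main technical point I anticipate is careful bookkeeping at the endpoints of the finite interval $I$ within the $V^{2}_{rc}$-framework, which is handled by the standard $U^{p}$-$V^{p}$ duality machinery developed in \cite{hadac2009well, koch2014dispersive, herr2011global}.
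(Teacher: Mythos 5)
Your proposal is correct and matches the route the paper itself indicates (the paper only sketches these standard facts and cites \cite{killip2016scale,herr2011global}): for (\ref{eq:cube Strichartz}) you combine the scale-invariant estimate (\ref{eq:Bourgain Strichartz}) with Galilean recentering and the atomic transfer principle through $Y^{0}\hook U_{\De}^{q}L^{2}$ from (\ref{eq:U^p V^p embed}), which is exactly the argument of the cited literature. Two small remarks. First, a sign slip: with the paper's convention (\ref{eq:Galilean}), the map $v(t,x)=e^{i\xi_{0}\cdot x-it|\xi_{0}|^{2}}u(t,x-2t\xi_{0})=I_{\xi_{0}}u$ shifts the spatial Fourier support to $C+\xi_{0}$, not $C-\xi_{0}$; you should apply $I_{-\xi_{0}}$ to recenter the cube at the origin --- harmless, since the rest of the argument is unchanged. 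Second, for (\ref{eq:U2V2}) the cited works derive the estimate as a $K^{+}:(Y^{-s})'\rightarrow X^{s}$ bound and then use $X^{s}\hook Y^{s}$ (as the paper's footnote notes), whereas you estimate each Fourier mode's $V^{2}$ norm directly by testing against $U^{2}$-atoms, assembling the test functions into a $Y^{-s}$ element via $U_{\De}^{2}\hook Y^{0}$ and the $\ell_{\xi}^{2}$-structure (\ref{eq:l^2_=00005Cxistructure}). This is a mild but sensible variation: it avoids introducing the $X^{s}$ space (which this paper never defines) at the cost of invoking the $U^{2}$--$V^{2}$ duality mode by mode, and it yields the same estimate with the same strength.
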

(\ref{eq:cube Strichartz}) is a consequence of (\ref{eq:Bourgain Strichartz})
used, for example, in \cite{killip2016scale}. It is obtained using
the atomic structure and the Galilean invariance of the $Y^{0}$ norm.

(\ref{eq:U2V2}) is a version of the $U^{2}-V^{2}$ dual estimate; see \cite{herr2011global}.\footnote{In other literature such as \cite{herr2011global}, \cite{killip2016scale},
and \cite{lee2019local}, (\ref{eq:U2V2}) is obtained from the duality
between $X^{s}$ and $Y^{s}$. Since $X^{s}\hook Y^{s}$, we still have
(\ref{eq:U2V2}).}

\subsubsection*{Galilean transforms}

We denote the Galilean transform with a shift $\xi\in\Z^{d}$ by $I_{\xi}:\mathcal{S}'(\R\times\T^{d})\rightarrow\mathcal{S}'(\R\times\T^{d})$,
where $\mc S'$ denotes the set of tempered distributions, which maps
$u:\R\times\T^{d}\rightarrow\C$ to 
\begin{equation}
I_{\xi}u(t,x)=e^{ix\cdot\xi-it|\xi|^{2}}u(t,x-2t\xi).\label{eq:Galilean}
\end{equation}

We collect some elementary properties of the Galilean transforms.
In particular, the $Y^{0}$ norm is invariant under the Galilean transforms.
\begin{prop}
\label{prop:galilean}For $u\in\mathcal{S}'(\R\times\T^{d})$ and
$\xi,\eta\in\Z^{d}$, we have the following: 
\begin{itemize}
\item $(i\d_{t}+\De)I_{\xi}u=I_{\xi}(i\d_{t}+\De)u$.
\item $I_{\xi}I_{\eta}u=I_{\xi+\eta}u$.
\item For each set $C\subset\Z^{d}$, we have $P_{C+\xi}I_{\xi}u=I_{\xi}P_{C}u$.
\item The $Y^{0}$ norm is invariant under the Galilean transforms, i.e.,
$\norm u_{Y^{0}}=\norm{I_{\xi}u}_{Y^{0}}$.
\item The spacetime Fourier transform of $I_{\xi}u$ can be written as follows:
\[
\widetilde{I_{\xi}u}(\tau,n)=\widetilde{u}(\tau+2n\cdot\xi-|\xi|^{2},n-\xi).
\]
\end{itemize}
\end{prop}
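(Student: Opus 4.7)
The plan is to verify each of the five bullet points by direct computation from the explicit formula
\[
I_{\xi}u(t,x)=e^{ix\cdot\xi-it|\xi|^{2}}u(t,x-2t\xi),
\]
so the only real work is bookkeeping of phase factors and translations; no deep functional tools are needed.

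For the intertwining identity, I would compute $\d_{t}(I_{\xi}u)$ and $\De(I_{\xi}u)$ via the product and chain rules. The time derivative produces a term $-i|\xi|^{2}I_{\xi}u$ from the phase and a term $-2\xi\cdot(\na u)(t,x-2t\xi)$ from differentiating the shifted argument; the Laplacian produces $-|\xi|^{2}I_{\xi}u$ and a cross term $+2i\xi\cdot e^{ix\cdot\xi-it|\xi|^{2}}(\na u)(t,x-2t\xi)$. Adding $i\d_{t}+\De$, the $\pm|\xi|^{2}$ contributions cancel in pairs and the $\pm 2i\xi\cdot\na u$ terms cancel as well, leaving $e^{ix\cdot\xi-it|\xi|^{2}}[(i\d_{t}+\De)u](t,x-2t\xi)=I_{\xi}(i\d_{t}+\De)u$. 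The composition law $I_{\xi}I_{\eta}u=I_{\xi+\eta}u$ is obtained by substituting the definition twice and using $|\xi|^{2}+2\xi\cdot\eta+|\eta|^{2}=|\xi+\eta|^{2}$ together with $(x-2t\xi)-2t\eta=x-2t(\xi+\eta)$.

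For the three Fourier statements, I would first compute the partial Fourier transform in $x$. After the change of variables $y=x-2t\xi$ one finds
\[
\widehat{I_{\xi}u}(t,n)=e^{it(|\xi|^{2}-2n\cdot\xi)}\,\widehat{u}(t,n-\xi),
\]
which shows that $I_{\xi}$ shifts the spatial frequency support by $\xi$, immediately yielding $P_{C+\xi}I_{\xi}u=I_{\xi}P_{C}u$. Multiplying by $e^{it|n|^{2}}$ the exponent collapses to $e^{it|n-\xi|^{2}}$, hence
\[
\norm{e^{it|n|^{2}}\widehat{I_{\xi}u}(t,n)}_{V^{2}}=\norm{e^{it|n-\xi|^{2}}\widehat{u}(t,n-\xi)}_{V^{2}},
\]
and reindexing $m:=n-\xi$ in the $Y^{0}$ norm gives $\norm{I_{\xi}u}_{Y^{0}}=\norm u_{Y^{0}}$. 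Finally, taking the Fourier transform in $t$ of the displayed expression produces the claimed identity $\widetilde{I_{\xi}u}(\tau,n)=\widetilde{u}(\tau+2n\cdot\xi-|\xi|^{2},n-\xi)$.

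The main obstacle is purely bookkeeping: correctly tracking the translated argument $x-2t\xi$ against the oscillating phase and ensuring that all cross terms collected from $\De$ cancel those generated by $\d_{t}$. Once the formula $\widehat{I_{\xi}u}(t,n)=e^{it(|\xi|^{2}-2n\cdot\xi)}\widehat{u}(t,n-\xi)$ is recorded, the remaining four items become immediate rewritings.
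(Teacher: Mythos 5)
Your computations are correct, and since the paper states these Galilean properties without proof (treating them as elementary), your direct verification from the definition of $I_{\xi}$ -- the phase/translation bookkeeping for the intertwining and composition laws, the key formula $\widehat{I_{\xi}u}(t,n)=e^{it(|\xi|^{2}-2n\cdot\xi)}\widehat{u}(t,n-\xi)$, and the reindexing in the $Y^{0}$ sum -- is exactly the intended argument. Nothing is missing beyond the routine remark that the pointwise manipulations extend to $\mathcal{S}'$ by duality.
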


\section{\label{sec:-Zspaces}$Z^{s}$ spaces\label{sec:Z^s-spaces}}

In this section, we introduce our main function spaces $Z^{s}=Z^{s}(\R\times\T^{d})$
for the local well-posedness theorem. We fix a sufficiently small
number $\s>0$ and the corresponding parameter $p$, depending only
on $d$ and $s$:
\begin{equation}
\s=\s(d,s)\ll1\text{ and }p:=\frac{d+2}{\frac{d}{2}-\s}.\label{eq:=00005Cs,p def}
\end{equation}
$\s$ has to approach $0$ as $d\rightarrow\infty$ or $s\rightarrow0$.
It suffices to choose, for example, $\s=10^{-10^{10^{10^{10^{d+1/s}}}}}$.

\subsection{Strichartz estimates}

In this subsection, we prove that the $Y^{0}$ norm is stronger than
certain time Besov spaces. For this, we start with a lemma that helps
estimate time Besov norms of $U^{p}$-atoms.
\begin{lem}
Let $E$ be a Banach space on $\T^{d}$. Let $q\in(1,\infty)$ and
$\al\in(0,\frac{1}{q})$. Given a finite collection of disjoint intervals
$I_{1},\ldots,I_{n}\subset\R$ and functions $f_{1},\ldots,f_{n}\in B_{q,1}^{\al}E$,
we have
\begin{equation}
\norm{\sum_{j=1}^{n}f_{j}\chi_{I_{j}}}_{B_{q,\infty}^{\al}E}\les_{\al,q}\left(\sum_{j=1}^{n}\norm{f_{j}}_{B_{q,1}^{\al}E}^{q}\right)^{1/q}.\label{eq:atom estimate}
\end{equation}
Here, $\chi_{I}$ is the sharp cutoff of $I$. We emphasize that (\ref{eq:atom estimate})
is independent of the choice of $\left\{ I_{j}\right\} $.
\end{lem}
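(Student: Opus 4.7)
The plan is to upgrade the $B_{q,\infty}^{\al}E$ bound to the stronger $B_{q,q}^{\al}E$ norm via the embedding $B_{q,q}^{\al}E\hook B_{q,\infty}^{\al}E$, then use the Slobodeckij characterization~\eqref{eq:slobo tx} to split into a diagonal part (controlled directly by $\sum_{j}\norm{f_{j}}_{B_{q,1}^{\al}E}^{q}$) and an off-diagonal part (controlled by a Hardy-type weighted integral that absorbs exactly the gap between $B_{q,1}$ and $B_{q,q}$).

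First I would apply \eqref{eq:slobo tx} to $u=\sum_{j}f_{j}\chi_{I_{j}}$ and reduce to bounding $\norm{u}_{L^{q}E}^{q}$ and the double integral $\iint_{\R\times\R}\norm{u(t)-u(s)}_{E}^{q}|t-s|^{-\al q-1}\,dt\,ds$. The $L^{q}E$ contribution is controlled trivially by $\sum_{j}\norm{f_{j}}_{L^{q}E}^{q}$, since the $I_{j}$'s are disjoint. I would split the Slobodeckij integral according to whether $t,s$ both lie in a common $I_{j}$, in distinct intervals, or in $\R\setminus\bigcup_{j}I_{j}$. The diagonal part is dominated by $\sum_{j}\norm{f_{j}}_{B_{q,q}^{\al}E}^{q}\le\sum_{j}\norm{f_{j}}_{B_{q,1}^{\al}E}^{q}$, directly from \eqref{eq:slobo tx} applied to each $f_{j}$.

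For the off-diagonal contributions I would use $\norm{u(t)-u(s)}_{E}\le\norm{u(t)}_{E}+\norm{u(s)}_{E}$, apply Fubini and the disjointness of the $I_{j}$'s, and arrive (up to symmetric terms) at
\[
\sum_{j}\int_{I_{j}}\norm{f_{j}(t)}_{E}^{q}\int_{\R\setminus I_{j}}\frac{ds}{|t-s|^{\al q+1}}\,dt
\;\sim\;\sum_{j}\int_{I_{j}}\frac{\norm{f_{j}(t)}_{E}^{q}}{d(t,\d I_{j})^{\al q}}\,dt,
\]
the inner $s$-integral being finite precisely because $\al q<1$. After translation, this reduces, for each $j$, to the Hardy-type inequality
\[
\int_{0}^{\infty}\frac{\norm{g(t)}_{E}^{q}}{t^{\al q}}\,dt\;\les\;\norm{g}_{B_{q,1}^{\al}E}^{q}
\]
for $g\in B_{q,1}^{\al}E$, with constant independent of $\{I_{j}\}$.

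To prove the Hardy-type inequality I would decompose $g=\sum_{N\in 2^{\N}}P_{N}^{t}g$, apply the $L^{q}$-triangle inequality with weight $t^{-\al q}$, and reduce to the single-frequency estimate $\big(\int_{0}^{\infty}\norm{P_{N}^{t}g(t)}_{E}^{q}\,t^{-\al q}\,dt\big)^{1/q}\les N^{\al}\norm{P_{N}^{t}g}_{L^{q}E}$. The latter I would split at $t\sim 1/N$: on $(0,1/N]$ use the Bernstein bound $\norm{P_{N}^{t}g}_{L^{\infty}E}\les N^{1/q}\norm{P_{N}^{t}g}_{L^{q}E}$ from \eqref{eq:Besov Bernstein} together with $\int_{0}^{1/N}t^{-\al q}\,dt\sim N^{\al q-1}$; on $[1/N,\infty)$ dyadically decompose the weight $t^{-\al q}$ and use $\al q>0$ to sum geometrically. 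Summing in $N$ produces the $\ell^{1}$-sum $\sum_{N}N^{\al}\norm{P_{N}^{t}g}_{L^{q}E}\sim\norm{g}_{B_{q,1}^{\al}E}$, which is precisely why the sharper second index $1$ on the right-hand side of \eqref{eq:atom estimate} is essential and cannot be relaxed to $q$. I expect the main obstacle to be the careful bookkeeping of the cross terms in the off-diagonal decomposition of the Slobodeckij integral, and verifying that the Hardy bound indeed respects the vector-valued $E$-structure with constants independent of $\{I_{j}\}$.
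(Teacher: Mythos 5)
Your proof is correct, but it takes a genuinely different route from the paper's. The paper proves two endpoint bounds: an $L^{r}$-type bound $\norm{\sum_{j}f_{j}\chi_{I_{j}}}_{B_{r,\infty}^{0}E}\les\bigl(\sum_{j}\norm{f_{j}}_{B_{r,1}^{0}E}^{r}\bigr)^{1/r}$ with $r=\frac{1-\al}{1/q-\al}$, which uses only the disjointness of the $I_{j}$, and a one-derivative bound $\norm{\sum_{j}f_{j}\chi_{I_{j}}}_{B_{1,\infty}^{1}E}\les\sum_{j}\norm{f_{j}}_{B_{1,1}^{1}E}$, obtained by mollifying the sharp cutoffs and estimating $W^{1,1}E$ norms via \eqref{eq:B c W c B}; it then complex-interpolates, the identity $(1-\tfrac{1}{q})/(1-\tfrac{1}{r})=1-\al$ giving exactly \eqref{eq:atom estimate}. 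You instead work directly at regularity $\al$: the Slobodeckij characterization \eqref{eq:slobo tx}, a diagonal/off-diagonal splitting of the double integral, and a vector-valued fractional Hardy inequality $\int_{0}^{\infty}\norm{g(t)}_{E}^{q}t^{-\al q}dt\les\norm g_{B_{q,1}^{\al}E}^{q}$ proved frequency-by-frequency; your computation of the off-diagonal term (inner integral $\sim d(t,\d I_{j})^{-\al q}$, then translation/reflection to the half-line, with translation invariance of the Besov norm keeping the constant independent of $\{I_{j}\}$ and $n$) is sound, and this is exactly where $\al q<1$ enters. What each buys: your argument is more elementary (no complex interpolation, no limiting argument with smoothed cutoffs) and in fact yields the stronger conclusion with $B_{q,q}^{\al}E$ on the left of \eqref{eq:atom estimate}, while the paper's argument is shorter given the interpolation machinery and isolates cleanly where the full-derivative information is used. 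Two small caveats: your Bernstein step $\norm{P_{N}^{t}g}_{L^{\infty}E}\les N^{1/q}\norm{P_{N}^{t}g}_{L^{q}E}$ is not literally covered by \eqref{eq:Besov Bernstein}, which requires $p<\infty$, though it is standard and can be bypassed by H\"older against the weight with a large finite exponent; and the side remark that the inner index $1$ ``cannot be relaxed to $q$'' is only a statement about your particular $\ell^{1}$ frequency summation, not about the estimate itself (the fractional Hardy inequality holds for $B_{q,q}^{\al}$ when $\al q<1$), but nothing in your proof of the stated lemma depends on that claim.
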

\begin{proof}
Let $r=\frac{1-\al}{\frac{1}{q}-\al}$. Since $I_{j}$'s are disjoint,
we have
\begin{equation}
\norm{\sum_{j=1}^{n}f_{j}\chi_{I_{j}}}_{B_{r,\infty}^{0}E}\les\norm{\sum_{j=1}^{n}f_{j}\chi_{I_{j}}}_{L^{r}E}\les\left(\sum_{j=1}^{n}\norm{f_{j}}_{L^{r}E}^{r}\right)^{1/r}\les\left(\sum_{j=1}^{n}\norm{f_{j}}_{B_{r,1}^{0}E}^{r}\right)^{1/r}.\label{eq:atom L^r}
\end{equation}
For each $j$, let $\varphi_{j}^{(k)}$ be an approximation of the
sharp cutoff $\chi_{I_{j}}$, i.e., $\varphi_{j}^{(k)}=\chi_{I_{j}}*\phi_{k}$,
where we set $\phi_{k}$ as $\phi_{k}(t)=k\phi(kt)$ with $\phi\in C_{c}^{\infty}(\R)$
chosen as a fixed function such that $\int_{\R}\phi dt=1$. Then,
we have $\norm{\d_{t}\varphi_{j}^{(k)}}_{L^{1}}+\norm{\varphi_{j}^{(k)}}_{L^{\infty}}\les1$.

For each $j$, we have 
\begin{align*}
\norm{\d_{t}(f_{j}\varphi_{j}^{(k)})}_{L^{1}E} & \le\norm{\d_{t}f_{j}\cdot\varphi_{j}^{(k)}}_{L^{1}E}+\norm{f_{j}\cdot\d_{t}\varphi_{j}^{(k)}}_{L^{1}E}\\
 & \les\norm{\d_{t}f_{j}}_{L^{1}E}\norm{\varphi_{j}^{(k)}}_{L^{\infty}}+\norm{f_{j}}_{L^{\infty}E}\norm{\d_{t}\varphi_{j}^{(k)}}_{L^{1}}\\
 & \les\norm{f_{j}}_{W^{1,1}E}.
\end{align*}
Thus, by (\ref{eq:B c W c B}), we have
\[
\norm{\sum_{j=1}^{n}f_{j}\varphi_{j}^{(k)}}_{B_{1,\infty}^{1}E}\les\norm{\sum_{j=1}^{n}f_{j}\varphi_{j}^{(k)}}_{W^{1,1}E}\les\sum_{j=1}^{n}\norm{f_{j}}_{W^{1,1}E}\les\sum_{j=1}^{n}\norm{f_{j}}_{B_{1,1}^{1}E}.
\]
For each $M\in2^{\N}$, we have
\begin{align*}
\norm{P_{\le M}^{t}\sum_{j=1}^{n}f_{j}\chi_{I_{j}}}_{B_{1,\infty}^{1}E} & =\lim_{k\rightarrow\infty}\norm{P_{\le M}^{t}\sum_{j=1}^{n}f_{j}\varphi_{j}^{(k)}}_{B_{1,\infty}^{1}E}\les\sum_{j=1}^{n}\norm{f_{j}}_{B_{1,1}^{1}E}.
\end{align*}
Taking $M\rightarrow\infty$, we have $\sum_{j=1}^{n}f_{j}\chi_{I_{j}}\in B_{1,\infty}^{1}E$
and
\begin{equation}
\norm{\sum_{j=1}^{n}f_{j}\chi_{I_{j}}}_{B_{1,\infty}^{1}E}\les\sum_{j=1}^{n}\norm{f_{j}}_{B_{1,1}^{1}E}.\label{eq:atom W^1,1}
\end{equation}
Since $r=\frac{1-\al}{\frac{1}{q}-\al}$ implies $(1-\frac{1}{q})/(1-\frac{1}{r})=1-\al$,
by a complex interpolation between (\ref{eq:atom L^r}) and (\ref{eq:atom W^1,1}),
we have (\ref{eq:atom estimate}).
\end{proof}
The next lemma presents a Strichartz estimate in time Besov spaces.
In the proof of the next lemma, we will use a real interpolation technique,
which is also used in many contexts, e.g., \cite{keel1998endpoint}.
We will first show the estimate with the spacetime frequency domain restricted
to a fixed support. Such an estimate gives
information only on Besov spaces of parameter $\infty$, i.e.,
$B_{p,\infty}^{\al}$. We will improve that estimate by using (\ref{eq:Besov real interp E s0s1})
with perturbed choices of exponents. This kind of technique will be
frequently used in later proofs.

Let $\psi:\R\rightarrow[0,\infty)$ be a smooth even bump function
such that $\psi|_{[-1,1]}\equiv1$ and $\text{supp}(\psi)\subset[-\frac{11}{10},\frac{11}{10}]$.
\begin{lem}
\label{lem:Besov stri}Let $\s$ and $p$ be as in (\ref{eq:=00005Cs,p def}).
Fix $\al\in(0,\frac{1}{p})$. Let $\be=\s+2\al$. For every $N\in2^{\N}$
and $f\in L^{1}L^{2}$, we have
\begin{equation}
\norm{\psi K^{+}f_{N}}_{B_{p,1}^{\al}L^{p}}\les_{\al,\psi}N^{\be}\norm f_{L^{1}L^{2}},\label{eq:Besov Stri}
\end{equation}
where $f_{N}=P_{N}f$.
\end{lem}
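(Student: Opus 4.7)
The plan is to combine a Besov-regularity estimate for the free Schr{\"o}dinger evolution with the atom-type lemma (\ref{eq:atom estimate}) applied to the Duhamel integral, and then use real interpolation in the Besov smoothness parameter to promote the second index from $\infty$ to $1$. First, for any $\al'\in(0,1)$ and $\phi\in L^{2}(\T^{d})$, I would establish
\[
\|\psi e^{it\De}P_{N}\phi\|_{B_{p,1}^{\al'}L^{p}}\les_{\psi}N^{\s+2\al'}\|\phi\|_{L^{2}}.
\]
The Strichartz estimate (\ref{eq:Bourgain Strichartz}) gives $\|\psi e^{it\De}P_{N}\phi\|_{L^{p}L^{p}}\les N^{\s}\|\phi\|_{L^{2}}$, while $\d_{t}e^{it\De}P_{N}\phi=i\De e^{it\De}P_{N}\phi$ with $\|\De P_{N}\phi\|_{L^{2}}\les N^{2}\|\phi\|_{L^{2}}$ yields the $W^{1,p}L^{p}$ bound $\les N^{\s+2}\|\phi\|_{L^{2}}$. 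By (\ref{eq:B c W c B}) these embed into $B_{p,\infty}^{0}L^{p}$ and $B_{p,\infty}^{1}L^{p}$, respectively, and real interpolation together with (\ref{eq:Besov real interp E s0s1}) produces the claim.

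Next, I write the Duhamel representation
\[
\psi K^{+}f_{N}(t)=\int_{\R}\chi_{[s,\infty)}(t)\cdot\psi(t)e^{i(t-s)\De}f_{N}(s)\,ds.
\]
For each fixed $s$, I apply (\ref{eq:atom estimate}) with a single interval $I_{1}=[s,\infty)$ and the smooth atom $F_{s}(t)=\psi(t)e^{i(t-s)\De}f_{N}(s)$; combined with the first step (after a harmless translation in $t$), this yields, for any $\al'\in(0,1/p)$,
\[
\|\chi_{[s,\infty)}\psi(\cdot)e^{i(\cdot-s)\De}f_{N}(s)\|_{B_{p,\infty}^{\al'}L^{p}}\les\|F_{s}\|_{B_{p,1}^{\al'}L^{p}}\les N^{\s+2\al'}\|f(s)\|_{L^{2}}.
\]
Minkowski's inequality in $s$ then gives $\|\psi K^{+}f_{N}\|_{B_{p,\infty}^{\al'}L^{p}}\les N^{\s+2\al'}\|f\|_{L^{1}L^{2}}$ for every $\al'\in(0,1/p)$.

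Finally, to sharpen the second Besov index from $\infty$ to $1$ at the target $\al\in(0,1/p)$, I would choose $\al_{0},\al_{1}\in(0,1/p)$ with $\al_{0}\ne\al_{1}$ and $\al=(1-\theta)\al_{0}+\theta\al_{1}$, and apply real interpolation to the linear map $f\mapsto\psi K^{+}f_{N}$, which by the preceding paragraph is bounded $L^{1}L^{2}\to B_{p,\infty}^{\al_{i}}L^{p}$ with norm $\les N^{\s+2\al_{i}}$ for $i=0,1$. Using (\ref{eq:Besov real interp E s0s1}),
\[
(B_{p,\infty}^{\al_{0}}L^{p},B_{p,\infty}^{\al_{1}}L^{p})_{\theta,1}=B_{p,1}^{\al}L^{p},
\]
and the interpolated constant is $N^{\s+2\al}=N^{\be}$, which completes the proof. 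The main subtlety lies in the middle step: multiplication by the sharp cutoff $\chi_{[s,\infty)}$ obstructs a direct estimate in $B_{p,1}^{\al'}L^{p}$ and naturally only lands in $B_{p,\infty}^{\al'}L^{p}$ via (\ref{eq:atom estimate}); the interpolation step is precisely the ``perturbed exponents'' trick the paper flags, and it crucially exploits the strict inclusion $\al\in(0,1/p)$ so that bracketing values $\al_{0}<\al<\al_{1}$ are available.
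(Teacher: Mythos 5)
Your proposal is correct, and every ingredient you invoke is available in the paper before this lemma (the scale-invariant Strichartz estimate (\ref{eq:Bourgain Strichartz}), the cutoff-atom lemma (\ref{eq:atom estimate}), the embeddings (\ref{eq:B c W c B}), and the real interpolation identity (\ref{eq:Besov real interp E s0s1})), so there is no circularity; the only cosmetic points are that in the $W^{1,p}L^{p}$ bound the time derivative also hits $\psi$, contributing a harmless term of size $N^{\s}\norm{\phi}_{L^{2}}$, and that no translation is needed in the Duhamel step since $e^{i(t-s)\De}f_{N}(s)=e^{it\De}\bigl(e^{-is\De}P_{N}f(s)\bigr)$, so Step 1 applies verbatim with data $e^{-is\De}P_{N}f(s)$.

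Your route, however, differs genuinely from the paper's. The paper works on the equation side: it fixes a time-frequency block $M$, observes that off the characteristic surface ($M\nsim N^{2}$) the modulation satisfies $\left|\tau+\left|\xi\right|^{2}\right|\sim\max\{M,N^{2}\}$, and gains the factor $M^{-\tilde{\al}}$ by dividing by this symbol after writing $(i\d_{t}+\De)P_{M}^{t}(\psi u)=P_{M}^{t}(\psi f_{N}+i\psi_{t}u)$, using Strichartz only at $M\sim N^{2}$; the block-wise decay then yields the $B_{p,\infty}^{\al_{j}}L^{p}$ bounds. You instead work on the solution side: you first prove a Besov-in-time Strichartz bound for cutoff free evolutions by interpolating the $L^{p}L^{p}$ Strichartz estimate against the crude $W^{1,p}L^{p}$ bound (at cost $N^{2}$), and then superpose over the Duhamel integral, using (\ref{eq:atom estimate}) with a single interval to absorb the sharp cutoff $\chi_{[s,\infty)}$ and Minkowski's inequality in $s$ (applied blockwise to $P_{M}^{t}$, if one wants to avoid Bochner-measurability pedantry in the non-separable space $B_{p,\infty}^{\al'}L^{p}$). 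Both proofs finish with the same $B_{p,\infty}\to B_{p,1}$ upgrade by real interpolation with bracketing exponents $\al_{0}<\al<\al_{1}$ in $(0,\frac1p)$. What each buys: the paper's modulation argument exhibits the decay $M^{-\tilde{\al}}$ explicitly and needs no transference machinery; yours avoids modulation analysis entirely and runs parallel to the atom-transfer argument the paper uses immediately afterwards in Lemma \ref{lem:Besov Y^0}, at the mild cost of routing the restriction $\al<\frac1p$ through the cutoff lemma (\ref{eq:atom estimate}) and of the norm-integral exchange in the Duhamel superposition.
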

\begin{proof}
Let $u=K^{+}f_{N}$. We first claim that for $\tilde{\al}\in(0,\frac{1}{p})$
and $M\in2^{\N}$, we have
\begin{equation}
\norm{P_{M}^{t}(\psi u)}_{L^{p}L^{p}}\les M^{-\tilde{\al}}N^{\s+2\tilde{\al}}\norm f_{L^{1}L^{2}}.\label{eq:Besov Stri claim}
\end{equation}
Once we have (\ref{eq:Besov Stri claim}), we plug in $\al_{0}\in(0,\al)$
and $\al_{1}\in(\al,\frac{1}{p})$ to obtain
\begin{equation}
\norm{\psi u}_{B_{p,\infty}^{\al_{j}}L^{p}}\les N^{\s+2\al_{j}}\norm f_{L^{1}L^{2}}\qquad j=0,1.\label{eq:Besov stri to interp}
\end{equation}
Applying a real interpolation of parameter $1$ to (\ref{eq:Besov stri to interp})
gives (\ref{eq:Besov Stri}) for $\al\in(0,\frac{1}{p})$.

Now, we prove the claim (\ref{eq:Besov Stri claim}). When $M\sim N^{2}$,
(\ref{eq:Besov Stri claim}) is merely (\ref{eq:Bourgain Strichartz}).
From now, we assume $M\nsim N^{2}$. With this assumption, we have
\[
\left|\tau+\left|\xi\right|^{2}\right|\sim\max\left\{ M,N^{2}\right\} .
\]
Since $(i\d_{t}+\De)u=f_{N}$, we have $(i\d_{t}+\De)P_{M}^{t}(\psi u)=P_{M}^{t}(\psi f_{N}+i\psi_{t}u)$.
Thus, we have
\begin{align*}
\norm{P_{M}^{t}(\psi u)}_{L_{t,x}^{p}} & \les M^{\frac{1}{2}-\frac{1}{p}}N^{d\left(\frac{1}{2}-\frac{1}{p}\right)}\norm{P_{M}^{t}(\psi u)}_{L_{t,x}^{2}}\\
 & =M^{\frac{1}{2}-\frac{1}{p}}N^{d\left(\frac{1}{2}-\frac{1}{p}\right)}\norm{\frac{1}{\tau+\left|\xi\right|^{2}}\F_{t,x}\left(P_{M}^{t}(\psi f_{N}+i\psi_{t}u)\right)}_{L_{\tau}^{2}\ell_{\xi}^{2}}\\
 & \les\frac{M^{\frac{1}{2}-\frac{1}{p}}N^{d\left(\frac{1}{2}-\frac{1}{p}\right)}}{\max\left\{ M,N^{2}\right\} }\norm{P_{M}^{t}(\psi f_{N}+i\psi_{t}u)}_{L_{t,x}^{2}}\\
 & \les\frac{M^{1-\frac{1}{p}}N^{d\left(\frac{1}{2}-\frac{1}{p}\right)}}{\max\left\{ M,N^{2}\right\} }\norm{\psi f_{N}+i\psi_{t}u}_{L^{1}L^{2}}\\
 & \les M^{-\tilde{\al}}N^{\s+2\tilde{\al}}\norm f_{L^{1}L^{2}}.
\end{align*}
Here, we have $\frac{M^{1-\frac{1}{p}}N^{d\left(\frac{1}{2}-\frac{1}{p}\right)}}{\max\left\{ M,N^{2}\right\} }\les M^{-\tilde{\al}}N^{\s+2\tilde{\al}}$
since the scales match as $2\left(1-\frac{1}{p}\right)+d\left(\frac{1}{2}-\frac{1}{p}\right)-2=\frac{d}{2}-\frac{d+2}{p}=\s=-2\tilde{\al}+(\s+2\tilde{\al})$
and $-\tilde{\al}\in(-\frac{1}{p},0)\subset(-\frac{1}{p},1-\frac{1}{p})$.
\end{proof}
Now, we transfer Lemma \ref{lem:Besov stri} to an estimate regarding
$Y^{0}$.
\begin{lem}
\label{lem:Besov Y^0}Let $\s,p,\al$, and $\be$ be as in Lemma \ref{lem:Besov stri}.
For $u\in Y^{0}$, we have 
\begin{equation}
\left(\sum_{N\in2^{\N}}N^{-2\be}\norm{\psi u_{N}}_{B_{p,1}^{\al}L^{p}}^{2}\right)^{1/2}=\norm{\psi u}_{\ell_{-\be}^{2}B_{p,1}^{\al}L^{p}}\les\norm u_{Y^{0}}.\label{eq:Besov Y^0}
\end{equation}
\end{lem}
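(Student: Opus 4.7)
The plan is to reduce to estimating a single frequency-localized piece $\psi u_N$ for each dyadic $N$, then sum in $N$ via the $\ell_\xi^2$-structure (\ref{eq:l^2_=00005Cxistructure}) of $Y^0$. Using the embedding $Y^0 \hook U_\Delta^p L^2$, which holds since $p > 2$ by (\ref{eq:U^p V^p embed}), one can atomize $u_N$ into free evolutions of step functions and reduce per-atom to a single frequency-localized free evolution cut off in time.

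First, a free-evolution analogue of Lemma \ref{lem:Besov stri} is needed: for $\phi_N\in P_N L^{2}(\T^d)$,
\begin{equation*}
\|\psi e^{it\De}\phi_N\|_{B_{p,1}^{\al}L^{p}} \les N^{\be}\|\phi_N\|_{L^{2}}.
\end{equation*}
This follows from the identity $\psi(t)e^{it\De}\phi_N = K^{+}\bb{\psi'(s)e^{is\De}\phi_N}(t)$, which holds because $\psi(-\infty)=0$. The source $g:=\psi'e^{is\De}\phi_N$ is frequency-localized at $N$ and satisfies $\|g\|_{L^{1}L^{2}}\les\|\phi_N\|_{L^{2}}$. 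Since the output $K^+g = \psi e^{it\De}\phi_N$ is compactly supported in time, multiplying by a slightly enlarged smooth cutoff $\tilde{\psi}\equiv 1$ on $\text{supp}(\psi)$ and invoking Lemma \ref{lem:Besov stri} (whose proof works with any compactly supported smooth cutoff) delivers the bound.

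Next, for a $U^{p}$-atom $a=\sum_{k}\chi_{[t_{k-1},t_{k})}\phi_{k}$ with $\sum_{k}\|\phi_{k}\|_{L^{2}}^{p}\le 1$, one writes
\begin{equation*}
\psi \cdot e^{it\De}P_N a = \sum_{k}\chi_{[t_{k-1},t_{k})}\cdot \psi e^{it\De}P_N\phi_{k},
\end{equation*}
which is a disjoint-support decomposition. For any $\al'\in(0,\tfrac{1}{p})$, applying the atomic summation inequality (\ref{eq:atom estimate}) with $q=p$ together with the per-piece free-evolution bound yields
\begin{equation*}
\|\psi e^{it\De}P_N a\|_{B_{p,\infty}^{\al'}L^{p}} \les N^{\s+2\al'}\bigg(\sum_{k}\|P_N\phi_{k}\|_{L^{2}}^{p}\bigg)^{1/p} \le N^{\s+2\al'}.
\end{equation*}
A real interpolation of parameter $1$ via (\ref{eq:Besov real interp E s0s1}) between two perturbed choices $\al_{0}<\al<\al_{1}$ in $(0,\tfrac{1}{p})$ then upgrades the target space to $B_{p,1}^{\al}$, giving $\|\psi e^{it\De}P_N a\|_{B_{p,1}^{\al}L^{p}} \les N^{\be}$.

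Finally, using $Y^{0}\hook U_{\De}^{p}L^{2}$, decompose $u_N = \sum_{j}\la_{j} e^{it\De} P_N a_{j}$ with $\sum_{j}|\la_{j}|\les\|u_N\|_{Y^{0}}$. The triangle inequality gives $\|\psi u_N\|_{B_{p,1}^{\al}L^{p}} \les N^{\be}\|u_N\|_{Y^{0}}$. Squaring, weighting by $N^{-2\be}$, summing over dyadic $N\in 2^{\N}$, and invoking $\|u\|_{Y^{0}}^{2}=\sum_{N}\|u_N\|_{Y^{0}}^{2}$ from (\ref{eq:l^2_=00005Cxistructure}) completes the proof of (\ref{eq:Besov Y^0}). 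The delicate point is the atomic summation: the sharp cutoffs $\chi_{[t_{k-1},t_{k})}$ only permit a $B_{p,\infty}^{\al'}$-type bound via (\ref{eq:atom estimate}), so the real interpolation trick already employed in the proof of Lemma \ref{lem:Besov stri} is essential to recover the sharper $B_{p,1}^{\al}$ norm required to close the $\ell^{2}$-summation in $N$.
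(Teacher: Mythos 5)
Your proof is correct and takes essentially the same route as the paper: the per-atom bound via (\ref{eq:atom estimate}) combined with the Besov Strichartz estimate of Lemma \ref{lem:Besov stri}, the embedding $Y^{0}\hook U_{\De}^{p}L^{2}$, the real-interpolation upgrade from $B_{p,\infty}^{\al}$ to $B_{p,1}^{\al}$, and the $\ell^{2}$-summation in $N$ via (\ref{eq:l^2_=00005Cxistructure}). The only differences are cosmetic: you make explicit the reduction of free evolutions to Lemma \ref{lem:Besov stri} through the identity $\psi e^{it\De}\phi_{N}=K^{+}\bb{\psi'e^{is\De}\phi_{N}}$ with an enlarged cutoff, and you interpolate per atom before summing, whereas the paper sums atoms first and interpolates at the level of $u_{N}$.
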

\begin{proof}
Let $\al\in(0,\frac{1}{p})$. By (\ref{eq:atom estimate}) and (\ref{eq:Besov Stri}),
for $\left\{ t_{k}\right\} _{k=1}^{K}\in\mc Z$, $N\in2^{\N}$, and
$\left\{ \phi_{k}\right\} _{k=1}^{K}$ in $L^{2}(\T^{d})$, we have
\[
\norm{\psi P_{N}\sum_{j=1}^{K}e^{it\De}\phi_{j}\cdot\chi_{[t_{j-1},t_{j})}}_{B_{p,\infty}^{\al}L^{p}}\les_{p,\al}N^{\be}\norm{\norm{\phi_{j}}_{L^{2}}}_{\ell_{j}^{p}}.
\]
In other words, we have
\[
\norm{\psi e^{it\De}f_{N}}_{B_{p,\infty}^{\al}L^{p}}\les_{p,\al}N^{\be}
\]
for every $U^{p}L^{2}$-atom $f=\sum_{j=1}^{K}\phi_{j}\cdot\chi_{[t_{j-1},t_{j})}$,
i.e., $\sum_{j=1}^{K}\norm{\phi_{j}}_{L^{2}}^{p}=1$. Using the embedding
$Y^{0}\hook U_{\De}^{p}L^{2}$, we have 
\[
\norm{\psi u_{N}}_{B_{p,\infty}^{\al}L^{p}}\les N^{\be}\norm{u_{N}}_{Y^{0}}
\]
for $u\in Y^{0}$. Then, by the real interpolation argument used in
Lemma \ref{lem:Besov stri}, we arrive at $\norm{\psi u_{N}}_{B_{p,1}^{\al}L^{p}}\les N^{\be}\norm{u_{N}}_{Y^{0}}$,
which implies (\ref{eq:Besov Y^0}) due to $Y^{0}=\ell^{2}Y^{0}$.
\end{proof}

\subsection{$Z^{s}$ spaces}

We now introduce a function space for the well-posedness problem.
In the introduction, we explained a limitation of the conventional
candidate, the $Y^{s}$ space. Large data a priori bounds were obtained
in \cite{herr2011global}, \cite{killip2016scale}, and \cite{lee2019local}
using an argument that works only when the nonlinearity $\calN(u)$
is either algebraic or sufficiently regular, i.e., $a\ge2$.

Here, we overcome this difficulty by introducing a new function space
$Z^{s}$. We hope to keep bringing up the favorable embedding properties
of the $Y^{s}$ space. Still, using an $L_{t}^{p}$-structure for the
time variable, we make the $Z^{s}$ norm shrink to zero as the length of the time interval goes to zero.

Indeed, we look for $Z^{s}$ weaker than $Y^{s}$, i.e., $Z^{s}\hookleftarrow Y^{s}$.
While the embedding $Z^{s}\hookleftarrow Y^{s}$ makes the retarded
estimate $K^{+}:(Z^{-s})'\rightarrow Z^{s}$ immediate, it also makes
the bilinear estimate based on $Z^{s}$ stronger than that based on
$Y^{s}$. Thus, we focus on the structure of the norm we use in our
proof of the main bilinear estimate.

In Section \ref{subsec:Bi-linear-estimates}, we will prove our bilinear
estimate using the identity
\begin{equation}
\int_{\R\times\T^{d}}v\overline{w}Adxdt=\int_{\R\times\T^{d}}I_{\xi}v\cdot\overline{I_{\xi}w}\cdot J_{\xi}Adxdt\label{eq:IIJ identity intro}
\end{equation}
for $v,w,A:\R\times\T^{d}\rightarrow\C$ and $\xi\in\Z^{d}$, where
$J_{\xi}$ denotes the shear effect $J_{\xi}A(t,x):=A(t,x-2t\xi)$.
To prove the bilinear estimate, we will partition the spatial frequency
domain into congruent cubes $C\subset\Z^{d}$, then apply (\ref{eq:IIJ identity intro})
to each $C$ with the shift $\xi$ that translates the center of $C$ to the
origin. Combining spacetime regularities of $v,w,A:\R\times\T^d\rightarrow\C$ and the shear
effect $J_{\xi}$ will give an extra gain of decay that we mentioned
in the introduction. To use the Galilean structure, frequency
partitioning, and time Besov regularity, we are motivated to consider
a norm of the form
\[
\max_{\substack{R\in2^{\N}}
}R^{-\be}\norm{\norm{\psi P_{\le R}I_{Rk}u}_{B_{p,1}^{\al}L^{p}}}_{\ell^{2}(k\in\Z^{d})},
\]
where $\al$ and $\be$ are some scaling-critical exponents.

Inspired by this observation, we design a new space $Z^{s}$.
\begin{defn}
Let $\s$ and $p$ be as in (\ref{eq:=00005Cs,p def}). We define
$Z^{s}=Z^{s}(\R\times\T^{d})$ as a Banach space given by the norm
\begin{align}
\norm u_{Z^{s}} & =\max_{q\in\text{ \ensuremath{\left[p,\frac{1}{\s}\right]} }}\norm{\norm{\psi u_{N}}_{L^{q}L^{r}}}_{\ell_{s-\s}^{2}(N\in2^{\N})}\label{eq:Z^s def}\\
 & +\max_{\al\in\left[\s,\frac{1}{p}-\s\right]}\norm{\max_{\substack{R\in2^{\N}\\
R\le8N
}
}R^{-\be}\norm{\norm{\psi P_{\le8R}I_{Rk}u_{N}}_{B_{p,1}^{\al}L^{p}}}_{\ell^{2}(k\in\Z^{d})}}_{\ell_{s}^{2}(N\in2^{\N})},\nonumber 
\end{align}
where scaling conditions $\frac{d}{r}=\frac{d}{2}-\s-\frac{2}{q}$
and $\be=\s+2\al$ are imposed.

Similarly, $Z^{0}$ is defined as (\ref{eq:Z^s def}) with $s$ replaced
by $0$.
\end{defn}
\begin{rem}
As seen in (\ref{eq:Besov Stri}), the scaling conditions $\be=\s+2\al$
and $p=\frac{d+2}{\frac{d}{2}-\s}$ make the embedding $Y^{0}\hook\ell_{-\be}^{2}B_{p,1}^{\al}L^{p}$
scaling invariant with the scale of $C^{0}L^{2}$. For the same reason,
$Z^{s}$ and $Z^{0}$ spaces have scales identical to those of $Y^{s}$
and $Y^{0}$, respectively.

In view of complex interpolations, $\norm u_{Z^{s}}$ is equivalent
to (\ref{eq:Z^s def}) with $q$ and $\al$ at the endpoints of the
intervals.
\end{rem}
We collect elementary properties of the space $Z^{s}$.
\begin{lem}
\label{lem:summary of embeds}We have the following properties:
\begin{itemize}
\item For a finite interval $I\subset\R$, we have the embedding
\begin{equation}
\ell_{s}^{2}(Z^{0})'=(Z^{-s})'\hook(Y^{-s})'\stackrel{K^{+}}{\longrightarrow}Y^{s}\hook Z^{s}=\ell_{s}^{2}Z^{0}.\label{eq:Z^s-Y^s}
\end{equation}
\item For a finite interval $I\subset\R$, we have
\begin{equation}
\norm{u\cdot\chi_{I}}_{Z^{s}}\les\norm u_{Z^{s}}.\label{eq:step Z^s}
\end{equation}
This estimate is uniform in the choice of $I$.
\item For $u\in Z^{s}$, we have
\begin{equation}
\lim_{T\rightarrow0^{+}}\norm{u\cdot\chi_{[0,T]}}_{Z^{s}}=0.\label{eq:shrink Z^s}
\end{equation}
\item Let $q\in[p,\frac{1}{\s}]$ and $r$ be parameters such that $\frac{2}{q}+\frac{d}{r}=\frac{d}{2}-\s$.
We have
\begin{equation}
\norm{\psi u}_{L^{q}H^{s-\s,r}}\les\norm{\psi u}_{L^{q}B_{r,2}^{s-\s}}\les\norm{\psi u}_{\ell_{s-\s}^{2}L^{q}L^{r}}\les\norm u_{Z^{s}}.\label{eq:Sobolev embed Z^s}
\end{equation}
\item Let $\al\in[\s,\frac{1}{p}-\s]$ and $\be=\s+2\al$. We have
\begin{equation}
\norm{\psi u}_{B_{p,2}^{\al}B_{p,2}^{s-\be}}\les\norm{\psi u}_{\ell_{s-\be}^{2}B_{p,1}^{\al}L^{p}}\les\norm u_{Z^{s}}.\label{eq:time Besov embed Z^s}
\end{equation}
\end{itemize}
Similar properties hold with $s$ replaced by $0$.
\end{lem}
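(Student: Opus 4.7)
The plan is to establish the five items of the lemma in the order listed, with the real work concentrated on the embedding chain (3.6) and the stability under sharp cutoff (3.7); the shrinking property (3.8) then follows by density from (3.7), and the embeddings (3.9) and (3.10) reduce to routine Besov and Minkowski manipulations.

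For (3.6), the embedding $(Z^{-s})'\hook(Y^{-s})'$ is the dual of $Y^{-s}\hook Z^{-s}$, the middle arrow $K^{+}:(Y^{-s})'\to Y^{s}$ is precisely (2.20), and only $Y^{s}\hook Z^{s}$ requires genuine work. The first summand of $\|u\|_{Z^{s}}$ is controlled by $\|u\|_{Y^{s}}$ via the scale-invariant Strichartz (2.18) applied with the cube $[-N,N]^{d}$ at the $L^{p}_{t,x}$ endpoint, together with the Bernstein-type bound $\|\psi u_{N}\|_{L^{\infty}L^{r_{\infty}}}\les N^{\s}\|u_{N}\|_{Y^{0}}$ coming from $Y^{0}\hook L^{\infty}L^{2}$ at the $L^{\infty}L^{r_{\infty}}$ endpoint with $\frac{d}{r_{\infty}}=\frac{d}{2}-\s$; complex interpolation between these two scale-matched endpoints covers the full $(q,r)$-family, and a weighted $\ell^{2}_{s-\s}(N)$-sum yields the first summand. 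For the second summand, the Galilean design of $Z^{s}$ is decisive: by Proposition 2.16(3), $P_{\le8R}I_{Rk}u_{N}=I_{Rk}P_{C_{R,k}}u_{N}$ with $C_{R,k}\subset\Z^{d}$ a cube of side $16R$, and the argument of Lemma 3.4 applied at the frequency scale $R$ (rather than $N$) yields $\|\psi P_{\le8R}I_{Rk}u_{N}\|_{B^{\al}_{p,1}L^{p}}\les R^{\be}\|P_{C_{R,k}}u_{N}\|_{Y^{0}}$ after invoking the Galilean invariance of the $Y^{0}$ norm (Proposition 2.16(4)). The cubes $\{C_{R,k}\}_{k\in\Z^{d}}$ have finite overlap multiplicity (bounded by a dimensional constant), so the $\ell^{2}(k)$-sum collapses to $R^{\be}\|u_{N}\|_{Y^{0}}$ via the $\ell^{2}_{\xi}$-structure (2.17); dividing by $R^{\be}$, taking the max over $R\le8N$, and summing in $\ell^{2}_{s}(N)$ finishes the embedding.

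For (3.7), the $L^{q}L^{r}$ piece is trivial as $|\chi_{I}u|\le|u|$. For the time-Besov piece, I would apply the atom estimate (3.1) to the two-interval partition $\{I,\R\setminus I\}$ to get, for every $\al\in(0,1/p)$, the $I$-uniform bound $\|f\chi_{I}\|_{B^{\al}_{p,\infty}L^{p}}\les\|f\|_{B^{\al}_{p,1}L^{p}}$. Since $[\s,1/p-\s]$ lies strictly inside $(0,1/p)$, real interpolation with parameter $1$ via (2.13) between two such bounds at values $\al_{0}<\al<\al_{1}$ converts the target back to $B^{\al}_{p,1}L^{p}$, giving $\|f\chi_{I}\|_{B^{\al}_{p,1}L^{p}}\les\|f\|_{B^{\al}_{p,1}L^{p}}$ uniformly in $I$ for every admissible $\al$; applying this coefficient-by-coefficient to $f=\psi P_{\le8R}I_{Rk}u_{N}$ establishes (3.7). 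For the shrinking (3.8), a further real interpolation between the same uniform bound and the trivial $\|f\chi_{[0,T]}\|_{L^{p}L^{p}}\les T^{1/p}\|f\|_{L^{\infty}L^{p}}$ produces a decay factor $T^{\delta}$ for some $\delta>0$ at each fixed $N,R,k$; a dominated-convergence cutoff of the dyadic $N$-sum, using (3.7) to control the tail uniformly in $T$, then closes the limit in the full $Z^{s}$ norm.

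Finally, (3.9) and (3.10) follow from the Minkowski inequality $\ell^{2}L^{p}\hook L^{p}\ell^{2}$ (valid for $p\ge2$, which holds since $r,p\ge2$ throughout), the identification $B^{s}_{p,2}(\T^{d})\sim\ell^{2}_{s}L^{p}$, the Sobolev embedding $B^{s-\s}_{r,2}\hook H^{s-\s,r}$ from (2.6), and the trivial $\ell^{1}\hook\ell^{2}$ giving $B^{\al}_{p,1}\hook B^{\al}_{p,2}$. The main obstacle in this lemma is the coupling of the frequency-cube decomposition in the second summand of $\|u\|_{Z^{s}}$ with the Galilean invariance of $Y^{0}$ in the proof of (3.6) — this is precisely the structural reason $Z^{s}$ was designed with the Galilean shifts $I_{Rk}$ in its definition — while the shrinking (3.8) is the other delicate point, since sharp time cutoffs are not continuous on $B^{\al}_{p,1}$ without the real-interpolation trick used for (3.7).
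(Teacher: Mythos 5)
Your handling of (\ref{eq:Z^s-Y^s}), (\ref{eq:step Z^s}), (\ref{eq:Sobolev embed Z^s}) and (\ref{eq:time Besov embed Z^s}) is essentially the paper's own proof: the first summand of the $Z^{s}$ norm is bounded by interpolating the cube Strichartz bound $\norm{\psi u_{N}}_{L^{p}_{t,x}}\les N^{\s}\norm{u_{N}}_{Y^{0}}$ against the Bernstein/$L^{\infty}L^{2}$ endpoint; the second summand by writing $P_{\le8R}I_{Rk}u_{N}=I_{Rk}P_{-Rk+[-8R,8R]^{d}}u_{N}$, applying the Besov--Strichartz estimate (\ref{eq:Besov Y^0}) at scale $R$, using Galilean invariance of $Y^{0}$ and the finite overlap of the shifted cubes together with the $\ell^{2}_{\xi}$-structure; (\ref{eq:step Z^s}) by the atom estimate (\ref{eq:atom estimate}) plus real interpolation between two nearby regularities; and the last two embeddings by Minkowski and the standard Besov--Sobolev facts. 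All of this matches the paper.

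The genuine gap is in your argument for (\ref{eq:shrink Z^s}). The ``trivial'' bound $\norm{f\chi_{[0,T]}}_{L^{p}L^{p}}\les T^{1/p}\norm f_{L^{\infty}L^{p}}$ is not available for $f=\psi P_{\le8R}I_{Rk}u_{N}$ with $u\in Z^{s}$: the $Z^{s}$ norm only controls $B^{\al}_{p,1}L^{p}$ in time with $\al\le\frac{1}{p}-\s<\frac{1}{p}$, which does not embed into $L^{\infty}_{t}$, and a single spatial mode $u=g(t)e^{in\cdot x}$ with $g\in B^{1/p-\s}_{p,1}\setminus L^{\infty}$ already makes the right-hand side infinite. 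Consequently your interpolation does not yield a controlled constant, and in fact no power rate $T^{\delta}$ holds per fixed piece at the endpoint $\al=\frac1p-\s$: stacking time bumps at scales $2^{-j}$ with weights $j^{-2}$ produces $u$ with finite $Z^{s}$-type norm for which $\norm{f\chi_{[0,T]}}_{B^{\al}_{p,1}L^{p}}$ decays only like $(\log(1/T))^{-2}$. What is needed (and all the paper proves) is the qualitative limit: split in time frequency, bound $P_{\ge M}^{t}(f\chi_{[0,T]})$ uniformly in $T$ by $M^{\al-\al_{+}}\norm f_{B^{\al_{+}}_{p,1}L^{p}}$ via (\ref{eq:step estimate}) with a slightly larger exponent $\al_{+}$ when $\al<\frac1p-\s$ (at the endpoint one should instead approximate $f$ in $B^{\al}_{p,1}L^{p}$ by smooth functions, which is possible since the summability index is $1$, and use the uniform cutoff bound), and send the low time-frequency part to zero by dominated convergence of $\norm{f\chi_{[0,T]}}_{L^{p}L^{p}}$, with no rate. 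Your final assembly --- dominated convergence over $N$, $k$, $R$, $\al$ with (\ref{eq:step Z^s}) as the dominating bound --- is exactly the paper's and is fine once the per-piece limit is justified this way; if you want a rate for interior $\al$, replace your $L^{\infty}L^{p}$ bound by $\norm{f\chi_{[0,T]}}_{L^{1}L^{p}}\le T^{1-1/p}\norm f_{L^{p}L^{p}}$, but this still cannot cover the endpoint value of $\al$, which is the binding one in the definition of $Z^{s}$.
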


\begin{proof}
In (\ref{eq:Z^s-Y^s}), we show that $Z^{s}$ is weaker than $Y^{s}$.
In (\ref{eq:time Besov embed Z^s}), we bring the Strichartz estimate
(\ref{eq:Besov Y^0}) to $Z^{s}$. Most importantly, in (\ref{eq:shrink Z^s}),
we show that the $Z^{s}$ norm of a free evolution converges to $0$
as the time cutoff shrinks.

1. We show 
\begin{equation}
Y^{s}\hook Z^{s}.\label{eq:Z^s-Y^s claim}
\end{equation}
Once we have (\ref{eq:Z^s-Y^s claim}), by duality and (\ref{eq:U2V2}),
(\ref{eq:Z^s-Y^s}) follows.

We decompose the equation for $Z^{s}$ as follows:
\begin{align*}
\norm u_{Z^{s}} & =\max_{\substack{q\in\text{ \ensuremath{\left[p,\frac{1}{\s}\right]} }\\
\frac{d}{r}=\frac{d}{2}-\s-\frac{2}{q}
}
}\norm{\norm{\psi u_{N}}_{L^{q}L^{r}}}_{\ell_{s-\s}^{2}(N\in2^{\N})}\\
 & +\max_{\substack{\al\in\left[\s,\frac{1}{p}-\s\right]\\
\be=\s+2\al
}
}\norm{\max_{\substack{R\in2^{\N}\\
R\le8N
}
}R^{-\be}\norm{\norm{\psi P_{\le8R}I_{Rk}u_{N}}_{B_{p,1}^{\al}L^{p}}}_{\ell^{2}(k\in\Z^{d})}}_{\ell_{s}^{2}(N\in2^{\N})}\\
 & =I+II.
\end{align*}

We bound $I$ first. We fix $q\in\left[p,\frac{1}{\s}\right]$ and
$r\in(2,\infty)$ satisfying $\frac{2}{q}+\frac{d}{r}=\frac{d}{2}-\s$.
Since the spacetime norms $L^{\infty}L^{2},\ell_{-\s}^{2}L^{\infty}L^{(1/2-\s/d)^{-1}},\ell_{-\s}^{2}L^{p}L^{p},\ell_{-\s}^{2}L^{q}L^{r}$,
and $Y^{0}$ have the same scale, a complex interpolation between
$\norm{\psi u}_{\ell_{-\s}^{2}L^{\infty}L^{(1/2-\s/d)^{-1}}}\les\norm{\psi u}_{\ell^{2}L^{\infty}L^{2}}\les\norm u_{Y^{0}}$
and $\norm{\psi u}_{\ell_{-\s}^{2}L^{p}L^{p}}\les\norm u_{Y^{0}}$
gives $\norm{\psi u}_{\ell_{-\s}^{2}L^{q}L^{r}}\les\norm u_{Y^{0}}$.
Therefore, we have
\[
I=\max_{\substack{q\in\text{ \ensuremath{\left[p,\frac{1}{\s}\right]} }\\
\frac{d}{r}=\frac{d}{2}-\s-\frac{2}{q}
}
}\norm{\norm{\psi u_{N}}_{L^{q}L^{r}}}_{\ell_{s-\s}^{2}(N\in2^{\N})}\les\norm u_{\ell_{s}^{2}Y^{0}}=\norm u_{Y^{s}}.
\]

To bound $II$, we fix $\al\in\left[\s,\frac{1}{p}-\s\right]$ and
$R\in2^{\N}$. Let $\be=2\al+\s$. By (\ref{eq:Besov Y^0}) and the
Galilean invariance of the $Y^{0}$ norm, we have
\begin{align*}
R^{-\be}\norm{\norm{\psi P_{\le8R}I_{Rk}u_{N}}_{B_{p,1}^{\al}L^{p}}}_{\ell^{2}(k\in\Z^{d})} & \les\norm{\norm{P_{\le8R}I_{Rk}u_{N}}_{Y^{0}}}_{\ell^{2}(k\in\Z^{d})}\\
 & =\norm{\norm{I_{Rk}P_{-Rk+[-8R,8R]^{d}}u_{N}}_{Y^{0}}}_{\ell^{2}(k\in\Z^{d})}\\
 & =\norm{\norm{P_{-Rk+[-8R,8R]^{d}}u_{N}}_{Y^{0}}}_{\ell^{2}(k\in\Z^{d})}\\
 & \les\norm{u_{N}}_{Y^{0}},
\end{align*}
which implies $II\les\norm u_{Y^{0}}$ immediately.

2. In fact, the stability under time cutoffs holds true for any time
Besov space. We prove a more general statement: For any Banach space
$E$ on $\T^{d}$, $\al\in(0,\frac{1}{p})$, $q\in[1,\infty]$, $f\in B_{p,q}^{\al}E$,
and any interval $I\subset\R$, we have
\begin{equation}
\norm{f\chi_{I}}_{B_{p,q}^{\al}E}\les_{\al,p,q}\norm f_{B_{p,q}^{\al}E}.\label{eq:step estimate}
\end{equation}
Once we have (\ref{eq:step estimate}), (\ref{eq:step Z^s}) follows
directly.

For $\al_{0}\in(0,\al)$ and $\al_{1}\in(\al,\frac{1}{p})$, by (\ref{eq:atom estimate}),
we have
\begin{equation}
\norm{f\chi_{I}}_{B_{p,\infty}^{\al_{j}}E}\les\norm f_{B_{p,1}^{\al_{j}}E},\qquad j=0,1.\label{eq:step to interp}
\end{equation}
Thus, applying a real interpolation of parameter $q$ to (\ref{eq:step to interp})
gives (\ref{eq:step estimate}).

3. We claim that for $N,R\in2^{\N}$, $\al\in[\s,\frac{1}{p}-\s]$,
$k\in\Z^{d}$, and $u\in Z^{s}$, we have
\begin{equation}
\lim_{T\rightarrow0^{+}}\norm{P_{\le8R}I_{Rk}u_{N}\cdot\chi_{[0,T]}}_{B_{p,1}^{\al}L^{p}}=0.\label{eq:shrinkclaim}
\end{equation}
Once we have (\ref{eq:shrinkclaim}), since we can deduce by (\ref{eq:step estimate})
that
\begin{align*}
 & \max_{\substack{q\in\text{\ensuremath{\left\{  p,\frac{1}{\s}\right\} } }\\
\frac{d}{r}=\frac{d}{2}-\s-\frac{2}{q}
}
}\norm{\sup_{T\in[0,1]}\norm{\psi u_{N}\cdot\chi_{[0,T]}}_{L^{q}L^{r}}}_{\ell_{s-\s}^{2}(N\in2^{\N})}\\
 & +\max_{\substack{\al\in\left\{ \s,\frac{1}{p}-\s\right\} \\
\be=\s+2\al
}
}\norm{\max_{\substack{R\in2^{\N}\\
R\le8N
}
}R^{-\be}\norm{\sup_{T\in[0,1]}\norm{\psi P_{\le8R}I_{Rk}u_{N}\cdot\chi_{[0,T]}}_{B_{p,1}^{\al}L^{p}}}_{\ell^{2}(k\in\Z^{d})}}_{\ell_{s}^{2}(N\in2^{\N})}<\infty,
\end{align*}
by the dominated convergence theorem, it follows that
\begin{align*}
 & \lim_{T\rightarrow0^{+}}\norm{u\cdot\chi_{[0,T]}}_{Z^{s}}\\
 & =\lim_{T\rightarrow0^{+}}\max_{\substack{q\in\text{\ensuremath{\left\{  p,\frac{1}{\s}\right\} } }\\
\frac{d}{r}=\frac{d}{2}-\s-\frac{2}{q}
}
}\norm{\norm{\psi u_{N}\cdot\chi_{[0,T]}}_{L^{q}L^{r}}}_{\ell_{s-\s}^{2}(N\in2^{\N})}\\
 & +\lim_{T\rightarrow0^{+}}\max_{\substack{\al\in\left\{ \s,\frac{1}{p}-\s\right\} \\
\be=\s+2\al
}
}\norm{\max_{\substack{R\in2^{\N}\\
R\le8N
}
}R^{-\be}\norm{\norm{\psi P_{\le8R}I_{Rk}u_{N}\cdot\chi_{[0,T]}}_{B_{p,1}^{\al}L^{p}}}_{\ell^{2}(k\in\Z^{d})}}_{\ell_{s}^{2}(N\in2^{\N})}\\
 & =\max_{\substack{q\in\text{\ensuremath{\left\{  p,\frac{1}{\s}\right\} } }\\
\frac{d}{r}=\frac{d}{2}-\s-\frac{2}{q}
}
}\norm{\lim_{T\rightarrow0^{+}}\norm{\psi u_{N}\cdot\chi_{[0,T]}}_{L^{q}L^{r}}}_{\ell_{s-\s}^{2}(N\in2^{\N})}\\
 & +\max_{\substack{\al\in\left\{ \s,\frac{1}{p}-\s\right\} \\
\be=\s+2\al
}
}\norm{\max_{\substack{R\in2^{\N}\\
R\le8N
}
}R^{-\be}\norm{\lim_{T\rightarrow0^{+}}\norm{\psi P_{\le8R}I_{Rk}u_{N}\cdot\chi_{[0,T]}}_{B_{p,1}^{\al}L^{p}}}_{\ell^{2}(k\in\Z^{d})}}_{\ell_{s}^{2}(N\in2^{\N})}\\
 & =0.
\end{align*}

Now, we show the claim (\ref{eq:shrinkclaim}). Let $f=P_{\le8R}I_{Rk}u_{N}$.
Choose $\al_{+}\in(\al,\frac{1}{p})$. Since $\norm u_{Z^{s}}<\infty$,
we have $\norm f_{B_{p,1}^{\al_{+}}L^{p}}<\infty.$ By (\ref{eq:step estimate}),
for a dyadic number $M\in2^{\N}$, we have
\begin{align*}
\sup_{T\in[0,1]}\norm{P_{\ge M}^{t}\left(f\cdot\chi_{[0,T]}\right)}_{B_{p,1}^{\al}L^{p}} & \les M^{\al-\al_{+}}\sup_{T\in[0,1]}\norm{f\cdot\chi_{[0,T]}}_{B_{p,1}^{\al_{+}}L^{p}}\\
 & \les M^{\al-\al_{+}}\norm f_{B_{p,1}^{\al_{+}}L^{p}}.
\end{align*}
As a consequence, we have (\ref{eq:shrinkclaim}).

4. Since $q,r\in(2,\infty)$, applying $\ell_{s-\s}^{2}L^{q}L^{r}\hook L^{q}\ell_{s-\s}^{2}L^{r}=L^{q}B_{r,2}^{s-\s}$
to (\ref{eq:Z^s def}), we have (\ref{eq:Sobolev embed Z^s}).

5. Since $p>2$, we have $B_{p,2}^{\al}B_{p,2}^{s-\be}=\ell_{\al;\tau}^{2}L_{t}^{p}\ell_{s-\be}^{2}L_{x}^{p}\hookleftarrow\ell_{\al;\tau}^{2}\ell_{s-\be}^{2}L_{t}^{p}L_{x}^{p}=\ell_{s-\be}^{2}\ell_{\al;\tau}^{2}L_{t}^{p}L_{x}^{p}=\ell_{s-\be}^{2}B_{p,2}^{\al}L^{p}$.
Thus, we have
\[
\norm{\psi u}_{B_{p,2}^{\al}B_{p,2}^{s-\be}}\les\norm{\psi u}_{\ell_{s-\be}^{2}B_{p,1}^{\al}L^{p}}=\norm{N^{-\be}\norm{\psi P_{\le8N}u_{N}}_{B_{p,1}^{\al}L^{p}}}_{\ell_{s}^{2}(N\in2^{\N})}\les\norm u_{Z^{s}}.
\]
\end{proof}

\subsection{\label{subsec:Bi-linear-estimates}Bilinear estimates}

In this subsection, we prove the main bilinear estimate (\ref{eq:strip1 claim1-1}), as introduced in (\ref{eq:key prop}).
To obtain the decay in (\ref{eq:strip1 claim1-1}), we use the identity
for functions $f,g,A:\R\times\T^{d}\rightarrow\C$:
\begin{equation}
\int_{\R\times\T^{d}}f\overline{g}Adxdt=\int_{\R\times\T^{d}}I_{\xi}f\cdot\overline{I_{\xi}g}\cdot J_{\xi}Adxdt,\label{eq:If Ig JA}
\end{equation}
where
\[
J_{\xi}A(t,x):=A(t,x-2t\xi).
\]
We first estimate $J_{\xi}A$ in Besov spaces.
\begin{lem}
For a dyadic number $N\in2^{\N}$, an integer point $k\in\Z^{d}\setminus\left\{ 0\right\} $,
and a function $A\in B_{1,1}^{\frac{1}{2}}L^{1}$, we have
\begin{equation}
\norm{J_{Nk}A_{N}}_{B_{2,2}^{-\frac{1}{8}}L^{2}}\les N^{\frac{d}{2}-\frac{1}{4}}\left(N^{-\frac{1}{4}}+\left|k\right|^{-\frac{1}{8}}\right)\norm A_{B_{1,1}^{\frac{1}{2}}L^{1}}.\label{eq:A_k bound1}
\end{equation}
\end{lem}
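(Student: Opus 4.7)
The plan is to pass to the spacetime Fourier side, where the shear operator $J_{Nk}$ acts as a translation of the time-frequency variable by $-2Nk\cdot n$ at each spatial mode $n$. This identifies the source of the decay factor: the weight $\langle\cdot\rangle^{-1/4}$ coming from $B_{2,2}^{-1/8}$ in time, when evaluated at a typical shift $|2Nk\cdot n|\sim N^{2}|k|$, produces exactly $(N^{2}|k|)^{-1/8}=N^{-1/4}|k|^{-1/8}$, which combined with the trivial Bernstein factor $N^{d/2}$ gives the claimed $N^{d/2-1/4}|k|^{-1/8}$ gain.

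First, a direct computation parallel to the last bullet of Proposition \ref{prop:galilean} yields
\[
\widetilde{J_{Nk}A_{N}}(\tau,n)=\widetilde{A}(\tau+2Nk\cdot n,n)\,\chi_{|n|\sim N}(n),
\]
and since $B_{2,2}^{-1/8}L^{2}\simeq H_{t}^{-1/8}L_{x}^{2}$, Plancherel recasts the target norm as
\[
\|J_{Nk}A_{N}\|_{B_{2,2}^{-1/8}L^{2}}^{2}\sim\sum_{|n|\sim N}\int_{\R}\langle\sigma-2Nk\cdot n\rangle^{-1/4}\,|\widetilde{A}(\sigma,n)|^{2}\,d\sigma.
\]
Next, decompose $A=\sum_{M\in 2^{\N}}A^{M}$ with $A^{M}:=P_{M}^{t}A$ and use the triangle inequality to reduce to an estimate of each $\|J_{Nk}A^{M}_{N}\|_{B_{2,2}^{-1/8}L^{2}}$; once bounded in the form $C(M,N,k)\|A^{M}_{N}\|_{L^{1}L^{1}}$, summation against the $B_{1,1}^{1/2}$-weight $\sum_{M}M^{1/2}\|A^{M}\|_{L^{1}L^{1}}\sim\|A\|_{B_{1,1}^{1/2}L^{1}}$ will close the argument.

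For fixed $M$, $\widetilde{A^{M}_{N}}$ is supported in $|n|\sim N,\,|\sigma|\sim M$, and Bernstein in time gives $\int|\widetilde{A^{M}_{N}}(\sigma,n)|^{2}\,d\sigma\les M\|A^{M}_{N}\|_{L^{1}L^{1}}^{2}$. The main step is a dyadic partition of spatial modes by $\mu=|n\cdot k|$: the slab $S_{\mu}=\{|n|\sim N,\,|n\cdot k|\sim\mu\}$ has cardinality $\les N^{d-1}\mu/|k|$ for $\mu\le N|k|$, and on $S_{\mu}$ the shift satisfies $|2Nk\cdot n|\sim N\mu$, so the weight $\langle\sigma-2Nk\cdot n\rangle^{-1/4}$ is bounded by $\max(M,N\mu)^{-1/4}$ away from the resonant band $N\mu\sim M$. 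Summing over dyadic $\mu\in[1,N|k|]$, the resulting geometric series in $\mu^{3/4}$ is dominated by the endpoint $\mu\sim N|k|$, producing
\[
\|J_{Nk}A^{M}_{N}\|_{B_{2,2}^{-1/8}L^{2}}^{2}\les N^{d-1/2}M|k|^{-1/4}\|A^{M}_{N}\|_{L^{1}L^{1}}^{2}\qquad\text{for }M\les N^{2}|k|,
\]
while for $M\gg N^{2}|k|$ the shift is dominated by $|\sigma|\sim M$ and a direct $M^{-1/4}$-weighted estimate yields $\les N^{d}M^{3/4}\|A^{M}_{N}\|_{L^{1}L^{1}}^{2}$. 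Taking square roots and summing in $M$ against the $B_{1,1}^{1/2}$ weight produces $\les N^{d/2-1/4}|k|^{-1/8}\|A\|_{B_{1,1}^{1/2}L^{1}}$, which already dominates the stated bound $N^{d/2-1/4}(N^{-1/4}+|k|^{-1/8})\|A\|_{B_{1,1}^{1/2}L^{1}}$ and therefore establishes the lemma.

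The hard part will be the resonant band $M\sim N|n\cdot k|$, where the weight $\langle\sigma-2Nk\cdot n\rangle^{-1/4}$ provides no decay. Its contribution has to be absorbed using the thinness of the slab $S_{M/N}$ (cardinality $\les N^{d-2}M/|k|$) against the trivial weight $\les 1$, and verifying that this resonant term never exceeds the non-resonant estimate coming from the $\mu\sim N|k|$ endpoint is the central quantitative check. The hypothesis $k\ne 0$ (giving $|k|\ge 1$) is used precisely to keep the slab cardinality bound $N^{d-1}\mu/|k|$ meaningful throughout.
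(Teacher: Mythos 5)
Your overall strategy coincides with the paper's: pass to the spacetime Fourier side where $\widetilde{J_{Nk}A_{N}}(\tau,n)=\widetilde{A_{N}}(\tau+2Nk\cdot n,n)$, bound the Fourier transform of each time block pointwise by an $L^{1}_{t,x}$ norm, integrate the weight $\jp{\sigma-2Nk\cdot n}^{-1/4}$ over $|\sigma|\sim M$, count spatial modes, and sum the blocks against the $B_{1,1}^{1/2}$ weight. The gap is in the counting step. The slab bound $\#S_{\mu}\les N^{d-1}\mu/|k|$ is false for thin slabs: the correct bound is $\#S_{\mu}\les N^{d-1}\bb{\mu/|k|+1}$, and your dyadic range $\mu\in[1,N|k|]$ omits the slab $n\cdot k=0$ entirely, which for $d\ge2$ contains $\sim N^{d-1}$ modes no matter how large $|k|$ is. Those near-orthogonal modes feel no shear, so the weight only gives $M^{-1/4}$, and their contribution is exactly the source of the $N^{-1/4}$ term in the statement (at the squared level it is the summand $1$ in the paper's bound $MN^{d-1}\bb{1+|\kappa|^{-1/4}N^{1/2}}$). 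Consequently your per-block bound $N^{d-1/2}M|k|^{-1/4}$ for $M\les N^{2}|k|$, and the resulting claim that you obtain the stronger estimate $N^{d/2-1/4}|k|^{-1/8}\norm A_{B_{1,1}^{1/2}L^{1}}$ which ``dominates'' the stated one, are false: take $A(t,x)=g(t)e^{in_{0}\cdot x}$ with $|n_{0}|\sim N$, $n_{0}\perp k$, and let $|k|\rightarrow\infty$ with $N$ and $g$ fixed; then $J_{Nk}A_{N}=A$, so the left-hand side is independent of $k$ while your bound tends to $0$. Even in the range $|k|\les N^{2}$, where the one-term bound happens to be numerically true, your derivation does not establish it, because the thin slabs are never accounted for. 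Restoring the $+1$ in the cardinality bound produces precisely the extra term $N^{d-1}M$ (equivalently $N^{-1/4}$ after the square root and the $M$-summation), and then your argument recovers the two-term estimate of the lemma, so the repair is straightforward.

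Two smaller remarks. First, the ``resonant band'' $N\mu\sim M$ that you flag as the central check is in fact harmless: for such $n$ one has $\int_{|\sigma|\sim M}\jp{\sigma-2Nk\cdot n}^{-1/4}d\sigma\les M^{3/4}$, which coincides with the non-resonant value $M(N\mu)^{-1/4}$ at $N\mu\sim M$, so it never exceeds your formula; the genuine danger is the thin and orthogonal slabs above. Second, reducing to $\norm{A_{N}^{M}}_{L^{1}L^{1}}$ is slightly off since the sharp spatial projection $P_{N}$ is not uniformly bounded on $L^{1}(\T^{d})$; instead use the pointwise bound $|\widetilde{A_{N}^{M}}(\sigma,n)|\le|\widetilde{A^{M}}(\sigma,n)|\les\norm{P_{M}^{t}A}_{L^{1}_{t,x}}$, as the paper does, after which the summation $\sum_{M}M^{1/2}\norm{P_{M}^{t}A}_{L^{1}_{t,x}}\sim\norm A_{B_{1,1}^{1/2}L^{1}}$ closes as you describe; your treatment of the regime $M\gg N^{2}|k|$ is fine.
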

\begin{proof}
We choose a coordinate vector $e_{j}$ such that $\kappa=k\cdot e_{j}\sim\left|k\right|$.
Let $k'=k-\kappa e_{j}$. Since $\widetilde{J_{Nk}A}(\tau,\xi)=\widetilde{A}(\tau+2Nk\cdot\xi,\xi)$,
for a dyadic number $M\in2^{\N}$, we have
\begin{align*}
\norm{J_{Nk}P_{M}^{t}A_{N}}_{B_{2,2}^{-\frac{1}{8}}L^{2}}^{2} & \les\sum_{\substack{\xi\in\Z^{d}}
}\int_{\R}\jp{\tau}^{-\frac{1}{4}}\left|\F_{t,x}\left(J_{Nk}P_{M}^{t}A_{N}\right)(\tau,\xi)\right|^{2}d\tau\\
 & =\sum_{\substack{\xi\in\Z^{d}}
}\int_{\R}\jp{\tau}^{-\frac{1}{4}}\left|\F_{t,x}\left(P_{M}^{t}A_{N}\right)\left(\tau+2Nk\cdot\xi,\xi\right)\right|^{2}d\tau\\
 & \les\sum_{\substack{\xi\in[-N,N]^{d}}
}\int_{-2Nk\cdot\xi+[-10M,10M]}\jp{\tau}^{-\frac{1}{4}}\left|\widetilde{A}(\tau+2Nk\cdot\xi,\xi)\right|^{2}d\tau\\
{\color{blue}{\color{blue}}} & \les\sum_{\substack{\xi\in[-N,N]^{d}}
}M\jp{Nk\cdot\xi}^{-\frac{1}{4}}\cdot\norm{\tilde A}_{L_{\tau}^{\infty}\ell_{\xi}^{\infty}}^{2}\\
{\color{blue}} & =\sum_{\xi'\in[-N,N]^{d-1}}\sum_{\substack{n\in[-N,N]}
}M\jp{Nk'\cdot\xi'+N\kappa n}^{-\frac{1}{4}}\cdot\norm{\tilde A}_{L_{\tau}^{\infty}\ell_{\xi}^{\infty}}^{2}\\
 & \les MN^{d-1}\left(1+\left|\kappa\right|^{-\frac{1}{4}}N^{\frac{1}{2}}\right)\norm A_{L_{t,x}^{1}}^{2}\\
{\color{blue}} & \les MN^{d-\frac{1}{2}}\left(N^{-\frac{1}{2}}+\left|k\right|^{-\frac{1}{4}}\right)\norm A_{L_{t,x}^{1}}^{2}.
\end{align*}
Taking a square root, summing over $M\in2^{\N}$, and applying the
triangle inequality, we obtain (\ref{eq:A_k bound1}), finishing the
proof.
\end{proof}
Meanwhile, for $1<r<q<\infty$, we have the embedding
\begin{equation}
\norm{J_{Nk}A_{N}}_{B_{q,\infty}^{0}L^{q}}\les\norm{J_{Nk}A_{N}}_{L_{t,x}^{q}}=\norm{A_{N}}_{L_{t,x}^{q}}\les N^{d\left(\frac{1}{r}-\frac{1}{q}\right)}\norm A_{B_{r,r}^{\frac{1}{r}-\frac{1}{q}}L^{r}}.\label{eq:A_k bound2}
\end{equation}
Interpolating between (\ref{eq:A_k bound1}) and (\ref{eq:A_k bound2}),
we have the following lemma.
\begin{lem}
\label{lem:A_k bound}Let $\Omega$ be the open tetrahedron whose
vertices are $\left(\frac{1}{2},1,\frac{1}{8}\right),\left(0,1,0\right),\left(1,1,0\right)$,
and $\left(0,0,0\right)$. Let $q,r$, and $\rho$ be parameters such
that $\left(\frac{1}{q},\frac{1}{r},\rho\right)$ lies in $\Omega$.
For a dyadic number $N\in2^{\N}$, an integer point $k\in\Z^{d}\setminus\left\{ 0\right\} ,$
and a function $A:\R\times\T^{d}\rightarrow\C,$ we have
\begin{equation}
\norm{J_{Nk}A_{N}}_{B_{q,\infty}^{-\rho}L^{q}}\les_{q,r,\rho}\left(N^{-2\rho}+|k|^{-\rho}\right)N^{d\left(\frac{1}{r}-\frac{1}{q}\right)-2\rho}\norm A_{B_{r,r}^{\frac{1}{r}-\frac{1}{q}}L^{r}}.\label{eq:A_k bound}
\end{equation}
\end{lem}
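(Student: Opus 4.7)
The plan is to obtain Lemma \ref{lem:A_k bound} by complex interpolation between the two endpoint bounds (\ref{eq:A_k bound1}) and (\ref{eq:A_k bound2}), viewing $\Omega$ as the convex hull of the apex $P^{\ast}:=(1/2,1,1/8)$ and the three base vertices lying in $\{\rho=0\}$. For every interior point of the base triangle one has $0<1/q<1/r<1$, equivalently $1<r<q<\infty$, which is precisely the range in which (\ref{eq:A_k bound2}) applies. Any interior point of $\Omega$ can be written as a convex combination with weight $\theta\in(0,1)$ on $P^{\ast}$ and weight $1-\theta$ on some interior point $P_{1}=(1/q_{1},1/r_{1},0)$ of the base. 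I would fix such a splitting, apply linear complex interpolation for the operator $A\mapsto J_{Nk}A_{N}$ between the bounds at $P^{\ast}$ and at $P_{1}$, and then let $\theta$ and $P_{1}$ sweep out $\Omega$.

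Concretely, setting $1/q_{\theta}:=\theta/2+(1-\theta)/q_{1}$, $1/r_{\theta}:=\theta+(1-\theta)/r_{1}$, and $\rho:=\theta/8$, the Besov complex interpolation identity (\ref{eq:Besov complex interp}) gives on the target side
\[
[B_{q_{1},\infty}^{0}L^{q_{1}},\,B_{2,2}^{-1/8}L^{2}]_{\theta}\sim B_{q_{\theta},\,2/\theta}^{-\rho}L^{q_{\theta}}\hook B_{q_{\theta},\infty}^{-\rho}L^{q_{\theta}},
\]
and on the source side
\[
[B_{r_{1},r_{1}}^{1/r_{1}-1/q_{1}}L^{r_{1}},\,B_{1,1}^{1/2}L^{1}]_{\theta}\sim B_{r_{\theta},r_{\theta}}^{1/r_{\theta}-1/q_{\theta}}L^{r_{\theta}},
\]
since the regularity exponents track as $\theta\cdot(-1/8)+(1-\theta)\cdot 0=-\rho$ on the target and $\theta/2+(1-\theta)(1/r_{1}-1/q_{1})=1/r_{\theta}-1/q_{\theta}$ on the source. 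This matches the Banach spaces appearing in (\ref{eq:A_k bound}) at the interpolated parameter triple.

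For the constant, the interpolated operator-norm bound equals $M_{0}^{\theta}M_{1}^{1-\theta}$ with $M_{0}:=N^{d/2-1/4}(N^{-1/4}+|k|^{-1/8})$ and $M_{1}:=N^{d(1/r_{1}-1/q_{1})}$. A short arithmetic check aggregates the pure powers of $N$ into $N^{d(1/r_{\theta}-1/q_{\theta})-2\rho}$, and the remaining factor $(N^{-1/4}+|k|^{-1/8})^{\theta}$ is handled by the elementary subadditivity $(a+b)^{c}\le a^{c}+b^{c}$ for $c\in(0,1]$, yielding
\[
(N^{-1/4}+|k|^{-1/8})^{\theta}\le N^{-\theta/4}+|k|^{-\theta/8}=N^{-2\rho}+|k|^{-\rho},
\]
which is exactly the claimed decay factor. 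The main obstacle is essentially bookkeeping: tracking how the interpolation acts simultaneously on the Besov regularity, on both Besov summation indices, and on the underlying $L^{p}$ exponents, while confirming that all four spaces sit inside a common Hausdorff ambient space (immediate on $\R\times\T^{d}$). That the three base vertices themselves fall outside the range $1<r<q<\infty$ of (\ref{eq:A_k bound2}) is immaterial, since the statement only concerns the open interior of $\Omega$.
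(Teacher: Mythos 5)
Your proposal is correct and is essentially the paper's own argument: the paper proves Lemma \ref{lem:A_k bound} precisely by interpolating between (\ref{eq:A_k bound1}) and (\ref{eq:A_k bound2}), only without writing out the details. Your bookkeeping — writing an interior point of $\Omega$ as a convex combination of the apex and an interior base point, invoking (\ref{eq:Besov complex interp}) on both sides, matching the exponents, and using $(a+b)^{\theta}\le a^{\theta}+b^{\theta}$ for the decay factor — checks out.
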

We choose $\s_{1},\ldots,\s_{4}>0$ satisfying 
\begin{equation}
\s\ll\s_{1}\ll\s_{2}\ll\s_{3}\ll\s_{4}\ll1.\label{eq:ss1s2<<}
\end{equation}
For example, if we choose $\s=10^{-10^{10^{10^{10^{d+1/s}}}}}$ for
(\ref{eq:=00005Cs,p def}), we can choose $\s_{j}$'s as $\s_{1}=10^{-10^{10^{10^{d+1/s}}}},\s_{2}=10^{-10^{10^{d+1/s}}},\s_{3}=10^{-10^{d+1/s}}$,
and $\s_{4}=10^{-d-1/s}$.
\begin{lem}
\label{lem:strip1 claim1-1}Let $q_{0},r_{0}$, and $\theta$ be the exponents such that $\frac{1}{q_{0}}=\frac{2+\s_{3}}{d+2},\frac{1}{r_{0}}=\frac{2+\s_{3}+\s_{2}}{d+2}$,
and $\theta=\frac{2}{q_{0}}+\frac{d}{r_{0}}-2$. For $A\in B_{r_{0},r_{0}}^{\frac{1}{r_{0}}-\frac{1}{q_{0}}}B_{r_{0},\infty}^{\theta}$,
$u,v\in Z^{0}$, and frequencies $N,R\in2^{\N}$ such that $N\ge32R$,
we have
\begin{align}
\left|\int_{\R\times\T^{d}}\psi^{2}u_{N}\overline{v}A_{R}dxdt\right| & \les\norm u_{Z^{0}}\norm v_{Z^{0}}\left(\text{\ensuremath{\jp{N/R}^{-\s_{1}}}}+R^{-2\s_{1}}\right)R^{\theta}\norm{A_{R}}_{B_{r_{0},r_{0}}^{\frac{1}{r_{0}}-\frac{1}{q_{0}}}L^{r_{0}}}.\label{eq:strip1 claim1-1}
\end{align}
\end{lem}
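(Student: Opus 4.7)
The plan is to combine the Galilean identity \eqref{eq:If Ig JA} with a trilinear H{\"o}lder estimate in time-Besov spaces and with the kernel bound of Lemma \ref{lem:A_k bound}, with the decay in $N/R$ coming from the $|k|^{-\rho}$ factor in Lemma \ref{lem:A_k bound} once the two high-frequency factors are Galilean-shifted to the origin. First I would partition $\Z^d$ into cubes $C_k=Rk+[-8R,8R]^d$ and decompose $u_N=\sum_k P_{C_k}u_N$, $v=\sum_{k'}P_{C_{k'}}v$. Since $u_N$ is supported at $|\xi|\sim N\gg R$ and $A_R$ at $|\xi|\les R$, frequency support forces $|k|\sim N/R$ and $|k-k'|\les1$ in every nonzero contribution. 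For each such paired cube I apply \eqref{eq:If Ig JA} with shift $\xi=Rk$: this translates the frequency supports of $I_{Rk}P_{C_k}u_N$ and $I_{Rk}P_{C_k}v$ into $[-8R,8R]^d$ at the cost of replacing $A_R$ by $J_{Rk}A_R$.

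Next I would apply the time-Besov trilinear estimate \eqref{eq:Besov trilinear} with spatial exponents $p,p,q$ satisfying $2/p+1/q=1$ and time-Besov regularities $\al,\al,-\rho$ adapted to the $Z^0$ norm, obtaining
\[
\Bb{\textstyle\int\psi^{2}I_{Rk}P_{C_{k}}u_{N}\cdot\overline{I_{Rk}P_{C_{k'}}v}\cdot J_{Rk}A_{R}\,dxdt}\les\norm{\psi P_{\le8R}I_{Rk}u_{N}}_{B_{p,1}^{\al}L^{p}}\norm{\psi P_{\le8R}I_{Rk}v}_{B_{p,1}^{\al}L^{p}}\norm{J_{Rk}A_{R}}_{B_{q,\infty}^{-\rho}L^{q}}.
\]
The first two factors, once divided by $R^{\be}$ with $\be=\s+2\al$, are precisely the $\ell^{2}(k)$-summable quantities appearing in the second part of \eqref{eq:Z^s def}. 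Cauchy--Schwarz in $k$ therefore controls the product of the first two factors by $R^{2\be}\norm{u_{N}}_{Z^{0}}\norm v_{Z^{0}}$, modulo an $\ell^{2}(k)$-norm of the $A$ factor restricted to $|k|\sim N/R$.

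For the $A$ factor I invoke Lemma \ref{lem:A_k bound} at the level $A_R$ with parameters $(1/q,1/r,\rho)$ chosen inside the tetrahedron $\Omega$; this yields
\[
\norm{J_{Rk}A_{R}}_{B_{q,\infty}^{-\rho}L^{q}}\les\bb{R^{-2\rho}+|k|^{-\rho}}R^{d(1/r-1/q)-2\rho}\norm{A_{R}}_{B_{r,r}^{1/r-1/q}L^{r}}.
\]
Since the sum is effectively over $|k|\sim N/R$, taking $\rho=\s_{1}$ and $r=r_{0}$, $q=q_{0}$ converts $|k|^{-\rho}$ into $\jp{N/R}^{-\s_{1}}$ and leaves $R^{-2\s_{1}}$ from the first term. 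The exponent $\theta$ in the statement is what is left of $R^{2\be}\cdot R^{d(1/r_{0}-1/q_{0})-2\s_{1}}$ after the critical scaling of $Z^{0}$ is absorbed; concretely it records the slight surplus built into $1/q_{0}=(2+\s_{3})/(d+2)$ and $1/r_{0}=(2+\s_{3}+\s_{2})/(d+2)$.

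The main obstacle is the exponent bookkeeping. Three constraints must be jointly satisfied: (i) the scaling and positivity hypotheses of \eqref{eq:Besov trilinear}, namely $\frac{1}{p_{1}}+\frac{1}{p_{2}}+\frac{1}{p_{3}}=s_{1}+s_{2}+s_{3}+1$, $s_{i}+s_{j}>0$, and each $1/p_{j}>s_{j}$; (ii) the $Z^{0}$-scaling $\be=\s+2\al$ with $\al\in[\s,1/p-\s]$; (iii) the admissibility $(1/q_{0},1/r_{0},\s_{1})\in\Omega$. The hierarchy $\s\ll\s_{1}\ll\s_{2}\ll\s_{3}\ll\s_{4}\ll1$ is engineered so that every positivity margin above is positive with room to spare; carefully choosing $\al$ and $(r_{0},q_{0})$ to sit inside these open constraints, while matching the final scale to exactly $R^{\theta}$, is the delicate computation of the proof. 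The final summation of the pair-matching $(k,k')$ and the trivial frequency overlap contribute only harmless constants.
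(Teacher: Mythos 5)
Your outline follows the paper's proof almost step for step: reduce to $M\sim N$ via the pairing of $R$-cubes, apply the Galilean identity (\ref{eq:If Ig JA}) cube by cube, invoke the time-Besov product/trilinear estimate, control the sheared factor $J_{Rk}A_{R}$ by Lemma \ref{lem:A_k bound}, and close with the $\ell^{2}(k)$ structure built into $Z^{0}$ together with the observation that $|k|\gtrsim N/R$ on the support. Two of your slips are cosmetic (the shift should translate the cube center to the origin, i.e.\ $\xi=-Rk$ up to sign conventions, and after Cauchy--Schwarz in $k$ on the two $Z^{0}$ factors the $A$-factor must be taken in $\ell^{\infty}(k)$, not $\ell^{2}(k)$ --- the $\ell^{2}_{k}$ sum of $|k|^{-\s_{1}}$ over $|k|\sim N/R$ would diverge badly, whereas the sup is exactly what converts $|k|^{-\s_{1}}$ into $\jp{N/R}^{-\s_{1}}$, and your next sentence shows this is what you intend).

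There is, however, one genuine gap, and it is not mere ``bookkeeping'': your central display cannot hold with the exponents as written. If $q$ is defined by $\frac{2}{p}+\frac{1}{q}=1$, then $q=(p/2)'$, which is \emph{not} $q_{0}$ (indeed $\frac{1}{(p/2)'}=\frac{2+2\s}{d+2}<\frac{2+\s_{3}}{d+2}=\frac{1}{q_{0}}$); applying Lemma \ref{lem:A_k bound} at that $q$ produces the norm $\norm{A_{R}}_{B_{r_{0},r_{0}}^{\frac{1}{r_{0}}-\frac{1}{(p/2)'}}L^{r_{0}}}$, whose time-Besov regularity exceeds $\frac{1}{r_{0}}-\frac{1}{q_{0}}$, so it is not controlled by the norm appearing in the statement. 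If instead you insist on $q=q_{0}$ in the $A$-factor, then $\frac{2}{p}+\frac{1}{q_{0}}>1$ and the spatial H{\"o}lder inequality $L^{p}\times L^{p}\times L^{q_{0}}\to L^{1}$ underlying (\ref{eq:fgh}) is simply false (reciprocals summing to more than $1$, with the \emph{smaller}-exponent norm on the third factor). The paper's proof resolves exactly this tension: it keeps the spatially exact triple $L^{p}\times L^{p}\times L^{(p/2)'}$ in the product rule, and then uses a spatial Bernstein estimate --- legitimate because $J_{2Rk}$ does not move spatial frequencies, so $J_{2Rk}A_{R}$ is still localized at frequencies $\les R$ --- to pass from $L^{(p/2)'}$ to $L^{q_{0}}$ at the cost of $R^{d(\frac{1}{q_{0}}-\frac{1}{(p/2)'})}$, after which Lemma \ref{lem:A_k bound} is applied at $(q_{0},r_{0},\s_{1})\in\Omega$. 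That Bernstein factor is precisely what makes the total power of $R$ equal $\theta-2\be$ (via $d(\frac{1}{r_{0}}+\frac{2}{p}-1)-2\s_{1}=\theta-2\be$) and makes the $A_{R}$-norm come out as $B_{r_{0},r_{0}}^{\frac{1}{r_{0}}-\frac{1}{q_{0}}}L^{r_{0}}$; without it your argument either proves the wrong estimate or rests on an invalid H{\"o}lder step.
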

\begin{proof}
For simplicity of notation, we denote $r=r_{0}$ and $q=q_{0}$ in
this proof. Let $\al$ and $\be$ be the parameters satisfying $\frac{1}{q}+\s_{1}+2\left(\frac{1}{p}-\al\right)=1$
and $\be=2\al+\s$. Since $\int_{\R\times\T^{d}}\psi^{2}u_{N}\overline{v_{M}}A_{R}dxdt$
is zero whenever $M>4N$ or $M<\frac{1}{4}N$, (\ref{eq:strip1 claim1-1})
follows once we have for $M\sim N$ the estimate
\[
\left|\int_{\R\times\T^{d}}\psi^{2}u_{N}\overline{v_{M}}A_{R}dxdt\right|\les\norm u_{Z^{0}}\norm v_{Z^{0}}\left(\text{\ensuremath{\jp{N/R}^{-\s_{1}}}}+R^{-2\s_{1}}\right)R^{\theta}\norm{A_{R}}_{B_{r,r}^{\frac{1}{r}-\frac{1}{q}}L^{r}}.
\]
Due to (\ref{eq:If Ig JA}), we have 
\[
\int_{\R\times\T^{d}}v\overline{w}Adxdt=\int_{\R\times\T^{d}}I_{Rk}v\cdot\overline{I_{Rk}w}\cdot J_{Rk}Adxdt
\]
for $v,w:\R\times\T^{d}\rightarrow\C$ and $k\in\Z^{d}$. We have
\begin{align*}
 & \left|\int_{\R\times\T^{d}}\psi^{2}u_{N}\overline{v_{M}}A_{R}dxdt\right|\\
 & =\left|\sum_{k\in\Z^{d}}\int_{\R\times\T^{d}}\psi^{2}u_{N}\cdot\overline{P_{(-R,R]^{d}-2Rk}v_{M}}A_{R}dxdt\right|\\
 & =\left|\sum_{k\in\Z^{d}}\int_{\R\times\T^{d}}\psi^{2}P_{[-8R,8R]^{d}-2Rk}u_{N}\cdot\overline{P_{(-R,R]^{d}-2Rk}v_{M}}A_{R}dxdt\right|\\
 & =\left|\sum_{k\in\Z^{d}}\int_{\R\times\T^{d}}\psi I_{2Rk}P_{[-8R,8R]^{d}-2Rk}u_{N}\cdot\overline{\psi I_{2Rk}P_{(-R,R]^{d}-2Rk}v_{M}}J_{2Rk}A_{R}dxdt\right|\\
 & =\left|\sum_{k\in\Z^{d}}\int_{\R\times\T^{d}}\psi P_{\le8R}I_{2Rk}u_{N}\cdot\overline{\psi P_{(-R,R]^{d}}I_{2Rk}v_{M}}J_{2Rk}A_{R}dxdt\right|.
\end{align*}
Using $0<\s_{1}<\al$, $\frac{1}{q}+\s_{1}+2\left(\frac{1}{p}-\al\right)=1$,
and (\ref{eq:Besov product rule}), we continue to estimate
\begin{align*}
\text{} & \les\sum_{k\in\Z^{d}}\norm{\psi P_{\le8R}I_{2Rk}u_{N}}_{B_{p,2}^{\al}L^{p}}\norm{\psi P_{\le8R}I_{2Rk}v_{M}}_{B_{p,2}^{\al}L^{p}}\norm{J_{2Rk}A_{R}}_{B_{q,\infty}^{-\s_{1}}L^{(p/2)'}}.
\end{align*}
Again, using $(\frac{1}{q},\frac{1}{r},\s_{1})\in\Omega$, $d\left(\frac{1}{r}+\frac{2}{p}-1\right)-2\s_{1}=\theta-2\be$,
$(p/2)'>q$, (\ref{eq:A_k bound}), and Bernstein estimates, we estimate
\begin{align*}
 & \les\sum_{k\in\Z^{d}}\norm{\psi P_{\le8R}I_{2Rk}u_{N}}_{B_{p,2}^{\al}L^{p}}\norm{\psi P_{\le8R}I_{2Rk}v_{M}}_{B_{p,2}^{\al}L^{p}}\\
 & \cdot R^{d\left(\frac{1}{r}+\frac{2}{p}-1\right)-2\s_{1}}\left(\left|k\right|^{-\s_{1}}+R^{-2\s_{1}}\right)\norm{A_{R}}_{B_{r,r}^{\frac{1}{r}-\frac{1}{q}}L^{r}}\\
 & =\sum_{k\in\Z^{d}}R^{-\be}\norm{\psi P_{\le8R}I_{2Rk}u_{N}}_{B_{p,2}^{\al}L^{p}}R^{-\be}\norm{\psi P_{\le8R}I_{2Rk}v_{M}}_{B_{p,2}^{\al}L^{p}}\\
 & \cdot\left(\left|k\right|^{-\s_{1}}+R^{-2\s_{1}}\right)R^{\theta}\norm{A_{R}}_{B_{r,r}^{\frac{1}{r}-\frac{1}{q}}L^{r}}\\
 & \les\norm{u_{N}}_{Z^{0}}\norm{v_{M}}_{Z^{0}}\left(\jp{N/R}^{-\s_{1}}+R^{-2\s_{1}}\right)\cdot R^{\theta}\norm{A_{R}}_{B_{r,r}^{\frac{1}{r}-\frac{1}{q}}L^{r}},
\end{align*}
where the last inequality holds since $P_{\le8R}I_{2Rk}u_{N}$ is
nonzero only if $\left|k\right|\gtrsim N/R$. This finishes the proof
of (\ref{eq:strip1 claim1-1}).
\end{proof}
\begin{lem}
\label{lem:strip1}Let $q_{0},r_{0}$, and $\theta$ be defined in Lemma
\ref{lem:strip1 claim1-1}. For $A\in B_{r_{0},r_{0}}^{\frac{1}{r_{0}}-\frac{1}{q_{0}}}B_{r_{0},\infty}^{\theta}$
and $u\in Z^{0}$, we have
\begin{equation}
\left|\int_{\R\times\T^{d}}\psi^{2}\left|u\right|^{2}Adxdt\right|\les\norm u_{Z^{0}}^{2}\norm A_{B_{r_{0},r_{0}}^{\frac{1}{r_{0}}-\frac{1}{q_{0}}}B_{r_{0},\infty}^{\theta}}.\label{eq:strip 1}
\end{equation}
\end{lem}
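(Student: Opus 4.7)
The plan is to dyadically expand
\[
\int_{\R\times\T^{d}}\psi^{2}|u|^{2}A\,dxdt=\sum_{N,M,R\in 2^{\N}}\int_{\R\times\T^{d}}\psi^{2}u_{N}\overline{u_{M}}A_{R}\,dxdt
\]
and to split the triple sum into a high-low regime (where Lemma \ref{lem:strip1 claim1-1} applies) and a near-diagonal regime (handled by H\"older together with Strichartz and Bernstein). Using the $u\leftrightarrow\overline{u}$ symmetry to restrict to $N\ge M$, the frequency-support constraint $|n-m|\sim R$ for non-vanishing summands produces two cases: \emph{Regime I}, $R\le N/32$ (which forces $M\sim N$); and \emph{Regime II}, $R>N/32$ (which forces $R\sim N\ge M$).

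In Regime I, I apply the frequency-localized form of Lemma \ref{lem:strip1 claim1-1} (with $v=u$) together with the identity
\[
R^{\theta}\|A_{R}\|_{B^{1/r_{0}-1/q_{0}}_{r_{0},r_{0}}L^{r_{0}}}=\|A_{R}\|_{B^{1/r_{0}-1/q_{0}}_{r_{0},r_{0}}B^{\theta}_{r_{0},\infty}}\les\|A\|_{B^{1/r_{0}-1/q_{0}}_{r_{0},r_{0}}B^{\theta}_{r_{0},\infty}},
\]
the last inequality being the boundedness of Littlewood-Paley projections on Besov spaces. Swapping the order of summation and using that $\sum_{R\le N/32}(\jp{N/R}^{-\s_{1}}+R^{-2\s_{1}})$ is a geometric-type series uniformly bounded in $N$, the Regime I contribution sums to $\les\|u\|_{Z^{0}}^{2}\|A\|_{B^{1/r_{0}-1/q_{0}}_{r_{0},r_{0}}B^{\theta}_{r_{0},\infty}}$ after Cauchy-Schwarz in $N,M$.

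In Regime II, I apply H\"older with exponents $(2q_{0}',2q_{0}',q_{0})$ in time and $(2r_{0}',2r_{0}',r_{0})$ in space. The $Z^{0}$ embedding (\ref{eq:Sobolev embed Z^s}) at the admissible pair $(2q_{0}',r^{*})$ with $\frac{2}{2q_{0}'}+\frac{d}{r^{*}}=\frac{d}{2}-\s$, together with a spatial Bernstein to pass from $L^{r^{*}}$ to $L^{2r_{0}'}$ (using frequency localization of $u_{N}$), yields $\|\psi u_{N}\|_{L^{2q_{0}'}L^{2r_{0}'}}\les N^{\theta/2}\tilde{a}_{N}\|u\|_{Z^{0}}$ for some $\ell^{2}(N)$-sequence $\{\tilde{a}_{N}\}$. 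For $A_{R}$, the time-Besov embedding (\ref{eq:Besov-Lorentz embed}) gives $\|A_{R}\|_{L^{q_{0}}L^{r_{0}}}\les R^{-\theta}\|A\|_{B^{1/r_{0}-1/q_{0}}_{r_{0},r_{0}}B^{\theta}_{r_{0},\infty}}$. With $R\sim N$, each summand is controlled by $(M/N)^{\theta/2}\tilde{a}_{N}\tilde{a}_{M}\|u\|_{Z^{0}}^{2}\|A\|_{B^{1/r_{0}-1/q_{0}}_{r_{0},r_{0}}B^{\theta}_{r_{0},\infty}}$, and a Schur test on the kernel $(M/N)^{\theta/2}\chi_{M\le N}$ (summable thanks to $\theta>0$) closes the estimate.

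The main subtlety is the exact balance in Regime II: the $N^{\theta/2}$ Bernstein loss on each $u$-factor is precisely canceled by the $R^{-\theta}=N^{-\theta}$ gain from the time-Besov embedding of $A_{R}$, leaving only the Schur-summable ratio $(M/N)^{\theta/2}$. This balance depends crucially on the parameter hierarchy $\s\ll\s_{2}\ll\s_{3}\ll1$ of (\ref{eq:ss1s2<<}), which keeps $\theta>0$ while keeping $2q_{0}'\in[p,1/\s]$ so the underlying Strichartz embedding remains admissible. Regime I is essentially immediate from Lemma \ref{lem:strip1 claim1-1}; the core of the argument is this bookkeeping in Regime II.
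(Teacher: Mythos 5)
Your argument is correct, and it splits naturally into a part that coincides with the paper and a part that takes a genuinely different route. Your Regime I is exactly the paper's treatment of the high-low term: the paper writes $|u|^{2}A=u\cdot\bigl(\pi_{>}(\overline{u},A)+\pi_{\le}(\overline{u},A)\bigr)$ and handles $\pi_{>}$ by the frequency-localized form of Lemma \ref{lem:strip1 claim1-1}, the bound $R^{\theta}\norm{A_{R}}_{B_{r_{0},r_{0}}^{1/r_{0}-1/q_{0}}L^{r_{0}}}\les\norm A_{B_{r_{0},r_{0}}^{1/r_{0}-1/q_{0}}B_{r_{0},\infty}^{\theta}}$, and the summability of $\jp{N/R}^{-\s_{1}}+R^{-2\s_{1}}$, just as you do. Where you diverge is the near-diagonal regime $R\sim\max\{N,M\}$: the paper disposes of its $\pi_{\le}$ term softly, with no dyadic bookkeeping, by the vector-valued time-Besov paraproduct rule (\ref{eq:Besov paraproduct rule,<=00003D}) applied with spatial regularities $(-\tilde{\be},-\tilde{\be},\theta)$, $2\tilde{\be}<\theta$, together with the time-Besov component (\ref{eq:time Besov embed Z^s}) of $Z^{0}$; you instead run a dyadic H{\"o}lder--Bernstein--Schur argument using only the Lebesgue--Strichartz component of $Z^{0}$, and your exponent bookkeeping checks out: with $\frac{2}{2q_{0}'}+\frac{d}{r^{*}}=\frac{d}{2}-\s$ one has $2q_{0}'\in(p,1/\s)$ since $\s\ll\s_{3}$, the Bernstein loss per factor is $d(\frac{1}{r^{*}}-\frac{1}{2r_{0}'})=\frac{\theta}{2}-\s$, the embedding (\ref{eq:Besov-Lorentz embed}) gives $\norm{A_{R}}_{L^{q_{0}}L^{r_{0}}}\les R^{-\theta}\norm A_{B_{r_{0},r_{0}}^{1/r_{0}-1/q_{0}}B_{r_{0},\infty}^{\theta}}$, and the leftover kernel $(M/N)^{\theta/2}\chi_{M\le N}$ is Schur-summable because $\theta>0$. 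The trade-off is that the paper's route needs no case analysis in $(N,M,R)$ beyond the spatial paraproduct and exploits the positive time regularity $\tilde{\al}$ of $\psi u$, while yours avoids the vector-valued Besov product machinery at the price of exactly the balance you identify. Two small points worth tightening if you write this up: the reduction ``by symmetry to $N\ge M$'' should be justified by conjugating the whole integral (which replaces $A$ by $\overline{A}$, with the same norms), since the two orderings are not literally symmetric and Lemma \ref{lem:strip1 claim1-1} places the unconjugated factor at high frequency; and your displayed identity $R^{\theta}\norm{A_{R}}_{B_{r_{0},r_{0}}^{1/r_{0}-1/q_{0}}L^{r_{0}}}=\norm{A_{R}}_{B_{r_{0},r_{0}}^{1/r_{0}-1/q_{0}}B_{r_{0},\infty}^{\theta}}$ is only an equivalence up to constants (and merely ``$\les$'' at $R=1$), which is all that is needed.
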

\begin{proof}
Again, we denote $r=r_{0}$ and $q=q_{0}$ in this proof. We decompose
the left-hand side of (\ref{eq:strip 1}) into
\begin{equation}
\left|\int_{\R\times\T^{d}}\psi^{2}\left|u\right|^{2}Adxdt\right|\le\left|\int_{\R\times\T^{d}}\psi^{2}u\cdot\pi_{>}\left(\overline{u},A\right)dxdt\right|+\left|\int_{\R\times\T^{d}}\psi^{2}u\cdot\pi_{\le}\left(\overline{u},A\right)dxdt\right|.\label{eq:triangle eq}
\end{equation}
We first estimate $\left|\int_{\R\times\T^{d}}\psi^{2}u\cdot\pi_{\le}\left(\overline{u},A\right)dxdt\right|$.
This is simply estimated using the Besov product rule (\ref{eq:Besov paraproduct rule,<=00003D})
on $\T^{d}$ and the embedding property of $Z^{s}$ (\ref{eq:time Besov embed Z^s}).
Let $\tilde{\al}$ and $\tilde{\be}$ be the exponents such that $\frac{1}{q}+2\left(\frac{1}{p}-\tilde{\al}\right)=1$
and $\tilde{\be}=2\tilde{\al}+\s$. Since $\tilde{\al}\in[\s,\frac{1}{p}-\s]$
and $2\tilde{\be}<\theta$, we have
\[
\left|\int_{\R\times\T^{d}}\psi^{2}u\cdot\pi_{\le}\left(\overline{u},A\right)dxdt\right|\les\norm{\psi u}_{B_{p,2}^{\tilde{\al}}B_{p,2}^{-\tilde{\be}}}^{2}\norm A_{B_{r,r}^{\frac{1}{r}-\frac{1}{q}}B_{r,\infty}^{\theta}}\les\norm u_{Z^{0}}^{2}\norm A_{B_{r,r}^{\frac{1}{r}-\frac{1}{q}}B_{r,\infty}^{\theta}}.
\]

The main part of the proof is to estimate $\left|\int_{\R\times\T^{d}}\psi^{2}u\cdot\pi_{>}\left(\overline{u},A\right)dxdt\right|$.
Since $\int_{\R\times\T^{d}}\psi^{2}u_{N}\cdot\pi_{>}\left(\overline{u_{M}},A_{R}\right)dxdt$
is zero whenever the dyadic frequencies $N,M\gg R$ are not comparable,
by (\ref{eq:strip1 claim1-1}), we have
\begin{align}
\left|\int_{\R\times\T^{d}}\psi^{2}u\cdot\pi_{>}\left(\overline{u},A_{R}\right)dxdt\right| & \les\sum_{N\ge16R}\norm{u_{N}}_{Z^{0}}^{2}\left(\text{\ensuremath{\jp{N/R}^{-\s_{1}}}}+R^{-2\s_{1}}\right)\label{eq:pi_hl}\\
 & \cdot R^{\theta}\norm{A_{R}}_{B_{r,r}^{\frac{1}{r}-\frac{1}{q}}L^{r}}.\nonumber 
\end{align}
Writing $A=\sum_{R\in2^{\N}}A_{R}$, we have
\begin{align*}
\left|\int_{\R\times\T^{d}}\psi^{2}u\cdot\pi_{>}\left(\overline{u},A\right)dxdt\right| & \les\sum_{\substack{R\in2^{\N}\\
N\ge16R
}
}\norm{u_{N}}_{Z^{0}}^{2}\left(\text{\ensuremath{\jp{N/R}^{-\s_{1}}}}+R^{-2\s_{1}}\right)\cdot\norm A_{B_{r,r}^{\frac{1}{r}-\frac{1}{q}}B_{r,\infty}^{\theta}}\\
 & \les\norm u_{Z^{0}}^{2}\norm A_{B_{r,r}^{\frac{1}{r}-\frac{1}{q}}B_{r,\infty}^{\theta}},
\end{align*}
which yields the estimate of $\left|\int_{\R\times\T^{d}}\psi^{2}u\cdot\pi_{>}\left(\overline{u},A\right)dxdt\right|$.
\end{proof}
As a consequence, we have the main estimate of this section.
\begin{prop}
\label{prop:Holder strip decomposition}Let $m\in\Z$. Let $\psi_{1}:\R\rightarrow\R$
be a $C_{0}^{\infty}$ bump function satisfying $\psi|_{supp(\psi_{1})}\equiv1$.
Then, for $u\in Z^{s}$ and $v\in Z^{0}$, we have
\begin{equation}
\norm{v^{*}\cdot\psi_{1}|u|^{a-m}u^{m}}_{(Z^{0})'}\les\norm v_{Z^{0}}\norm u_{Z^{s}}^{a},\label{eq:Holder strip decomposition}
\end{equation}
where $v^{*}$ denotes either $v$ or $\overline{v}$.
\end{prop}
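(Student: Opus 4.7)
The plan is to prove (\ref{eq:Holder strip decomposition}) by duality, reducing it to the bilinear estimate of Lemma \ref{lem:strip1} combined with a fractional chain rule for the nonlinear factor. By the definition of $(Z^{0})'$, it suffices to bound, uniformly for $w\in Z^{0}$ with $\|w\|_{Z^{0}}\le 1$, the pairing
\[
I := \left| \int_{\R\times\T^{d}} \overline{w}\cdot v^{*}\cdot \psi_{1}\,|u|^{a-m}u^{m}\,dxdt \right|.
\]
Since $\psi\equiv 1$ on $\mathrm{supp}(\psi_{1})$, inserting the harmless factor $\psi^{2}$ rewrites $I$ as $\big|\int \psi^{2}\,\overline{w}v^{*}\cdot A\,dxdt\big|$ with $A:=\psi_{1}|u|^{a-m}u^{m}$.

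I would first upgrade Lemma \ref{lem:strip1} from the quadratic form $|u|^{2}$ to the bilinear form $\overline{w}v^{*}$. The high-low paraproduct piece is already covered by Lemma \ref{lem:strip1 claim1-1}, which is stated for independent functions $u_{N}$ and $\overline{v_{M}}$; the identical argument applies with $\overline{w_{N}}$ and $v_{M}^{*}$. The low-high paraproduct piece is controlled by the Besov product rule (\ref{eq:Besov paraproduct rule,<=00003D}) together with the embedding (\ref{eq:time Besov embed Z^s}), which are symmetric in their two factors. (Alternatively, the polarization identity $4\overline{w}v=|w+v|^{2}-|w-v|^{2}+i|w+iv|^{2}-i|w-iv|^{2}$ reduces matters directly to Lemma \ref{lem:strip1}.) This yields
\[
I \les \|w\|_{Z^{0}}\|v\|_{Z^{0}}\cdot \|A\|_{B_{r_{0},r_{0}}^{1/r_{0}-1/q_{0}}B_{r_{0},\infty}^{\theta}},
\]
so the proof reduces to establishing
\begin{equation}
\|\psi_{1}\,|u|^{a-m}u^{m}\|_{B_{r_{0},r_{0}}^{1/r_{0}-1/q_{0}}B_{r_{0},\infty}^{\theta}} \les \|u\|_{Z^{s}}^{a}. \label{eq:propplanFCR}
\end{equation}

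To prove (\ref{eq:propplanFCR}), set $F(z):=|z|^{a-m}z^{m}$, which is homogeneous of degree $a$ and belongs to $C^{0,\min(a,1)}(\C)$. When $0<a\le 1$, apply the spacetime fractional chain rule (\ref{eq:frac Holder tx}) with Hölder exponent $\al=a$, $p=ar_{0}$, $s_{0}=\frac{1/r_{0}-1/q_{0}}{a}$, and $s_{1}=\frac{\theta}{a}$; the required condition $2s_{0}+s_{1}<1$ holds under (\ref{eq:ss1s2<<}) since $\s_{2},\s_{3}\ll a$ in the allowed range. When $a>1$, write $F(u)=|u|^{\gamma}G(u)$ for some $\gamma\in(0,1)$ and a smoother factor $G$ of degree $a-\gamma$, apply (\ref{eq:frac Holder tx}) to $|u|^{\gamma}$, and handle $G(u)$ via (\ref{eq:high fractional Holder}) combined with the Besov product rule (\ref{eq:Besov product rule}). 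A direct scaling check using $s=d/2-2/a$ and $\frac{1}{r_{0}}=\frac{2+\s_{2}+\s_{3}}{d+2}$ confirms that the resulting mixed Lebesgue-Besov norm of $u$ sits exactly at the $H^{s}$-critical scale, and is then dominated by $\|u\|_{Z^{s}}^{a}$ through the Strichartz embeddings (\ref{eq:Sobolev embed Z^s})--(\ref{eq:time Besov embed Z^s}) combined with a standard spatial Sobolev-Besov embedding at a suitably chosen admissible exponent $(q,r)$. The main obstacle will be the careful alignment of the hierarchy $\s\ll\s_{1}\ll\cdots\ll\s_{4}$, ensuring both that the spatial Sobolev embeddings remain strict (so as to absorb the $-\s$ regularity loss built into $Z^{s}$) and that, in the $a>1$ regime, the smoother factor $G(u)$ is handled without producing logarithmic losses in the dyadic paraproduct sums.
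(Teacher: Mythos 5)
Your overall architecture is close to the paper's and the first half is sound. Bilinearizing Lemma \ref{lem:strip1} to $\overline{w}v^{*}A$ is legitimate: Lemma \ref{lem:strip1 claim1-1} is already stated for two independent functions, the low-high piece via (\ref{eq:Besov paraproduct rule,<=00003D}) and (\ref{eq:time Besov embed Z^s}) is symmetric, and Cauchy--Schwarz in the dyadic sum replaces $\norm{u_{N}}_{Z^{0}}^{2}$ by $\norm{w_{N}}_{Z^{0}}\norm{v_{N}}_{Z^{0}}$ (your polarization identity actually produces $w\overline{v}$ rather than $\overline{w}v$, but that is cosmetic). The paper instead handles the two distinct test functions by splitting the weight into two degree-$a/2$ factors and using Cauchy--Schwarz at the $L^{2}$ level, i.e.\ the weighted estimate (\ref{eq:strip 2}); your route avoids that intermediate step, and for $a<1$ your one-shot application of (\ref{eq:frac Holder tx}) with $\al=a$, $p=ar_{0}$ is scaling-consistent and needs exactly the same kind of exponent bookkeeping ($ar_{0}\in[p,1/\s]$, $s-\s>2s_{0}+s_{1}$, an intermediate spatial Sobolev--Besov embedding) that the paper carries out in the proof of (\ref{eq:Holder strip'}).

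The genuine gap is in your $a>1$ (and in particular $a\ge2$) branch. Writing $F(u)=|u|^{\gamma}G(u)$ and handling $G(u)$ by the \emph{spatial} chain rule (\ref{eq:high fractional Holder}) cannot close: the target space $B_{r_{0},r_{0}}^{1/r_{0}-1/q_{0}}B_{r_{0},\infty}^{\theta}$ has strictly positive time regularity $1/r_{0}-1/q_{0}=\s_{2}/(d+2)>0$, so when you reassemble the two factors with the time-Besov product rule (\ref{eq:Besov product rule}) \emph{both} factors must carry time regularity exceeding this amount; (\ref{eq:high fractional Holder}) is a fixed-time estimate and supplies none. Choosing $\gamma$ close to $1$ only rescues the range $1<a<2$ (where $G$ is again sub-unit H{\"o}lder and (\ref{eq:frac Holder tx}) applies to it), but for $a\ge2$ no two-factor split can make both exponents lie in $(0,1)$, and the proposition must cover such $a$. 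The repair is the device the paper uses inside (\ref{eq:Holder strip'}): decompose the power into $k$ factors $a_{1}+\cdots+a_{k}=a$ (the paper does this with sum $a/2$ after its Cauchy--Schwarz step) with every $a_{j}\in(0,1)$, $a_{j}\gg\s_{4}$, apply the spacetime chain rule (\ref{eq:frac Holder tx}) to each factor $|\psi u|^{a_{j}-m_{j}}(\psi u)^{m_{j}}$, and only then multiply the factors back together with (\ref{eq:Besov product rule}); with that substitution your argument goes through for all admissible $a$ (the borderline $a=1$ is also handled this way, since (\ref{eq:frac Holder tx}) requires $\al<1$ strictly).
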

\begin{proof}
In this proof, we use exponents $\hat{r},\zeta$, and $\eta$ defined
as $\frac{1}{\hat{r}}=\frac{1+\s_{4}}{d+2},\zeta=\frac{1}{\hat{r}}-\frac{1}{2q_{0}}$,
and $\eta=\frac{d}{\hat{r}}-\frac{d}{2r_{0}}+\frac{\theta}{2}$, respectively,
where $q_{0},r_{0}$, and $\theta$ are the exponents defined in Lemma
\ref{lem:strip1 claim1-1}.

First, we claim that for $u\in Z^{0}$ and $A\in B_{\hat{r},\hat{r}}^{\zeta}B_{\hat{r},\hat{r}}^{\eta}$,
we have
\begin{equation}
\norm{\psi Au}_{L_{t,x}^{2}}\les\norm u_{Z^{0}}\norm A_{B_{\hat{r},\hat{r}}^{\zeta}B_{\hat{r},\hat{r}}^{\eta}}.\label{eq:strip 2}
\end{equation}
Since $\zeta>\frac{1}{r_{0}}-\frac{1}{q_{0}}$, $\eta>\theta$, and
$\frac{1}{r_{0}}<\frac{2}{\hat{r}}$, by (\ref{eq:Besov product rule}),
we have 
\begin{equation}
\norm{|A|^{2}}_{B_{r_{0},r_{0}}^{\frac{1}{r_{0}}-\frac{1}{q_{0}}}B_{r_{0},\infty}^{\theta}}\les\norm A_{B_{\hat{r},\hat{r}}^{\zeta}B_{\hat{r},\hat{r}}^{\eta}}^{2}.\label{eq:|A|^2}
\end{equation}
By (\ref{eq:strip 1}) and (\ref{eq:|A|^2}), we have $\int\psi^{2}u\overline{u}\left|A\right|^{2}dxdt\les\norm u_{Z^{0}}^{2}\norm A_{B_{\hat{r},\hat{r}}^{\zeta}B_{\hat{r},\hat{r}}^{\eta}}^{2}$,
which implies (\ref{eq:strip 2}).

Next, we claim that for $m\in\Z$, we have
\begin{equation}
\norm{|\psi u|^{a/2-m}(\psi u)^{m}}_{B_{\hat{r},\hat{r}}^{\zeta}B_{\hat{r},\hat{r}}^{\eta}}\les\norm u_{Z^{s}}^{a/2}.\label{eq:Holder strip'}
\end{equation}
Once we have (\ref{eq:Holder strip'}), by (\ref{eq:strip 2}), we
obtain (\ref{eq:Holder strip decomposition}) as the following dual form:
\begin{align*}
 & \left|\int_{\R\times\T^{d}}\psi_{1}|u|^{a-m}u^{m}v^{*}wdxdt\right|\\
 & \les\norm{\psi|\psi u|^{a/2-m}(\psi u)^{m}v^{*}}_{L_{t,x}^{2}}\cdot\norm{\psi|\psi u|^{a/2}w}_{L_{t,x}^{2}}\\
 & \les\norm{|\psi u|^{a/2-m}(\psi u)^{m}}_{B_{\hat{r},\hat{r}}^{\zeta}B_{\hat{r},\hat{r}}^{\eta}}\norm{|\psi u|^{a/2}}_{B_{\hat{r},\hat{r}}^{\zeta}B_{\hat{r},\hat{r}}^{\eta}}\norm v_{Z^{0}}\norm w_{Z^{0}}\\
 & \les\norm u_{Z^{s}}^{a}\norm v_{Z^{0}}\norm w_{Z^{0}}.
\end{align*}
Since (\ref{eq:frac Holder tx}) holds only for functions of H{\"o}lder
regularities $C^{0,\al},\al\in(0,1)$, we first decompose the exponent
$a/2$ into $a_{1},\ldots,a_{k}\in(0,1)$ satisfying 
\begin{equation}
a_{1}+\ldots+a_{k}=a/2\label{eq:a1+...+ak}
\end{equation}
and $a_{j}\gg\s_{4}$ for $j=1,\ldots,k$. We partition $|\psi u|^{a/2-m}(\psi u)^{m}$
into a product of $k$ terms of the form
\begin{equation}
|\psi u|^{a/2-m}(\psi u)^{m}=\prod_{j=1}^{k}|\psi u|^{a_{j}-m_{j}}(\psi u)^{m_{j}},\label{eq:product form}
\end{equation}
where $m_{j}\in\Z$'s are certain integers, then estimate each term
using (\ref{eq:frac Holder tx}).

Choose $s_{0}>0$ satisfying $\s_{4}\ll_{d,a_{j},m_{j}}s_{0}\ll_{d,a_{j},m_{j}}1$.
Let $r$ and $s_{1}$ be the exponents satisfying $\frac{a}{2}\left(\frac{1}{r}-s_{0}\right)=\frac{1}{\hat{r}}-\zeta$
and $\frac{a}{2}\left(\frac{1}{r}-\frac{s_{1}}{d}\right)=\frac{1}{\hat{r}}-\frac{\eta}{d}$.

Since $s-\s>2s_{0}+s_{1}$ and $r\in[p,\frac{1}{\s}]$, by (\ref{eq:Sobolev embed Z^s})
and (\ref{eq:Besov embed}), we have
\begin{equation}
\norm{\psi u}_{L^{r}B_{r,2}^{2s_{0}+s_{1}}}\les\norm u_{Z^{s}}.\label{eq:Holder strip claim1}
\end{equation}
Choose $\al=\frac{1}{p}-\frac{1}{r}+s_{0}+s_{1}/2$ and $\be=2\al+\s$.
Since $\al\in[\s,\frac{1}{p}-\s]$, $\al>s_{0}+s_{1}/2$, and $s-\be>0$,
by (\ref{eq:time Besov embed Z^s}), (\ref{eq:Besov-Lorentz embed}),
and (\ref{eq:Besov embed}), we have
\begin{equation}
\norm{\psi u}_{B_{r,2}^{s_{0}+s_{1}/2}L^{r}}\les\norm{\psi u}_{B_{p,2}^{\al}B_{p,2}^{s-\be}}\les\norm u_{Z^{s}}.\label{eq:Holder strip claim2}
\end{equation}
Applying (\ref{eq:Besov product rule}) and (\ref{eq:frac Holder tx})
to (\ref{eq:product form}), since $r\ge2$, $s_{0}\cdot a_{j}>\zeta>0$,
and $s_{1}\cdot a_{j}>\eta>0$, by (\ref{eq:Holder strip claim1})
and (\ref{eq:Holder strip claim2}), we have
\begin{align*}
\norm{|\psi u|^{a/2-m}(\psi u)^{m}}_{B_{\hat{r},\hat{r}}^{\zeta}B_{\hat{r},\hat{r}}^{\eta}} & \les\prod_{j=1}^{k}\norm{|\psi u|^{a_{j}-m_{j}}(\psi u)^{m_{j}}}_{B_{r/a_{j},r/a_{j}}^{s_{0}\cdot a_{j}}B_{r/a_{j},r/a_{j}}^{s_{1}\cdot a_{j}}}\\
 & \les\norm{\psi u}_{L^{r}B_{r,2}^{2s_{0}+s_{1}}\cap B_{r,2}^{s_{0}+s_{1}/2}L^{r}}^{a/2}\\
 & \les\norm u_{Z^{s}}^{a/2},
\end{align*}
which is just (\ref{eq:Holder strip'}) and finishes the proof.
\end{proof}

\section{\label{sec:Proof of LWP}Local well-posedness of (\ref{eq:NLS})}

In this section, we prove Theorem \ref{thm:LWP s<a}, the local well-posedness
of (\ref{eq:NLS}). We construct solutions in the $Z^{s}$ space introduced
in Section \ref{sec:Z^s-spaces}. Based on the key estimate, Proposition
\ref{prop:Holder strip decomposition}, we first prove the main nonlinear
estimate (Lemma \ref{lem:Nu bound =0003C6}). On the way, we handle
non-algebraic nonlinear terms using the Bony linearization. Since
we construct solutions by weak limits, we provide a separate argument
for the continuous dependence. For this purpose, we use an enhanced
form of nonlinear estimate (see (\ref{eq:Nu bound =0003C6})).

Fix a $C_{0}^{\infty}$-function $\psi_{1}$ such that $\psi|_{\text{supp}(\psi_{1})}\equiv1$
and $\psi_{1}\equiv1$ on some open interval containing $0$.

\subsection{Nonlinear estimates}

In this subsection, we propose some estimates on $\psi_{1}\calN(u)$,
where $u:\R\times\T^{d}\rightarrow\C$. To handle the non-algebraicity
of $\calN(u)$, we use a paraproduct technique known as the Bony linearization
method. We decompose $\calN(u)$ as a sum of
\[
F^{N}:=\calN(u_{\le N})-\calN(u_{\le N/2})
\]
over $N\in2^{\N}$, then estimate $F_{K}^{N}=P_{K}F^{N}$ in terms
of $N,K\in2^{\N}$. (We use the conventional notation $u_{\le1/2}=0$.)
Here, the following Bony linearization formula given in \cite{lee2019local}
is used:
\begin{equation}
F^{N}=u_{N}\int_{0}^{1}\d_{z}\calN(u_{\le N/2}+\theta u_{N})d\theta+\overline{u}_{N}\int_{0}^{1}\d_{\overline{z}}\calN(u_{\le N/2}+\theta u_{N})d\theta.\label{eq:Bony}
\end{equation}
For simplicity of notation, we denote by $A^{N}:\R\times\T^{d}\rightarrow\C^{2},N\in2^{\N}$ the function
\[
A^{N}:=\left(\int_{0}^{1}\d_{z}\calN(u_{\le N/2}+\theta u_{N})d\theta,\int_{0}^{1}\d_{\overline{z}}\calN(u_{\le N/2}+\theta u_{N})d\theta\right)
\]
and denote by $u\times A$ for $u:\R\times\T^{d}\rightarrow\C$ and $A=(A_{1},A_{2}):\R\times\T^{d}\rightarrow\C^{2}$ the function
\[
u\times A:=uA_{1}+\overline{u}A_{2}.
\]
With this notation, (\ref{eq:Bony}) can be rewritten as $F^{N}=u_{N}\times A^{N}$.

The bilinear estimate (\ref{eq:Holder strip decomposition}) is transferred
to the following estimate of $F_{K}^{N}$:
\begin{lem}
For $N,K\in2^{\N}$, we have
\begin{equation}
\norm{\psi_{1}F_{K}^{N}}_{(Z^{-s})'}\les\norm u_{Z^{s}}^{a}(K/N)^{s}\cdot\norm{u_{N}}_{Z^{s}}.\label{eq:N,K}
\end{equation}
\end{lem}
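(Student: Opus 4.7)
The plan is to reduce the bilinear estimate to Proposition \ref{prop:Holder strip decomposition} via the Bony linearization (\ref{eq:Bony}) and the $\ell^2_s$-structure of $(Z^{-s})' = \ell^2_s(Z^0)'$ recorded in (\ref{eq:Z^s-Y^s}). The decay factor $(K/N)^s$ is not dynamical; it will emerge purely from redistributing the $s$-regularity between the output Littlewood--Paley block at scale $K$ and the input block at scale $N$. Concretely, since $F_K^N$ is spatially Fourier-localized at scale $K$, the outer weighted $\ell^2_s$-norm collapses to
\[
\|\psi_1 F_K^N\|_{(Z^{-s})'} \sim K^{s}\, \|\psi_1 F_K^N\|_{(Z^0)'},
\]
and since the sharp dyadic projection $P_K$ is bounded on $(Z^0)'$ (it commutes with every time/frequency ingredient defining $Z^0$), it suffices to establish the unweighted bound
\[
\|\psi_1 F^N\|_{(Z^0)'} \les \|u_N\|_{Z^0}\,\|u\|_{Z^s}^{a}.
\]
Combining this with $\|u_N\|_{Z^0} = N^{-s}\|u_N\|_{Z^s}$ and the external factor $K^s$ will then yield the desired $(K/N)^s$.

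For the displayed bound on $\|\psi_1 F^N\|_{(Z^0)'}$, I would apply (\ref{eq:Bony}) to write $F^N = u_N \times A^N$, whose two components are $\theta$-integrals of $\partial_z\mathcal{N}(u_\theta)$ and $\partial_{\bar z}\mathcal{N}(u_\theta)$ with $u_\theta := u_{\le N/2} + \theta u_N$. A direct Wirtinger computation for $\mathcal{N}(u)=\pm|u|^a u$ gives
\[
\partial_z\mathcal{N}(u_\theta) = \pm\bigl(\tfrac{a}{2}+1\bigr)|u_\theta|^{a}, \qquad \partial_{\bar z}\mathcal{N}(u_\theta) = \pm\tfrac{a}{2}|u_\theta|^{a-2}u_\theta^{2},
\]
both of which are of the form $|u_\theta|^{a-m}u_\theta^{m}$ with $m\in\{0,2\}$, precisely the shape handled by Proposition \ref{prop:Holder strip decomposition}. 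Pulling the $\theta$-integral outside the $(Z^0)'$-norm by Minkowski and applying the proposition to each of the two resulting terms with $v = u_N$ (taking $v^*=v$ for the first, $v^*=\bar v$ for the second) and with $u$ replaced by $u_\theta$ gives
\[
\|\psi_1 F^N\|_{(Z^0)'} \les \int_0^1 \|u_N\|_{Z^0}\,\|u_\theta\|_{Z^s}^{a}\, d\theta.
\]
The proof is then closed by the uniform bound $\|u_\theta\|_{Z^s} \les \|u\|_{Z^s}$ for all $\theta\in[0,1]$, which is immediate from the $\ell^2_s$-structure of $Z^s$ since $u_\theta$ is built from dyadic LP pieces of $u$ with bounded coefficients.

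I do not anticipate a real analytic obstacle here: once Proposition \ref{prop:Holder strip decomposition} is in place, the only conceptual points to verify carefully are the two elementary reductions at the outset, namely the single-scale frequency collapse $\|\cdot\|_{(Z^{-s})'} \sim K^s\|\cdot\|_{(Z^0)'}$ and the boundedness of $P_K$ on $(Z^0)'$. Both follow transparently from the definition of $Z^s$ and (\ref{eq:Z^s-Y^s}). The content of the estimate is thus concentrated entirely in Proposition \ref{prop:Holder strip decomposition}, and the gain $(K/N)^s$ is bookkeeping of the $s$-regularity weight between the output Littlewood--Paley block at scale $K$ and the input block at scale $N$.
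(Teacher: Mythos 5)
Your proposal is correct and follows essentially the same route as the paper: reduce by spatial frequency localization (the $\ell^2_s$-structure of $(Z^{-s})'$ and the single-scale collapse giving $K^s$, with $\|u_N\|_{Z^0}\sim N^{-s}\|u_N\|_{Z^s}$) to the unweighted bound $\|\psi_1 F^N\|_{(Z^0)'}\les\|u_N\|_{Z^0}\|u\|_{Z^s}^a$, then apply the Bony linearization (\ref{eq:Bony}) and Proposition \ref{prop:Holder strip decomposition} to $\partial_z\calN(u_{\le N/2}+\theta u_N)$ and $\partial_{\bar z}\calN(u_{\le N/2}+\theta u_N)$, closing with Minkowski in $\theta$ and $\|u_{\le N/2}+\theta u_N\|_{Z^s}\les\|u\|_{Z^s}$. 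The only difference is that you spell out the frequency-localization bookkeeping that the paper dispatches in one phrase, which is harmless.
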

\begin{proof}
It suffices to show $\norm{\psi_{1}F^{N}}_{(Z^{0})'}\les\norm{u_{N}}_{Z^{0}}\norm u_{Z^{s}}^{a}$
by frequency localization. By (\ref{eq:Holder strip decomposition}),
we have
\begin{align*}
\norm{\psi_{1}F^{N}}_{(Z^{0})'} & =\norm{u_{N}\times\psi_{1}A^{N}}_{(Z^{0})'}\\
 & \les\int_{0}^{1}\norm{u_{N}\times\psi_{1}\left(\d_{z}\calN(u_{\le N/2}+\theta u_{N}),\d_{\overline{z}}\calN(u_{\le N/2}+\theta u_{N})\right)}_{(Z^{0})'}d\theta\\
 & \les\int_{0}^{1}\norm{u_{N}}_{Z^{0}}\norm{u_{\le N/2}+\theta u_{N}}_{Z^{s}}^{a}d\theta\\
 & \les\norm{u_{N}}_{Z^{0}}\norm u_{Z^{s}}^{a},
\end{align*}
which finishes the proof.
\end{proof}
On the other hand, the fractional H{\"o}lder inequalities give the following
estimate:
\begin{lem}
\label{lem:N<<K}Let $\mu>0$ and $\nu>0$ be the exponents 
$\nu=1+a-s-\s_{1}$ and $\mu=\left(1-\frac{2\s}{\s_{1}}\right)(\nu+\s)+\frac{2\s}{\s_{1}}(-s+\s)$.
For $N,K\in2^{\N}$ satisfying $4N\le K$, we have
\begin{equation}
\|\psi_{1}F_{K}^{N}\|_{(Z^{-s})'}\les(N/K)^{\mu}N^{-\nu}\|u_{\le N}\|_{Z^{s+\nu}}\|u\|_{Z^{s}}^{a}.\label{eq:N<<K}
\end{equation}
\end{lem}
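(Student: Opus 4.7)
The plan is to combine the Bony linearization with the fractional chain rule, exploiting both the H{\"o}lder regularity of $\calN'$ and the frequency separation $K \ge 4N$. Using (\ref{eq:Bony}) I write $F_K^N = P_K(u_N \times A^N)$; since $u_N$ has spatial frequency $\sim N$ and the output frequency is $K \ge 4N$, only the portion of $A^N$ with spatial frequency $\gtrsim K$ can contribute, up to harmless low-frequency errors. The central task is therefore to estimate a high-frequency projection of $A^N$ in a suitable (spacetime) Besov norm, with quantitative gain coming from the projection frequency $K$.

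The core ingredient is the fractional chain rule applied to $\calN'(u_{\le N/2} + \theta u_N)$. Since $\calN'$ has H{\"o}lder regularity $a$ -- so $C^{0,a}$ when $a \le 1$ and smoother when $a > 1$ -- one of the estimates (\ref{eq:low fractional Holder}), (\ref{eq:high fractional Holder}), (\ref{eq:frac Holder x}), (\ref{eq:frac Holder tx}) applies so long as we demand strictly fewer than $1 + a$ derivatives. The choice $\nu = 1 + a - s - \s_1$ is arranged so that $s + \nu = 1 + a - \s_1 < 1 + a$, leaving exactly the room $\s_1$ for the chain rule. Concentrating the $s + \nu$ derivatives on a single copy of $u_{\le N}$ produces the factor $\|u_{\le N}\|_{Z^{s+\nu}}$, while the remaining $a$ copies of $u$ are handled in critical Lebesgue/time-Besov norms which are dominated by $\|u\|_{Z^s}^a$ via the $Z^s$ embeddings (\ref{eq:Sobolev embed Z^s}) and (\ref{eq:time Besov embed Z^s}). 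Pairing against a test function in $Z^{-s}$ through Lemma \ref{lem:strip1 claim1-1} or the $L^2$-type bound (\ref{eq:strip 2}), and using Bernstein on $P_{\gtrsim K} A^N$ to gain a factor $K^{-(s+\nu)}$, yields a ``strong'' endpoint estimate with decay $(N/K)^{\nu + \s}$ in place of $(N/K)^{\mu}$.

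The stated exponent $\mu = (1 - 2\s/\s_1)(\nu + \s) + (2\s/\s_1)(-s + \s)$ is then obtained by interpolating this strong estimate with a complementary ``weak'' one gaining only $(N/K)^{-s + \s}$, derived from a cruder paraproduct bound that does not exploit the full regularity of $\calN'$ and only extracts the $\s$ slack intrinsic to the $Z^s$ embeddings. The convex weights $1 - 2\s/\s_1$ and $2\s/\s_1$ are precisely those making both endpoint estimates valid within the ambient Besov scaling.

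The main obstacle will be the careful bookkeeping of Besov and Sobolev exponents: every application of the fractional chain rule must fall inside its admissible range, the high-low paraproduct decomposition of $A^N$ must interact correctly with the $(Z^{-s})'$-structure via the duality (\ref{eq:Z^s-Y^s}), and both endpoint norms must be compatible so that real interpolation with the stated weights is legitimate. The hierarchy $\s \ll \s_1 \ll 1$ from (\ref{eq:ss1s2<<}) is precisely what absorbs all the small technical losses without disturbing the critical scale.
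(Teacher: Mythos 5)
Your overall architecture (two endpoint bounds with decays $(N/K)^{\nu+\s}$ and $(N/K)^{-s+\s}$, interpolated with weights $1-\tfrac{2\s}{\s_{1}}$ and $\tfrac{2\s}{\s_{1}}$) is the same as the paper's, but the mechanism you give for the strong endpoint does not work. You propose to stay inside the Bony-linearized form $F_{K}^{N}=P_{K}(u_{N}\times A^{N})$ and gain $K^{-(s+\nu)}$ by Bernstein on $P_{\gtrsim K}A^{N}$. That requires $A^{N}$ to carry $s+\nu$ derivatives, which it cannot: $A^{N}$ is built from $\d_{z}\calN,\d_{\overline z}\calN\sim|z|^{a}$, whose H{\"o}lder/chain-rule regularity is only $a$, and $s+\nu=1+a-\s_{1}>a$. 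The fractional chain rules (\ref{eq:low fractional Holder}), (\ref{eq:high fractional Holder}) therefore cap any Sobolev norm of $A^{N}$ strictly below $a$ derivatives, and no amount of "concentrating derivatives on one copy of $u_{\le N}$" inside $A^{N}$ circumvents this. The correct strong endpoint (the paper's (\ref{eq:L})) abandons the linearization entirely: one applies Bernstein to the output $F_{K}^{N}$ itself, $\|F_{K}^{N}\|_{L^{p_{0}'}}\les K^{-s-\nu}\|F_{K}^{N}\|_{H^{s+\nu,p_{0}'}}$, and then uses (\ref{eq:high fractional Holder}) for the \emph{undifferentiated} nonlinearity $\calN(z)=|z|^{a}z$, whose regularity is $1+a>s+\nu$; this is exactly where the hypothesis $s+\nu<1+a$ (i.e.\ the choice $\nu=1+a-s-\s_{1}$) enters, and it is why the lemma is stated for $F^{N}=\calN(u_{\le N})-\calN(u_{\le N/2})$ rather than for a general product $u_{N}\times A$.

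Your weak endpoint is also under-specified in a way that matters quantitatively. You describe it as a crude paraproduct bound that "does not exploit the regularity of $\calN'$" and only uses the $\s$-slack of the $Z^{s}$ embeddings; but the whole point of the paper's (\ref{eq:H}) is to exploit the small amount of regularity $\s_{1}$ that $A^{N}$ \emph{does} have: since $u_{N}$ lives at frequency $N\le K/4$, only $A^{N}_{M}$ with $M\sim K$ contributes, and (\ref{eq:low fractional Holder}) with $\s_{1}$ derivatives yields a factor $K^{-\s_{1}}$. After weighting by $\tfrac{2\s}{\s_{1}}$ in the interpolation this produces the extra $K^{-2\s}$, giving the prefactor $K^{-s-\s}$ which is precisely what is needed to absorb the $\s$-losses when dualizing the resulting $L_{t,x}^{p'}$ bound against $Z^{-s}$ via (\ref{eq:Sobolev embed Z^s}); without the $K^{-\s_{1}}$ gain the argument falls short by $K^{2\s}$. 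Finally, invoking Lemma \ref{lem:strip1 claim1-1} here is unnecessary and misdirected: that bilinear estimate gives decay in terms of a \emph{low}-frequency factor of $A$, whereas in this lemma $A^{N}$ contributes at the high frequency $\sim K$; the paper's proof of (\ref{eq:N<<K}) is purely a Lebesgue--Sobolev (Strichartz-embedding) argument and never uses the Galilean bilinear machinery.
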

\begin{proof}
We will estimate $P_{K}\left(u_{N}\times A^{N}\right)=F_{K}^{N}$
in two ways: (i) applying (\ref{eq:low fractional Holder}) to $A_{M}^{N},M\in2^{\N}$
to estimate $F_{K}^{N}$; (ii) applying (\ref{eq:high fractional Holder})
to $F_{K}^{N}$ directly. Then we will interpolate between (i) and
(ii) to obtain (\ref{eq:N<<K}).

Let $q$ and $r$ be the parameters satisfying $\frac{1+a}{q}=\frac{1}{p'}$
and $\frac{d}{r}=\frac{d}{2}-\s-\frac{2}{q}$. Let $p_{0}$ and $p_{1}$
be the parameters satisfying $\frac{1}{p_{1}'}-\frac{1}{r}-\frac{\s_{1}}{d}=a\left(\frac{1}{r}-\frac{s-\s}{d}\right)$
and $\frac{1}{p_{0}'}-\frac{1}{r}=a\left(\frac{1}{r}-\frac{s-\s}{d}\right)$.

(i) Since $\s_{1}<a(s-\s)$ and $\s_{1}<s-\s$, by either (\ref{eq:low fractional Holder})
or (\ref{eq:high fractional Holder}), for $M\in2^{\N}$, we have
\begin{align*}
\|u_{N}\times A_{M}^{N}\|_{L^{p_{1}'}} & \les\|u_{N}\|_{L^{r}}\|A_{M}^{N}\|_{L^{\left(\frac{1}{p_{1}'}-\frac{1}{r}\right)^{-1}}}\\
 & \les N^{-s+\s}\|u_{N}\|_{H^{s-\s,r}}\cdot M^{-\s_{1}}\|A^{N}\|_{H^{\s_{1},\left(\frac{1}{p_{1}'}-\frac{1}{r}\right)^{-1}}}\\
 & \les N^{-s+\s}M^{-\s_{1}}\|u_{N}\|_{H^{s-\s,r}}\|u\|_{H^{s-\s,r}}^{a}.
\end{align*}
Thus, we have
\begin{align}
\|F_{K}^{N}\|_{L^{p_{1}'}} & \les\sum_{M\in2^{\N}}\|P_{K}(u_{N}\times A_{M}^{N})\|_{L^{p_{1}'}}\label{eq:H}\\
 & \les\sum_{M\sim K}\|u_{N}\times A_{M}^{N}\|_{L^{p_{1}'}}\nonumber \\
 & \les(N/K)^{-s+\s}K^{-s-\s_{1}+\s}\|u_{N}\|_{H^{s-\s,r}}\|u\|_{H^{s-\s,r}}^{a}\nonumber \\
 & \les(N/K)^{-s+\s}K^{-s-\s_{1}+\s}N^{-\nu}\|u_{\le N}\|_{H^{s-\s+\nu,r}}\|u\|_{H^{s-\s,r}}^{a}.\nonumber 
\end{align}

(ii) Since $s+\nu<1+a$, by (\ref{eq:high fractional Holder}), we
have
\begin{align}
\|F_{K}^{N}\|_{L^{p_{0}'}} & \les K^{-s-\nu}\|F_{K}^{N}\|_{H^{s+\nu,p_{0}'}}\label{eq:L}\\
 & \les K^{-s-\nu}\|u_{\le N}\|_{H^{s+\nu,r}}\|u\|_{L^{a\left(\frac{1}{p_{0}'}-\frac{1}{r}\right)^{-1}}}^{a}\nonumber \\
 & \les K^{-s-\nu}N^{\s}\|u_{\le N}\|_{H^{s-\s+\nu,r}}\|u\|_{H^{s-\s,r}}^{a}\nonumber \\
 & =(N/K)^{\nu+\s}K^{-s+\s}N^{-\nu}\|u_{\le N}\|_{H^{s-\s+\nu,r}}\|u\|_{H^{s-\s,r}}^{a}.\nonumber 
\end{align}
We interpolate between (\ref{eq:H}) and (\ref{eq:L}). Interpolating
between (\ref{eq:H}) and (\ref{eq:L}) with exponent $2\s/\s_{1}$,
we have
\[
\|F_{K}^{N}\|_{L^{p'}}\les(N/K)^{\mu}K^{-s-\s}N^{-\nu}\|u_{\le N}\|_{H^{s-\s+\nu,r}}\|u\|_{H^{s-\s,r}}^{a}.
\]
Multiplying cutoffs and applying $L_{t}^{p'}$ norms to both sides,
by $\frac{1+a}{q}=\frac{1}{p'}$, $\norm{\psi u_{\le N}}_{L^{q}H^{s-\s+\nu,r}}\les\norm{u_{\le N}}_{Z^{s+\nu}}$,
and $\norm{\psi u_{\le N}}_{L^{q}H^{s-\s,r}}\les\norm{u_{\le N}}_{Z^{s}}$,
we have
\begin{equation}
\|\psi_{1}F_{K}^{N}\|_{L_{t,x}^{p'}}\les(N/K)^{\mu}K^{-s-\s}N^{-\nu}\|u_{\le N}\|_{Z^{s+\nu}}\|u\|_{Z^{s}}^{a},\label{eq:HL}
\end{equation}
which implies (\ref{eq:N<<K}) due to (\ref{eq:Sobolev embed Z^s}).
\end{proof}
In the rest of this section, for $K,M,N\in2^{\N}$, we denote
\begin{align*}
\be_{K}^{N} & :=\norm{\psi_{1}F_{K}^{N}}_{(Z^{-s})'}^{2},\\
\al_{M} & :=\norm{u_{M}}_{Z^{s}}^{2},\\
\al_{\Omega} & :=\norm u_{Z^{s}}^{2}=\sum_{M\in2^{\N}}\al_{M}.
\end{align*}
In terms of $\al_{N}$ and $\be_{K}^{N}$, (\ref{eq:N,K}) and (\ref{eq:N<<K})
can be rewritten as
\[
\be_{K}^{N}\les\al_{\Omega}^{a}(K/N)^{2s}\cdot\al_{N}
\]
and
\[
\be_{K}^{N}\les\al_{\Omega}^{a}(N/K)^{2\mu}\cdot\sum_{L\le N}\left(L/N\right)^{2\nu}\al_{L},\qquad4N\le K,
\]
respectively.

Combining (\ref{eq:N,K}) and (\ref{eq:N<<K}), we obtain the following
inequality:
\begin{cor}
\label{cor:beta bound}There exists $\epsilon_{0}=\epsilon_{0}(d,s,a)>0$
(e.g., $\ep_{0}=\min\left\{ s,\mu,2\nu\right\} /2$) such that, for
each $K\in2^{\N}$,
\begin{equation}
\left(\sum_{N\in2^{\N}}\sqrt{\be_{K}^{N}}\right)^{2}\les\al_{\Omega}^{a}\cdot\sum_{N\in2^{\N}}\max\left\{ \frac{N}{K},\frac{K}{N}\right\} ^{-\epsilon_{0}}\al_{N}.\label{eq:beta bound}
\end{equation}
\end{cor}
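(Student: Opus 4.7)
The plan is to split the sum $\sum_{N\in 2^{\N}}\sqrt{\be_K^N}$ at the threshold $N \sim K$ and apply the two lemmas in their natural regimes, then combine via Cauchy--Schwarz. Concretely, I would write
\[
\sum_{N}\sqrt{\be_K^N} = \sum_{N \ge K/4}\sqrt{\be_K^N} + \sum_{4N < K}\sqrt{\be_K^N} =: S_1 + S_2,
\]
and square via $(S_1+S_2)^2 \le 2S_1^2+2S_2^2$, so it suffices to bound each $S_j^2$ by the right-hand side.

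For $S_1$ (the regime $N \gtrsim K$), I would apply \eqref{eq:N,K} to get $\sqrt{\be_K^N} \les \al_\Omega^{a/2}(K/N)^s \al_N^{1/2}$ and then split the weight as $(K/N)^s = (K/N)^{s-\ep_0/2}\cdot(K/N)^{\ep_0/2}$, yielding
\[
S_1^2 \les \al_\Omega^a\Bigl(\sum_{N\ge K/4}(K/N)^{2s-\ep_0}\Bigr)\Bigl(\sum_{N\ge K/4}(K/N)^{\ep_0}\al_N\Bigr).
\]
The first factor is $O(1)$ provided $\ep_0 < 2s$, and on the support $N\ge K/4$ we have $(K/N)^{\ep_0} \les \max\{N/K,K/N\}^{-\ep_0}$, so this gives the desired bound.

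For $S_2$ (the regime $N \ll K$), I would apply \eqref{eq:N<<K} to get $\sqrt{\be_K^N} \les \al_\Omega^{a/2}(N/K)^\mu\bigl(\sum_{L\le N}(L/N)^{2\nu}\al_L\bigr)^{1/2}$, then split $(N/K)^\mu = (N/K)^{\mu-\ep_0/2}\cdot(N/K)^{\ep_0/2}$ and apply Cauchy--Schwarz in $N$:
\[
S_2^2 \les \al_\Omega^a\Bigl(\sum_{N<K/4}(N/K)^{2\mu-\ep_0}\Bigr)\Bigl(\sum_{N<K/4}(N/K)^{\ep_0}\sum_{L\le N}(L/N)^{2\nu}\al_L\Bigr).
\]
The first factor is $O(1)$ when $\ep_0 < 2\mu$. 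For the second factor I would swap the order of summation, writing it as $\sum_L \al_L(L/K)^{\ep_0}\sum_{L\le N<K/4}(N/L)^{\ep_0-2\nu}$; the inner geometric sum is $O(1)$ as long as $\ep_0 < 2\nu$, and since $L < K/4$ we again have $(L/K)^{\ep_0} = \max\{L/K, K/L\}^{-\ep_0}$, which completes the bound.

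Thus choosing any $\ep_0 < \min\{2s,2\mu,2\nu\}$ works — in particular $\ep_0 = \min\{s,\mu,2\nu\}/2$ as stated — and the positivity of $s$, $\mu$, $\nu$ established in Lemmas~\ref{lem:strip1 claim1-1} and \ref{lem:N<<K} ensures such a choice exists. The main (very mild) subtlety is to verify that the three exponent conditions $\ep_0 < 2s$, $\ep_0 < 2\mu$, $\ep_0 < 2\nu$ can be met simultaneously, and to be careful that the $(K/N)^s$ estimate from \eqref{eq:N,K} is only useful for $N \gtrsim K$, while \eqref{eq:N<<K} strictly requires $4N \le K$; the splitting at $4N = K$ matches these restrictions exactly.
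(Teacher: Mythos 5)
Your proof is correct and follows essentially the same route as the paper's: split the sum at $N\sim K$, apply \eqref{eq:N,K} in the high regime and \eqref{eq:N<<K} in the low regime, use a weighted Cauchy--Schwarz with a small exponent shift, and swap the order of the $L,N$ summation, with $\ep_{0}=\min\{s,\mu,2\nu\}/2$ ensuring all geometric sums converge. The only cosmetic difference is that the paper places the weights $(N/K)^{\pm s}$, $(K/N)^{\pm\mu}$ directly in the Cauchy--Schwarz step rather than splitting $(K/N)^{s}$ and $(N/K)^{\mu}$ afterwards, which is the same estimate.
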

\begin{proof}
Let $\mu$ and $\nu$ be the exponents defined in Lemma \ref{lem:N<<K}.
Since $\sum_{N>K/4}(N/K)^{-s}+\sum_{N\le K/4}(K/N)^{-\mu}\les1$,
by the Cauchy-Schwartz inequality, we have
\begin{align*}
\left(\sum_{N\in2^{\N}}\sqrt{\be_{K}^{N}}\right)^{2} & \les\sum_{N>K/4}(N/K)^{s}\be_{K}^{N}+\sum_{N\le K/4}(K/N)^{\mu}\be_{K}^{N}\\
 & \les\al_{\Omega}^{a}\cdot\left(\sum_{N>K/4}(K/N)^{s}\al_{N}+\sum_{N\le K/4}(N/K)^{\mu}\sum_{L\le N}\left(L/N\right)^{2\nu}\al_{L}\right)\\
 & \les\al_{\Omega}^{a}\cdot\left(\sum_{N>K/4}(K/N)^{s}\al_{N}+\sum_{L\le K/4}(L/K)^{\min\left\{ \mu,2\nu\right\} -}\al_{L}\right)\\
 & \les\al_{\Omega}^{a}\cdot\sum_{N\in2^{\N}}\max\left\{ \frac{N}{K},\frac{K}{N}\right\} ^{-\epsilon_{0}}\al_{N},
\end{align*}
which is (\ref{eq:beta bound}).

Here, it suffices to choose $\epsilon_{0}=\min\left\{ s,\mu,2\nu\right\} /2$.
\end{proof}
Now, we present the main inequality for the nonlinear term $\calN(u)$.
\begin{lem}
\label{lem:Nu bound =0003C6}Let $\epsilon_{0}$ be as in Corollary
\ref{cor:beta bound}. Fix $\epsilon_{1}\ll\epsilon_{0}$. Let $\left\{ \gamma_{N}\right\} _{N\in2^{\N}}$
be a sequence of positive numbers such that $1\le\frac{\ga_{2N}}{\ga_{N}}\le1+\epsilon_{1}$.
We have
\begin{equation}
\sum_{N\in2^{\N}}\ga_{N}\norm{\psi_{1}P_{N}\calN(u)}_{(Z^{-s})'}^{2}\les\norm u_{Z^{s}}^{2a}\sum_{N\in2^{\N}}\ga_{N}\norm{u_{N}}_{Z^{s}}^{2}.\label{eq:Nu bound =0003C6}
\end{equation}
In particular, plugging in $\ga_{N}\equiv1$ gives
\begin{equation}
\norm{\psi_{1}\calN(u)}_{(Z^{-s})'}\les\norm u_{Z^{s}}^{1+a}.\label{eq:Nu bound}
\end{equation}
\end{lem}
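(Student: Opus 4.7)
The plan is to combine the Bony decomposition of $\calN(u)$ with the dyadic bound from Corollary~\ref{cor:beta bound} and a slow-growth averaging on the weights $\ga_N$. First, since $\calN(u) = \sum_{N\in 2^{\N}} F^{N}$ and $F_K^N = P_K F^N$, the triangle inequality in $(Z^{-s})'$ gives the pointwise (in $K$) bound
\[
\norm{\psi_{1}P_{K}\calN(u)}_{(Z^{-s})'}\le\sum_{N\in 2^{\N}}\norm{\psi_{1}F_{K}^{N}}_{(Z^{-s})'}=\sum_{N\in 2^{\N}}\sqrt{\be_{K}^{N}}.
\]
Squaring and invoking (\ref{eq:beta bound}) immediately yields
\[
\norm{\psi_{1}P_{K}\calN(u)}_{(Z^{-s})'}^{2}\les\al_{\Omega}^{a}\sum_{N\in 2^{\N}}\max\left\{ \tfrac{N}{K},\tfrac{K}{N}\right\} ^{-\epsilon_{0}}\al_{N}.
\]

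Next I multiply by $\ga_{K}$, sum over $K\in 2^{\N}$, and swap the order of the two dyadic sums. The whole problem then reduces to the pointwise-in-$N$ weight estimate
\[
\sum_{K\in 2^{\N}}\ga_{K}\max\left\{ \tfrac{N}{K},\tfrac{K}{N}\right\} ^{-\epsilon_{0}}\les\ga_{N}.
\]
This is where the slow-growth hypothesis is used: iterating $1\le\ga_{2M}/\ga_{M}\le1+\epsilon_{1}$ and using monotonicity gives
\[
\ga_{K}/\ga_{N}\le\max\left\{ \tfrac{N}{K},\tfrac{K}{N}\right\} ^{\log_{2}(1+\epsilon_{1})}.
\]
Choosing $\epsilon_{1}$ small enough that $\log_{2}(1+\epsilon_{1})<\epsilon_{0}/2$ (the content of $\epsilon_{1}\ll\epsilon_{0}$), the potential growth of $\ga_{K}/\ga_{N}$ is dominated by half the decay rate, and the inner sum collapses to a geometric series bounded by a constant multiple of $\ga_{N}$. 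Putting everything together,
\[
\sum_{K}\ga_{K}\norm{\psi_{1}P_{K}\calN(u)}_{(Z^{-s})'}^{2}\les\al_{\Omega}^{a}\sum_{N}\ga_{N}\al_{N}=\norm u_{Z^{s}}^{2a}\sum_{N}\ga_{N}\norm{u_{N}}_{Z^{s}}^{2},
\]
which is (\ref{eq:Nu bound =0003C6}).

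For the unweighted corollary (\ref{eq:Nu bound}), I note that $\ga_{N}\equiv1$ trivially satisfies the growth assumption, so the right-hand side becomes $\norm u_{Z^{s}}^{2(a+1)}$; on the left, the identification $(Z^{-s})'\simeq\ell_{s}^{2}(Z^{0})'$ coming from $Z^{-s}=\ell_{-s}^{2}Z^{0}$ ensures $\norm{\psi_{1}\calN(u)}_{(Z^{-s})'}^{2}\sim\sum_{N}\norm{\psi_{1}P_{N}\calN(u)}_{(Z^{-s})'}^{2}$ by Littlewood--Paley almost-orthogonality, and (\ref{eq:Nu bound}) follows. The substance of the lemma is entirely in Corollary~\ref{cor:beta bound}; the only new point here is the weight-averaging gadget, whose main (minor) subtlety is the quantitative matching of $\epsilon_{1}$ against $\epsilon_{0}$ so that the geometric factors survive the swap of summation order.
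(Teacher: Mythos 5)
Your proposal is correct and follows essentially the same route as the paper: Bony decomposition plus the triangle inequality to get $\norm{\psi_{1}P_{K}\calN(u)}_{(Z^{-s})'}\le\sum_{N}\sqrt{\be_{K}^{N}}$, then Corollary \ref{cor:beta bound}, a swap of the dyadic sums, and the slow-growth bound $\sum_{K}\ga_{K}\max\{N/K,K/N\}^{-\epsilon_{0}}\les\ga_{N}$. The only difference is that you spell out explicitly the weight-averaging estimate and the $\ell_{s}^{2}(Z^{0})'$ square-summation used for the $\ga_{N}\equiv1$ case, both of which the paper leaves implicit; these details are accurate.
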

A standard nonlinear estimate for constructing a solution is (\ref{eq:Nu bound}).
We need an enhanced form, (\ref{eq:Nu bound =0003C6}), for showing
the continuous dependence of the solution map.
\begin{proof}
The proof follows easily from (\ref{eq:beta bound}). We have
\begin{equation}
\sum_{K\in2^{\N}}\ga_{K}\left(\sum_{N\in2^{\N}}\sqrt{\be_{K}^{N}}\right)^{2}\les\al_{\Omega}^{a}\cdot\sum_{K,N\in2^{\N}}\max\left\{ \frac{N}{K},\frac{K}{N}\right\} ^{-\epsilon_{0}}\ga_{K}\al_{N}\les\al_{\Omega}^{a}\cdot\sum_{N\in2^{\N}}\ga_{N}\al_{N}.\label{eq:Nu bound =0003C61}
\end{equation}
Since $\norm{\psi_{1}P_{K}\calN u}_{(Z^{-s})'}\les\sum_{N\in2^{\N}}\sqrt{\be_{K}^{N}}$,
(\ref{eq:Nu bound =0003C61}) implies (\ref{eq:Nu bound =0003C6}).
\end{proof}
\begin{rem}
Due to the embedding between $Y^{s}$ and $Z^{s}$, (\ref{eq:Z^s-Y^s}),
(\ref{eq:Nu bound}) also implies the nonlinear estimate in $Y^{s}$
space:
\begin{equation}
\norm{\psi_{1}\calN(u)}_{(Y^{-s})'}\les\norm u_{Y^{s}}^{1+a}.\label{eq:beta bound Y^s}
\end{equation}
Although $Y^{s}\hook Z^{s}$, we expect the proof of (\ref{eq:Nu bound})
to be almost at the same level as that of (\ref{eq:beta bound Y^s}).
This is because, in designing $Z^{s}$, we incorporated structures
of $Y^{s}$ required for the nonlinear estimate, such as the Galilean
invariance, the frequency $\ell^{2}$-basedness, and the Besov Strichartz
estimates. We recall that the main benefit of working in the $Z^{s}$
space is that $\norm{u\cdot\chi_{[0,T]}}_{Z^{s}}$ shrinks as $T\rightarrow0$, 
(\ref{eq:shrink Z^s}).
\end{rem}

\subsection{Proof of Theorem \ref{thm:LWP s<a}}

In this subsection, we finish the proof of Theorem \ref{thm:LWP s<a}.
We restate it in a more precise technical form in the following proposition:
\begin{prop}
\label{prop:Y^s LWP}Let $a>\frac{4}{d}$ and $s<1+a$. Fix $\underline{u_{0}}\in H^{s}$.
There exist an open interval $I=I(\underline{u_{0}})\ni0$ and $R=R(\underline{u_{0}})>0$
such that for every $u_{0}\in H^{s}$ with $\norm{u_{0}-\underline{u_{0}}}_{H^{s}}<R$,
there exists a unique solution $\Phi(u_{0}):=u\in C^{0}(I;H^{s})\cap Y^{s}$
to 
\begin{equation}
\begin{cases}
iu_{t}+\De u=\calN(u)\cdot\chi_{I}\\
u(0)=u_{0}
\end{cases}\label{eq:NLS*I}
\end{equation}
and the solution map $\Phi:\left\{ u_{0}\in H^{s}\mid\norm{u_{0}-\underline{u_{0}}}_{H^{s}}<R\right\} \rightarrow C^{0}(I;H^{s})\cap Y^{s}$
is continuous.
\end{prop}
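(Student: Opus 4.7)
The plan is to construct the solution by an approximation/compactness scheme rather than by contraction mapping, since Theorem \ref{thm:a<1} obstructs Lipschitz dependence when $a<1$, and a contraction argument is unavailable in that regime. I work on a short symmetric interval $I=[-T,T]\subset\{\psi_1\equiv1\}$, so that (\ref{eq:Nu bound}) and (\ref{eq:Nu bound =0003C6}) control $\chi_I\calN(u)$ via the embedding $\chi_I\psi_1=\chi_I$ and (\ref{eq:step Z^s}). The shrinking property (\ref{eq:shrink Z^s}) applied to the linear evolution of $\underline{u_0}$ lets me pick $T=T(\underline{u_0})$ and $R=R(\underline{u_0})$ so small that $\|\chi_I e^{it\Delta}u_0\|_{Z^s}<\eta$ for some small $\eta$ and every $u_0$ within the $H^s$-ball of radius $R$ around $\underline{u_0}$; the uniformity in the ball comes from the continuity of $e^{it\Delta}$ on $H^s$ combined with (\ref{eq:shrink Z^s}).

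To obtain a priori bounds, I approximate $u_0$ by $u_0^{(n)}:=P_{\le n}u_0\in H^\infty$ and invoke classical subcritical theory for the regularized problem to produce smooth solutions $u^{(n)}$ to (\ref{eq:NLS*I}). Combining the retarded estimate (\ref{eq:Z^s-Y^s}) with (\ref{eq:Nu bound}) in the Duhamel identity gives
\[
\|u^{(n)}\|_{Z^s}\le \|\chi_I e^{it\Delta}u_0^{(n)}\|_{Z^s}+C\|u^{(n)}\|_{Z^s}^{1+a}\le\eta+C\|u^{(n)}\|_{Z^s}^{1+a},
\]
and a standard continuity argument yields a uniform bound $\|u^{(n)}\|_{Z^s}\le 2\eta$. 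Because $Y^s\hookrightarrow Z^s\hookrightarrow L^\infty H^s$, a weak-$\ast$ compactness argument extracts a limit $u\in Y^s\cap C^0(I;H^s)$ satisfying the integral formulation of (\ref{eq:NLS*I}) with $u(0)=u_0$; weak convergence is enough to pass to the limit in the nonlinearity thanks to the reflexivity provided by the $\ell^2$-based $Y^s$ and a standard compactness-on-frequency-bands argument.

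The main obstacle is continuous dependence (and uniqueness), since for $a<1$ the map $\calN$ is only H\"older. I would close this in two stages. First, applying the enhanced estimate (\ref{eq:Nu bound =0003C6}) with a slowly growing weight $\gamma_N$ (for instance $\gamma_N=\min\{N^{\epsilon_1},M^{\epsilon_1}\}$ for large truncation $M$) transports the high-frequency smallness of the initial datum to the solution: for every $\varepsilon>0$ there exists $N_0$ such that $\|P_{\ge N_0}u\|_{Z^s}<\varepsilon$, uniformly over all data in the $R$-ball. Second, for two solutions $u,v$ arising from data $u_0,v_0$ in this ball, the difference $w=u-v$ satisfies
\[
w=\chi_I e^{it\Delta}(u_0-v_0)+K^+\bigl(\chi_I(\calN(u)-\calN(v))\bigr),
\]
and writing
\[
\calN(u)-\calN(v)=w\int_0^1\partial_z\calN(v+\theta w)\,d\theta+\bar w\int_0^1\partial_{\bar z}\calN(v+\theta w)\,d\theta,
\]
Proposition \ref{prop:Holder strip decomposition} bounds the nonlinear difference in $(Z^0)'$ by $C\bigl(\|u\|_{Z^s}^a+\|v\|_{Z^s}^a\bigr)\|w\|_{Z^0}$. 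For $T$ small this yields the Lipschitz contraction $\|u-v\|_{Z^0}\lesssim\|u_0-v_0\|_{L^2}\le\|u_0-v_0\|_{H^s}$, which also gives uniqueness.

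Finally, to upgrade the $Z^0$-Lipschitz estimate to $Z^s$-continuity, I interpolate the preceding bound against the uniform $Z^s$-bound on $u,v$, obtaining H\"older control in $Z^{s'}$ for every $s'<s$; the high-frequency decay from the first stage then transfers this into $Z^s$-continuity via a standard Littlewood-Paley splitting (low frequencies are controlled by the $Z^{s'}$-H\"older bound, high frequencies by the uniform decay of $\|P_{\ge N_0}u\|_{Z^s}$ and $\|P_{\ge N_0}v\|_{Z^s}$). Since $Z^s\hookrightarrow L^\infty H^s$, this delivers continuity of the map $u_0\mapsto u$ into $C^0(I;H^s)\cap Y^s$ and completes the proof. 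The hardest part in practice is showing that the high-frequency decay produced by (\ref{eq:Nu bound =0003C6}) is preserved in the limit $n\to\infty$ and is genuinely uniform in the data, which requires a careful choice of $\gamma_N$ that is compatible with both the nonlinear weight condition $\gamma_{2N}/\gamma_N\le 1+\epsilon_1$ and the continuity of $e^{it\Delta}$ in frequency-localized norms.
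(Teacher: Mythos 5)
Your overall architecture (no contraction; a priori $Z^{s}$ bound via (\ref{eq:Z^s-Y^s})+(\ref{eq:Nu bound}), compactness limit, then continuity via the weighted estimate (\ref{eq:Nu bound =0003C6}) for high-frequency tightness plus a $Z^{0}$ difference bound from Proposition \ref{prop:Holder strip decomposition}) is the same as the paper's, and your continuity/uniqueness stages are essentially the paper's argument. However, the construction step has genuine gaps. First, regularizing the \emph{data} by $u_{0}^{(n)}=P_{\le n}u_{0}$ and invoking ``classical subcritical theory for the regularized problem to produce smooth solutions'' is not available here: smoothing the datum does not smooth the equation, $|u|^{a}u$ does not propagate regularity above $1+a$, and subcritical local well-posedness on $\T^{d}$ for small non-algebraic $a$ is precisely what is not classical (it is part of what this paper establishes). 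The paper instead truncates the \emph{nonlinearity} in frequency, solving $iu_{t}+\De u=P_{<2^{\la}}(\calN(u)\chi_{I})$ with a continuously varying parameter $\la$; this makes the approximate problem genuinely solvable and, crucially, gives a family $v^{(\la)}=u^{(\la)}-e^{it\De}u_{0}$ whose $Z^{s}$ norm depends continuously on $\la$ with $v^{(0)}=0$, which is what legitimizes the bootstrap. Your ``standard continuity argument'' for $\|u^{(n)}\|_{Z^{s}}\le2\eta$ has no such parameter: for fixed $n$ you neither know a priori that $u^{(n)}$ lies in $Z^{s}$ on all of $I$ nor have a continuous path along which the norm starts small (continuity of $T\mapsto\|\chi_{[0,T]}u\|_{Z^{s}}$ is not established; (\ref{eq:shrink Z^s}) only gives the limit at $T=0$).

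Second, the claim that ``weak convergence is enough to pass to the limit in the nonlinearity'' is false for the non-algebraic map $u\mapsto|u|^{a}u$; reflexivity and frequency-band compactness do not by themselves give convergence of $\calN(u^{(n)})$. The paper passes to the limit by extracting \emph{strong} $L_{t,x}^{1+a}(I\times\T^{d})$ convergence, using the time-Besov component of the $Z^{s}$ bound: the embedding $B_{p,2}^{\s}B_{p,2}^{s-3\s}\cap L_{t,x}^{(2d+4)/(d-2s)}\hook L_{t,x}^{1+a}$ is compact on functions supported in $I\times\T^{d}$, and the uniform bound (\ref{eq:Sobolev embed Z^s}), (\ref{eq:time Besov embed Z^s}) places the approximants in that space. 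Without an analogous quantitative equicontinuity-in-time input, your limit step does not close. If you replace your data-regularization by the paper's frequency truncation of the nonlinearity (or otherwise supply a solvable approximate problem with a continuous small-starting parameter) and upgrade the weak limit to a strong $L^{1+a}_{t,x}$ limit via this compact embedding, the rest of your proposal goes through as written.
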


In Proposition \ref{prop:Y^s LWP}, $I$ depends on $\underline{u_{0}}$
to guarantee the smallness of $\norm{e^{it\De}\underline{u_{0}}\cdot\chi_{I}}_{Z^{s}}$,
for which our $Z^{s}$ space plays a crucial role.
\begin{proof}
We use nonlinear estimates based on $Z^{s}$. Using the embedding
$Y^{s}\hook Z^{s}$ we can show that the constructed solution lies
in $Y^{s}$ as well.

\emph{Construction}

First, we show that such a map $\Phi$ exists. We construct such a
solution $u=\Phi(u_{0})$ by taking a weak limit of approximate solutions
bounded in $Y^{s}$.

By (\ref{eq:shrink Z^s}), there exists an interval $I\ni0$ such
that $\norm{e^{it\De}\underline{u_{0}}\cdot\chi_{I}}_{Z^{s}}$ is
sufficiently small and $\psi_{1}|_{I}\equiv1$. Choose $R\ll_{\underline{u_{0}},I}1$.
For $\la\ge0$, we denote
\[
P_{<2^{\la}}:=\begin{cases}
P_{\le2^{\left\lfloor \la\right\rfloor -1}}+(\la-\left\lfloor \la\right\rfloor )P_{2^{\left\lfloor \la\right\rfloor }} & ,\la\ge1\\
\la P_{\le1} & ,0\le\la<1
\end{cases},
\]
where $\left\lfloor \la\right\rfloor $ denotes the greatest integer
$n\le\la$.

Fix $\la\ge0$. For $u_{0}\in H^{s}$, let $u=u^{(\la)}$ be the strong
solution of the following equation:
\begin{equation}
\begin{cases}
iu_{t}+\De u=P_{<2^{\la}}\left(\calN(u)\cdot\chi_{I}\right)\\
u(0)=u_{0}
\end{cases}.\label{eq:NLS'}
\end{equation}
Let $v=u-e^{it\De}u_{0}$. By (\ref{eq:Nu bound}), (\ref{eq:step Z^s}),
and (\ref{eq:Z^s-Y^s}), we have a bootstrap bound on $\norm v_{Z^{s}}$:
\begin{align}
\norm v_{Z^{s}} & \les\norm v{}_{Y^{s}}\les\norm{\psi_{1}\calN(u\cdot\chi_{I})}_{(Y^{-s})'}\les\norm{\psi_{1}\calN(u\cdot\chi_{I})}_{(Z^{-s})'}\label{eq:bootstrap}\\
 & \les\norm{u\cdot\chi_{I}}_{Z^{s}}^{1+a}\les\left(\norm v_{Z^{s}}+\norm{e^{it\De}u_{0}\cdot\chi_{I}}_{Z^{s}}\right)^{1+a}.\nonumber 
\end{align}
Assume $\norm{u_{0}-\underline{u_{0}}}_{H^{s}}<R$. Since $\norm{v^{(\la)}}_{Z^{s}}$
is continuous on $\la$ with $\norm{v^{(0)}}_{Z^{s}}=0$ and 
\[
\norm{e^{it\De}u_{0}\cdot\chi_{I}}_{Z^{s}}\les\norm{e^{it\De}\underline{u_{0}}\cdot\chi_{I}}_{Z^{s}}+\norm{u_{0}-\underline{u_{0}}}_{H^{s}}\ll1,
\]
we have $\sup_{\la\ge0}\norm{v^{(\la)}}_{Z^{s}}\ll1$ and so $\sup_{\la\ge0}\norm{v^{(\la)}}_{Y^{s}}\ll1.$
By (\ref{eq:Sobolev embed Z^s}), (\ref{eq:time Besov embed Z^s}),
and (\ref{eq:bootstrap}), we have
\[
\sup_{\la\ge0}\norm{\psi u^{(\la)}}_{B_{p,2}^{\s}B_{p,2}^{s-3\s}\cap L_{t,x}^{(2d+4)/(d-2s)}}\les\sup_{\la\ge0}\norm{u^{(\la)}}_{Y^{s}}<\infty.
\]
Since $\s>0$, $s-3\s>0$, and $(2d+4)/(d-2s)>1+a$, the embedding
$B_{p,2}^{\s}B_{p,2}^{s-3\s}\cap L_{t,x}^{(2d+4)/(d-2s)}\hook L_{t,x}^{1+a}$
is compact on the space of functions supported on $I\times\T^{d}$.
Thus, we have a sequence $\left\{ \la_{n}\right\} $ increasing to
$\infty$ such that $u^{(\la_{n})}$ converges to a function $u^{(\infty)}$
in $L_{t,x}^{1+a}(I\times\T^{d})$. Let $u\in Y^{s}$ be the Duhamel
solution of $iu_{t}+\De u=\calN(u^{(\infty)})\cdot\chi_{I}\in(Z^{-s})',\,u(0)=u_{0}$.
Since $u^{(\infty)}$ is a weak solution to (\ref{eq:NLS*I}), we
have $P_{\le N}u=P_{\le N}u^{(\infty)}$ almost everywhere for every
$N\in2^{\N}$. Hence $u=u^{(\infty)}\in L_{t,x}^{1+a}(I\times\T^{d})$
holds almost everywhere and so $u\in Y^{s}$ is a solution to (\ref{eq:NLS*I}),
$u(0)=u_{0}$. Furthermore, for each $t_{0}\in I$, by (\ref{eq:shrink Z^s}),
we have the continuity in time:
\begin{align*}
\limsup_{t_{1}\rightarrow t_{0}}\norm{u(t_{1})-u(t_{0})}_{H^{s}} & \les\limsup_{t_{1}\rightarrow t_{0}}\norm{\calN(u(t+t_{0}))\cdot\chi_{[0,t_{1}-t_{0}]}}_{(Z^{-s})'}\\
 & \les\limsup_{t_{1}\rightarrow t_{0}}\norm{u(t+t_{0})\cdot\chi_{[0,t_{1}-t_{0}]}}_{Z^{s}}^{1+a}=0.
\end{align*}
Therefore, we obtained a strong solution $u\in C^{0}H^{s}\cap Y^{s}$
to (\ref{eq:NLS*I}). We emphasize the estimate 
\begin{equation}
\norm{u\cdot\chi_{I}}_{Z^{s}}\les\norm{e^{it\De}\underline{u_{0}}\cdot\chi_{I}}_{Z^{s}}+\norm v_{Z^{s}}\ll1.\label{eq:smallness}
\end{equation}
(\ref{eq:smallness}) will be used to show the continuity of the solution
map $\Phi$.

\emph{Uniqueness}

Next, we check that such a solution $u\in C^{0}H^{s}\cap Y^{s}$ is
unique. Let $u,v\in C^{0}H^{s}\cap Y^{s}$ be two solutions to (\ref{eq:NLS})
with $u(0)=v(0)\in H^{s}$. We show that $u=v$ holds on a sufficiently
small interval $I_{1}\subset I$. Using (\ref{eq:shrink Z^s}), we freely
shrink an open interval $I_{1}\subset I$ containing $0$ such that
$\norm{u\cdot\chi_{I_{1}}}_{Z^{s}},\norm{v\cdot\chi_{I_{1}}}_{Z^{s}}\ll1$.

Let $w=\left(v-u\right)\cdot\chi_{I_{1}}$. By (\ref{eq:Holder strip decomposition}),
we have
\begin{align*}
\norm w_{Z^{0}} & \les\norm{\left(\calN(v)-\calN(u)\right)\chi_{I_{1}}}_{(Z^{0})'}\\
 & =\|\psi_{1}\cdot\left(w\int_{0}^{1}\d_{z}\calN(u\cdot\chi_{I_{1}}+\theta w\cdot\chi_{I_{1}})d\theta\right.\\
 & +\left.\overline{w}\int_{0}^{1}\d_{\overline{z}}\calN(u\cdot\chi_{I_{1}}+\theta w\cdot\chi_{I_{1}})d\theta\right)\|_{(Z^{0})'}\\
 & \les\norm w_{Z^{0}}\int_{0}^{1}\norm{u\cdot\chi_{I_{1}}+\theta w\cdot\chi_{I_{1}}}_{Z^{s}}^{a}d\theta\\
 & \les\norm w_{Z^{0}}\left(\norm{u\cdot\chi_{I_{1}}}_{Z^{s}}^{a}+\norm{v\cdot\chi_{I_{1}}}_{Z^{s}}^{a}\right).
\end{align*}
Thus, we have $\norm w_{Z^{0}}=0$, which implies $u=v$ on $I_{1}$.

\emph{Continuous dependence}

So far we have constructed the solution map $\Phi:\left\{ u_{0}\in H^{s}\mid\norm{u_{0}-\underline{u_{0}}}_{H^{s}}<R\right\} \rightarrow C^{0}H^{s}\cap Y^{s}$,
whose image is in a small neighborhood of $0$ in $Z^{s}$ when localized
on $I$. Now, we prove the continuity of the solution map $\Phi$.
For simplicity of notation, we show the continuity only at $\underline{u_{0}}$.
The only information we use about $\underline{u_{0}}$ is (\ref{eq:smallness}),
thus the continuity of $\Phi$ on a small neighborhood of $\underline{u_{0}}$
can be shown by the same argument. For this purpose, we show that
\[
\limsup_{N_{0}\rightarrow\infty}\limsup_{\delta\rightarrow0}\sup_{\norm{u_{0}-\underline{u_{0}}}_{H^{s}}<\delta}\norm{P_{\ge N_{0}}\Phi(u_{0})}_{Y^{s}}=0
\]
and
\[
P_{\le N}\Phi\text{ is Lipschitz for any }N\in2^{\N}.
\]
Fix $\epsilon>0$. There exist $N_{0}\in2^{\N}$ and a positive sequence
$\left\{ \gamma_{N}\right\} $ satisfying the condition of (\ref{eq:Nu bound =0003C6}),
$\sum_{N\in2^{\N}}\gamma_{N}\norm{\underline{u_{0}}_{N}}_{H^{s}}^{2}\le\epsilon^{2}$,
and $\gamma_{N}=1$ for every $N\ge N_{0}$. (Such a choice is possible since
one can choose $\gamma_{N}$ close to $0$ for arbitrarily many $N$'s.)
Fix $\delta\ll\epsilon N_{0}^{-s}$. For $\norm{u_{0}-\underline{u_{0}}}_{H^{s}}<\delta$,
by (\ref{eq:Nu bound =0003C6}), $u=\Phi(u_{0})$ satisfies 
\begin{align}
\sum_{N\in2^{\N}}\gamma_{N}\norm{u_{N}}_{Y^{s}}^{2} & \les\sum_{N\in2^{\N}}\gamma_{N}\norm{u_{0N}}_{H^{s}}^{2}+\sum_{N\in2^{\N}}\gamma_{N}\norm{P_{N}\calN(u)\cdot\chi_{I}}_{(Z^{-s})'}^{2}\label{eq:tight0}\\
 & \les\norm{u_{0}-\underline{u_{0}}}_{H^{s}}^{2}+\sum_{N\in2^{\N}}\gamma_{N}\norm{\underline{u_{0}}_{N}}_{H^{s}}^{2}\nonumber \\
 & +\norm{u\cdot\chi_{I}}_{Z^{s}}^{2a}\cdot\sum_{N\in2^{\N}}\gamma_{N}\norm{u_{N}\cdot\chi_{I}}_{Z^{s}}^{2}.\nonumber 
\end{align}
Since $\norm{u_{N}\cdot\chi_{I}}_{Z^{s}}^{2}\les\norm{u_{N}}_{Y^{s}}^{2}$,
by (\ref{eq:tight0}) and (\ref{eq:smallness}), we have
\begin{equation}
\norm{u_{\ge N_{0}}}_{Y^{s}}^{2}\le\sum_{N\in2^{\N}}\gamma_{N}\norm{u_{N}}_{Y^{s}}^{2}\les\delta^{2}+\epsilon^{2}.\label{eq:tight}
\end{equation}
Let $\underline{u}=\Phi(\underline{u_{0}})$ and $w=u-\underline{u}$.
By (\ref{eq:Holder strip decomposition}), we have
\begin{align*}
 & \norm{w-e^{it\De}(u_{0}-\underline{u_{0}})}_{Y^{0}}\\
 & \les\norm{\left(\calN(u)-\calN(\underline{u})\right)\chi_{I}}_{(Z^{0})'}\\
 & =\norm{\psi_{1}\cdot\left(w\int_{0}^{1}\d_{z}\calN(\underline{u}\cdot\chi_{I}+\theta w\cdot\chi_{I})d\theta+\overline{w}\int_{0}^{1}\d_{\overline{z}}\calN(\underline{u}\cdot\chi_{I}+\theta w\cdot\chi_{I})d\theta\right)}_{(Z^{0})'}\\
 & \les\norm w_{Z^{0}}\int_{0}^{1}\norm{\underline{u}\cdot\chi_{I}+\theta w\cdot\chi_{I}}_{Z^{s}}^{a}d\theta\\
 & \les\norm w_{Y^{0}}\left(\norm{u\cdot\chi_{I}}_{Z^{s}}^{a}+\norm{\underline{u}\cdot\chi_{I}}_{Z^{s}}^{a}\right).
\end{align*}
Since $\norm{u\cdot\chi_{I}}_{Z^{s}}^{a},\norm{\underline{u}\cdot\chi_{I}}_{Z^{s}}^{a}\ll1$,
we have
\[
\norm w_{Y^{0}}\les\norm{u_{0}-\underline{u_{0}}}_{L^{2}},
\]
which implies
\begin{equation}
\norm{P_{\le N_{0}}\left(u-\underline{u}\right)}_{Y^{s}}^{2}\les N_{0}^{2s}\norm{u_{0}-\underline{u_{0}}}_{L^{2}}^{2}\les N_{0}^{2s}\delta^{2}.\label{eq:low lipschitz}
\end{equation}
Combining (\ref{eq:tight}) and (\ref{eq:low lipschitz}), we have
\[
\norm{\Phi(u_{0})-\Phi(\underline{u_{0}})}_{Y^{s}}=\norm{u-\underline{u}}_{Y^{s}}\les\epsilon,
\]
finishing the proof of Proposition \ref{prop:Y^s LWP}.
\end{proof}

\section{\label{sec:Lipschitz}Proof of Theorem \ref{thm:Lipschitz}}

In this section, we prove Theorem \ref{thm:Lipschitz}. Throughout
this section, we fix exponents $d,a$, and $s$ such that 
\begin{equation}
0<s<a\text{ and }1<a.\label{eq:0<s<a}
\end{equation}
Note that $\frac{4}{d}<a$ is equivalent to $0<s$.

The proof of Theorem \ref{thm:Lipschitz} follows from a nonlinear
estimate of the difference between solutions:
\begin{lem}
\label{lem:Lipschitz lemma}For $u,v\in Z^{s}$, we have
\begin{equation}
\norm{\psi_{1}\d_{z}\calN(u)\cdot v+\psi_{1}\d_{\overline{z}}\calN(u)\cdot\overline{v}}_{(Z^{-s})'}\les\norm u_{Z^{s}}^{a}\norm v_{Z^{s}}.\label{eq:Lipschitz bound}
\end{equation}
\end{lem}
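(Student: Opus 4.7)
The plan is to follow the Bony-linearization scheme of Section \ref{sec:Proof of LWP}, with $v\in Z^{s}$ playing the role of one high-regularity copy of $u$. Since $\d_{z}\calN(u)=\frac{a+2}{2}|u|^{a}$ and $\d_{\overline{z}}\calN(u)=\frac{a}{2}|u|^{a-2}u^{2}$ are both of the form $|u|^{a-m}u^{m}$ already handled by Proposition \ref{prop:Holder strip decomposition}, I would dyadically decompose $v=\sum_{M\in2^{\N}}v_{M}$ and set $\tilde{F}_{K}^{M}:=\psi_{1}P_{K}\bigl[\d_{z}\calN(u)\,v_{M}+\d_{\overline{z}}\calN(u)\,\overline{v_{M}}\bigr]$. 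Using $\|f\|_{(Z^{-s})'}^{2}\sim\sum_{K}K^{2s}\|P_{K}f\|_{(Z^{0})'}^{2}$, the estimate (\ref{eq:Lipschitz bound}) reduces, exactly as (\ref{eq:Nu bound =0003C6}) reduced to Corollary \ref{cor:beta bound}, to establishing the two-sided decay
\[
\|\tilde{F}_{K}^{M}\|_{(Z^{-s})'}^{2}\les\|u\|_{Z^{s}}^{2a}\cdot\max\{K/M,M/K\}^{-2\epsilon_{0}}\cdot\|v_{M}\|_{Z^{s}}^{2}
\]
for some $\epsilon_{0}=\epsilon_{0}(d,s,a)>0$, after which Cauchy-Schwarz in $(K,M)$ closes the argument verbatim.

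The two halves of this bound will parallel (\ref{eq:N,K}) and (\ref{eq:N<<K}) respectively. For arbitrary $K,M$, applying Proposition \ref{prop:Holder strip decomposition} with $v_{M}$ in place of $v$ yields $\|\tilde{F}_{K}^{M}\|_{(Z^{0})'}\les\|v_{M}\|_{Z^{0}}\|u\|_{Z^{s}}^{a}$; multiplying by $K^{s}$ and writing $\|v_{M}\|_{Z^{0}}=M^{-s}\|v_{M}\|_{Z^{s}}$ produces the factor $(K/M)^{s}$, which suffices in the regime $K\le4M$. For $K\ge4M$, I would mimic the proof of Lemma \ref{lem:N<<K}: choose $\nu\in(0,a-s)$ (possible because $s<a$), estimate $\d\calN(u)\cdot v_{M}$ in $H^{s+\nu,p'}$ via the fractional chain rule (\ref{eq:high fractional Holder}) (legitimate because $a>1$), and extract the $K^{-s-\nu}$ Bernstein gain from the outer $P_{K}$. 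The missing $H^{s+\nu}$-regularity of $u$ is supplied by paraproduct-truncating the high-frequency part of $\d\calN(u)$ and bounding each retained piece via $\|u_{\le N}\|_{Z^{s+\nu}}\les N^{\nu}\|u\|_{Z^{s}}$; together with $\|v_{M}\|_{Z^{s+\nu}}\sim M^{\nu}\|v_{M}\|_{Z^{s}}$ this should yield
\[
\|\tilde{F}_{K}^{M}\|_{(Z^{-s})'}\les(M/K)^{\mu}\,\|v_{M}\|_{Z^{s}}\|u\|_{Z^{s}}^{a}
\]
for some $\mu>0$.

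Combining the two estimates furnishes the advertised maximum decay with $\epsilon_{0}:=\min(s,\mu)/2$, and the ensuing Cauchy-Schwarz summation in $(K,M)$ is verbatim the one at the end of the proof of Lemma \ref{lem:Nu bound =0003C6}. The main technical difficulty sits in the second estimate: a direct application of the fractional chain rule ostensibly requires $u\in H^{s+\nu}$, which is not available from $\|u\|_{Z^{s}}$. The remedy is precisely the paraproduct/frequency-truncation trick above, and it is here that both hypotheses $a>1$ and $s<a$ of Theorem \ref{thm:Lipschitz} are essential: the former keeps $\d\calN$ H\"{o}lder continuous so that the fractional chain rule on $\d\calN(u)\cdot v_{M}$ applies, while the latter leaves room $a-s>0$ for the interpolation exponent $\nu$. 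The collapse of this room at $a\le1$ is precisely reflected in the failure of Lipschitz continuity established by Theorem \ref{thm:a<1}.
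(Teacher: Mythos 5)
The final inequality you are after is true, but your reduction is not: the two--parameter decay bound
\begin{equation*}
\norm{\tilde{F}_{K}^{M}}_{(Z^{-s})'}\les\norm u_{Z^{s}}^{a}\,\max\{K/M,M/K\}^{-\epsilon_{0}}\,\norm{v_{M}}_{Z^{s}}
\end{equation*}
is false in the regime $K\gg M$, because the output frequency $K$ can be supplied by $u$ itself rather than by the roughness of the nonlinearity. Take, say, $u=\la+u_{K}$ with $\la\sim\norm{u_{K}}_{Z^{s}}\sim1$ and $v=v_{M}$ a fixed low mode: then $\d_{z}\calN(u)\cdot v_{M}=\tfrac{a+2}{2}|u|^{a}v_{M}$ contains a piece of the schematic form $\la^{a-1}u_{K}v_{M}$ at spatial frequency $\sim K$, whose $(Z^{-s})'$ norm is of the critical size $\norm{u_{K}}_{Z^{s}}\norm{v_{M}}_{Z^{s}}\sim\norm u_{Z^{s}}^{a}\norm{v_{M}}_{Z^{s}}$ (up to $K^{O(\s)}$ factors), with no decay in $K/M$ whatsoever. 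Any correct frequency bookkeeping must therefore retain the Bony frequency $N$ of $u$ and extract decay in $K/N$, not in $K/M$; this is exactly why the paper's proof splits the dual pairing into three parts: (I) $v$ at frequency $\gtrsim$ the output frequency (your first case, treated as you do via Proposition \ref{prop:Holder strip decomposition}); (II) the Bony pieces $G^{N}=u_{N}\times B^{N}$ with $N$ at or above the output frequency, multiplied by low--frequency $v$; and (III) $\d_{z}\calN(u_{\le L/16})\cdot v_{\le L/16}$, which is the only part amenable to the fractional chain rule as in Lemma \ref{lem:N<<K}.

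Part (II) is precisely what your plan has no estimate for. The chain--rule/Bernstein mechanism gives nothing there, since $\norm{u_{\le N}}_{Z^{s+\nu}}\sim N^{\nu}$ with $N\gtrsim K$ cancels the Bernstein gain $K^{-s-\nu}$, and the phrase ``paraproduct--truncating the high-frequency part of $\d\calN(u)$'' only isolates this contribution without bounding it. In the paper it is bounded by a variant of the bilinear estimate (\ref{eq:Holder strip decomposition}) in which $u_{N}$ and the dual function $w_{L}$ serve as the two $Z^{0}$ inputs while $B^{N}\cdot v_{\le L/16}$ is placed in the multiplier norm, yielding the factor $(L/N)^{s}\norm{u_{N}}_{Z^{s}}$; carrying this out forces one exponent $a_{k}=1/2$ in the decomposition (\ref{eq:a1+...+ak}), and this---not H{\"o}lder continuity of $\d\calN$ in the chain rule---is where the hypothesis $a>1$ is genuinely needed. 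Your treatments of the regime $K\les M$ and of the low-$N$ piece (with $\nu\in(0,a-s)$ and the interpolation of Lemma \ref{lem:N<<K}) do match the paper, but without an argument for the intermediate regime the proof does not close.
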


We first show that Lemma \ref{lem:Lipschitz lemma} implies Theorem
\ref{thm:Lipschitz}. Fix $\underline{u_{0}}\in H^{s}$. Let $I$
and $R>0$ be defined in Proposition \ref{prop:Y^s LWP}, chosen to
be small enough so that (\ref{eq:smallness}) also holds. For $u_{0},v_{0}\in H^{s}$
satisfying $\norm{u_{0}-\underline{u_{0}}}_{H^{s}},\norm{v_{0}-\underline{u_{0}}}_{H^{s}}<R$,
let $w=v-u$ and $w_{0}=v_{0}-u_{0}$. We have
\begin{align*}
\norm w_{Y^{s}} & \les\norm{w_{0}}_{H^{s}}+\int_{0}^{1}\norm{\left(\d_{z}\calN(u+\theta w)\cdot w+\d_{\overline{z}}\calN(u+\theta w)\cdot\overline{w}\right)\cdot\chi_{I}}_{(Z^{-s})'}d\theta\\
 & \les\norm{w_{0}}_{H^{s}}+\sup_{\theta\in[0,1]}\norm{\left(u+\theta w\right)\cdot\chi_{I}}_{Z^{s}}^{a}\norm{w\cdot\chi_{I}}_{Z^{s}}.
\end{align*}
This implies $\norm w_{Y^{s}}\les\norm{w_{0}}_{H^{s}}$, which completes
the proof of Theorem \ref{thm:Lipschitz}.

Lemma \ref{lem:Lipschitz lemma} is based on modifications of (\ref{eq:Holder strip decomposition}),
(\ref{eq:N,K}), (\ref{eq:N<<K}), and (\ref{eq:beta bound}), allowing one linear term of $u$ to be replaced by that of another function $v$. To handle the difference form, we apply fractional H{\"o}lder inequalities
to the linearization form $\d_{z}\calN u\cdot v$. Here, we require
$s<a$ instead of $s<1+a$. To mimick the proof of (\ref{eq:Holder strip decomposition}),
we require that at least one factor in (\ref{eq:a1+...+ak}), say $a_k$, is equal
to $1/2$. For this, we need $a>1$.
\begin{proof}
We show the dual form of (\ref{eq:Lipschitz bound}) regarding $\d_{z}\calN(u)\cdot v$:
\begin{equation}
\left|\int\psi_{1}\d_{z}\calN(u)\cdot v\cdot\overline{w}dxdt\right|\les\norm u_{Z^{s}}^{a}\norm v_{Z^{s}}\norm w_{Z^{-s}}.\label{eq:dual}
\end{equation}
The conjugate term $\d_{\overline{z}}\calN(u)\cdot\overline{v}$ can
be dealt with similarly.

For $N\in2^{\N}$, we denote by $G^{N}:\R\times\T^{d}\rightarrow\C$
and $B^{N}:\R\times\T^{d}\rightarrow\C^{2}$ the functions
\[
G^{N}:=\d_{z}\calN(u_{\le N})-\d_{z}\calN(u_{\le N/2})
\]
and
\[
B^{N}:=\left(\int_{0}^{1}\d_{zz}\calN(u_{\le N/2}+\theta u_{N})d\theta,\int_{0}^{1}\d_{z\overline{z}}\calN(u_{\le N/2}+\theta u_{N})d\theta\right).
\]
With these notations, we have $G^{N}=u_{N}\times B^{N}$. For $N=1$,
we use the conventional notation $u_{\le1/2}=0$.

We estimate the terms
\begin{align*}
\left|\int\psi_{1}\d_{z}\calN(u)\cdot v\cdot\overline{w}dxdt\right| & \le\left|\sum_{L\in2^{\N}}\int\psi_{1}\d_{z}\calN(u)\cdot v_{\ge L/8}\cdot\overline{w_{L}}dxdt\right|\\
 & +\left|\sum_{L\in2^{\N}}\int\psi_{1}\left(\d_{z}\calN(u)-\d_{z}\calN(u_{\le L/16})\right)\cdot v_{\le L/16}\cdot\overline{w_{L}}dxdt\right|\\
 & +\left|\sum_{L\in2^{\N}}\int\psi_{1}\d_{z}\calN(u_{\le L/16})\cdot v_{\le L/16}\cdot\overline{w_{L}}dxdt\right|\\
 & =I+II+III
\end{align*}
separately.

(1) \emph{Estimate of $I$}

Denote by $K,L,M,N\in2^\N$ dyadic numbers. By (\ref{eq:Holder strip decomposition}),
we have
\[
\left|\int\psi_{1}\d_{z}\calN(u)\cdot v_{M}\cdot\overline{w_{L}}\right|\les\norm u_{Z^{s}}^{a}\norm{v_{M}}_{Z^{0}}\norm{w_{L}}_{Z^{0}},
\]
which implies
\begin{align}
I & \le\sum_{\substack{L,M\in2^{\N}\\
L\le8M
}
}\left|\int\psi_{1}\d_{z}\calN(u)\cdot v_{M}\cdot\overline{w_{L}}dxdt\right|\label{eq:Lip esti 1}\\
 & \les\norm u_{Z^{s}}^{a}\cdot\sum_{\substack{L,M\in2^{\N}\\
L\le8M
}
}(L/M)^{s}\norm{v_{M}}_{Z^{s}}\norm{w_{L}}_{Z^{-s}}\nonumber \\
 & \les\norm u_{Z^{s}}^{a}\norm v_{Z^{s}}\norm w_{Z^{-s}}.\nonumber 
\end{align}
(2) \emph{Estimate of $II$}

Since $a/2>1/2$, mimicking the proof of (\ref{eq:Holder strip decomposition})
with the choice $a_{k}=1/2$ gives the estimate
\[
\left|\int\psi_{1}\left(u_{N}\times B^{N}\right)\cdot v_{\le L/16}\cdot\overline{w_{L}}dxdt\right|\les\norm u_{Z^{s}}^{a-1}\norm{u_{N}}_{Z^{0}}\norm{v_{\le L/16}}_{Z^{s}}\norm{w_{L}}_{Z^{0}}.
\]
This implies
\begin{align}
II & \le\sum_{\substack{L,N\in2^{\N}\\
L\le8N
}
}\left|\int\psi_{1}\left(u_{N}\times B^{N}\right)\cdot v_{\le L/16}\cdot\overline{w_{L}}dxdt\right|\label{eq:Lip esti 2}\\
 & \les\norm u_{Z^{s}}^{a-1}\norm v_{Z^{s}}\cdot\sum_{\substack{L,N\in2^{\N}\\
L\le8N
}
}(L/N)^{s}\norm{u_{N}}_{Z^{s}}\norm{w_{L}}_{Z^{-s}}\nonumber \\
 & \les\norm u_{Z^{s}}^{a}\norm v_{Z^{s}}\norm w_{Z^{-s}}.\nonumber 
\end{align}
(3) \emph{Estimate of $III$}

To estimate $III$, we estimate $\left|\int\psi_{1}G^{N}\cdot v_{\le L/16}\cdot\overline{w_{L}}dxdt\right|$
assuming $L\ge16N$. Let $q,r,p_{0}$, and $p_{1}$ be the parameters
defined in the proof of (\ref{eq:N<<K}). Choose $\mu>0$ and $\nu>0$
as the exponents $\nu=a-s-\s_{1}$ and $\mu=\left(1-\frac{2\s}{\s_{1}}\right)(\nu+\s)+\frac{2\s}{\s_{1}}(-s+\s)$
(recall that $\s\ll\s_{1}\ll1$ in (\ref{eq:ss1s2<<})).

(i) Since $\s_{1}<\min\left\{ (a-1)(s-\s),s-\s\right\} $, by either
(\ref{eq:low fractional Holder}) or (\ref{eq:high fractional Holder}),
for $M\in2^{\N}$ we have
\begin{align*}
 & \|\left(u_{N}\times B_{M}^{N}\right)\cdot v_{\le L/16}\|_{L^{p_{1}'}}\\
 & \les\|u_{N}\|_{L^{r}}\cdot\norm{v_{\le L/16}}_{H^{s-\s,r}}\cdot\|B_{M}^{N}\|_{L^{\left(\frac{1}{p_{1}'}-\frac{2}{r}+\frac{s-\s}{d}\right)^{-1}}}\\
 & \les N^{-s+\s}\|u_{N}\|_{H^{s-\s,r}}\cdot\norm{v_{\le L/16}}_{H^{s-\s,r}}\cdot M^{-\s_{1}}\|B_{M}^{N}\|_{H^{\s_{1},\left(\frac{1}{p_{1}'}-\frac{2}{r}+\frac{s-\s}{d}\right)^{-1}}}\\
 & \les N^{-s+\s}M^{-\s_{1}}\|u_{N}\|_{H^{s-\s,r}}\cdot\norm{v_{\le L/16}}_{H^{s-\s,r}}\cdot\|u\|_{H^{s-\s,r}}^{a-1}.
\end{align*}
Since $N\le L/16$, we have
\begin{align}
 & \|P_{L}(G^{N}\cdot v_{\le L/16})\|_{L^{p_{1}'}}\label{eq:H-1}\\
 & \les\sum_{M\in2^{\N}}\|P_{L}\left(\left(u_{N}\times B_{M}^{N}\right)\cdot v_{\le L/16}\right)\|_{L^{p_{1}'}}\nonumber \\
 & =\sum_{\substack{M\sim L}
}\|P_{L}\left(\left(u_{N}\times B_{M}^{N}\right)\cdot v_{\le L/16}\right)\|_{L^{p_{1}'}}\nonumber \\
 & \les(N/L)^{-s+\s}L^{-s-\s_{1}+\s}\|u_{N}\|_{H^{s-\s,r}}\cdot\norm{v_{\le L/16}}_{H^{s-\s,r}}\cdot\|u\|_{H^{s-\s,r}}^{a-1}\nonumber \\
 & \les(N/L)^{-s+\s}L^{-s-\s_{1}+\s}N^{-\nu}\|u_{\le N}\|_{H^{s-\s+\nu,r}}\cdot\norm{v_{\le L/16}}_{H^{s-\s,r}}\cdot\|u\|_{H^{s-\s,r}}^{a-1}.\nonumber 
\end{align}
(ii) Since $s+\nu<a$, by (\ref{eq:high fractional Holder}), we have
\begin{align}
 & \|P_{L}(G^{N}\cdot v_{\le L/16})\|_{L^{p_{0}'}}\label{eq:L-1}\\
 & \les L^{-s-\nu}\|G^{N}\|_{H^{s+\nu,\left(\frac{1}{p_{0}'}-\frac{1}{r}+\frac{s-\s}{d}\right)^{-1}}}\cdot\norm{v_{\le L/16}}_{H^{s-\s,r}}\nonumber \\
 & \les L^{-s-\nu}\|u_{\le N}\|_{H^{s+\nu,r}}\cdot\|u\|_{L^{a\left(\frac{1}{p_{0}'}-\frac{1}{r}\right)^{-1}}}^{a-1}\cdot\norm{v_{\le L/16}}_{H^{s-\s,r}}\nonumber \\
 & \les L^{-s-\nu}N^{\s}\|u_{\le N}\|_{H^{s-\s+\nu,r}}\cdot\|u\|_{H^{s-\s,r}}^{a-1}\cdot\norm{v_{\le L/16}}_{H^{s-\s,r}}\nonumber \\
 & =(N/L)^{\nu+\s}L^{-s+\s}N^{-\nu}\|u_{\le N}\|_{H^{s-\s+\nu,r}}\cdot\|u\|_{H^{s-\s,r}}^{a-1}\cdot\norm{v_{\le L/16}}_{H^{s-\s,r}}.\nonumber 
\end{align}
Interpolating between (\ref{eq:H-1}) and (\ref{eq:L-1}) with the
exponent $2\s/\s_{1}$, we have
\[
\|P_{L}(G^{N}\cdot v_{\le L/16})\|_{L^{p'}}\les(N/L)^{\mu}L^{-s-\s}N^{-\nu}\|u_{\le N}\|_{H^{s-\s+\nu,r}}\cdot\|u\|_{H^{s-\s,r}}^{a-1}\cdot\norm{v_{\le L/16}}_{H^{s-\s,r}}.
\]
Multiplying cutoffs and applying $L_{t}^{p'}$ norms to both sides,
then using (\ref{eq:Sobolev embed Z^s}), we have
\[
\norm{\psi_{1}P_{L}(G^{N}\cdot v_{\le L/16})}_{\left(Z^{-s}\right)'}\les(N/L)^{\mu}N^{-\nu}\norm{u_{\le N}}_{Z^{s+\nu}}\norm u_{Z^{s}}^{a-1}\norm v_{Z^{s}}.
\]
Since $\sum_{N\le L/16}(N/L)^{\mu}\les1$, by the Cauchy-Schwartz
inequality, we have
\begin{align*}
 & \norm{\sum_{\substack{L,N\in2^{\N}\\
L\ge16N
}
}\int P_{L}\left(\psi_{1}G^{N}\cdot v_{\le L/16}\right)}_{(Z^{-s})'}^{2}\\
 & \les\sum_{\substack{L\in2^{\N}}
}\left(\sum_{N\le L/16}\norm{\psi_{1}P_{L}(G^{N}\cdot v_{\le L/16})}_{\left(Z^{-s}\right)'}\right)^{2}\\
 & \les\norm u_{Z^{s}}^{2a-2}\norm v_{Z^{s}}^{2}\sum_{\substack{L\in2^{\N}}
}\left(\sum_{N\le L/16}(N/L)^{\mu}N^{-\nu}\norm{u_{\le N}}_{Z^{s+\nu}}\right)^{2}\\
 & \les\norm u_{Z^{s}}^{2a-2}\norm v_{Z^{s}}^{2}\sum_{\substack{L\in2^{\N}}
}\sum_{N\le L/16}(N/L)^{\mu}N^{-2\nu}\norm{u_{\le N}}_{Z^{s+\nu}}^{2}\\
 & =\norm u_{Z^{s}}^{2a-2}\norm v_{Z^{s}}^{2}\sum_{\substack{L\in2^{\N}}
}\sum_{N\le L/16}\sum_{K\le N}(N/L)^{\mu}(K/N)^{2\nu}\norm{u_{K}}_{Z^{s}}^{2}\\
 & \les\norm u_{Z^{s}}^{2a-2}\norm v_{Z^{s}}^{2}\sum_{\substack{K,L\in2^{\N}\\
L\ge16K
}
}(K/L)^{\min\left\{ \mu,2\nu\right\} -}\norm{u_{K}}_{Z^{s}}^{2}\\
 & \les\norm u_{Z^{s}}^{2a}\norm v_{Z^{s}}^{2},
\end{align*}
which implies 
\begin{equation}
III\le\left|\sum_{\substack{L,N\in2^{\N}\\
L\ge16N
}
}\int\psi_{1}G^{N}\cdot v_{\le L/16}\cdot\overline{w_{L}}dxdt\right|\les\norm u_{Z^{s}}^{a}\norm v_{Z^{s}}\norm w_{Z^{-s}}.\label{eq:Lip esti 3}
\end{equation}
Combining (\ref{eq:Lip esti 1}), (\ref{eq:Lip esti 2}), and (\ref{eq:Lip esti 3}),
we have (\ref{eq:dual}), finishing the proof of Lemma \ref{lem:Lipschitz lemma}.
\end{proof}

\section{\label{sec:Proof-of-Theorem a<1}Proof of Theorem \ref{thm:a<1}}

In this section, we prove the H{\"o}lder ill-posedness of (\ref{eq:NLS}),
Theorem \ref{thm:a<1}. For the proof, we construct an explicit counterexample.
Although Theorem \ref{thm:a<1} is stated on $\T^{d}$, the counterexample
we construct relies on one spatial variable $x_{1}$. Hence, we perform
the construction on $\T$. Indeed, by considering initial data $u_{0}\in H^{s}(\T^{d})$
of the form $u_{0}(x_{1},\ldots,x_{d})=u_{01}(x_{1}),u_{01}\in H^{s}(\T)$,
one can deduce the result in Theorem \ref{thm:a<1} from that with
the domain replaced by $\T$. Here, $s$ and $a$ are no longer related,
i.e., we do not require (\ref{eq:s_c}) in the rest of this section.

Reducing to the one-dimensional problem, we show the following statement:
\begin{prop}
\label{prop:a<1}Assume $0<a<1$ and $0<s<1+\frac{1}{a}$. The solution
map to 
\begin{equation}
\begin{cases}
iu_{t}+u_{xx}=\calN(u)\\
u(0)=u_{0}\in H^{s}(\T)
\end{cases}\label{eq:NLS(T)}
\end{equation}
is not locally $\al$-H{\"o}lder continuous in $H^{s}(\T)$ for each $\al>a$.
\end{prop}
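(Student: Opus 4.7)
The plan is to construct an explicit counterexample in the small ball of $H^{s}(\T)$ that violates $\alpha$-H\"older continuity for any $\alpha>a$. Having reduced to one spatial dimension as described immediately before the statement, I would assume for contradiction that the solution map is $\alpha$-H\"older continuous on the ball $\{\norm{u_{0}}_{H^{s}}<\epsilon\}$ uniformly on $[0,T]$ for some $\epsilon,T>0$. Take $v_{0}=0$, so that $v\equiv0$, and take $u_{0}=\lambda\delta_{N}$ for parameters $\lambda>0$ and dyadic $N$ to be chosen, where $\delta_{N}=\P_{N}\delta$ is the Littlewood--Paley-localized Dirac at scale $N$, so that $\norm{\delta_{N}}_{H^{s}}\sim N^{s+1/2}$.

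Under the H\"older hypothesis and smallness, the solution $u$ exists on $[0,T]$, and one Picard iteration of the Duhamel formula gives
\[
u(t)=\lambda\,e^{it\De}\delta_{N}\;-\;i\lambda^{1+a}W_{N}(t)\;+\;R_{N}(t),
\]
where $W_{N}(t):=\int_{0}^{t}e^{i(t-\tau)\De}|e^{i\tau\De}\delta_{N}|^{a}e^{i\tau\De}\delta_{N}\,d\tau$ and $R_{N}$ is a remainder of higher order in $\lambda$, controlled a posteriori by the H\"older hypothesis itself. The linear term has $H^{s}$-norm comparable to $\lambda N^{s+1/2}$.

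The heart of the matter is extracting a sharp lower bound on $\norm{u(t_{0})}_{H^{s}}$ at some $t_{0}\in(0,T]$, coming from a component of $W_{N}(t_{0})$ living at Fourier modes outside $\mathrm{supp}(\widehat{\delta_{N}})$, so that the linear term cannot cancel it. Here the central tool is the oscillation estimate (\ref{eq:L^inftyL^2}): on a set of times $E\subset[0,T]$ of positive measure, $\norm{e^{i\tau\De}\delta_{N}}_{L^{\infty}(\T)}\sim\norm{e^{i\tau\De}\delta_{N}}_{L^{2}(\T)}\sim N^{1/2}$. For $\tau\in E$ this forces $|e^{i\tau\De}\delta_{N}|^{a}e^{i\tau\De}\delta_{N}$ to have magnitude $\sim N^{(1+a)/2}$ spread across $\T$ (rather than concentrating at a single point), producing substantial Fourier content outside $\mathrm{supp}(\widehat{\delta_{N}})$. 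Pairing $W_{N}(t_{0})$ with a carefully chosen test function localized at such new modes, and using the coherence of these coefficients in $\tau$ together with the Schr\"odinger phase, yields a lower bound of the form $\norm{u(t_{0})}_{H^{s}}\gtrsim\lambda^{1+a}\eta(N)$ for an explicit $\eta(N)$.

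Balancing this against the hypothesized H\"older upper bound $\norm{u(t_{0})}_{H^{s}}\les(\lambda N^{s+1/2})^{\alpha}$ and optimizing over $\lambda,N$ subject to $\lambda N^{s+1/2}<\epsilon$, the inequality fails as $N\to\infty$ precisely in the range $\alpha>a$ and $0<s<1+1/a$, the upper constraint on $s$ ensuring that the scaling exponents line up correctly (and that the Picard remainder $R_{N}$ remains negligible). The main obstacle is the coherent lower bound on $W_{N}(t_{0})$ at the new Fourier modes: one must exploit (\ref{eq:L^inftyL^2}) quantitatively to show that the Fourier coefficients of $|e^{i\tau\De}\delta_{N}|^{a}e^{i\tau\De}\delta_{N}$ at frequencies disjoint from those of $\delta_{N}$ are sufficiently large and sufficiently coherent in $\tau$ to survive the Duhamel integration against $e^{i(t_{0}-\tau)\De}$.
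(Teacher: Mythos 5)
Your proposal has the right key tool (the oscillation estimate (\ref{eq:L^inftyL^2})) but the counterexample itself is built on the wrong mechanism, and a scaling count shows it cannot close. With $v_{0}=0$ and $u_{0}=\la\delta_{N}$, the H{\"o}lder hypothesis caps $\norm{u}_{C^{0}H^{s}}$ by $(\la N^{s+1/2})^{\al}$, while your lower bound comes from the first Picard iterate and is of size $\la^{1+a}\eta(N)$. Since $\al<1+a$, the best choice of $\la$ is the largest admissible one, $\la\sim\epsilon N^{-s-1/2}$, and the contradiction then requires $\eta(N)\gg N^{(1+a)(s+1/2)}$. But even granting full coherence in $\tau$ and no loss at the ``new'' modes, the Duhamel integrand satisfies $\norm{|e^{i\tau\De}\delta_{N}|^{a}e^{i\tau\De}\delta_{N}}_{L^{2}}\les\norm{e^{i\tau\De}\delta_{N}}_{L^{\infty}}^{a}\norm{e^{i\tau\De}\delta_{N}}_{L^{2}}\sim N^{(1+a)/2}$ on the good set of times (and the bad times have too small measure to help), so the most optimistic value of $\eta(N)$ is of order $N^{s+(1+a)/2}$. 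This falls short of $N^{(1+a)(s+1/2)}$ by the factor $N^{as}$, for every $s>0$; so for $\al$ close to $a$ (the case one must treat, since $\al$-H{\"o}lder for larger $\al$ implies it for smaller $\al$ on small differences) the inequality you need simply does not hold in the admissible ball. In short, perturbing off the zero solution cannot detect the failure: near $u\equiv0$ the nonlinear response is superlinear ($\la^{1+a}$) in the datum, which is always dominated by $\la^{\al}$, and the $N$-growth cannot compensate. There is also a secondary gap: controlling the Picard remainder $R_{N}$ ``a posteriori by the H{\"o}lder hypothesis'' is not justified, since that hypothesis only bounds $u$ in $C^{0}H^{s}$ and gives no smallness of the error relative to the second iterate in the absence of any Lipschitz theory.

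The paper's construction exploits a different mechanism: the derivative of $z\mapsto|z|^{a}z$ is only $C^{0,a}$ near $z=0$, so one compares two solutions on \emph{nonzero constant backgrounds}. Both data share the identical high-frequency component $\d_{x}u_{\rho,0}=\kappa N^{\frac{1}{2}-s}e^{-iT\De}\delta_{N}$ and differ only in their means $\la$ versus $2\la$, so $\norm{u_{2\la,0}-u_{\la,0}}_{H^{s}}\sim\la$. The difference of the solutions at frequency $N$ is then driven by the difference of the effective potentials, of size $|2\la|^{a}-|\la|^{a}\sim\la^{a}$, acting \emph{linearly} on the shared high-frequency amplitude $\kappa$: pairing with $\overline{e^{i(t-T)\De}\delta_{N}}$ and using (\ref{eq:m(kernel>sqrtN)}) to get $\int_{E}|e^{i(t-T)\De}\delta_{N}|^{2}\sim N$ yields a main term $\gtrsim\la^{a}\kappa N^{\frac{3}{2}-s}$, which beats the H{\"o}lder bound $\la^{\al}N^{\frac{3}{2}-s}$ because $\kappa=\la^{\al-a}N^{\s}$; the errors are controlled by $L^{4}$ Strichartz bounds and the decomposition into $E\cup E^{c}$. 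That $\la^{a}$-versus-$\la$ scaling (rather than your $\la^{1+a}$-versus-$\la$) is what makes the counterexample work, so you would need to redesign your data along these lines — a one-parameter family through zero cannot do it.
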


The proof of Proposition \ref{prop:a<1} does not depend on the sign
of the nonlinearity. For simplicity, we assume that $\calN(u)$ is
defocusing, i.e., $\calN(u)=\left|u\right|^{a}u$.

We construct an explicit pair of initial data $u_{\la,0},u_{2\la,0}\in H^{s}(\T)$
which contradicts the H{\"o}lder continuity in Proposition \ref{prop:a<1}.
Denoting by $u_{\la},u_{2\la}\in C^{0}H^{s}\left([0,T]\times\T\right)$
the solutions to (\ref{eq:NLS(T)}) and letting $w=u_{2\la}-u_{\la}$,
we expand $\P_{N}\d_{x}w(T,0),N\gg1$ in a Duhamel form, expecting
that $\left|\P_{N}\d_{x}w(T,0)\right|$ exceeds the bound required
by such a H{\"o}lder assumption.

In the Duhamel expansion, we have a term of $u,\d_{x}u$, and $w$
(\ref{eq:claimA;1}), which is non-oscillating if we plug $\d_{x}u_{\la}=\d_{x}u_{2\la}=Ce^{i(t-T)\De}\delta_{N}$
and positive reals $u_{\la},u_{2\la},w\gtrsim\la$. Inspired by this
observation, we choose the initial data $u_{\la,0}$ and $u_{2\la,0}$
as functions such that $\frac{1}{2\pi}\int_{\T}u_{\la,0}dx=\la$,
$\frac{1}{2\pi}\int_{\T}u_{2\la,0}dx=2\la$, and $\d_{x}u_{\la,0}=\d_{x}u_{2\la,0}=\theta e^{-iT\De}\delta_{N}$.
The parameters $\la$ and $\theta$ are chosen as constants such that
$u_{\la}$ and $u_{2\la}$ are small perturbations of free evolutions
and (\ref{eq:claimA;1}) dominates the rest in the expansion of $\P_{N}\d_{x}w(T,0)$.

To avoid a small set of irregular high peaks of $u$ generating a
large error, we partition the set $[0,T]\times\T^{d}$ into $E\cup E^{c}$,
where $E$ is the set of points $(t,x)$ at which $u_{\rho}-e^{it\De}u_{\rho,0}$
and $\d_{x}\left(u_{\rho}-e^{it\De}u_{\rho,0}\right)$ are both small
for $\rho=\la,2\la$. The Duhamel integral is estimated on $E$ and
$E^{c}$ separately.

Before proving Proposition \ref{prop:a<1}, we show a preliminary
fact on the Schr{\"o}dinger kernel $e^{it\De}\delta_{N},N\in2^{\N}$ on
$\T$. The following lemma states that for each dyadic $N\in2^{\N}$,
the $L^{2}$ norm and the $L^{\infty}$ norm of $e^{it\De}\delta_{N}$
are comparable on a large set of times $t$:
\begin{lem}
For $T>0$, there exists a constant $C$ depending only on $T$ such
that for every dyadic $N\in2^{\N}$, we have the estimate
\begin{equation}
m\left(\left\{ t\in[0,T]\mid\norm{e^{it\De}\delta_{N}}_{L^{\infty}}\le C\sqrt{N}\right\} \right)\gtrsim_{T}1,\label{eq:L^inftyL^2}
\end{equation}
where $m(\cdot)$ denotes the (Lebesgue) measure of a set.
\end{lem}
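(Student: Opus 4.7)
The plan is to combine Bourgain's pointwise kernel bound (\ref{eq:Bourgain bound}) with Dirichlet's simultaneous approximation theorem. The guiding heuristic is that $\|e^{it\Delta}\delta_{N}\|_{L^{\infty}_{x}}$ can substantially exceed $\sqrt{N}$ only when $t$ lies unusually close to a rational with small denominator, and such $t$ occupy only a small portion of $[0,T]$, shrinking as the comparison constant $C$ grows.

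For each $t\in[0,T]$, Dirichlet's theorem with $Q=N$ supplies coprime integers $l,m$ with $1\le m\le N$ and $|t-l/m|\le 1/(mN)$. Setting aside the edge cases where $m=N$ or where $l/m$ hits an integer endpoint (which together trap $t$ in a set of measure $\lesssim(T+1)/N$), Bourgain's estimate (\ref{eq:Bourgain bound}) applies directly to the Dirichlet approximant and gives
\[
\|e^{it\Delta}\delta_{N}\|_{L^{\infty}_{x}}\;\lesssim\;\frac{N}{\sqrt{m}\,\bigl(1+N|t-l/m|^{1/2}\bigr)}.
\]
A direct inspection shows that this upper bound is at most $C\sqrt{N}$ unless \emph{both} $m<N/C^{2}$ and $|t-l/m|<1/(C^{2}mN)$. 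Thus the "bad" set is contained in
\[
B_{C}:=\bigcup_{1\le m\le N/C^{2}}\ \bigcup_{\substack{l\in\mathbb{Z},\ \gcd(l,m)=1\\ l/m\in[0,T]}}\bigl(l/m-1/(C^{2}mN),\ l/m+1/(C^{2}mN)\bigr).
\]

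Counting fractions gives
\[
|B_{C}|\;\lesssim\;\sum_{m\le N/C^{2}}(Tm+1)\cdot\frac{1}{C^{2}mN}\;\lesssim\;\frac{T}{C^{4}}+\frac{\log N}{C^{2}N},
\]
in which the $T/C^{4}$ term dominates for large $N$. Choosing $C=C(T)$ large enough that $T/C^{4}\le T/4$, and taking $N\ge N_{0}(T,C)$ so the remaining terms (together with the $(T+1)/N$ edge contribution) are at most $T/4$, we obtain $|B_{C}|+|\mathrm{edge}|\le T/2$, and hence the good set has measure at least $T/2$. For the finitely many $N<N_{0}$, the trivial Bernstein bound $\|e^{it\Delta}\delta_{N}\|_{L^{\infty}}\lesssim N\le N_{0}\sqrt{N}$ already holds on all of $[0,T]$ upon enlarging $C$ to $\max(C,N_{0})$. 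The substantive content is just the elementary Dirichlet-counting estimate; the main technical nuisance, rather than a genuine obstacle, is the bookkeeping of coprime fractions of denominator $m$ in $[0,T]$ and the verification that the Dirichlet approximant $(l,m)$ truly satisfies the hypothesis $1\le l<m<N$ of Bourgain's lemma outside the negligible edge set.
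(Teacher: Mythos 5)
Your argument is correct, but it runs in the opposite direction from the paper's. The paper never invokes Dirichlet approximation: it directly \emph{constructs} a good set, namely the union $\mathcal{T}$ of the intervals $\left(\frac{l}{m}-\frac{1}{mN},\frac{l}{m}+\frac{1}{mN}\right)$ over coprime pairs with $m\sim N$, on which (\ref{eq:Bourgain bound}) immediately gives $\|e^{it\De}\delta_{N}\|_{L^{\infty}}\les N/\sqrt{m}\sim\sqrt{N}$, and then lower-bounds $m(\mathcal{T})$ by counting: there are $\gtrsim N^{2}$ admissible coprime pairs, each interval has length $\sim N^{-2}$, and the spacing bound $|l_{1}/m_{1}-l_{2}/m_{2}|\ge(m_{1}m_{2})^{-1}$ caps the overlap by an absolute constant. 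You instead bound the \emph{bad} set: Dirichlet guarantees every $t$ an approximant, (\ref{eq:Bourgain bound}) can exceed $C\sqrt{N}$ only when the approximant has denominator $<N/C^{2}$ and $t$ lies within $1/(C^{2}mN)$ of it, and the denominator sum makes that exceptional set of measure $O(T/C^{4})$ plus $o_{N}(1)$ terms. Your route buys a stronger conclusion (the good set exhausts all but an arbitrarily small fraction of $[0,T]$ once $C$ is large, not merely a set of measure $\gtrsim_{T}1$), at the price of the Dirichlet step and the verification that the approximant meets the hypotheses of (\ref{eq:Bourgain bound}); the paper's construction is shorter and needs no approximation theorem because it picks the rationals itself. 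One caveat you should state rather than file under ``negligible edge set'': when $T>1$, the Dirichlet approximant of any $t\in(1,T)$ has $l\ge m$, so the cited hypothesis $1\le l<m$ fails on a set of full measure there; this is harmless (either reduce to $T\le1$, since the good set only grows with $T$, or note that the kernel bound is insensitive to the restriction $l<m$ --- indeed the paper's own proof uses rationals up to $T$), but as written your claim that the hypothesis holds off a measure-$O((T+1)/N)$ set is not accurate for $T>1$. The remaining bookkeeping (extending the range of $l/m$ slightly beyond $[0,T]$ in the definition of $B_{C}$, absorbing the implicit constant of (\ref{eq:Bourgain bound}) and the small-$N$ Bernstein case into $C$) is fine as you describe it.
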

\begin{proof}
Let $\mc A$ be the collection of coprime integer pairs $\left(l,m\right)$
such that $\frac{N}{10}\le m\le N$ and $0<\frac{l}{m}<T$. Let $\mc I$
be the collection of intervals $\left(\frac{l}{m}-\frac{1}{mN},\frac{l}{m}+\frac{1}{mN}\right),(l,m)\in\mc A$.
Denote by $\mc T$ the union $\mc T=\cup_{I\in\mc I}I$.

Once we have $m\left(\mc T\right)\gtrsim_{T}1$, we have (\ref{eq:L^inftyL^2}).
For $t\in\mc T$, by the kernel estimate (\ref{eq:Bourgain bound}),
we have $\norm{e^{it\De}\delta_{N}}_{L^{\infty}}\le C\sqrt{N}$ for
some universal constant $C=C(T)$, which implies (\ref{eq:L^inftyL^2}).

We show
the estimate $m\left(\mc T\right)\gtrsim_{T}1$. For two pairs of coprime integers $\left(l_{1},m_{1}\right)$ and
$\left(l_{2},m_{2}\right)$, we have 
\[
\left|\frac{l_{1}}{m_{1}}-\frac{l_{2}}{m_{2}}\right|=\left|\frac{l_{1}m_{2}-l_{2}m_{1}}{m_{1}m_{2}}\right|\ge\frac{1}{m_{1}m_{2}}.
\]
Thus, no more than $100$ members of $\mc I$ can intersect at a common
point. Since the length of each interval in $\mc I$ is comparable
to $\frac{1}{N^{2}}$, we are done once we have $\#\mc A\gtrsim N^{2}.$
Since $\sum_{k=2}^{\infty}\frac{1}{k^{2}}<1$, the number of such
points is indeed comparable to $N^{2}$, finishing the proof of (\ref{eq:L^inftyL^2}).
\end{proof}
Since $e^{it\De}\delta_{N}$ is localized on frequencies comparable
to $N$, (\ref{eq:L^inftyL^2}) indicates an oscillating behavior
of $e^{it\De}\delta_{N}$ for such times $t$. As a particular consequence,
we have $\left|e^{it\De}\delta_{N}(t,x)\right|\gtrsim_{T}\sqrt{N}$
on a large set of points $(t,x)\in[0,T]\times\T$, which is stated in the
following corollary:
\begin{cor}
For $T>0$, there exists a constant $\epsilon>0$ depending only on
$T$ such that for every dyadic $N\in2^{\N}$, we have the estimate
\begin{equation}
m\left(\left\{ (t,x)\in[0,T]\times\T\mid\left|e^{it\De}\delta_{N}(x)\right|\ge\epsilon\sqrt{N}\right\} \right)\gtrsim_{T}1.\label{eq:m(kernel>sqrtN)}
\end{equation}
\end{cor}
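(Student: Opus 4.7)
The plan is to deduce (\ref{eq:m(kernel>sqrtN)}) from (\ref{eq:L^inftyL^2}) by a reverse Chebyshev (inverse H\"older) argument built on the $L^{2}$ conservation of the Schr\"odinger propagator. Because $\widehat{\delta_{N}}(\xi)=\varphi_{N}(\xi)=\varphi(\xi/N)$ is supported on $|\xi|\lesssim N$, Parseval's identity on $\T$ gives the uniform lower bound
\[
\|e^{it\De}\delta_{N}\|_{L^{2}(\T)}^{2}=\|\delta_{N}\|_{L^{2}(\T)}^{2}=\sum_{\xi\in\Z}\varphi(\xi/N)^{2}\gtrsim N
\]
for every $t\in\R$. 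This $L^{2}$ mass is the only new ingredient beyond the previous lemma.

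Next, I would invoke (\ref{eq:L^inftyL^2}) to produce a set $S\subset[0,T]$ with $m(S)\gtrsim_{T}1$ on which the pointwise upper bound $\|e^{it\De}\delta_{N}\|_{L^{\infty}(\T)}\le C\sqrt{N}$ holds. For each $t\in S$, setting $E_{t}:=\{x\in\T:|e^{it\De}\delta_{N}(x)|\ge\epsilon\sqrt{N}\}$ and splitting the $L^{2}$ integral,
\[
cN\le\int_{E_{t}}|e^{it\De}\delta_{N}|^{2}\,dx+\int_{\T\setminus E_{t}}|e^{it\De}\delta_{N}|^{2}\,dx\le C^{2}N\cdot m(E_{t})+2\pi\epsilon^{2}N.
\]
Choosing $\epsilon>0$ small enough that $2\pi\epsilon^{2}\le c/2$ (a choice depending only on $T$ through $C$) forces $m(E_{t})\ge c/(2C^{2})\gtrsim_{T}1$ uniformly in $t\in S$.

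Finally, Fubini gives
\[
m\left(\{(t,x)\in[0,T]\times\T:|e^{it\De}\delta_{N}(x)|\ge\epsilon\sqrt{N}\}\right)\ge\int_{S}m(E_{t})\,dt\gtrsim_{T}1,
\]
which is the claim. There is no substantive obstacle here: the argument is a routine two-line reverse Chebyshev estimate once the previous lemma is in hand. The only bookkeeping required is to track that the constant $C$ from (\ref{eq:L^inftyL^2}) and the implicit constant in $\|\delta_{N}\|_{L^{2}}^{2}\gtrsim N$ are genuinely independent of $N$, so that the resulting $\epsilon$ depends only on $T$.
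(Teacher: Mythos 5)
Your proposal is correct and follows essentially the same route as the paper: the paper's (two-sentence) proof likewise combines the $L^{\infty}$ bound from (\ref{eq:L^inftyL^2}) on a time set of measure $\gtrsim_{T}1$ with the conserved $L^{2}$ mass $\norm{e^{it\De}\delta_{N}}_{L^{2}}\sim\sqrt{N}$ to conclude that $|e^{it\De}\delta_{N}(x)|\gtrsim\sqrt{N}$ on an $x$-set of measure $\sim1$ for each such $t$, then integrates in $t$. You have merely written out explicitly the reverse Chebyshev step and the Fubini step that the paper leaves implicit.
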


\begin{proof}
For each $t\in[0,T]$ such that $\norm{e^{it\De}\delta_{N}}_{L^{\infty}}\sim\sqrt{N}\sim\norm{e^{it\De}\delta_{N}}_{L^{2}}$,
$\left|e^{it\De}\delta_{N}(x)\right|\gtrsim\sqrt{N}$ is attained
on a set of points $x\in\T$ of measure comparable to $1$. Thus, (\ref{eq:L^inftyL^2})
implies (\ref{eq:m(kernel>sqrtN)}).
\end{proof}
Using (\ref{eq:m(kernel>sqrtN)}), we prove Proposition \ref{prop:a<1}
as follows:
\begin{proof}
Since Proposition \ref{prop:a<1} is stronger with lower $\al$, we
may assume $0<\al-a\ll_{a,s}1$ in advance. 

Fix $T>0$. We use the Wirtinger derivatives with the convention $u^{\gamma}\overline{u}^{\gamma}:=\left|u\right|^{2\gamma}$
for $\gamma\in\R$. 

Fix a number $\epsilon_{0}\ll_{a,s,T}1$. Let $N\in2^{\N}$ be a large
dyadic number. Let $\la,\kappa,\iota\ll1$ be the numbers
\begin{align}
\la & =N^{-\frac{s}{1-\al+2a}+\s_{1}},\label{eq:par def of prop a<1}\\
\kappa & =\la^{\al-a}\cdot N^{\s},\nonumber \\
\iota & =N^{-\s},\nonumber 
\end{align}
where $\s$ and $\s_{1}$ are the numbers chosen in (\ref{eq:ss1s2<<}).

For $\rho\in[\la,2\la]$, let $u_{\rho,0}\in H^{\infty}(\T)$ be the
unique function such that
\[
\d_{x}u_{\rho,0}=\kappa N^{\frac{1}{2}-s}e^{-iT\De}\delta_{N}
\]
and
\[
\frac{1}{2\pi}\int_{\T}u_{\rho,0}dx=\rho.
\]
Denote by $u_{\la}$ and $u_{2\la}$ the solutions to (\ref{eq:NLS(T)})
on $[0,T]$ with initial data $u_{\la,0}$ and $u_{2\la,0}$, respectively.
For $\rho\in(\la,2\la)$, denote by $u_{\rho}:[0,T]\times\T\rightarrow\C$ the function
$u_{\rho}=\frac{2\la-\rho}{\la}u_{\la}+\frac{\rho-\la}{\la}u_{2\la}$.
We also denote $w=u_{2\la}-u_{\la}$.

First, we collect some estimates on $u_{\la}$ and $u_{2\la}$. For
$\rho=\la$ and $\rho=2\la$, by $0<s<1+\frac{1}{a}$ and $a<\al$,
we have $\norm{u_{\rho,0}}_{L^{2}}\les\kappa N^{-s}+\la\sim\la$ and
$\norm{u_{\rho,0}}_{H^{1}}\les\kappa N^{1-s}+\la\sim\kappa N^{1-s}$.
Thus, a contraction mapping argument using (\ref{eq:L^4 Strichartz})
gives
\begin{equation}
\norm{u_{\rho}}_{C^{0}L^{2}\cap L^{4}L^{4}}\les\norm{u_{\rho,0}}_{L^{2}}\les\la,\label{eq:u L^2}
\end{equation}
\begin{equation}
\norm{u_{\rho}-e^{it\De}u_{\rho,0}}_{C^{0}L^{2}\cap L^{4}L^{4}}\les\norm{u_{\rho}}_{L^{4}L^{4}}^{1+a}\les\la^{1+a},\label{eq:u-e L^2}
\end{equation}
\begin{equation}
\norm{u_{\rho}}_{C^{0}H^{1}\cap L^{4}W^{1,4}}\les\norm{u_{\rho,0}}_{H^{1}}\les\kappa N^{1-s},\label{eq:u H^1}
\end{equation}
and
\begin{equation}
\norm{u_{\rho}-e^{it\De}u_{\rho,0}}_{C^{0}H^{1}\cap L^{4}W^{1,4}}\les\norm{u_{\rho}}_{L^{4}L^{4}}^{a}\norm{u_{\rho}}_{L^{4}W^{1,4}}\les\la^{a}\kappa N^{1-s}.\label{eq:u-e H^1}
\end{equation}
If Proposition \ref{prop:a<1} were false on $[0,T]$, since
\[
\norm{u_{\la,0}}_{H^{s}},\norm{u_{2\la,0}}_{H^{s}}\les\kappa+\la\ll1,
\]
we would have
\[
\left|\P_{N}\d_{x}w(T,0)\right|\les N^{\frac{3}{2}-s}\norm w_{C^{0}H^{s}}\les N^{\frac{3}{2}-s}\norm{u_{2\la,0}-u_{\la,0}}_{H^{s}}^{\al}\les\la^{\al}N^{\frac{3}{2}-s}.
\]
Thus, showing the following leads to a contradiction:
\begin{equation}
\left|\P_{N}\d_{x}w(T,0)\right|\gg\la^{\al}N^{\frac{3}{2}-s}.\label{eq:claim}
\end{equation}
Since $w(0)=\la$ and
\[
(i\d_{t}+\d_{xx})w=\calN(u_{2\la})-\calN(u_{\la}),
\]
we have
\begin{align}
i\P_{N}\d_{x}w(T,0) & =\int_{[0,T]\times\T}\overline{e^{i(t-T)\De}\delta_{N}}\d_{x}\left(\calN(u_{2\la})-\calN(u_{\la})\right)dxdt\label{eq:dw}\\
 & =\int_{E}\overline{e^{i(t-T)\De}\delta_{N}}\left(A+B\right)dxdt\nonumber \\
 & +\int_{E^{c}}\overline{e^{i(t-T)\De}\delta_{N}}\left(\d_{x}\calN(u_{2\la})-\d_{x}\calN(u_{\la})\right)dxdt,\nonumber 
\end{align}
where $E\subset[0,T]\times\T$ is the set of $(t,x)\in[0,T]\times\T$
such that for $\rho=\la,2\la$,
\begin{align}
\left|\frac{u_{\rho}}{\rho}-1\right| & \le\epsilon_{0},\label{eq:up/p}\\
\left|\d_{x}u_{\rho}-\kappa N^{\frac{1}{2}-s}e^{i(t-T)\De}\delta_{N}\right| & \le\iota\kappa N^{1-s}\label{eq:dxup-dN}
\end{align}
are satisfied, and the terms $A$ and $B$ denote
\begin{align*}
A & =\frac{1}{\la}\int_{\la}^{2\la}\frac{a}{2}\left(\frac{a}{2}+1\right)\left(u_{\rho}^{\frac{a}{2}-1}\overline{u}_{\rho}^{\frac{a}{2}}w+u_{\rho}^{\frac{a}{2}}\overline{u}_{\rho}^{\frac{a}{2}-1}\overline{w}\right)\d_{x}u_{\rho}d\rho\\
 & +\frac{1}{\la}\int_{\la}^{2\la}\left(\frac{a}{2}\left(\frac{a}{2}+1\right)u_{\rho}^{\frac{a}{2}}\overline{u}_{\rho}^{\frac{a}{2}-1}w+\frac{a}{2}\left(\frac{a}{2}-1\right)u_{\rho}^{\frac{a}{2}+1}\overline{u}_{\rho}^{\frac{a}{2}-2}\overline{w}\right)\overline{\d_{x}u_{\rho}}d\rho
\end{align*}
and
\[
B=\frac{1}{\la}\int_{\la}^{2\la}\left(\frac{a}{2}+1\right)u_{\rho}^{\frac{a}{2}}\overline{u}_{\rho}^{\frac{a}{2}}\d_{x}w+\frac{a}{2}u_{\rho}^{\frac{a}{2}+1}\overline{u}_{\rho}^{\frac{a}{2}-1}\overline{\d_{x}w}d\rho.
\]
We show the following claims:
\begin{equation}
\left|\int_{E}\overline{e^{i(t-T)\De}\delta_{N}}Adxdt\right|\gg\la^{\al}N^{\frac{3}{2}-s},\label{eq:claimA;1}
\end{equation}
\begin{equation}
\left|\int_{E}\overline{e^{i(t-T)\De}\delta_{N}}Bdxdt\right|\ll\la^{\al}N^{\frac{3}{2}-s},\label{eq:claimB;2}
\end{equation}
and
\begin{equation}
\left|\int_{E^{c}}\overline{e^{i(t-T)\De}\delta_{N}}\left(\d_{x}\calN(u_{2\la})-\d_{x}\calN(u_{\la})\right)dxdt\right|\ll\la^{\al}N^{\frac{3}{2}-s}.\label{eq:claimN;3}
\end{equation}
Once we have (\ref{eq:claimA;1}), (\ref{eq:claimB;2}), and (\ref{eq:claimN;3}),
we have (\ref{eq:claim}) immediately, finishing the proof.

1. Proof of (\ref{eq:claimA;1}))\emph{ }For $\rho=\la$ and $\rho=2\la$,
by (\ref{eq:u-e L^2}), we have
\begin{equation}
\norm{u_{\rho}-\rho}_{L^{4}L^{4}}\les\norm{e^{it\De}u_{\rho,0}-\rho}_{L^{4}L^{4}}+\norm{u_{\rho}-e^{it\De}u_{\rho,0}}_{L^{4}L^{4}}\les\kappa N^{-s}+\la^{1+a}\les\la^{1+a},\label{eq:u-p L4}
\end{equation}
which implies
\begin{equation}
m\left(\left\{ (t,x)\in[0,T]\times\T\mid\left|u_{\rho}-\rho\right|>\epsilon_{0}\rho\right\} \right)\les\left(\frac{\la^{1+a}}{\epsilon_{0}\rho}\right)^{4}\sim\la^{4a}.\label{eq:m1}
\end{equation}
Similarly, by (\ref{eq:u-e H^1}), we have
\begin{equation}
\norm{\d_{x}u_{\rho}-\kappa N^{\frac{1}{2}-s}e^{i(t-T)\De}\delta_{N}}_{L^{4}L^{4}}\les\la^{a}\kappa N^{1-s},\label{eq:du-d L^4}
\end{equation}
which implies
\begin{align}
 & m\left(\left\{ (t,x)\in[0,T]\times\T\mid\left|\d_{x}u_{\rho}-\kappa N^{\frac{1}{2}-s}e^{i(t-T)\De}\delta_{N}\right|>\iota\kappa N^{1-s}\right\} \right)\label{eq:m2}\\
 & \les\left(\frac{\la^{a}\kappa N^{1-s}}{\iota\kappa N^{1-s}}\right)^{4}=\frac{\la^{4a}}{\iota^{4}}.\nonumber 
\end{align}
Combining (\ref{eq:m1}) and (\ref{eq:m2}), since $\iota\ll1$, we
have
\begin{equation}
m(E^{c})\les\frac{\la^{4a}}{\iota^{4}}.\label{eq:m}
\end{equation}
For $(t,x)\in E$, by (\ref{eq:up/p}) and (\ref{eq:dxup-dN}), we
have
\begin{align*}
\Re\left[\overline{e^{i(t-T)\De}\delta_{N}}A\right] & \ge\frac{a^{2}}{100}\la^{a}\kappa N^{\frac{1}{2}-s}\left|e^{i(t-T)\De}\delta_{N}\right|^{2}\\
 & -100\la^{a}\iota\kappa N^{1-s}\left|e^{i(t-T)\De}\delta_{N}\right|.
\end{align*}
Since $m(E^{c})\ll1$, by (\ref{eq:m(kernel>sqrtN)}), we have
\[
\int_{E}\left|e^{i(t-T)\De}\delta_{N}\right|^{2}dxdt\sim N.
\]
Thus, by $\iota\ll1$, we have
\[
\left|\int_{E}\overline{e^{i(t-T)\De}\delta_{N}}Adxdt\right|\gtrsim\la^{a}\kappa N^{\frac{3}{2}-s}\gg\la^{\al}N^{\frac{3}{2}-s}.
\]

2. Proof of (\ref{eq:claimB;2})) By (\ref{eq:u L^2}) and (\ref{eq:u-e H^1}),
we have
\begin{align*}
\left|\int_{E}\overline{e^{i(t-T)\De}\delta_{N}}Bdxdt\right| & \les N^{\frac{1}{2}}\norm B_{L^{1}L^{2}}\\
 & \les N^{\frac{1}{2}}\sup_{\rho\in[\la,2\la]}\norm{u_{\rho}^{a}}_{L^{4}L^{4}}\norm{\d_{x}w}_{L^{4}L^{4}}\\
 & \les N^{\frac{1}{2}}\sup_{\rho\in[\la,2\la]}\norm{u_{\rho}}_{L^{4}L^{4}}^{a}\norm{w-\la e^{it\De}1}_{L^{4}W^{1,4}}\\
 & \les\la^{2a}\kappa N^{\frac{3}{2}-s}\ll\la^{\al}N^{\frac{3}{2}-s}.
\end{align*}

3. Proof of (\ref{eq:claimN;3})) For $\rho=\la$ and $\rho=2\la$,
we have
\begin{align*}
\left|\int_{E^{c}}\overline{e^{i(t-T)\De}\delta_{N}}\d_{x}(\calN(u_{\rho}))dxdt\right| & \les m(E^{c})^{\frac{1}{4}}\norm{\overline{e^{i(t-T)\De}\delta_{N}}\d_{x}(\calN(u_{\rho}))}_{L^{4/3}L^{4/3}}\\
 & \les m(E^{c})^{\frac{1}{4}}\norm{e^{i(t-T)\De}\delta_{N}}_{L^{4}L^{4}}\norm{u_{\rho}}_{L^{4}L^{4}}^{a}\norm{u_{\rho}}_{L^{4}W^{1,4}}\\
 & \les m(E^{c})^{\frac{1}{4}}\cdot N^{\frac{1}{2}}\la^{a}\kappa N^{1-s}\\
 & \les\la^{2a}\iota^{-1}\kappa N^{\frac{3}{2}-s}\ll\la^{\al}N^{\frac{3}{2}-s}.
\end{align*}
Combining (\ref{eq:claimA;1}), (\ref{eq:claimB;2}), and (\ref{eq:claimN;3}),
we have (\ref{eq:claim}), which finishes the proof.
\end{proof}

\section*{Appendix}

The following is a list of parameters used in Section \ref{sec:Z^s-spaces}
and Section \ref{sec:Proof-of-Theorem a<1}, rewritten in terms of
$\s_{j}$'s:

\begin{tabular}{|c|c|}
\hline 
Relation & First-appearing place\tabularnewline
\hline 
\hline 
$s=\frac{d}{2}-\frac{2}{a}$ & (\ref{eq:s_c})\tabularnewline
\hline 
$\frac{1}{p}=\frac{\frac{d}{2}-\s}{d+2}$ & (\ref{eq:=00005Cs,p def})\tabularnewline
\hline 
$\s\ll\s_{1}\ll\s_{2}\ll\s_{3}\ll\s_{4}\ll1$ & (\ref{eq:ss1s2<<})\tabularnewline
\hline 
$\frac{1}{q_{0}}=\frac{2+\s_{3}}{d+2}$ & Lemma \ref{lem:strip1 claim1-1}\tabularnewline
\hline 
$\frac{1}{r_{0}}=\frac{2+\s_{3}+\s_{2}}{d+2}$ & Lemma \ref{lem:strip1 claim1-1}\tabularnewline
\hline 
$\theta=\frac{2}{q_{0}}+\frac{d}{r_{0}}-2=\s_{3}+\frac{d\s_{2}}{d+2}$ & Lemma \ref{lem:strip1 claim1-1}\tabularnewline
\hline 
$\frac{1}{\hat{r}}=\frac{1+\s_{4}}{d+2}$ & Proof of Proposition \ref{prop:Holder strip decomposition}\tabularnewline
\hline 
$\zeta=\frac{1}{\hat{r}}-\frac{1}{2q_{0}}=\frac{\s_{4}-\s_{3}/2}{d+2}$ & Proof of Proposition \ref{prop:Holder strip decomposition}\tabularnewline
\hline 
$\eta=\frac{d}{\hat{r}}-\frac{d}{2r_{0}}+\frac{\theta}{2}=\frac{d}{d+2}\s_{4}+\frac{1}{d+2}\s_{3}$ & Proof of Proposition \ref{prop:Holder strip decomposition}\tabularnewline
\hline 
$\la=N^{-\frac{s}{1-\al+2a}+\s_{1}}$ & (\ref{eq:par def of prop a<1})\tabularnewline
\hline 
$\kappa=\la^{\al-a}\cdot N^{\s}$ & (\ref{eq:par def of prop a<1})\tabularnewline
\hline 
$\iota=N^{-\s}$ & (\ref{eq:par def of prop a<1})\tabularnewline
\hline 
\end{tabular}

\bibliographystyle{plain}
\bibliography{citationforTd}

\end{document}